\documentclass[11pt]{article}
\usepackage[margin=1in]{geometry}
\usepackage{setspace}

\usepackage[utf8]{inputenc}
\usepackage[numbers,sort&compress]{natbib} 
\usepackage{multirow}
\usepackage[pdftex]{graphicx}
\usepackage{booktabs}
\usepackage{array}
\usepackage{algorithm}
\usepackage{algpseudocode}

\usepackage{amsmath,amsfonts,bm, amsthm}
\usepackage{amssymb}

\usepackage{subcaption}
\usepackage{mathtools}
\usepackage{lipsum}
\usepackage{enumitem}
\usepackage{chngcntr}
\usepackage{apptools}
\usepackage{xcolor}

\usepackage{comment}
\let\hat\widehat

\usepackage[toc,page]{appendix}
\usepackage[perpage]{footmisc}

\usepackage{times}

\usepackage[export]{adjustbox}

\usepackage{cancel}
\usepackage{verbatim}
\usepackage{upgreek}

\usepackage{macros}
\usepackage{tikz}
\usetikzlibrary{calc,trees, positioning,arrows.meta}
\tikzset{
    every node/.style={font=\sffamily\small},
    main node/.style={thick,circle,draw,font=\sffamily\Large}
}

\usepackage[normalem]{ulem}
\usepackage{authblk}

\newcommand\redsout{\bgroup\markoverwith{\textcolor{red}{\rule[0.5ex]{2pt}{0.4pt}}}\ULon}
\usepackage{hyperref}
\hypersetup{colorlinks,breaklinks,
    citecolor=[RGB]{153,204,153},
    linkcolor=[RGB]{102,153,204},
    urlcolor=[RGB]{115,140,191}
}

\newsavebox\affbox
\makeatletter
\renewcommand\AB@authnote[1]{%
  \ifcase#1\or\textsuperscript{$\dagger$}\or\textsuperscript{$\ddagger$}\or
  \textsuperscript{*}\or\textsuperscript{\S}\or
  \textsuperscript{\P}\or\textsuperscript{**}\or
  \textsuperscript{$\dagger\dagger$}\fi}
  
\makeatother

\title{Multistage  Conditional Compositional Optimization}
\author[1]{Buse \c{S}en}
\author[2]{Yifan Hu}
\author[1]{Daniel Kuhn}

\affil[$\dagger$]{\small EPFL, Switzerland \authorcr 
{\href{mailto:buse.sen@epfl.ch}{\texttt{buse.sen@epfl.ch}}} \authorcr 
{\href{mailto:daniel.kuhn@epfl.ch}{\texttt{daniel.kuhn@epfl.ch}}}}

\affil[$\ddagger$]{\small Rutgers University, USA \authorcr 
\href{mailto:yifan.hu@rutgers.edu}{\texttt{yifan.hu@rutgers.edu}}}

\date{\today}

\begin{document}
\maketitle
\begin{abstract}
    We introduce Multistage Conditional Compositional Optimization (MCCO) as a new paradigm for decision-making under uncertainty that combines aspects of multistage stochastic programming and conditional stochastic optimization. MCCO minimizes a nest of conditional expectations and nonlinear cost functions. It has numerous applications and arises, for example, in optimal stopping, linear–quadratic regulator problems, distributionally robust contextual bandits, as well as in problems involving dynamic risk measures. The na\"ive nested sampling approach for MCCO suffers from the curse of dimensionality familiar from scenario tree-based multistage stochastic programming, that is, its scenario complexity grows exponentially with the number of nests. We develop new multilevel Monte Carlo techniques for MCCO whose scenario complexity grows only polynomially with the desired accuracy. 
\end{abstract}

\section{Introduction}
Multistage stochastic optimization constitutes a central and still largely unresolved challenge in the field of optimization under uncertainty. Efficient exact solution methods are out of reach even if the probability distribution of the random problem parameters is given in closed form. Indeed, even linear two-stage stochastic programs with independent and uniformly distributed random parameters are \char"0023 P-hard \cite{dyer2006complexity, hanasusanto2016comment}. Consequently, multistage stochastic programs are often studied in a black-box framework, where the decision-maker has no analytical distributional information and aims to construct an $\epsilon$-optimal solution relying only on oracles that supply samples of the random problem parameters. In this framework, multistage stochastic programs are often addressed with the {\em Sample Average Approximation} (SAA), which builds a scenario tree using conditional sampling and solves a finite-dimensional approximate stochastic program on this empirically generated tree. However, the scenario complexity of SAA for obtaining an $\epsilon$-optimal solution to a $T$-stage linear stochastic program is bounded above by $\cO(\epsilon^{-2T})$ \citep{shapiro2005complexity, shapiro2006complexity}, and this bound cannot be significantly improved by any SAA-type method even if the random parameters are serially independent \cite[Proposition~2]{reaiche2016note}. Hence, the size of this scenario tree grows exponentially with the number of stages. We are not aware of any other black-box method that overcomes this curse of dimensionality.

With this paper we hope to lay the groundwork for a new generation of black-box methods for multistage stochastic optimization whose scenario complexity depends on the desired accuracy~$\epsilon$ only through $(1/\epsilon)^p$, where the exponent~$p$ is independent of~$T$. To this end, we study \emph{Multistage Conditional Compositional Optimization} (MCCO) problems, which are reminiscent of multistage stochastic programs. In fact, we will show that MCCO encapsulates various stochastic dynamic control problems with serially dependent disturbances as special cases. In addition, we will argue that MCCO is highly expressive and even includes problem instances that cannot be reformulated as any risk-neutral multistage stochastic program. The main contribution of this paper is to propose new multilevel Monte Carlo methods for MCCO problems with~$T\geq 2$ stages whose scenario complexity scales as~$\cO(\epsilon^{-2})$ under standard regularity conditions.

The MCCO problems to be investigated in this paper can be represented as
\begin{equation}
\tag{MCCO}
    	\label{problem:MCCO}
    	\min_{x\in\mathcal{X}}  F(x),
\end{equation}
where $\mathcal{X}\subseteq\RR^{d}$ denotes the feasible set, and where the objective function $F:\RR^d\rightarrow\RR$ is defined as a nest of~$T$ conditional expectations and nonlinear integrands of the form
\begin{equation}
    \tag{MCCE}
    \label{problem:MCCE} 
    F(x) \coloneqq \EE \Bigg[f_1\Bigg(\xi_1,\EE_{1} \bigg[f_2\bigg(\xi_2,\ldots \EE_{T-2} \Big[f_{T-1}\left(\xi_{T-1},\EE_{T-1} \left[f_T\left(\xi_T,x\right)\right]\right)\Big]\ldots\bigg) \bigg]\Bigg)\Bigg].
\end{equation}
Here, $\xi_1,\ldots,\xi_T$ represents a stochastic process, $\EE[\cdot]$ denotes unconditional expectation, and $\EE_{t}[\cdot]$ denotes expectation conditional on the $\sigma$-algebra $\cF_t$ generated by the time-$t$ observation history $\xi_{[t]} \coloneqq (\xi_1,...,\xi_{t})$. The integrands $f_t:\RR^{m_t}\times\RR^{d_t}\rightarrow \RR^{d_{t-1}}$ with $d_0=1$ and $d_T=d$ can be interpreted as uncertainty-affected cost functions. A key assumption throughout the paper is that the random vectors~$\xi_t$ are {\em dependent} and that the integrands~$f_t$ are {\em nonlinear}. Hence, evaluating the objective function at a given~$x\in\cX$ is challenging, a problem which we henceforth refer to as \emph{Multistage Conditional Compositional Estimation} (MCCE).

MCCO reduces to classical (single-stage) stochastic optimization when~$T=1$, to conditional stochastic optimization when~$T=2$ \citep{dai2017learning, hu2020biased, hu2021bias, hu2020sample, goda2023constructing, he2023debiasing}, to multilevel compositional stochastic optimization when the random vectors~$\xi_t$ are mutually independent \citep{jiang2022optimal, gao2024decentralized, xiao2022projection, cong2020minimal, zhang2021multilevel, yang2019multilevel, balasubramanian2022stochastic, ruszczynski2021stochastic, chen2021solving} and to MCCE when the feasible set~$\cX$ is a singleton \citep{rainforth2018nesting,syed2023optimal,zhou2022unbiased}. As we allow~$T$ to be any integer and the random vectors~$\xi_t$ to be  dependent, MCCO strictly generalizes all of these more basic decision-making paradigms. 

Both MCCE and MCCO arise in many applications relevant for operations research, economics or engineering. For example, the problem of computing credit valuation adjustments can be naturally formulated as an instance of MCCE \cite{giles2023efficient}. MCCE also emerges in optimal stopping problems, which encompass applications such as option pricing in finance~\citep{zhou2022unbiased} or inventory replenishment in operations management~\citep{ozyoruk2022end}. In addition, we will show that linear quadratic regulator problems and general finite-state stochastic control problems can be reformulated as instances of MCCE. Remarkably, these reformulations are possible even when the disturbances are serially dependent, as is the case when the underlying system is in a transient regime or is subject to distribution shifts. Finally, MCCO naturally arises in distributionally robust contextual bandits~\citep{shen2024wasserstein} or nested risk minimization~\citep{wang2024bayesian}. All of these examples, which will be presented in detail in Section~\ref{sec:motivational_ex}, underscore the strong ties between MCCE, MCCO and multistage stochastic optimization. However, MCCO is not subsumed by multistage stochastic optimization. Indeed, the infimum of any risk-neutral multistage stochastic program is concave in the joint distribution of the random problem parameters, whereas~$F(x)$ clearly fails to be concave in the distribution of~$\xi_{[T]}$ for some choices of the integrands~$f_t$.

Much like in multistage stochastic optimization, efficient exact solution methods for MCCE and MCCO are out of reach even if the probability distribution of the underlying stochastic process is given in closed form. Indeed, solving MCCE exactly is \char"0023 P-hard even if~$T=1$, $\xi_1$ follows the uniform distribution on the unit hypercube in~$\RR^{m_1}$ and~$f_1$ is a piecewise affine function with only two pieces \citep[Corollary~1]{hanasusanto2016comment}. We thus adopt a black-box model, that is, we assume access to~$T$ oracles (``black boxes'') that one can use to generate any number of independent samples from the marginal distribution of~$\xi_1$ and the conditional distribution of~$\xi_{t+1}$ given~$\xi_{[t]}$, $t\in[T-1]$, respectively. However, we do not presume to know these distributions. In this situation, we can resort to sampling methods for solving MCCE and MCCO approximately.

In this paper we will design and analyze function value estimators~$\hat{F}(x)$ for~$F(x)$ that attain a specified mean squared error with a minimum number of scenarios ({\em i.e.}, sample paths), and we will refer to this number as the estimator's mean squared error-based scenario complexity.  Given a function value estimator~$\hat F(x)$, one can attempt to solve the following sample-based approximation of the intractable MCCO problem.
\begin{equation}
	\phantomsection \label{prob:min_hatF(x)}
    	\min_{x\in\mathcal{X}}\; \hat F(x)
\end{equation}
We will show that the minimizers of problem~\eqref{prob:min_hatF(x)} are near-optimal in~\eqref{problem:MCCO} with high probability. However, computing these minimizers can be challenging in the absence of restrictive convexity and monotonicity conditions. Thus, we will also design and analyze sample-efficient estimators for~$\nabla F(x)$ that can be employed within stochastic gradient descent-type methods for solving problem~\eqref{problem:MCCO} to stationarity. 

The SAA estimator is arguably the simplest estimator for~$F(x)$. It is obtained by replacing all conditional expectations in~\eqref{problem:MCCE} with conditional sample averages. This amounts to approximating the stochastic process of the disturbances~$\xi_t$, $t\in[T]$, by a scenario tree. The number of samples used to approximate~$\EE_t[\cdot]$ coincides with the scenario tree's branching factor at stage~$t$. We will show that the scenario complexity of an SAA estimator with optimally tuned branching factors amounts to~$\cO(\epsilon^{-2T})$ if all integrands~$f_t$ are Lipschitz continuous and to~$\cO(\epsilon^{-(T+1)})$ if the integrands are additionally smooth. In both cases, the SAA estimator suffers from a curse of dimensionality. To design more efficient estimators, we leverage \emph{Multilevel Monte Carlo} (MLMC) methods, which have originally been popularized in computational finance and uncertainty quantification \cite{giles2008multilevel, giles2015multilevel}. A basic MLMC estimator can be constructed as follows. For any level~$\ell\in\mathbb N_0$, let~$\hat Y_\ell$ denote the SAA estimator of~$F(x)$ with a uniform branching factor~$2^\ell$ across all nodes of the scenario tree, and set~$\hat Y_{-1}=0$. By the law of large numbers, $\hat Y_\ell$ converges almost surely to~$F(x)$ as~$\ell$ grows. Hence, $F(x)$ admits the telescoping sum representation $F(x) = \EE [\sum_{\ell=0}^\infty (\hat Y_\ell -\hat Y_{\ell -1}) ]$. If~$\lambda$ is a discrete random variable independent of all samples and if~$\mathbb P(\lambda=\ell)=q(\ell)>0$ for all~$\mathbb N_0$, then the law of total expectation implies
\begin{equation}
	\label{eq:MLMC-basic}
    	F(x) = \EE \left[\sum_{\ell=0}^\infty q(\ell) \frac{\hat Y_\ell -\hat Y_{\ell -1}}{q(\ell)} \right] = \EE \left[\frac{\hat Y_\lambda -\hat Y_{\lambda -1}}{q(\lambda)} \right].
\end{equation}
The inverse propensity weighted telescoping term~$\hat F(x)=(\hat Y_\lambda -\hat Y_{\lambda -1})/q(\lambda)$ associated with level~$\lambda$ constitutes a randomized MLMC estimator in the spirit of~\cite{rhee2015unbiased, blanchet2015unbiased}. By construction, it satisfies $\EE[\hat F(x)]=F(x)$ and is thus unbiased. The choice of the distribution for~$\lambda$ is critical for controlling its computational cost. To see this, note that the SAA estimator~$\hat Y_\ell$ requires~$2^{\ell(T-1)}$ scenarios---one for each leaf node of the underlying scenario tree---implying that its computational cost grows quickly with~$\ell$. Drawing~$\lambda$ from a geometric distribution, for instance, ensures that small (inexpensive) levels~$\ell$ are sampled with high probability, while large (expensive) levels are sampled rarely. This trick allows us to reduce the expected computational cost of~$\hat F(x)$ while preserving its unbiasedness. To further ensure that the variance of~$\hat F(x)$ remains bounded, we reuse all samples of the level~$\ell -1$ scenario tree when constructing the level~$\ell$ tree. Hence, $\hat Y_\ell$ and~$\hat Y_{\ell-1}$ are strongly positively correlated, and the variance of~$\hat F(x)$ remains bounded thanks to a control variate effect.

In this paper, we introduce new MLMC estimators for~$F(x)$ and~$\nabla F(x)$ whose construction departs from that of the na\"ive MLMC estimator described above in three key respects. First, we truncate the telescoping sum in~\eqref{eq:MLMC-basic} at a maximum level~$M$ in order to control the estimator's variance \cite{blanchet2017unbiased}. Truncation introduces a bias, but the bias decays exponentially with~$M$. Hence, selecting~$M$ becomes a matter of balancing bias and variance. Second, we adopt a stagewise recursive construction employing antithetic sampling~\cite{giles2014antithetic}, thereby improving the estimator's computational cost. Third, we average multiple independent and identically distributed (i.i.d.) copies of the same estimator to reduce the variance. We will prove that the scenario complexity of the resulting MLMC estimator for~$F(x)$ amounts to $\cO(\log(\epsilon^{-1})^{2T-2}\epsilon^{-2})$ if all integrands~$f_t$ are Lipschitz continuous and to~$\cO(\epsilon^{-2})$ if the integrands are additionally smooth. Under an additional Hölder continuity assumption, analogous results are obtained for our MLMC estimator of~$\nabla F(x)$.


The main contributions of this paper can be summarized as follows.

\begin{enumerate}
    
    \item {\bf SAA value estimator:} We provide a refined scenario complexity analysis of the SAA estimator for~$F(x)$. First, we prove that the mean squared error-based scenario complexity of this estimator scales as~$\cO(\epsilon^{-2T})$ and $\cO(\epsilon^{-(T+1)})$ for nonsmooth and smooth integrands, respectively. Under a standard sub-Gaussianity assumption, we further show that, for nonsmooth and smooth integrands respectively, $\cO(\log(\epsilon^{-1})\epsilon^{-2T})$ and $\cO(\log(\epsilon^{-1})\epsilon^{-(T+1)})$ scenarios suffice to ensure that the estimator differs from~$F(x)$ at most by~$\epsilon$, uniformly across all $x\in\cX$ and with high probability. All bounds are presented explicitly without any unknown constants. Thus, they provide finite-sample guarantees. 
    
    \item {\bf MLMC value estimator:} We propose a new recursive truncated MLMC estimator for~$F(x)$ together with sharp\footnote{Some of these guarantees are sharp only up to poly-logarithmic factors in $\epsilon^{-1}$.} scenario complexity guarantees. First, we prove that the mean squared error-based scenario complexity of this estimator scales as~$\cO(\log(\epsilon^{-1})^{2T-2}\epsilon^{-2})$ and~$\cO(\epsilon^{-2})$ for nonsmooth and smooth integrands, respectively. Under a standard sub-Gaussianity assumption, we further show that, for nonsmooth and smooth integrands respectively, $\cO(\log(\epsilon^{-1})^{T}\epsilon^{-2})$ and $\cO(\log(\epsilon^{-1})\epsilon^{-2})$ scenarios suffice to ensure that the estimator differs from~$F(x)$ at most by~$\epsilon$, uniformly across all $x\in\cX$ and with high probability. Consequently, we provide the first scenario complexity results for MCCE with nonsmooth integrands that eliminate the usual exponential dependence on~$T$ in~$\epsilon^{-1}$.
    
    \item {\bf MCCO paradigm:} We introduce MCCO as a new paradigm for optimization under uncertainty and show that it unifies and generalizes numerous traditional decision-making models from the literature.
    
    \item {\bf MLMC gradient estimator:} We propose new recursive truncated as well as {\em un}truncated MLMC estimators for~$\nabla F(x)$ together with sharp scenario complexity guarantees. Under the assumption that the integrands are smooth and have H\"older continuous Hessians, we prove that the mean squared error-based scenario complexities of both estimators scale as~$\cO(\epsilon^{-2})$. If the Hölder continuity assumption is relaxed, the scenario complexity increases to~$\cO(\log(\epsilon^{-1})^{2T-2}\epsilon^{-2})$ for the truncated and diverges for the untruncated estimator. We develop the first MLMC gradient estimators for MCCO with $T\geq 2$. 
    
    \item {\bf SGD algorithm:} We integrate our new MLMC gradient estimators into a stochastic gradient descent (SGD) algorithm for finding a stationary point of problem~\eqref{problem:MCCO}. We prove that if all integrands are smooth and have H\"older continuous Hessians, then this algorithm needs $\cO(\epsilon^{-4})$ scenarios to reach an $\epsilon$-stationary point both when equipped with the truncated and untruncated variant of the estimator. 
    
    \item {\bf Numerical experiments:} We validate our theoretical results with numerical experiments revolving around Bermudan option pricing (for MCCE) and distributionally robust contextual bandits (for MCCO). These experiments confirm that the proposed MLMC estimators achieve the same mean square error as an SAA estimator using a significantly smaller number of scenarios.
\end{enumerate}

\paragraph{Literature Review} MCCE with dependent disturbances has been widely studied in the literature. However, most works focus on problems with $T=2$ stages, which arise in derivative pricing \cite{bujok2015multilevel, haji2023nested} or risk management \cite{giles2015multilevel, gordy2010nested, broadie2011efficient, giles2019multilevel}. For~$T=2$, it is well known that SAA estimators typically achieve a root mean squared error of~$\epsilon$ using~$\cO(\epsilon^{-3})$ scenarios, while MLMC estimators require only~$\cO(\epsilon^{-2})$ scenarios \citep{giles2018mlmc}. The literature on MCCE with~$T>2$ stages is far more limited. It is shown in~\citep{rainforth2018nesting} that the scenario complexity of SAA estimators amounts to~$\cO(\epsilon^{-2T})$ for nonsmooth integrands and to~$\cO(\epsilon^{-(T+1)})$ for smooth ones. This work, however, provides only asymptotic rates without explicit constants. Moreover, it only studies mean squared errors but provides no uniform convergence analysis, which would be necessary to ensure that the minimizers of~\eqref{prob:min_hatF(x)} approximate those of~\eqref{problem:MCCO} with high probability. To the best of our knowledge, only two papers develop MLMC estimators for MCCE with~$T>2$. The first focuses on optimal stopping problems and imposes restrictive assumptions on the integrands \citep{zhou2022unbiased}. The second primarily considers smooth integrands. While it also discusses nonsmooth integrands, the corresponding MLMC estimators have infinite variance, making problem~\eqref{prob:min_hatF(x)} numerically unstable \citep{syed2023optimal}. No prior work develops MLMC estimators for MCCE with~$T>2$ stages and general nonsmooth integrands.

To our best knowledge, all existing work on MCCO assumes that~$T=2$. In this case, MCCO reduces to conditional stochastic optimization, which arises in many applications. In machine learning, for instance, it appears in reinforcement learning \citep{dai2018sbeed, hu2020sample}, model-agnostic meta-learning, robust supervised learning or invariant learning~\citep{hu2020sample, hu2021bias}. In statistics and econometrics, applications include instrumental variable regression \citep{bennett2019deep, goda2023constructing, hartford2017deep, muandet2020dual, singh2019kernel,chen2024stochastic} or Bayesian experimental design \citep{goda2020multilevel, goda2022unbiased, beck2020multilevel}. The simplest approach to MCCO is to approximate~$F(x)$ by an SAA estimator~$\hat F(x)$ and to solve~\eqref{prob:min_hatF(x)}. For~$T=2$, the number of scenarios needed to guarantee that the resulting minimizer of~\eqref{prob:min_hatF(x)} constitutes an $\epsilon$-optimal solution to~\eqref{problem:MCCO} with high probability amounts to~$\mathcal{O}(\epsilon^{-4})$ if the integrands are nonsmooth and can be reduced to~$\mathcal{O}(\epsilon^{-3})$ if the integrands are smooth \citep{hu2020sample}. However, solving~\eqref{prob:min_hatF(x)} may be computationally infeasible without restrictive convexity or monotonicity assumptions. To address this, subsequent work develops SAA estimators for~$\nabla F(x)$ that can be employed within a stochastic gradient descent algorithm for solving problem~\eqref{problem:MCCO} to stationarity. For smooth integrands, this method requires~$\cO(\epsilon^{-6})$ scenarios to find an $\epsilon$-stationary point, or~$\cO(\epsilon^{-5})$ with standard variance reduction techniques~\citep{hu2020biased}. MLMC estimators for $\nabla F(x)$ introduced in~\citep{hu2024multi} further reduce this to $\cO(\epsilon^{-4})$ scenarios. MCCO with $T>2$ stages has not yet been investigated.


The remainder of the paper is structured as follows. Section~\ref{sec:motivational_ex} illustrates motivating applications of MCCE and MCCO. Sections~\ref{sec:mcco_estimation} and~\ref{sec:mcco_optimization} then introduce and analyze SAA and MLMC estimators for~$F(x)$ and~$\nabla F(x)$, respectively. Numerical results are presented in Section~\ref{sec:experiments}, and all proofs as well as some auxiliary technical results are relegated to the appendix.

\paragraph{Notation} The entrywise $p$-norm of a matrix $A\in\RR^{n\times m}$ is denoted by $\|A\|_p \coloneqq (\sum_{j=1}^m\sum_{i=1}^n| A_{ij}|^p)^{1/p}$. If $p=2$, $\|A\|_p$ reduces to the Frobenius norm. We use~$\NN_0$ and~$\NN$ to denote the sets of natural numbers with and without~$0$, respectively, and for any~$n\in\NN$, we set~$[n]=\{1,2,\ldots,n\}$. The transpose of the Jacobian of a differentiable function $f:\RR^n\rightarrow \RR^m$ is denoted by $\nabla f:\RR^n\rightarrow \RR^{n\times m}$. If~$m=1$, $\nabla f$ thus reduces to the gradient of~$f$. All (in)equalities involving random variables are understood to hold almost surely. The variance of a random variable $\xi$ is denoted by~$\VV(\xi)$. We designate estimators by the superscript~‘\;$\hat{}$\;’ and define~$\CC(\hat{F})$ as the number of scenarios (sample paths) needed to construct an estimator~$\hat{F}$. The truncated geometric distribution with rate~$r\in(0,1)$ and truncation point~$M\in\mathbb N$ is denoted by~$\text{Geo}(r|M)$. Thus, a random variable $\lambda$ governed by $\text{Geo}(r|M)$ satisfies $\PP(\lambda=\ell)=r(1-r)^\ell /(1-(1-r)^{M+1})$ for every $\ell=0,\ldots, M$. If~$a(\epsilon)$ and~$b(\epsilon)$ are real-valued functions of a tolerance~$\epsilon>0$, then~$a(\epsilon)=\cO(b(\epsilon))$ means that there exist~$C>0$ and~$\bar\epsilon>0$ such that $a(\epsilon)\leq C b(\epsilon)$ for all~$\epsilon\in(0,\bar\epsilon)$. Note that $a(\epsilon)$ and $b(\epsilon)$ could depend on other variables. However, $\cO(\cdot)$ captures only their asymptotic dependence on~$\epsilon$.

\section{Motivating Applications}
\label{sec:motivational_ex}
Instances of~\eqref{problem:MCCE} and~\eqref{problem:MCCO} with~$T>2$ stages arise in numerous applications. Examples include optimal stopping, linear-quadratic regulator and finite-state stochastic control problems for~\eqref{problem:MCCE} as well as policy learning in contextual bandits and risk measurement with nested risk measures for~\eqref{problem:MCCO}.

\paragraph{Optimal Stopping}
\label{sec:application_optimal_stopping} 
Suppose that an agent observes a sequence of random variables $\xi_t$, $t\in[T]$, which are revealed one at a time, and let $g_t: \RR^{m_t}\rightarrow \RR$, $t\in[T]$, be Borel measurable payoff functions. At each time~$t\in[T]$, the agent must decide whether to stop observing, in which case an immediate payoff~$g_t(\xi_t)$ is received, or to continue observing, in which case no immediate payoff is received. The process must stop at time~$T$ at the latest. If the goal is to maximize the expected payoff, then the optimal stopping time can be found via dynamic programming \citep{zhou2022unbiased}. To this end, use $U_t(\xi_{[t]})$ to denote the agent's expected future payoff conditional on all observations up to time~$t$. One readily verifies that $U_T(\xi_{[T]})=g_T(\xi_T)$ and
\begin{align*}
    U_{t}(\xi_{[t]}) = \max \left\{g_t(\xi_{t}), \EE_{t} \left[U_{t+1}(\xi_{[t+1]}) \right]\right\} \quad \forall t\in[T-1].
\end{align*} 
Unraveling this recursion yields
\[
    \EE\left[U_1(\xi_{1})\right]=\EE \bigg[\max\bigg\{g_1(\xi_1), \EE_{1}\Big[\max\Big\{g_2(\xi_2), \ldots, \EE_{T-1}[ g_T(\xi_T)] \ldots  \Big\}   \Big] \bigg\}  \bigg].
\]
Thus, the problem of computing the expected value of the optimal stopping time can be viewed as an instance of~\eqref{problem:MCCE} with $f_T({\xi_T},x_T)=g_T(\xi_T)$ and $f_t(\xi_t,x_t)=\max\{g_t(\xi_t),x_t\} $ for all $t\in[T-1]$. Optimal stopping problems emerge in various applications across many domains~\citep{ciocan2022interpretable}. For example, they arise in the pricing of American-style options~\citep{zhou2022unbiased} or in sequential hypothesis testing~\citep{lai1973optimal}. In addition, they also arise in supply chain management~\citep{van2023intuitive} to determine the timing of inventory replenishment~\citep{ozyoruk2022end}, and in game theory to model the reasoning about the probabilistic strategies of players~\citep{rainforth2018nesting}. 

\paragraph{Linear-Quadratic Regulator}
Consider the finite-horizon stochastic linear-quadratic regulator problem
\begin{equation}
	\label{eq:lqr}
	\begin{array}{cl}
	\displaystyle \min_{a,s} & \displaystyle \mathbb E\left[\sum_{t=1}^{T-1} \left(s_t^\top Q s_t+a_t^\top Ra_t \right) +s_T^\top P_T s_T\right] \\
	\text{s.t.} & s_{t+1}= A s_t + B a_t + \xi_{t+1}\quad\forall  t\in[T-1] \\ & a_t \text{ is $\mathcal F_t$-measurable}\quad\forall  t\in[T-1],
	\end{array}
\end{equation}
where $s_t\in\mathbb{R}^m$ and $a_t\in\mathbb{R}^n$ denote the state and the control action at time $t$. We assume that the system matrices $A\in\mathbb{R}^{m\times m}$ and $B\in\mathbb{R}^{m\times n}$ as well as the symmetric positive semidefinite cost matrices $Q\in\mathbb R^{m\times m}$, $R\in\mathbb R^{n\times n}$ and $P_T\in\mathbb R^{m\times m}$ are deterministic and that $R$ is invertible. We also assume that~$s_1=\xi_1$ and that $\{\xi_t\}_{t\in \mathbb N}$ is a (possibly serially dependent) stochastic process with finite second moments taking values in $\mathbb{R}^m$. Control problems of the form~\eqref{eq:lqr} are ubiquitous in economics~\cite{hansen2008robustness}, engineering~\cite{bertsekas1995dynamic} or neuroscience~\cite{todorov2002optimal}, among many others. Traditionally, the stochastic disturbances are assumed to be i.i.d. However, in an era where control systems must navigate continuously shifting environments and interact with adaptive, data-driven agents, models that abandon the fragile i.i.d.\ noise assumption are essential for ensuring robustness under real-world nonstationarity and persistent distribution shift.

The Q-function at time~$t$ associated with problem~\eqref{eq:lqr} is defined via the Bellman-type recursion 
\begin{equation}
	\label{eq:Qdef}
	Q_t(s,a) = s^\top Q s + a^\top R a + \mathbb{E}_t \left[ \min_{a'\in\mathbb R^n} Q_{t+1}(As+Ba+\xi_{t+1},a') \right]
\end{equation}
with terminal value $Q_T(s,a)=s^\top P_T s$ (which is deterministic and independent of $a$); see also~\cite[\S~4.1]{bertsekas1995dynamic}. Due to the serial dependencies of the noise process, $Q_t(s,a)$ is an $\mathcal F_t$-measurable random object for any fixed~$(s,a)$. But we notationally suppress the dependence of $Q_t(s,a)$ on $\xi_{[t]}$ to avoid clutter. The optimal value of~\eqref{eq:lqr} can be expressed in terms of the time-$1$ Q-function as
\begin{equation}
	\label{eq:min-lqr}
	\mathbb E\left[ \min_{a\in\mathbb R^n} Q_1(\xi_1,a)\right].
\end{equation}
The following lemma shows that the Q-functions are quadratic in the states and actions with random coefficients. To keep the paper self-contained, we sketch the proof of this elementary result in Appendix~\ref{sec:proofs-section-2}.

\begin{lemma}
\label{lem:Q-lqr}
For each $t\in [T]$, the Q-function is quadratic in $(s,a)$ and can be represented as
\begin{subequations}
\begin{align}
	\label{eq:Q-Bellman}
	Q_t(s,a) = 
	\begin{bmatrix}s\\ a\end{bmatrix}^{\top}
	\begin{pmatrix}
	Q_t^{ss} & Q_t^{sa} \\
	(Q_t^{sa})^\top & Q_t^{aa}
	\end{pmatrix}
	\begin{bmatrix}s\\ a\end{bmatrix} 
	+2 \begin{bmatrix}b_t^s \\ b_t^a \end{bmatrix}^{\!\top}
	\begin{bmatrix}s\\ a\end{bmatrix}
	+c_t \quad \mathbb P\text{-a.s.},
\end{align}
where the $\mathcal F_t$-measurable coefficient matrices and vectors are defined through the backward recursions
\begin{equation}
\label{eq:Q-recursion}
\begin{aligned}
	Q_t^{ss} &= \mathbb{E}_t\big[Q + A^\top P_{t+1}A\big]\\
	Q_t^{sa} &= \mathbb{E}_t\big[A^\top P_{t+1} B\big]\\
	Q_t^{aa} &=  \mathbb{E}_t\big[R +B^\top P_{t+1} B\big]\\
	b_t^s &= \mathbb{E}_t\big[A^\top P_{t+1} \xi_{t+1} + A^\top g_{t+1} \big]\\
	b_t^a &= \mathbb{E}_t\big[B^\top P_{t+1} \xi_{t+1} + B^\top g_{t+1}\big]\\
	c_t &= \mathbb{E}_t\big[\xi_{t+1}^\top P_{t+1} \xi_{t+1} + 2\,g_{t+1}^\top \xi_{t+1} + d_{t+1} \big]
\end{aligned}
\end{equation}
for all $t\in[T-1]$ initialized by 
\begin{equation}
	\label{eq:Q-initialization}
	Q_T^{ss} =P_T,  \quad Q_T^{sa} =0, \quad Q_T^{aa} =0, \quad b_T^s = 0, \quad b_T^a = 0 \quad\text{and} \quad  c_T = 0.
\end{equation}
The underlying auxiliary variables are defined by
\begin{equation}
	\label{eq:Q-Schur}
	P_{t} = Q_{t}^{ss} - Q_{t}^{sa}(Q_{t}^{aa})^{-1}(Q_{t}^{sa})^\top,\quad g_{t} = b_{t}^s - Q_{t}^{sa}(Q_{t}^{aa})^{-1}b_{t}^a  \quad\text{and} \quad  d_{t} = c_{t} - (b_{t}^a)^\top (Q_{t}^{aa})^{-1} b_{t}^a
\end{equation}
\end{subequations}
for all $t\in[T-1]$, while $P_T$ is the terminal state cost matrix, $g_T=0$ and $d_T=0$.
\end{lemma}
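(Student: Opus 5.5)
We argue by backward induction on $t$, from $t=T$ down to $t=1$. The induction hypothesis is that $Q_t$ has the quadratic form \eqref{eq:Q-Bellman} with $\mathcal F_t$-measurable coefficients, and additionally that $Q_t^{aa}\succeq R\succ 0$ for $t\le T-1$ and $P_t\succeq 0$. These definiteness facts are needed so that the inner minimisation is well posed and the Schur complements in \eqref{eq:Q-Schur} make sense. Since $R$ is symmetric positive semidefinite and invertible, it is positive definite.

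\emph{Base case.} At $t=T$, $Q_T(s,a)=s^\top P_T s$. This is exactly \eqref{eq:Q-Bellman} with the initialization \eqref{eq:Q-initialization}. Here $P_T\succeq 0$, and we set $g_T=0$, $d_T=0$.

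\emph{Induction step, minimisation.} Fix $t\le T-1$ and assume the claim holds at $t+1$. First we compute $\min_{a'}Q_{t+1}(s',a')$ for a given next state $s'$.
\begin{itemize}
\item If $t+1=T$, $Q_T$ does not depend on $a'$, so the minimum is $s'^\top P_T s' = s'^\top P_T s' + 2g_T^\top s' + d_T$.
\item If $t+1<T$, the map $a'\mapsto Q_{t+1}(s',a')$ is a strictly convex quadratic because $Q_{t+1}^{aa}\succ 0$. Setting the gradient to zero gives
\[
a'^\star=-(Q_{t+1}^{aa})^{-1}\big((Q_{t+1}^{sa})^\top s'+b_{t+1}^a\big).
\]
Substituting back and completing the square yields $s'^\top P_{t+1}s'+2g_{t+1}^\top s'+d_{t+1}$, with $P_{t+1},g_{t+1},d_{t+1}$ as in \eqref{eq:Q-Schur}.
\end{itemize}
By the induction hypothesis all these quantities are $\mathcal F_{t+1}$-measurable.

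\emph{Induction step, expectation.} Substitute $s'=As+Ba+\xi_{t+1}$ and expand. This gives
\[
(As+Ba)^\top P_{t+1}(As+Ba)+2(As+Ba)^\top\big(P_{t+1}\xi_{t+1}+g_{t+1}\big)+\xi_{t+1}^\top P_{t+1}\xi_{t+1}+2g_{t+1}^\top\xi_{t+1}+d_{t+1}.
\]
Here we use the symmetry of $P_{t+1}$, which follows from the symmetry of the $Q_{t+1}$ blocks. Now add $s^\top Qs+a^\top Ra$ and take $\mathbb E_t$. The point $(s,a)$ is fixed and deterministic, so it can be pulled out of the conditional expectation by linearity. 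This is valid provided all coefficients are integrable, and reading off the coefficients gives exactly the recursions \eqref{eq:Q-recursion}. For the hypothesis at $t$, we have $Q_t^{aa}=R+\mathbb E_t[B^\top P_{t+1}B]\succeq R\succ0$ since $P_{t+1}\succeq0$. Moreover, $P_t$ is the Schur complement of the positive semidefinite block matrix of $Q_t$. That matrix equals $\mathbb E_t$ of the PSD matrix $\mathrm{diag}(Q,R)+[A\ B]^\top P_{t+1}[A\ B]$, so $P_t\succeq 0$.

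\emph{Main obstacle.} The delicate part is rigor, not algebra. We must show that the random coefficients are integrable so that the conditional expectations exist and are finite. One route is to show inductively that $P_{t}$ is bounded almost surely. This holds because $0\preceq P_t\preceq Q_t^{ss}$, which is bounded by $Q+A^\top(\text{bound on }P_{t+1})A$. Then $g_t$ and $d_t$ have finite first and second moments, using the finite second moments of $\xi$ together with $(Q_t^{aa})^{-1}\preceq R^{-1}$. We must also justify the measurable choice of the minimiser, which is explicit here, and treat the interchange of pointwise $(s,a)$ with the a.s. qualifiers by using regular conditional distributions, or simply by linearity, since the dependence on $(s,a)$ is polynomial with fixed coefficients.
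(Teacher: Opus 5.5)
Your proposal is correct and follows the same backward-induction argument as the paper's sketch: minimize the quadratic in $a'$ via the Schur complement, substitute $s'=As+Ba+\xi_{t+1}$, expand, and match coefficients against \eqref{eq:Q-recursion}. The extra invariants you carry---$Q_t^{aa}\succeq R\succ 0$, $0\preceq P_t\preceq Q_t^{ss}$ with a deterministic bound, $g_t$ square-integrable and $d_t$ integrable (only a first moment is needed for $d_t$)---fill in details the paper leaves implicit, namely the invertibility of $Q_t^{aa}$ and the finiteness of the conditional expectations.
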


For all $t\in[T]$ we define an aggregate coefficient vector encoding the time-$t$ Q-function as
\[
	x_t = \bigl(Q_t^{ss},\,Q_t^{sa},\,Q_t^{aa},\,b_t^s,\,b_t^a,\,c_t\bigr) \in\mathbb R^{d},
\]
where the components are the $\mathcal F_t$-measurable matrices and vectors appearing in~\eqref{eq:Q-Bellman}, and where $d=m^2+mn+n^2+m+n+1$. Also, for all $t=2,\ldots,T$ we define $f_t:\mathbb{R}^{m}\times\mathbb R^d \to\mathbb R^d$ through
\[
	f_t(\xi_t,x_t) = \Bigl( Q + A^\top P_t A,\; A^\top P_t B,\; R + B^\top P_t B,\; A^\top P_t \xi_t + A^\top g_t,\; B^\top P_t \xi_t + B^\top g_t,\; \xi_t^\top P_t \xi_t + 2g_t^\top\xi_t + d_t \Bigr),
\]
where $P_t$, $g_t$ and $d_t$ are defined as in~\eqref{eq:Q-Schur} and thus constitute rational functions of~$x_t$. In addition, we define $f_1:\mathbb R^m\times\mathbb R^d\to\mathbb R$ through
\[
	f_1(\xi_1,x_1) = \xi_1^\top \Big(Q_1^{ss}-Q_1^{sa}(Q_1^{aa})^{-1}(Q_1^{sa})^\top\Big) \xi_1 +2\Big(b_1^s - Q_1^{sa}(Q_1^{aa})^{-1}b_1^a\Big)^\top \xi_1 + \Big(c_1 - (b_1^a)^\top (Q_1^{aa})^{-1} b_1^a\Big).
\]
The backward recursions~\eqref{eq:Q-recursion} for the Q-function coefficients now take the compact form
\[
	x_{t} = \mathbb{E}_{t}\!\bigl[f_{t+1}(\xi_{t+1},x_{t+1})\bigr] \quad \forall t\in[T-1]
\]
with terminal condition $x_T=(P_T,0,0,0,0,0)$. Using similar arguments as in the proof of Lemma~\ref{lem:Q-lqr}, one readily verifies that the optimal value~\eqref{eq:min-lqr} of the control problem~\eqref{eq:lqr} can be represented compactly as $\mathbb E[f_1(\xi_1,x_1)]$. In view of the above recursion, this implies that the problem of computing the optimal value of~\eqref{eq:lqr} is equivalent to solving problem~\eqref{problem:MCCE}, where $x$ is identified with the deterministic vector~$x_T$. 

A similar reasoning as shown here can be used to reduce linear exponential quadratic regulator and risk-sensitive control problems~\cite{whittle1990risk} to instances of~\eqref{problem:MCCE}. We omit the derivations for brevity.

\paragraph{Stochastic Control with Finite State and Action Spaces}
We use the same symbols as in the last example to denote states, actions and disturbances and consider the finite-horizon stochastic control problem
\begin{equation}
	\label{eq:control}
	\begin{array}{cl}
	\displaystyle \min_{a,s} & \displaystyle \mathbb E\left[\sum_{t=1}^{T-1} c_t(s_t,a_t) + c_T(s_T)\right] \\
	\text{s.t.} & a_t \in \cA_t(\xi_t) \quad \forall t\in[T-1]\\
    & s_{t+1}= \psi(s_t,a_t,\xi_{t+1}) \quad\forall  t\in[T-1] \\ & a_t \text{ is $\mathcal F_t$-measurable}\quad\forall  t\in[T-1].
	\end{array}
\end{equation}
All states and actions take values in finite sets~$\cS$ and~$\cA$, respectively, while $\{\xi_t\}_{t\in \mathbb N}$ represents a generic (possibly serially dependent) stochastic process of disturbances taking values in $\mathbb{R}^m$. The system evolves according to a generic state transition function $\psi : \cS \times \cA \times \RR^m \to \cS$, and at each time~$t$, the controller must select an action from a 
generic random and time-varying feasible set~$\cA_t(\xi_t)\subseteq \cA$. The stage-costs  $c_t : \mathcal S \times \mathcal A \to \mathbb R$, $t\in[T-1]$, and the terminal cost $c_T : \mathcal S \to \mathbb R$ are also captured by generic functions. 

If the disturbances are serially independent, then problem~\eqref{eq:control} reduces to a Markov decision process, a problem class that occupies central stage in operations research and machine learning~\cite{bertsekas1995dynamic}. Allowing the disturbances to display serial correlations enlarges this problem class significantly.

The time-$t$ Q-function of problem~\eqref{eq:control} is defined by the Bellman-type recursion
\begin{equation}\label{eq:Q-bellman}
 Q_t(s,a)=c_t(s,a) + \EE_t\left[\min_{a^\prime\in\cA_{t+1}(\xi_{t+1})} Q_{t+1}\left(\psi(s,a', \xi_{t+1}),a' \right)\right]
\end{equation}
with terminal condition $Q_T(s,a) = c_T(s)$. As in the previous example, $ Q_t(s,a)$ is an $\cF_t$-measurable random object for any fixed~$(s,a)$, but we suppress its dependence on~$\xi_{[t]}$ to avoid clutter. The optimal value of problem~\eqref{eq:control} can be expressed in terms of the time-$1$ $Q$-function as
\begin{equation}\label{eq:min-control}
    \EE\left[\min_{a\in\cA_1(\xi_1)} Q_1(\xi_1,a)\right].
\end{equation}
For all $t\in[T]$ we define~$x_t \in\RR^{d}$ with $d=|\cS|\times|\cA|$ as the vectorized time-$t$ $Q$-function with components $(x_t)_{(s,a)}= Q_t(s,a)$, $(s,a)\in\cS\times\cA$. Also, for all $t=2,\ldots,T$, we define $f_{t}:\RR^{m}\times \RR^{d}\rightarrow\RR^{d}$ through
\[
    \big( f_{t}(\xi_{t}, x_{t})\big)_{(s,a)} = c_{t-1}(s,a)  + \min_{a^\prime \in \mathcal A_{t}(\xi_{t})} x_{t}\big(\psi(s,a,\xi_{t}),\,a^\prime\big)\quad \forall (s,a)\in \mathcal S \times \mathcal A.
\]
In addition, we define $f_1:\RR^{m}\times \RR^{d}\rightarrow \RR$ through $f_1 (\xi_1,x_1) =  \min_{a^\prime \in \mathcal A_{1}(\xi_{1})} x_{1}(s,a^\prime)$, where we use again the convention that $s=\xi_1$. With this notation, the Bellman recursions~\eqref{eq:Q-bellman} can be written compactly as
\[
    x_t = \mathbb E_t\big[f_{t+1}(\xi_{t+1}, x_{t+1})\big] \quad \forall t \in [T-1],
\]
with terminal condition $(x_T)_{(s,a)}=c_T(s)$ for every $s\in\cS$ and $a\in\cA$. The optimal value~\eqref{eq:min-control} of problem~\eqref{eq:control} thus coincides with $\EE[f_1(\xi_1,x_1)]$, and computing this value is equivalent to solving problem~\eqref{problem:MCCE}.

\paragraph{Distributionally Robust Policy Learning for Offline Contextual Bandits}
\label{sec:application_dro}
    The contextual bandit problem is a sequential decision problem under uncertainty. In each round, an agent first observes a context $c\in\mathcal C$, then selects an action $a\in\mathcal A$ (metaphorically thought of as an arm of a slot machine) and finally incurs a random cost $y_a\in\mathcal Y$ associated with the chosen action. The distribution~$\mathbb Q_{y_a|c}$ of~$y_a$ conditional on~$c$ is unknown to the agent. However, the pairs $(c,y)$ with $y=(y_a)_{a\in\mathcal A}$ are known to be independent and identically distributed across rounds. The crux of multi-armed bandit problems is that, in each round, the agent observes only the cost of the chosen action but not those of the other actions. Thus, each action $a\in\mathcal A$ must be selected repeatedly for each given~$c$ in order to learn the distribution $\mathbb Q_{y_a|c}$. 
    
    Consider now a parametric family of policies~$\pi_\theta$, $\theta\in\Theta$, which map any context~$c\in\mathcal C$ to a probability distribution over the actions~$a\in\mathcal A$. We will henceforth assume that~$\mathcal A$ is finite. Thus, given a context~$c$, the policy~$\pi_\theta$ selects action~$a$ with probability~$\pi_\theta(a|c)$. The agent aims to find~$\theta\in\Theta$ such that $\pi_\theta$ minimizes the average cost. If the joint distribution~$\mathbb Q_{(c,y)}$ of~$c$ and~$y$ was known, this could be achieved by solving
    \begin{align*}
        \min_{\theta\in\Theta} \int_{\mathcal C\times\mathcal Y} \sum_{a\in\mathcal A} y_a \, \pi_\theta(a|c) \, {\rm d}\mathbb Q_{(c,y)}(c,y) = \min_{\theta\in\Theta} \int_{\mathcal C} \sum_{a\in\mathcal A} \pi_\theta(a|c) \int_{\mathcal Y} y_a \,{\rm d}\mathbb Q_{y|c}(y) \, {\rm d}\mathbb Q_{c}(c).
    \end{align*}
    In off-policy learning, however, $\mathbb Q_{(c,y)}$ is unknown, and the agent has only access to context, action and cost data collected under a fixed behavioral policy. Any estimator~$\PP_{(c,y)}$ constructed from this data invariably differs from~$\mathbb Q_{(c,y)}$. To combat the detrimental effects of estimation errors, it has been proposed to solve the following distributionally robust variant of the off-policy learning problem~\citep{shen2024wasserstein}.
    \begin{align}
        \label{eq:dro-offline-bandits}
        \min_{\theta\in\Theta} \sup_{\substack{\mathbb Q_c\in\mathcal P(\mathcal C): \\ d(\mathbb Q_c,\PP_c)\leq r_c}} \int_{\mathcal C} \sum_{a\in\mathcal A} \pi_\theta(a|c) \sup_{\substack{\mathbb Q_{y|c}\in\mathcal P(\mathcal Y): \\ d(\mathbb Q_ {y|c},\PP_{y|c})\leq r_y}} \int_{\mathcal Y} y_a \,{\rm d}\mathbb Q_{y|c}(y) \, {\rm d}\mathbb Q_{c}(c)
    \end{align}
    Here, we use the 2-Wasserstein metric $d(\cdot,\cdot)$ to measure distances between probability distributions. Note that the distributionally robust off-policy learning problem replaces the expectations with respect to the unknown distributions $\mathbb Q_c$ and $\mathbb Q_{y|c}$ by worst-case expectations over all distributions in an $r_c$-neighborhood around~$\PP_c$ and an $r_y$-neighborhood around~$\PP_{y|c}$, respectively. By~\cite[Proposition~6.20]{kuhn2024distributionally}, the maximization problem over~$\mathbb Q_{y|c}$ in~\eqref{eq:dro-offline-bandits} can be solved in closed form, and its supremum equals $\int_{\mathcal Y} y_a\, {\rm d} \PP_{y|c}(y) +r_y$. In addition, by~\cite[Theorem~4.18]{kuhn2024distributionally}, the maximization problem over~$\mathbb Q_c$ can be dualized and thus be converted to a minimization problem over a scalar dual decision variable~$\lambda$. Hence, problem~\eqref{eq:dro-offline-bandits} can be recast as
    \begin{align}
        \label{eq:dro-offline-bandits-dual}
        \min_{\theta\in\Theta,\lambda\geq 0} \; \int_{\mathcal C} \sup_{u\in\mathcal C} \left(\sum_{a\in\mathcal A} \pi_\theta(a|u) \int_{\mathcal Y} y_a\, {\rm d} \PP_{y|c=u}(y) +r_y + r_c^2\lambda -\lambda\|u-c'\|_2^2 \right) {\rm d}\PP_{c}(c').
    \end{align}
    Note that~\eqref{eq:dro-offline-bandits-dual} involves a maximization problem over all contexts~$u\in\mathcal C$, which may be difficult to solve because the expected reward of action~$a$ conditional on~$c=u$ is a measurable function of~$u$ and because maximizing a generic measurable function is hard. As a remedy, the supremum over all $u\in\mathcal C$ can be approximated by a softmax function involving the uniform distribution~$\PP_u$ on~$\mathcal C$. Thus, \eqref{eq:dro-offline-bandits-dual} simplifies to
    \begin{equation}
    \label{eq:dro-offline-bandits-dual-softmax}
    \begin{aligned}
        & \min_{\theta\in\Theta,\lambda\geq 0} \; \int_{\mathcal C} \frac{1}{\mu} \log \Bigg(\int_{\mathcal C} \exp\Bigg( \mu \Bigg(\sum_{a\in\mathcal A} \pi_\theta(a|u) \int_{\mathcal Y} y_a\, {\rm d} \PP_{y|c=u}(y) \\
        & \hspace{7cm} +r_y +r_c^2\lambda -\lambda\|u-c'\|^2_2 \Bigg) \Bigg) {\rm d} \PP_{u}(u) \Bigg) {\rm d}\PP_{c}(c'),
    \end{aligned}
    \end{equation}
    where the parameter~$\mu>0$ controls the smoothness of the softmax function. For further details see~\citep{shen2024wasserstein}. Problem~\eqref{eq:dro-offline-bandits-dual-softmax} is readily recognized as an instance of~\eqref{problem:MCCO} with~$T=3$, $x=(\theta,\lambda)$, $\mathcal X=\Theta\times \RR_+$, $\xi_1=c'$, $\xi_2=u$ and~$\xi_3=(c',u, y)$. The integrands are given by $f_1(\xi_1,x_1)= \frac{1}{\mu} \log(x_1)$, $f_2(\xi_2,x_2)= \exp(\mu x_2)$ and
    \[
        f_3(\xi_3,x) = \sum_{a\in\mathcal A} \pi_\theta(a|u)\, y_a +r_y +r_c^2\lambda -\lambda\|u-c'\|^2_2.
    \]
    Variants of the distributionally robust off-policy learning problem~\eqref{eq:dro-offline-bandits} use the 2-Wasserstein distance with a Kullback-Leibler regularization term \citep{azizian2023regularization} or the Sinkhorn divergence \citep{wang2021sinkhorn} instead of the plain 2-Wasserstein distance to measure the discrepancy of different distributions. These alternative formulations also give rise to instances of MCCO with $T=3$. For further details we refer to~\citep[Appendix~D]{wang2021sinkhorn}.

\paragraph{Nested Risk Measures}
\label{sec:application_nested_risk}
Consider a stream of random costs $\xi_t$, for $t \in [T]$, that are $p$-integrable for some exponent $p \geq 1$ ({\em i.e.}, $\mathbb{E}[\|\xi_t\|^p_2] < \infty$ for all $t \in [T]$). In this situation, it makes sense to assign the total cost $\sum_{t=1}^T \xi_t$ a risk that depends on the temporal structure of the underlying stochastic process. This can be done by using a nested dynamic risk measure $\rho = \rho_0 \circ \cdots \circ \rho_{T-1}$, where $\rho_t: \mathcal{L}^p_{t+1} \to \mathcal{L}^p_t$ is a conditional risk mapping and $\mathcal{L}^p_t$ represents the family of all $p$-integrable functions of the history $\xi_{[t]}$ of costs up to time $t$. The defining properties of a conditional risk mapping are translation invariance, convexity and monotonicity; see~\cite{ruszczynski2006conditional} for details. By exploiting these properties, the risk of the total cost can then be represented as
\begin{equation*}
    \rho\left[ {\textstyle\sum_{t=1}^T \xi_t }\right] = \rho_0\left[ \xi_1 + \rho_1\left[ \xi_2 + \cdots +\rho_{T-1}[\xi_T] \cdots\right] \right].  
\end{equation*}
A widely studied choice is to set $\rho_t$ to a conditional entropic risk measure, defined as
\[
    \rho_{t}(\xi_{t+1}) = \frac{1}{{\mu}_{t}} \log\left(\mathbb{E}_{t}[\exp(-{\mu}_{t} \xi_{t+1} )] \right),
\]
where $\mu_t > 0$ represents the risk aversion level at time $t$. Notably, the risk aversion level may vary over time. Such scenarios arise, for instance, in pension planning, where the goal is to assess the risk of a pension contract. In this context, tolerance for uncertainty in cash flows typically decreases over time as retirement approaches. It is now easy to see that evaluating $\exp(\mu_0 \rho(\sum_{t=1}^T \xi_t))$ reduces to an instance of~\eqref{problem:MCCE}, with $f_t(\xi_t,x_t)=\exp(-\mu_{t-1} (\xi_t + \mu_{t}^{-1} \log(x_t)))$ for $t\in[T-1]$ and $f_T(\xi_T,x)=\exp(-\mu_{T-1} \xi_T)$. Similarly, optimization problems involving nested risk measures can often be reformulated as instances of~\eqref{problem:MCCO}. Such problems arise, for example, in Bayesian distributionally robust optimization \citep{shapiro2023bayesian} and in the analysis of Bayesian risk Markov decision processes \citep{lin2022bayesian}, which emerge in betting or control problems \citep{lin2022bayesian, wang2024bayesian}. Solving Bayesian risk Markov decision processes presents significant challenges due to the complexity of computing nested risk functionals \citep{lin2022bayesian}. To address these difficulties, approximation techniques are commonly used. However, traditional estimation and optimization methods can be computationally expensive. This paper proposes cost-efficient methods to mitigate this issue and resolves an open question raised in~\cite{lin2022bayesian}.


\section{Numerical Solution of MCCE Problems}
\label{sec:mcco_estimation}

We begin by presenting the fundamental assumptions that underpin this paper.
\begin{assumption}[Basic Regularity Conditions] 
\label{assump:general}
\leavevmode\begin{enumerate}[label=(\roman*)]
    \item The integrand $f_t(\xi_t,x_t)$ is Borel-measurable in~$\xi_t$, and $\EE[\|f_t(\xi_t,0)\|_2]<\infty$ for every $t\in[T]$. 
    \item The feasible set $\mathcal{X}$ is convex and compact. Thus, it has a finite diameter~$D_\mathcal{X}$.
\end{enumerate}
\end{assumption}

Neither $F$ nor the underlying integrands $f_t, t\in[T]$, are assumed to display any convexity properties. However, we assume these functions to be Lipschitz continuous, which is common practice in sample complexity analysis (see {\em e.g.}, \citep{shapiro2005complexity}). Sometimes, we additionally assume the integrands to be smooth. 

\begin{assumption}[Lipschitz Continuity] 
\label{assump:lipschitz}
   $f_t(\xi_t,x_t)$ is $L_t$-Lipschitz continuous in $x_t$ uniformly for all $\xi_t$, i.e.,
    $$ \|f_t(\xi_t,x_t) -  f_t(\xi_t,y_t)\|_2 \leq L_t\|x_t-y_t\|_2 \quad \forall x_t,y_t \in \RR^{d_t},\; \forall \xi_t\in\RR^{m_t}.$$
\end{assumption}

\begin{assumption}[Smoothness] 
\label{assump:smooth}
 $f_t(\xi_t,x_t)$ is $S_t$-smooth in $x_t$ for across all $\xi_{t}$, i.e., 
  $$\|\nabla_{x_t} f_t(\xi_t,x_t) - \nabla_{x_t} f_t(\xi_t,y_t) \|_2\leq S_t\|x_t-y_t\|_2\quad \forall x_t,y_t\in \RR^{d_t},\; \forall \xi_t\in\RR^{m_t}.$$
\end{assumption}
Note that if Assumption~\ref{assump:smooth} holds, then the fundamental theorem of calculus implies that 
\begin{align}\label{eq:inequality_for_smooth_funs}
    \left\|f_t(\xi_t,x_t)-f_t(\xi_t,y_t)-\nabla_{x_t} f_t(\xi_t,y_t)^\top(x_t-y_t) \right\|_2\leq \frac{S_{t}}{2}\|x_t-y_t\|_2^2\quad \forall x_t,y_t\in\RR^{d_t},\;\forall \xi_t\in\RR^{m_t}.
\end{align}

Assumptions~\ref{assump:general} and~\ref{assump:lipschitz} ensure that
\eqref{problem:MCCE} and~\eqref{problem:MCCO} are well-defined. Indeed, they imply that all conditional expectations in the definition of~$F(x)$ exist and are finite, which is an immediate consequence of Lemma~\ref{lem:well-defined} in the appendix. Thus, problem~\eqref{problem:MCCE} is well-defined. In addition, Assumptions~\ref{assump:general} and~\ref{assump:lipschitz} imply that $F(x)$ is Lipschitz continuous with Lipschitz constant~$\prod_{t=1}^T L_t$. This is an immediate corollary of Lemma~\ref{lem:lipschitz} in the appendix. By Assumption~\ref{assump:general}\,(ii), we can thus invoke Weierstrass' extreme value theorem to conclude that the minimum in~\eqref{problem:MCCO} is attained. Hence, problem~\eqref{problem:MCCO} is well-defined.

Problem \eqref{problem:MCCE} is challenging due to the inherent difficulty of high-dimensional integration. Consequently, attempting to compute~$F(x)$ {\em exactly} is overly ambitious. In this section we thus pursue the more modest but achievable goal of {\em approximating}~$F(x)$ with an estimator~$\hat F(x)$ constructed from samples. Specifically, we will construct an SAA estimator in Section~\ref{sec:saa} and an MLMC estimator in Section~\ref{sec:rtmlmc}. We will show that both estimators converge to~$F(x)$ in mean squared error and in probability, uniformly over all~$x \in \cX$. Moreover, we will demonstrate that the MLMC estimator outperforms the SAA estimator in terms of both mean squared error-based and high-probability scenario complexity.

\subsection{SAA Estimator}
\label{sec:saa}
Perhaps the most straightforward estimator for $F(x)$ is the SAA estimator defined below.

\begin{defn}[SAA Estimator]
\label{def:MCCO_sample_average_approximation}
We construct a forest of~$n_1$ scenario trees as follows. The root nodes of the trees are indexed by $i_1\in[n_1]$. The children of $i_1$ are indexed by $\itwo=(i_1,i_2)$ for $i_2\in[n_2]$, and the children of $\itwo$ are indexed by $\ithree=(\itwo,i_3)$ for $i_3\in[n_3]$, etc. Thus, the multi-index $\itt=(i_1,i_2,\ldots,i_t)$ of a stage-$t$ node encodes the tree that accommodates the node $\itt$ as well as the path from the root node in that tree to the node $\itt$; see Figure~\ref{fig:scenario-tree}. The SAA estimator for $F(x)$ is then given by
\begin{subequations}
\begin{equation}\label{eq:saa_estimator}
    \hat{F}(x)\coloneqq \frac{1}{n_1}\sum_{i_1=1}^{n_1} \hat F_{i_1}(x),
\end{equation}
where the term corresponding to the $i_1$-th scenario tree is set to
\begin{equation}
    \label{eq:sub-saa_estimator}
    \hat F_{i_1}(x)\coloneqq f_1\left(\xi_1^{i_1},\frac{1}{n_2}\sum_{i_{2}=1}^{n_2} f_2\left(\xi_2^{\itwo},\ldots, \frac{1}{n_T}\sum_{i_T=1}^{n_T} f_T\left({\xi_T^{\iT}},x\right)\right)\ldots\right)\quad\forall i_1\in[n_1],
\end{equation}
\end{subequations}
and where the sample $\xi_t^{\itt}$ associated with node $\itt$ is constructed by the forward recursion
\[
    \xi_t^{\itt} \, \stackrel{\text i.i.d.}{\sim} \, \PP_{\xi_t}\left(\cdot \left|\xi_1=\xi_1^{i_1} ,\ldots, \xi_{t-1}=\xi_{t-1}^{i_{[t-1]}} \right. \right) \quad \forall i_1\in[n_1],\ldots,\forall i_t\in[n_t],~\forall t\in[T].
\]
\end{defn}
We introduce some key notations used throughout the paper. Given any~$t\in\mathbb N_0$ with $t+1\in[T]$ and any Borel-measurable function~$\psi(\xi_{[t+1]})$, we henceforth use $\hat\EE_t[\psi(\xi_{[t+1]})]$ as shorthand for the sample average of~$\psi(\xi_{[t+1]})$ over~$n_{t+1}$ samples $\xi_{t+1}^{i}$, $i\in[n_{t+1}]$, drawn independently from the marginal distribution of~$\xi_1$ (if $t=0$) or the conditional distribution of $\xi_{t+1}$ given $\xi_{[t]}$ (if $t>0$). That is, we set 
\begin{equation}\label{eq:expected_psi}
    \hat\EE_t[\psi(\xi_{[t+1]})] \coloneqq \frac{1}{n_{t+1}} \sum_{i=1}^{n_{t+1}} \psi(\xi_{[t]},\xi_{t+1}^{i}).
\end{equation}
Below, we sometimes use $\hat\EE[\cdot]$ as shorthand for $\hat\EE_0[\cdot]$.
Using these conventions, the SAA estimator of Definition~\ref{def:MCCO_sample_average_approximation} can be recast more compactly as
\[
    \hat F(x) = \hat \EE \Big[f_1\Big(\xi_1, \hat \EE_{1} \big[f_2\big(\xi_2,\ldots \hat \EE_{T-1} \left[f_T\left(\xi_T,x\right)\right] \ldots\big) \big]\Big)\Big].
\]

\begin{figure}[htb]
    \centering
\begin{tikzpicture}[->,>={Stealth[round,sep]},auto, level distance=1.1cm,
level 1/.style={sibling distance=5cm},
level 2/.style={sibling distance=1.8cm},
level 3/.style={sibling distance=0.5cm}
]

\node {$\xi_1^{i_1}$}
  child {
    node (a) {$\xi_2^{(i_1,1)}$}
      child {node (a1) {$\xi_3^{(i_1,1,1)}$}} 
      child {node (a2) {$\xi_3^{(i_1,1,i_3)}$}}
      child {node (a3) {$\xi_3^{(i_1,1,n_3)}$}}
  }
  child {
    node (b) {$\xi_2^{(i_1,i_2)}$}
      child {node (b1) {$\xi_3^{(i_1,i_2,1)}$}}
      child {node (b2) {$\xi_3^{(i_1,i_2,i_3)}$}}
      child {node (b3) {$\xi_3^{(i_1,i_2,n_3)}$}}
  }
  child {
    node (c) {$\xi_2^{(i_1,n_2)}$}
      child {node (c1) {$\xi_3^{(i_1,n_2,1)}$}}
      child {node (c2) {$\xi_3^{(i_1,n_2,i_3)}$}}
      child {node (c3) {$\xi_3^{(i_1,n_2,n_3)}$}}
  };

\node at ($(a)!.5!(b)$) {\ldots};
\node at ($(b)!.5!(c)$) {\ldots};

\node at ($(a1)!.5!(a2)$) {\ldots};
\node at ($(a2)!.5!(a3)$) {\ldots};

\node at ($(b1)!.5!(b2)$) {\ldots};
\node at ($(b2)!.5!(b3)$) {\ldots};

\node at ($(c1)!.5!(c2)$) {\ldots};
\node at ($(c2)!.5!(c3)$) {\ldots};

\end{tikzpicture}

\caption{Visualization of the $i_1$-th scenario tree underlying the SAA estimator when $T=3$.}
\label{fig:scenario-tree}
\end{figure}
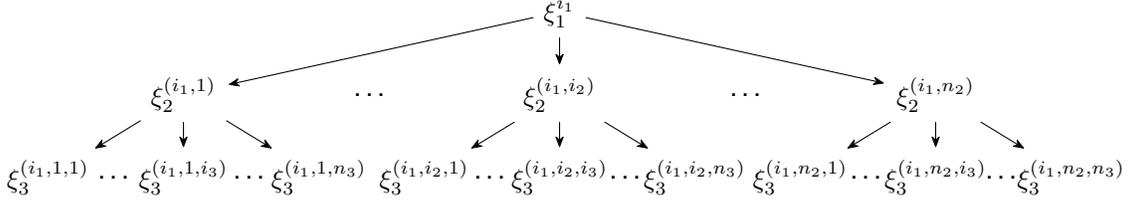

Note that the SAA estimator of Definition~\ref{def:MCCO_sample_average_approximation} requires $\CC(\hat F(x)) = \prod_{t=1}^T n_t$ scenarios. In the following we will prove that, as the sample sizes $n_t$, $t\in[T]$, tend to infinity, the estimator~$\hat{F}(x)$ converges in mean squared error and in probability to~$F(x)$ uniformly across all $x\in\cX$. To this end, we first rewrite the mean squared error of~$\hat{F}(x)$ with respect to~$F(x)$ as the sum of the squared bias and the variance of~$\hat F(x)$. 
\begin{align}\label{eq:mse_decomposition}
    \text{MSE}(\hat{F}(x))=\EE\Big[ \big(\hat{F}(x)-F(x)\big)^2\Big] =\Big(\EE\big[\hat{F}(x) \big] -F(x)\Big)^2 + \VV\big(\hat{F}(x) \big)
\end{align}
This formula shows that, for any fixed~$\epsilon>0$, we can drive the mean squared error of $\hat{F}(x)$ below $\epsilon^2$ by ensuring that $|\EE [\hat{F}(x)] -F(x)|\leq \epsilon/\sqrt{2}$ and $\VV(\hat{F}(x))\leq \epsilon^2/2$. In the following, we construct bounds on the bias and the variance of the SAA estimator and analyze their dependence on the sample sizes~$n_t$, $t\in[T]$. These bounds are obtained under the assumption that all integrands have finite variance.

\begin{assumption}\label{assump:finite_variance}
    Each integrand $f_t(\xi_t,x_t)$ has a finite conditional variance in the sense that $$\sigma_t^2\coloneqq\sup_{x_t\in\RR^{d_t}}\esssup \; \EE_{t-1} \left[ \big\|f_t(\xi_t,x_t) - \EE_{t-1}[f_t(\xi_t,x_t)] \big\|^2_2 \right]<\infty\quad\forall t\in[T].$$
\end{assumption}

We are now ready to derive an upper bound on the bias of the SAA estimator. This upper bound is expressed in terms of product Lipschitz constants of the form $L_{[t]}\coloneqq\prod_{s=1}^t L_s$ for $t\in[T]$ and $L_{[0]}=1$.

\begin{lemma}
\label{lemma:bound_on_bias_saa}
If Assumptions~\ref{assump:general}, \ref{assump:lipschitz} and \ref{assump:finite_variance} hold, then the bias of the SAA estimator satisfies 
\begin{align*}
    \left|  \EE \big[ \hat{F}(x)\big] - F(x) \right| \leq \begin{cases}
    \sum_{t=1}^{T-1} \frac{L_{[t]}\sigma_{t+1}}{\sqrt{n_{t+1}}} \; &\text{if Assumption~\ref{assump:smooth} does not hold}, \\ 
    \sum_{t=1}^{T-1}  \frac{L_{[t-1]}S_t\sigma_{t+1}^2}{2n_{t+1}} \; &\text{if Assumption~\ref{assump:smooth} holds}.
    \end{cases}
    \end{align*}
\end{lemma}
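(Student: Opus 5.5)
The plan is a telescoping (hybrid) argument over the stages. I will replace the true conditional expectations by sample averages one stage at a time and bound the bias introduced at each stage.

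For $t\in\{1,\ldots,T\}$, define the hybrid quantities $G_t$ as nested expressions in which the first $t-1$ conditional expectations are sample averages and the remaining ones are exact. Concretely, introduce the true inner values $X_t(\xi_{[t]})\coloneqq \EE_t[f_{t+1}(\xi_{t+1},X_{t+1})]$ with $X_T=x$. Then set
\[
G_t\coloneqq \EE\Big[f_1\Big(\xi_1,\hat\EE_1\big[f_2\big(\xi_2,\ldots\hat\EE_{t-1}[f_t(\xi_t,X_t)]\ldots\big)\big]\Big)\Big],
\]
where the expectation is over the tree samples. Hence $G_1=F(x)$ and $G_T=\EE[\hat F(x)]$, since $\hat\EE_{T-1}[f_T(\xi_T,x)]$ uses $X_T=x$. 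Because each $\hat\EE_{s-1}$ has the same conditional expectation as $\EE_{s-1}$ given its parent node, the unbiased outer average $\hat\EE_0$ can be dropped in expectation. The bias then telescopes as $\EE[\hat F(x)]-F(x)=\sum_{t=1}^{T-1}(G_{t+1}-G_t)$, and it suffices to bound each difference.

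The difference $G_{t+1}-G_t$ involves two expressions that agree except at depth $t$. In $G_{t+1}$ the argument of $f_t$ is the sample average $\hat X_t\coloneqq\hat\EE_t[f_{t+1}(\xi_{t+1},X_{t+1})]$, while in $G_t$ it is $X_t=\EE_t[f_{t+1}(\xi_{t+1},X_{t+1})]$. Note that $\hat X_t$ is an average of $n_{t+1}$ conditionally i.i.d.\ terms with conditional mean $X_t$. By Assumption~\ref{assump:finite_variance}, it satisfies $\EE_t\|\hat X_t-X_t\|_2^2\le \sigma_{t+1}^2/n_{t+1}$.

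In the nonsmooth case, I propagate the error outward through $f_t,\hat\EE_{t-1},f_{t-1},\ldots,f_1$. Each $f_s$ is $L_s$-Lipschitz and averages are nonexpansive (triangle inequality), so $|G_{t+1}-G_t|\le L_{[t]}\,\EE\|\hat X_t-X_t\|_2\le L_{[t]}\sigma_{t+1}/\sqrt{n_{t+1}}$ by Jensen.

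In the smooth case, I first expand $f_t$ around $X_t$ using \eqref{eq:inequality_for_smooth_funs}:
\[
f_t(\xi_t,\hat X_t)=f_t(\xi_t,X_t)+\nabla_{x_t}f_t(\xi_t,X_t)^\top(\hat X_t-X_t)+R_t,\qquad \|R_t\|_2\le \tfrac{S_t}{2}\|\hat X_t-X_t\|_2^2 .
\]
Conditional on $\xi_{[t]}$, the linear term has mean zero, but it does not vanish before the outer nonlinear maps are applied. This is the main obstacle. I will resolve it by a different telescoping order. I define the hybrids so that at step $t$ the levels outside depth $t$ use \emph{exact} conditional expectations, and levels inside use sample averages (replacing from the inside out), that is, telescoping over $H_t$ where depths $\ge t$ are sampled. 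Then $H_t-H_{t+1}$ differs only at depth $t$, with exact $\EE_{t-1},\ldots,\EE_0$ applied outside. The argument of $f_t$ is then $\hat X_t=\hat\EE_t[f_{t+1}(\xi_{t+1},\tilde X_{t+1})]$ with sampled inner values $\tilde X_{t+1}$, versus $\bar X_t=\EE_t[f_{t+1}(\xi_{t+1},\tilde X_{t+1})]$ with the same inner structure averaged. The variance bound still gives $\sigma_{t+1}^2/n_{t+1}$, since $\sigma_{t+1}$ is a supremum over inputs; I only need the inner subtrees under distinct children to be conditionally i.i.d. Applying the exact $\EE_{t-1}$ to the expansion kills the linear term. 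The remainder contributes $\tfrac{S_t}{2}\sigma_{t+1}^2/n_{t+1}$ at the level of $\EE_{t-1}[f_t(\cdot)]$, and Lipschitz propagation through $f_{t-1},\ldots,f_1$ multiplies it by $L_{[t-1]}$. The nonsmooth bound holds with this ordering as well. Summing over $t\in[T-1]$ gives both claimed bounds.
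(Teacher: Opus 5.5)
Your final argument is the paper's proof. The hybrids with exact conditional expectations on the outer levels and sample averages on the inner levels are exactly the paper's $F_1^t(\hat F_{t+1}^T(x))$. You then use Lipschitz propagation in the nonsmooth case and, in the smooth case, a second-order expansion around the $\cF_t$-measurable point $\EE_t[\hat X_t]$, whose linear term is annihilated by the exact $\EE_{t-1}$.

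The gap is the variance step, which you justify by saying that $\sigma_{t+1}$ is a supremum over inputs. In Assumption~\ref{assump:finite_variance} that supremum runs over \emph{deterministic} $x_{t+1}\in\mathbb{R}^{d_{t+1}}$; combined with the essential supremum over histories, it controls at most $\cF_t$-measurable inputs. The input you feed to $f_{t+1}$, however, is random given $\cF_t$. The sampled $\tilde X_{t+1}$ of your second ordering depends on $\xi_{t+1}$ and on all descendant samples, and even the exact $X_{t+1}(\xi_{[t+1]})$ of your first ordering depends on $\xi_{t+1}$. The conditional variance of $f_{t+1}(\xi_{t+1},\tilde X_{t+1})$ therefore also contains the fluctuation of the input, which $\sigma_{t+1}$ does not see. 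The conditional i.i.d.\ structure of the subtrees gives you the factor $1/n_{t+1}$, but not the bound $\sigma_{t+1}^2$ on each summand. The paper makes the identical move when it bounds the conditional variance of $f_{t+1}(\xi_{t+1},\hat F_{t+2}^T(x))$ by $\sigma_{t+1}^2$ ``by Assumption~\ref{assump:finite_variance}''. So you have not departed from the paper, but the step is false, and the lemma does not hold under the stated hypotheses.

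Here is a counterexample. Take $T=3$, $\xi_1\equiv 0$, $\xi_2\sim\mathcal{N}(0,1)$, $\xi_3=\xi_2$, $f_3(\xi_3,x)=\xi_3$, $f_2(\xi_2,x_2)=x_2$ and $f_1(\xi_1,x_1)=\sqrt{1+x_1^2}$. All hypotheses hold with $L_1=L_2=S_1=1$, $S_2=S_3=0$ and $\sigma_1=\sigma_2=\sigma_3=0$, so both claimed bounds vanish. But $F(x)=1$, while $\hat F(x)=\frac{1}{n_1}\sum_{i_1}\sqrt{1+Y_{i_1}^2}$ with $Y_{i_1}=\frac{1}{n_2}\sum_{i_2}\xi_2^{(i_1,i_2)}\sim\mathcal{N}(0,1/n_2)$. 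Hence $\EE[\hat F(x)]>1$ by strict convexity; with $f_1(\xi_1,x_1)=|x_1|$ instead, the bias is $\sqrt{2/(\pi n_2)}$. In your depth-$1$ step the summands $f_2(\xi_2,\tilde X_2)=\xi_2$ have conditional variance $1$, not $\sigma_2^2=0$.

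A correct version needs a hypothesis that controls the conditional variance at the inputs that actually arise. One option is a uniform bound on $\EE_t\|f_{t+1}(\xi_{t+1},F_{t+2}^T(x))-F_{t+1}^T(x)\|_2^2$, which makes your first ordering work in the nonsmooth case, together with control of the propagated inner sampling noise in the smooth case. Under serial independence every $X_{t+1}$ is deterministic, so your \emph{first} ordering does prove the nonsmooth bound as stated. The smooth bound fails even then: with the same $f_t$ but $\xi_3\sim\mathcal{N}(0,1)$ independent of $\xi_2$, the claimed bound is $0$ and the bias is positive, of order $1/(n_2n_3)$. Your second-ordering argument repairs it only after replacing $\sigma_{t+1}$ by $\tilde\sigma_{t+1}$, defined by $\tilde\sigma_T=\sigma_T$ and $\tilde\sigma_{t+1}=\sigma_{t+1}+L_{t+1}\tilde\sigma_{t+2}/\sqrt{n_{t+2}}$.
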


Lemma~\ref{lemma:bound_on_bias_saa} shows that the bias of the SAA estimator decays faster with the sample sizes if the integrands are smooth ({\em i.e.}, if Assumption~\ref{assump:smooth} holds). If~$T=2$, then Lemma~\ref{lemma:bound_on_bias_saa} recovers the bounds of \citep[Lemma~3.1]{hu2020sample} for conditional stochastic optimization. Next, we bound the variance of the SAA estimator.

\begin{lemma}
\label{lemma:bound_on_variance_saa} 
If Assumptions~\ref{assump:general},~\ref{assump:lipschitz} and~\ref{assump:finite_variance} hold, then the variance of the SAA estimator satisfies
    \begin{align*}
        \VV(\hat{F}(x)) \leq \frac{1}{n_1}\Big(\sum_{t=1}^{T-1} \frac{L_{[t]} \sigma_{t+1}}{\sqrt{n_{t+1}}} + \sigma_1\Big)^2.
    \end{align*}
\end{lemma}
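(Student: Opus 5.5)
Since $\hat F(x)$ is the average of the $n_1$ i.i.d.\ terms $\hat F_{i_1}(x)$, we have $\VV(\hat F(x)) = \VV(\hat F_1(x))/n_1$. It therefore suffices to show $\sqrt{\VV(\hat F_1(x))}\le \sigma_1+\sum_{t=1}^{T-1} L_{[t]}\sigma_{t+1}/\sqrt{n_{t+1}}$. Because the variance is the smallest mean squared deviation from a constant, we may bound it by the second moment of $\hat F_1(x)-\EE[f_1(\xi_1,X_1)]$. Here $X_t\coloneqq \EE_t[f_{t+1}(\xi_{t+1},X_{t+1})]$ denotes the exact nested conditional expectations, with $X_T=x$ and $X_1$ depending on $\xi_1^{1}$. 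We then apply Minkowski's inequality in $L^2$ along a telescoping decomposition.

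Write $\hat X_t$ for the nested sample averages at a stage-$t$ node, so that $\hat F_1=f_1(\xi_1,\hat X_1)$ and $\hat X_t=\hat\EE_t[f_{t+1}(\xi_{t+1},\hat X_{t+1})]$ with $\hat X_T=x$. Split
\[
\hat F_1-\EE[f_1(\xi_1,X_1)] = \bigl(f_1(\xi_1,X_1)-\EE[f_1(\xi_1,X_1)]\bigr) + \bigl(f_1(\xi_1,\hat X_1)-f_1(\xi_1,X_1)\bigr).
\]
The first term has $L^2$ norm at most $\sigma_1$ by Assumption~\ref{assump:finite_variance}, applied with $t=1$ at the ($\cF_0$-trivial, deterministic-in-law) argument. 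Strictly, $X_1$ is random, so we need the conditional variance bound with a random argument; this holds by the essential-supremum definition after conditioning on $\xi_1$, or alternatively one can fold $X_1$'s dependence into the function. By Assumption~\ref{assump:lipschitz}, the second term is bounded by $L_1\|\hat X_1-X_1\|_2$.

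Next, recursively bound $e_t\coloneqq \bigl(\EE\|\hat X_t-X_t\|_2^2\bigr)^{1/2}$. Decompose
\[
\hat X_t-X_t=\bigl(\hat\EE_t[f_{t+1}(\xi_{t+1},X_{t+1})]-X_t\bigr)+\hat\EE_t\bigl[f_{t+1}(\xi_{t+1},\hat X_{t+1})-f_{t+1}(\xi_{t+1},X_{t+1})\bigr].
\]
Conditionally on $\xi_{[t]}$, the first term is an average of $n_{t+1}$ i.i.d.\ mean-zero vectors with conditional variance at most $\sigma_{t+1}^2$, so its $L^2$ norm is at most $\sigma_{t+1}/\sqrt{n_{t+1}}$. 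For the second term, apply Jensen's inequality (or the triangle inequality) to the average, then Lipschitz continuity; this bounds its $L^2$ norm by $L_{t+1}e_{t+1}$, since each child subtree is identically distributed. Hence $e_t\le \sigma_{t+1}/\sqrt{n_{t+1}}+L_{t+1}e_{t+1}$ with $e_T=0$. Unrolling gives $L_1e_1\le\sum_{t=1}^{T-1}L_{[t]}\sigma_{t+1}/\sqrt{n_{t+1}}$, and combining with the $\sigma_1$ term yields the claim.

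The main technical care lies in the conditional i.i.d.\ structure. The children $\xi_{t+1}^{(i_{[t]},i)}$ must be conditionally independent given the node history, with $X_{t+1}$ a fixed measurable function of that history, so that the cross terms vanish in the variance-of-average computation. We must also check that Assumption~\ref{assump:finite_variance} applies at the random $\cF_t$-measurable argument $X_{t+1}$. I would handle this by conditioning on $\xi_{[t+1]}$ and using the uniform (sup over $x_t$, ess sup over histories) form of $\sigma_{t+1}$.
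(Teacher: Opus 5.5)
Your recursion $e_t\le\sigma_{t+1}/\sqrt{n_{t+1}}+L_{t+1}e_{t+1}$ is the paper's argument read from the inside out. The paper telescopes $\hat F(x)-F(x)$ by replacing one exact conditional expectation with an empirical one at a time. It then uses the $L_{[t]}$-Lipschitz continuity of the empirical outer map $\hat F_1^t$ and specializes its bound to $n_1=1$. The constants coincide with yours, so that part is fine.

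\textbf{Gap.} The problem is the step you flagged yourself, and your proposed fix does not work. Your $X_{t+1}=\EE_{t+1}[f_{t+2}(\xi_{t+2},X_{t+2})]$ is $\cF_{t+1}$-measurable, not $\cF_t$-measurable, so it varies with $\xi_{t+1}$ itself.

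Assumption~\ref{assump:finite_variance} controls the conditional variance of $f_{t+1}(\xi_{t+1},x_{t+1})$ given $\cF_t$ only for deterministic $x_{t+1}$. After conditioning on $\xi_{[t]}$ it also covers $\cF_t$-measurable arguments, but nothing more. The conditional variance of $f_{t+1}(\xi_{t+1},X_{t+1})$ given $\cF_t$ additionally contains the fluctuation of $X_{t+1}$ in $\xi_{t+1}$.

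Your proposed remedies do not reach this fluctuation:
\begin{itemize}
\item Conditioning on $\xi_{[t+1]}$ makes $f_{t+1}(\xi_{t+1},X_{t+1})$ deterministic, so it says nothing about its spread given $\xi_{[t]}$.
\item Likewise, conditioning on $\xi_1$ says nothing about $\VV(f_1(\xi_1,X_1))$.
\item ``Folding $X_1$ into the function'' produces an integrand to which the assumption does not apply.
\end{itemize}

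This is not a technicality. Take $T=2$, $f_1(\xi_1,x_1)=x_1$, $f_2(\xi_2,x)=\xi_2$, $\xi_1$ Gaussian with variance $s^2>0$, and $\xi_2=\xi_1+\eta$ with $\eta$ an independent centered Gaussian with variance $\tau^2$. Then $L_1=1$, $\sigma_1=0$ and $\sigma_2=\tau$, so the claimed bound is $\tau^2/(n_1n_2)$. Yet $\hat F(x)$ averages $n_1$ i.i.d.\ copies of $\xi_1+\bar\eta$, giving $\VV(\hat F(x))=(s^2+\tau^2/n_2)/n_1$, which exceeds the bound.

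The paper's proof makes exactly the same tacit move. It bounds $\EE[\|A_{t+1}-\EE_t[A_{t+1}]\|_2^2]$ by $\sigma_{t+1}^2$ with $A_{t+1}=f_{t+1}(\xi_{t+1},F_{t+2}^T(x))$, where the argument is again $\cF_{t+1}$-measurable. So the statement is only valid if $\sigma_t^2$ also bounds $\EE_{t-1}[\|f_t(\xi_t,z_t)-\EE_{t-1}[f_t(\xi_t,z_t)]\|_2^2]$ almost surely for the random arguments $z_t=F_{t+1}^T(x)$ that actually occur. You should state this strengthened reading explicitly. Under it, every other step of your proof goes through: the conditional i.i.d.\ averaging, the Lipschitz propagation with $e_T=0$, and the $1/n_1$ reduction.
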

\begin{rem}\label{remark:bound_on_variance_saa}
   The variance bound of Lemma~\ref{lemma:bound_on_variance_saa} can be strengthened when the integrands are smooth. However, the improved bound is cumbersome and does not decay faster with the sample sizes. To keep our analysis transparent, we therefore use the simpler bound of Lemma~\ref{lemma:bound_on_variance_saa} even if the integrands are smooth.
\end{rem}

Combining Lemmas~\ref{lemma:bound_on_bias_saa} and~\ref{lemma:bound_on_variance_saa} with~\eqref{eq:mse_decomposition} shows that the mean squared error of~$\hat F(x)$ satisfies
\begin{align*}
\text{MSE}(\hat{F}(x)) \leq 
\begin{cases}
      \big( \sum_{t=1}^{T-1} \frac{L_{[t]}\sigma_{t+1}}{\sqrt{n_{t+1}}} \big)^2 + \frac{1}{n_1}\big(\sum_{t=1}^{T-1} \frac{L_{[t]} \sigma_{t+1}}{\sqrt{n_{t+1}}} + \sigma_1\big)^2  &\text{if Assumption~\ref{assump:smooth} does not hold},\\ 
          \big( \sum_{t=1}^{T-1} \frac{L_{[t-1]}S_t \sigma_{t+1}^2}{2n_{t+1}}  \big)^2 + \frac{1}{n_1}\big(\sum_{t=1}^{T-1} \frac{L_{[t]} \sigma_{t+1}}{\sqrt{n_{t+1}}} + \sigma_1\big)^2  &\text{if Assumption~\ref{assump:smooth} holds}.
\end{cases} 
\end{align*}
We define the mean squared error-based scenario complexity of any estimator for~$F(x)$ as the total number of scenarios needed in order to ensure a root mean squared error of at most~$\epsilon$ uniformly across all $x\in \mathcal{X}$. The above bound enables us to characterize the mean squared error-based scenario complexity of the SAA estimator. To this end, recall that the construction of~$\hat F(x)$ requires a total of~$\prod_{t=1}^{T}n_t$ scenarios. The next theorem shows that setting the sample sizes to $n_t = \mathcal{O}(1/\epsilon^2)$ for nonsmooth integrands and $n_t = \mathcal{O}(1/\epsilon)$ for smooth integrands, for all $t = 2, \ldots, T$, ensures that the bias is bounded by~$\epsilon/\sqrt{2}$. Moreover, choosing $n_1 = \mathcal{O}(1/\epsilon^2)$ guarantees that the standard deviation is bounded by~$\epsilon/\sqrt{2}$. This implies that the mean squared error-based scenario complexity of the SAA estimator is at most~$\mathcal{O}(\epsilon^{-2T})$ for nonsmooth integrands and~$\mathcal{O}(\epsilon^{-(T+1)})$ for smooth integrands. The proof of this theorem is straightforward and therefore omitted. 
\begin{theorem}[Mean Squared Error-Based Scenario Complexity of the SAA Estimator]
\label{thm:saa_sample_complexity_mse} 
Suppose that Assumptions~\ref{assump:general}, \ref{assump:lipschitz} and~\ref{assump:finite_variance} hold. Assume also  that $n_1=\lceil1+2\sqrt{2}\sigma_1/\epsilon +2\sigma_1^2/\epsilon^2\rceil$ and that
\begin{align*}
    n_{t} = \begin{cases} \Big\lceil\Big(\frac{\sqrt{2}L_{[t-1]}\sigma_{t}(T-1)}{\epsilon}\Big)^2 \Big\rceil & \text{if Assumption~\ref{assump:smooth} does not hold,}\\[1ex]
    \Big\lceil\frac{\sqrt{2}L_{[t-2]}S_{t-1}\sigma_{t}^2(T-1)}{2\epsilon}\Big\rceil &\text{if Assumption~\ref{assump:smooth} holds,}
    \end{cases}
\end{align*}
for all $t=2,\ldots,T$. Then, the SAA estimator achieves a root mean squared error of at most $\epsilon$ uniformly across all~$x\in\mathcal X$. Thus, the mean squared error-based scenario complexity of the SAA estimator amounts at most to $\cO(\epsilon^{-2T})$ if Assumption~\ref{assump:smooth} does not hold and to $\cO(\epsilon^{-(T+1)})$ if Assumption~\ref{assump:smooth} holds.
\end{theorem}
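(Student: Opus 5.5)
The plan is to plug the prescribed sample sizes into the bias bound of Lemma~\ref{lemma:bound_on_bias_saa} and the variance bound of Lemma~\ref{lemma:bound_on_variance_saa}. We then combine them through the decomposition~\eqref{eq:mse_decomposition}, and finally count scenarios via $\CC(\hat F(x))=\prod_{t=1}^T n_t$. All bounds in these lemmas are independent of $x$, because $\sigma_t$ is defined as a supremum over $x_t$ and the constants $L_t,S_t$ are uniform. Hence whatever we prove holds uniformly over $x\in\cX$.

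\textbf{Bias.} In the nonsmooth case, $n_{t+1}\ge (\sqrt2 L_{[t]}\sigma_{t+1}(T-1)/\epsilon)^2$ gives $L_{[t]}\sigma_{t+1}/\sqrt{n_{t+1}}\le \epsilon/(\sqrt2(T-1))$ for each $t\in[T-1]$. Summing the $T-1$ terms bounds the bias by $\epsilon/\sqrt2$. In the smooth case, $n_{t+1}\ge \sqrt2 L_{[t-1]}S_t\sigma_{t+1}^2(T-1)/(2\epsilon)$ gives $L_{[t-1]}S_t\sigma_{t+1}^2/(2n_{t+1})\le \epsilon/(\sqrt2(T-1))$, and the same summation bounds the bias by $\epsilon/\sqrt2$.

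\textbf{Variance.} In the smooth case the $n_t$ no longer force $\sum_t L_{[t]}\sigma_{t+1}/\sqrt{n_{t+1}}$ to be small in terms of $\epsilon$; they only keep it bounded. So the variance step needs more care than the bias step, and I expect it to be the main obstacle.

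\begin{itemize}
\item In the nonsmooth case, the bias step already shows $A:=\sum_{t=1}^{T-1} L_{[t]}\sigma_{t+1}/\sqrt{n_{t+1}}\le \epsilon/\sqrt2$. Lemma~\ref{lemma:bound_on_variance_saa} then gives $\VV(\hat F(x))\le (\epsilon/\sqrt2+\sigma_1)^2/n_1$. We need this to be at most $\epsilon^2/2$, i.e.\ $n_1\ge (\epsilon/\sqrt2+\sigma_1)^2\cdot 2/\epsilon^2 = 1+2\sqrt2\sigma_1/\epsilon+2\sigma_1^2/\epsilon^2$. This is exactly the stated choice of $n_1$.
\item In the smooth case, $n_{t+1}=\Theta(1/\epsilon)$ still makes $A=\cO(\sqrt{\epsilon})$, so $A\le\epsilon/\sqrt2$ need not hold. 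The exact bound $\epsilon^2/2$ with the stated $n_1$ must therefore be argued differently. My first attempt is to check whether, for $\epsilon$ small enough, the prescribed $n_{t+1}$ satisfy $A\le \epsilon/\sqrt2$ anyway. That check fails in general, so I would instead show that $\VV(\hat F(x))\le (A+\sigma_1)^2/n_1\le \epsilon^2/2+\cO(\epsilon^{2.5})$.
\item If an exact bound is needed, I would inflate $n_1$ by a constant factor, e.g.\ replace $\sigma_1$ by $\sigma_1+A_{\max}$, where $A_{\max}$ bounds $A$ uniformly for $\epsilon\le\bar\epsilon$. This leaves the order of $n_1$ unchanged at $\cO(\epsilon^{-2})$. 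Since the asymptotic claim is what matters, this suffices for the complexity statement.
\end{itemize}

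\textbf{Scenario count.} The mean squared error is then at most $\epsilon^2/2+\epsilon^2/2=\epsilon^2$. For the complexity count, $n_1=\cO(\epsilon^{-2})$ in both cases. In the nonsmooth case $n_t=\cO(\epsilon^{-2})$ for $t\ge2$, so $\prod_t n_t=\cO(\epsilon^{-2T})$. In the smooth case $n_t=\cO(\epsilon^{-1})$ for $t\ge2$, so $\prod_t n_t=\cO(\epsilon^{-2}\epsilon^{-(T-1)})=\cO(\epsilon^{-(T+1)})$. The ceilings only add lower-order terms.
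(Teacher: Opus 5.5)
Your plan is the argument the paper intends; it omits the proof as ``straightforward.'' You plug the $n_t$ into Lemmas~\ref{lemma:bound_on_bias_saa} and~\ref{lemma:bound_on_variance_saa}, combine them via~\eqref{eq:mse_decomposition}, and multiply the $n_t$. Your nonsmooth case is complete.

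Your worry about the smooth case is not a gap in your reasoning but a defect in the statement. The prescribed $n_1=\lceil 2(\sigma_1+\epsilon/\sqrt2)^2/\epsilon^2\rceil$ is calibrated for $A\le\epsilon/\sqrt2$. The smooth-case choice $n_t=\Theta(1/\epsilon)$ only gives $A=\mathcal O(\sqrt\epsilon)$, and the claimed RMSE bound then genuinely fails.

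For example, take $T=2$, $\xi_1\equiv 0$, $\xi_2\sim\mathcal N(0,1)$, $f_1(\xi_1,x_1)=\sin x_1$ and $f_2(\xi_2,x)=\xi_2$. Then $\sigma_1=0$ and $L_1=S_1=\sigma_2=1$, so $n_1=1$, $n_2=\lceil 1/(\sqrt2\,\epsilon)\rceil$ and $F\equiv 0$. The estimator $\hat F(x)=\sin(\bar\xi_2)$ with $\bar\xi_2\sim\mathcal N(0,1/n_2)$ is unbiased. Its mean squared error is $(1-e^{-2/n_2})/2\approx\sqrt2\,\epsilon$, which is far larger than $\epsilon^2$ for small $\epsilon$.

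So drop the intermediate bound $\epsilon^2/2+\mathcal O(\epsilon^{5/2})$. It silently requires $\sigma_1>0$, since its constant blows up as $\sigma_1\to0$, and it would not give RMSE at most $\epsilon$ anyway. Go straight to your repair, which is the right one. You also do not need $A_{\max}$ or a threshold $\bar\epsilon$: just replace $\sigma_1$ by $\sigma_1+A$ in the formula for $n_1$, with $A=\sum_{t=1}^{T-1}L_{[t]}\sigma_{t+1}/\sqrt{n_{t+1}}$ computed directly from the prescribed $n_t$. This gives $n_1\ge 2(\sigma_1+A)^2/\epsilon^2$, hence variance at most $\epsilon^2/2$. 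Since $A=\mathcal O(\sqrt\epsilon)$, you still have $n_1=\mathcal O(\epsilon^{-2})$, so the $\mathcal O(\epsilon^{-(T+1)})$ complexity claim survives with this corrected $n_1$.
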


Theorem~\ref{thm:saa_sample_complexity_mse} shows that although smoothness improves the exponent in the accuracy parameter~$\epsilon$, the scenario complexity of the SAA estimator still exhibits an exponential dependence on the horizon~$T$ in both the smooth and nonsmooth cases. This highlights a fundamental limitation of nested estimation. Achieving high accuracy becomes increasingly sample-intensive as~$T$ grows. It is important to note that these guarantees hold only \emph{in expectation}; that is, the root mean squared error of the SAA estimator is at most~$\epsilon$ \emph{on average} over repeated independent simulation runs. To obtain stronger guarantees that hold for a \emph{single} run with high confidence, we next turn to high-probability bounds. These results are based on concentration inequalities and therefore require an additional standard sub-Gaussianity assumption. 

\begin{defn}[Sub-Gaussian Random Vector] An $m$-dimensional random vector $\xi$ with finite mean is called sub-Gaussian with variance proxy $\zeta^2>0$ if
\begin{align*}
    \EE \big[\exp(\lambda^\top (\xi-\EE[\xi]))\big]\leq \exp\left(\|\lambda\|_2^2\,\zeta^2/2\right) \quad \forall \lambda \in\RR^{m}.
\end{align*}    
\end{defn}

\begin{assumption}[Sub-Gaussianity of $\hat F_{i_1}(x)$]
\label{assump:sub-gaussianity_saa} 
For each $i_1\in[n_1]$, the estimator $\hat F_{i_1}(x)$ corresponding to the $i_1$-th scenario tree in Definition~\ref{def:MCCO_sample_average_approximation} is sub-Gaussian with variance proxy $\zeta^2>0$ for any fixed~$x\in\cX$.
\end{assumption}

Assumption~\ref{assump:sub-gaussianity_saa} is hard to verify in general, but Remark~\ref{rem:sub-gaussian} below outlines conditions when it holds.

\begin{rem}
\label{rem:sub-gaussian}
Assumption~\ref{assump:sub-gaussianity_saa} holds under either of the following easily verifiable conditions.
\begin{enumerate}[label=(\roman*)]
    \item If $\xi_t$ is sub-Gaussian with variance proxy $\zeta_t^2$ and $f_t(\xi_t,x_t)$ is jointly $L_t$-Lipschitz continuous in $\xi_t$ and $x_t$ for all~$t\in[T]$, then Assumption~\ref{assump:finite_variance} is automatically satisfied \cite[Exercise~2.5]{wainwright2019high}. In addition, one readily verifies that $\hat F_{i_1}(x)$ is sub-Gaussian. Indeed, $\hat F_{i_1}(x)$ can be constructed by iteratively averaging and concatenating sub-Gaussian random variables and transforming them under Lipschitz continuous functions. By Lemma~\ref{lemma:sub-gaussianity-preserving-operations} in the appendix, all of these operations preserve sub-Gaussianity.

    \item If there exists a constant $M>0$ such that $|f_1(\xi_1,x_1)|\leq M$ for all $\xi_1\in\RR^{m_1}$ and $x_1\in\RR^{d_1}$, then \cite[Example~2.4]{wainwright2019high} readily implies that $\hat F_{i_1}(x)$ is sub-Gaussian with variance proxy $\zeta^2=4M^2$.
\end{enumerate}
\end{rem}

We now establish the uniform convergence of the SAA estimator to the true objective function.

\begin{lemma}\label{lemma:unif_conv_saa} Suppose that
 Assumptions~\ref{assump:general}, \ref{assump:lipschitz}, \ref{assump:finite_variance} and~\ref{assump:sub-gaussianity_saa} hold and that $\epsilon>0$ is a prescribed tolerance. Assume also that the stagewise sample sizes satisfy
\begin{align*}
    n_t = \begin{cases}
    \Big \lceil \Big( \frac{4L_{[t-1]}\sigma_{t} (T-1)}{\epsilon} \Big)^2 \Big \rceil&\text{if Assumption~\ref{assump:smooth} does not hold,} \\[1ex]
    \Big\lceil\frac{2L_{[t-2]}S_{t-1}\sigma_{t}^2 (T-1)}{\epsilon}\Big\rceil &\text{if Assumption~\ref{assump:smooth} holds,}
    \end{cases}
\end{align*}
for all $t=2,\ldots,T$. 
Then, we have
\begin{align*}
    \PP\left(\sup_{x\in \mathcal{X}} \; \lvert \hat{F}(x) - F(x) \rvert > \epsilon \right) \leq
    2\left\lceil 8L_{[T]} D_\mathcal{X}/\epsilon +1\right\rceil^{d} \exp\bigg(- \frac{n_1\epsilon^2}{32\zeta^2} \bigg).
    \end{align*}
\end{lemma}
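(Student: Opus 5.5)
We split the deviation into a bias part and a fluctuation part, and handle the supremum over $\mathcal X$ with an $\epsilon$-net combined with Lipschitz continuity. For every $x\in\mathcal X$,
\[
|\hat F(x)-F(x)| \le |\hat F(x)-\EE[\hat F(x)]| + |\EE[\hat F(x)]-F(x)|.
\]
With the stated choice of $n_t$, $t=2,\ldots,T$, Lemma~\ref{lemma:bound_on_bias_saa} makes each of the $T-1$ summands at most $\epsilon/(4(T-1))$. In the nonsmooth case, $L_{[t-1]}\sigma_t/\sqrt{n_t}\le \epsilon/(4(T-1))$. In the smooth case, $L_{[t-2]}S_{t-1}\sigma_t^2/(2n_t)\le \epsilon/(4(T-1))$. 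Hence the bias is at most $\epsilon/4$ uniformly in $x$. It therefore suffices to show that $\sup_x|\hat F(x)-\EE[\hat F(x)]|\le 3\epsilon/4$ with the claimed probability.

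\emph{Fixed $x$.} The trees indexed by $i_1\in[n_1]$ are built from independent root samples and independent conditional sampling, so the $\hat F_{i_1}(x)$ are i.i.d. Each is sub-Gaussian with proxy $\zeta^2$ by Assumption~\ref{assump:sub-gaussianity_saa}. Their average $\hat F(x)$ is then sub-Gaussian with proxy $\zeta^2/n_1$, and Hoeffding/Chernoff gives
\[
\PP\bigl(|\hat F(x)-\EE[\hat F(x)]|>\epsilon/4\bigr)\le 2\exp\bigl(-n_1\epsilon^2/(32\zeta^2)\bigr),
\]
since $(\epsilon/4)^2/(2\zeta^2/n_1)=n_1\epsilon^2/(32\zeta^2)$. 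This is exactly the exponent in the statement, which pins down the allocation $\epsilon/4$ for the fluctuation at net points.

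\emph{Lipschitz step.} Both $\hat F$ and $\EE[\hat F]$ are $L_{[T]}$-Lipschitz in $x$, deterministically and for every sample realization. This follows by composing the Lipschitz maps $f_t$ (Assumption~\ref{assump:lipschitz}) with averages, which do not increase Lipschitz constants; the argument mirrors Lemma~\ref{lem:lipschitz}, and Lipschitz continuity of $\EE[\hat F]$ follows by taking expectations. Take a $\delta$-net $\{x^k\}$ of $\mathcal X$ with $\delta=\epsilon/(8L_{[T]})$. Covering the diameter-$D_\mathcal X$ set by a grid gives at most $\lceil 8L_{[T]}D_\mathcal X/\epsilon+1\rceil^d$ points; we can use a cube of side $D_\mathcal X$ with spacing $\delta$ per coordinate, adjusting the norm constant if needed. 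We accept that the constant matches the statement's form.

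For any $x$ with nearest net point $x^k$,
\[
|\hat F(x)-\EE\hat F(x)|\le |\hat F(x^k)-\EE\hat F(x^k)| + 2L_{[T]}\delta \le \epsilon/4+\epsilon/4 .
\]
Together with the bias bound $\epsilon/4$, this yields a total deviation of at most $3\epsilon/4<\epsilon$. A union bound over the net points then gives the claimed probability.

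The main obstacle is the covering-number constant. The grid count in the Euclidean norm naturally carries a $\sqrt d$ factor. To match $\lceil 8L_{[T]}D_\mathcal X/\epsilon+1\rceil^d$, I would instead use the slack, since only $3\epsilon/4$ of the $\epsilon$ budget is used, or cite a standard covering bound of the form $(1+2D/\delta)^d$, choosing $\delta$ so that the expression takes this form. The probabilistic steps themselves are routine.
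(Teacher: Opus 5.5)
Your proof is the paper's argument with the steps reordered. The paper uses Lipschitz continuity of $\hat F$ and $F$ (rather than $\hat F$ and $\mathbb{E}[\hat F]$) to reduce to deviations exceeding $\epsilon/2$ at the net points. Only there does it subtract the bias bound $\epsilon/4$ from Lemma~\ref{lemma:bound_on_bias_saa}, before applying the same Hoeffding bound at level $\epsilon/4$. Your bias and concentration steps are correct.

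The covering step is not closed as written. The two remedies you mention are not alternatives; you need both.
\begin{itemize}
\item A grid of spacing $\delta$ in a cube of side $D_{\mathcal{X}}$ has Euclidean covering radius $\sqrt{d}\,\delta/2$, which exceeds $\delta$ once $d>4$. Even after spending your slack, the grid count scales like $(\sqrt{d}\,L_{[T]}D_{\mathcal{X}}/\epsilon)^d$.
\item Grid points also need not lie in $\mathcal{X}$, whereas Assumption~\ref{assump:sub-gaussianity_saa} is only imposed on $\mathcal{X}$.
\item Conversely, the volumetric bound $\lceil 2D_{\mathcal{X}}/\delta+1\rceil^d$ with your $\delta=\epsilon/(8L_{[T]})$ yields $\lceil 16L_{[T]}D_{\mathcal{X}}/\epsilon+1\rceil^d$. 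That is twice too large inside the bracket.
\end{itemize}

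The fix has two parts. First, take $\delta=\epsilon/(4L_{[T]})$. Your budget then becomes $\epsilon/4+2L_{[T]}\delta+\epsilon/4=\epsilon$, which is enough because the bad event is the strict inequality $\sup_{x}|\hat F(x)-F(x)|>\epsilon$. Second, use a net inside $\mathcal{X}$ of size at most $\lceil 2D_{\mathcal{X}}/\delta+1\rceil^d$. A maximal $\delta$-packing of $\mathcal{X}$ works, as do the centres of a cover of a radius-$D_{\mathcal{X}}$ ball projected onto the convex set $\mathcal{X}$ (Lemma~\ref{lemma:v-net}). This gives at most $\lceil 8L_{[T]}D_{\mathcal{X}}/\epsilon+1\rceil^d$ net points, and the union bound then yields the stated inequality.
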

Recall that~$\zeta^2$ is the variance proxy of~$\hat{F}_{i_1}(x)$, $d$ and~$D_\cX$ are the dimension and the diameter of~$\cX$, respectively, and~$L_{[T]}$ is the Lipschitz constant of~$F(x)$. Lemma~\ref{lemma:unif_conv_saa} establishes a high probability bound on the worst possible difference between the SAA estimator and the true objective function $F(x)$ uniformly across all $x\in\mathcal{X}$. It asserts that the probability that this difference exceeds any given threshold~$\epsilon>0$ decays exponentially fast with~$\epsilon$ and with the sample size~$n_1$. 

The uniform convergence result of Lemma~\ref{lemma:unif_conv_saa} not only characterizes the number of scenarios needed to approximately solve the estimation problem~\eqref{problem:MCCE}, which is the central focus of this section, but also inspires a straightforward method to approximately solve the optimization problem~\eqref{problem:MCCO}. Indeed, instead of minimizing the function~$F(x)$, whose values and gradients are difficult to compute exactly, one may minimize the SAA estimator~$\hat F(x)$, that is, one may solve an instance of~\eqref{prob:min_hatF(x)}. Below, we will use Lemma~\ref{lemma:unif_conv_saa} to show that the exact minimizers of the SAA problem~\eqref{prob:min_hatF(x)} yield near-optimal solutions to~\eqref{problem:MCCO} with high probability. The practicality of this approach ultimately depends on the tractability of~\eqref{prob:min_hatF(x)}.

\begin{rem}\label{rem:mcce_mcco_transition_saa}
    Global optimization of the SAA estimator~$\hat F(x)$ is tractable in settings where~$\hat{F}(x)$ is convex and admits an efficient separation oracle. A sufficient condition for convexity of both~$F(x)$ and~$\hat F(x)$ is that the integrand~$f_t(\xi_t, x_t)$ is convex in~$x_t$ for every~$t \in [T]$ and non-decreasing in~$x_t$ for every~$t \in [T-1]$. These conditions ensure that the global minimum of~\eqref{prob:min_hatF(x)} can be found efficiently. However, they are restrictive. 
\end{rem}

We now show that the minimizers of~\eqref{prob:min_hatF(x)} are near-optimal in~\eqref{problem:MCCO} with high probability.

\begin{corollary}
\label{cor:high_prob_bound_saa_epsilon_optimal} Assume that all conditions of Lemma~\ref{lemma:unif_conv_saa} hold for some~$\epsilon>0$, and let $x^*$ and $\hat{x}^*$ be minimizers of \eqref{problem:MCCO} and the SAA problem \eqref{prob:min_hatF(x)}, respectively. Then, we have
\begin{align*}
    \PP\big( F(\hat{x}^*) - F(x^*) > \epsilon \big) \leq 4\left\lceil 8L_{[T]} D_\mathcal{X}/\epsilon +1\right\rceil^{d} \exp\bigg(- \frac{n_1\epsilon^2}{128\zeta^2} \bigg).
\end{align*}   
\end{corollary}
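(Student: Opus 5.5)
The plan is to reduce the optimality gap of $\hat x^*$ to the uniform deviation $\sup_{x\in\cX}|\hat F(x)-F(x)|$ and then apply Lemma~\ref{lemma:unif_conv_saa} at the halved tolerance $\epsilon/2$. Since $\hat x^*$ minimizes $\hat F$ over $\cX$, we have $\hat F(\hat x^*)\le \hat F(x^*)$. This yields the standard decomposition
\begin{align*}
F(\hat x^*)-F(x^*) = \big(F(\hat x^*)-\hat F(\hat x^*)\big) + \big(\hat F(\hat x^*)-\hat F(x^*)\big) + \big(\hat F(x^*)-F(x^*)\big) \le 2\sup_{x\in\cX}|\hat F(x)-F(x)|.
\end{align*}
Consequently, the event $\{F(\hat x^*)-F(x^*)>\epsilon\}$ is contained in $\{\sup_{x\in\cX}|\hat F(x)-F(x)|>\epsilon/2\}$. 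One should also note that $\hat x^*$ exists, because $\hat F$ is Lipschitz in $x$ (a composition of Lipschitz maps) and $\cX$ is compact. Measurability of $\hat x^*$ can be sidestepped by working with the inclusion of events, or by using an outer probability.

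The next step is to invoke Lemma~\ref{lemma:unif_conv_saa} with tolerance $\epsilon/2$. This gives the bound $2\lceil 16L_{[T]}D_\cX/\epsilon+1\rceil^d\exp(-n_1\epsilon^2/(128\zeta^2))$. The exponent already matches the claim, but the covering factor does not: it carries $16$ instead of $8$, and its prefactor is $2$ rather than the claimed $4$. I would fix this using $\lceil 2a+1\rceil\le 2\lceil a+1\rceil$ for $a\ge 0$, which gives $\lceil 16L_{[T]}D_\cX/\epsilon+1\rceil^d\le 2^d\lceil 8L_{[T]}D_\cX/\epsilon+1\rceil^d$. For $d=1$ this yields exactly the constant $4$. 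For general $d$, however, it yields $2^{d+1}$, which means directly quoting the lemma at $\epsilon/2$ does not reproduce the stated constant.

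The main obstacle is therefore the bookkeeping of constants, together with the fact that the hypotheses of Lemma~\ref{lemma:unif_conv_saa} fix $n_t$ for $t\ge2$ at tolerance $\epsilon$ rather than $\epsilon/2$. To obtain exactly the stated constant, I would instead re-run the proof of Lemma~\ref{lemma:unif_conv_saa} internally rather than quote it. That proof presumably uses the following ingredients:
\begin{enumerate}
\item an $\epsilon/(8L_{[T]})$-net of $\cX$ of cardinality $\lceil 8L_{[T]}D_\cX/\epsilon+1\rceil^d$;
\item Lipschitz continuity of $F$ and $\hat F$ to control the discretization error by $\epsilon/4$ in total;
\item the bias bound of Lemma~\ref{lemma:bound_on_bias_saa}, which the chosen $n_t$ keep at most $\epsilon/4$;
\item Hoeffding's inequality for the sub-Gaussian average of the $\hat F_{i_1}(x)$, with deviation threshold $\epsilon/2$, giving $\exp(-n_1\epsilon^2/(32\zeta^2))$.
\end{enumerate}

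For the corollary I would keep the same net and the same $n_t$, so the bias and discretization pieces stay at $\epsilon/4$ each. I would then split the requirement $2\sup|\hat F-F|\le\epsilon$ differently: I would apply the concentration step separately to the two one-sided deviations, upper at $\hat x^*$ and lower at $x^*$. Each is handled at threshold $\epsilon/4$, which produces the exponent $n_1\epsilon^2/(128\zeta^2)$. A union bound over the two sides and over the net then gives the factor $4$ times the same covering number, matching the claim.
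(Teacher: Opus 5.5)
Your reduction is the same as the paper's proof. The paper also uses $\hat F(\hat x^*)\le \hat F(x^*)$, splits into the one-sided events $F(\hat x^*)-\hat F(\hat x^*)>\epsilon/2$ and $\hat F(x^*)-F(x^*)>\epsilon/2$, and bounds each probability by Lemma~\ref{lemma:unif_conv_saa}; that is where its factor $4$ comes from. Your diagnosis of the constants is also correct. That citation gives the exponent $n_1\epsilon^2/(128\zeta^2)$ only if the lemma is applied at tolerance $\epsilon/2$. Yet the paper keeps the covering factor $\lceil 8L_{[T]}D_\cX/\epsilon+1\rceil^d$ and the $n_t$ tuned for $\epsilon$, which are exactly the two discrepancies you point out.

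The gap is in your repair, which does not close. First, your reconstruction of the budget in Lemma~\ref{lemma:unif_conv_saa} is off:
\begin{itemize}
\item The net with $\lceil 8L_{[T]}D_\cX/\epsilon+1\rceil^d$ points is the $v$-net of Lemma~\ref{lemma:v-net} with $v=\epsilon/(4L_{[T]})$. Discretization therefore costs $2L_{[T]}v=\epsilon/2$, not $\epsilon/4$. An $\epsilon/(8L_{[T]})$-net would need $\lceil 16L_{[T]}D_\cX/\epsilon+1\rceil^d$ points.
\item The Hoeffding threshold there is $\epsilon/4$, since $\exp(-n_1t^2/(2\zeta^2))=\exp(-n_1\epsilon^2/(32\zeta^2))$ at $t=\epsilon/4$. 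The exponent $128$ corresponds to $t=\epsilon/8$.
\end{itemize}
More importantly, the optimality gap pays the bias $\mu(x)=\EE[\hat F(x)]-F(x)$ at two points. Let $t_1$ be the threshold for $\EE[\hat F(x')]-\hat F(x')$ at the net points $x'$; the net is unavoidable on this side because $\hat x^*$ is data dependent. Let $t_2$ be the threshold for $\hat F(x^*)-\EE[\hat F(x^*)]$ at the deterministic point $x^*$. Outside these two deviation events,
\[
F(\hat x^*)-F(x^*)\le \big(F(\hat x^*)-\hat F(\hat x^*)\big)+\big(\hat F(x^*)-F(x^*)\big)\le |\mu(\hat x^*)|+2L_{[T]}v+t_1+|\mu(x^*)|+t_2 .
\]
Under the given $n_t$ you only know $|\mu|\le\epsilon/4$. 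With the same net, the deterministic terms already total $\epsilon/4+\epsilon/2+\epsilon/4=\epsilon$. This forces $t_1=t_2=0$, so you get no exponential bound at all. Even with your own numbers (discretization, both biases and both thresholds equal to $\epsilon/4$), the pieces add up to $5\epsilon/4>\epsilon$. Re-splitting the two sides with the same net and the same $n_t$ therefore cannot produce the claim.

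The missing ingredient is a smaller bias or a finer net.
\begin{itemize}
\item Suppose the $n_t$ are tuned for tolerance $\epsilon/2$, so that $|\mu|\le\epsilon/8$; the paper's citation of the lemma at $\epsilon/2$ tacitly needs this anyway. Then the same $\epsilon/(4L_{[T]})$-net with $t_1=t_2=\epsilon/8$ gives $\big(\lceil 8L_{[T]}D_\cX/\epsilon+1\rceil^d+1\big)\exp(-n_1\epsilon^2/(128\zeta^2))$. The $x^*$-term here is a single pointwise Hoeffding bound, and the whole expression is below the stated bound.
\item Under the literal hypotheses you must instead take $v=\epsilon/(8L_{[T]})$ and $t_1=t_2=\epsilon/8$. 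This gives the prefactor $\lceil 16L_{[T]}D_\cX/\epsilon+1\rceil^d+1\le 2^d\lceil 8L_{[T]}D_\cX/\epsilon+1\rceil^d$, which implies the stated bound only for $d\le 2$.
\end{itemize}
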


We define the high-probability scenario complexity of a method for solving~\eqref{problem:MCCO} as the total number of scenarios that this method needs to identify an $\epsilon$-optimal solution of~\eqref{problem:MCCO} with probability~$1-\beta$. 

\begin{theorem}[High-Probability Scenario Complexity of the SAA Estimator]
    \label{thm:SAA_sample_complexity_high_probability} Suppose that all conditions of Lemma~\ref{lemma:unif_conv_saa} hold for some~$\epsilon>0$ and that
\begin{align*}
    n_1 =\left\lceil \frac{128\zeta^2}{\epsilon^2} \left(d \log\left( \left\lceil 8L_{[T]}D_\mathcal{X}/\epsilon +1\right\rceil\right) + \log\left(4/\beta \right) \right) \right\rceil
\end{align*}
for some $\beta\in(0,1)$. Then any solution of the SAA problem \eqref{prob:min_hatF(x)} constitutes an $\epsilon$-optimal solution of \eqref{problem:MCCO} with probability at least $1-\beta$. Thus, the high-probability scenario complexity of the SAA estimator equals $\cO(\log(\epsilon^{-1})\epsilon^{-2T})$ if Assumption~\ref{assump:smooth} does not hold and $\cO(\log(\epsilon^{-1})\epsilon^{-(T+1)})$ if Assumption~\ref{assump:smooth} holds.
\end{theorem}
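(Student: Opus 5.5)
The argument plugs the prescribed $n_1$ into Corollary~\ref{cor:high_prob_bound_saa_epsilon_optimal} and then counts scenarios. Since all conditions of Lemma~\ref{lemma:unif_conv_saa} hold, the corollary gives
\[
\PP\big(F(\hat x^*)-F(x^*)>\epsilon\big)\leq 4\left\lceil 8L_{[T]}D_\cX/\epsilon+1\right\rceil^{d}\exp\!\Big(-\tfrac{n_1\epsilon^2}{128\zeta^2}\Big).
\]
With the stated choice, $n_1\epsilon^2/(128\zeta^2)\geq d\log(\lceil 8L_{[T]}D_\cX/\epsilon+1\rceil)+\log(4/\beta)$. Exponentiating, the right-hand side is at most
\[
4\,\lceil\cdot\rceil^{d}\cdot\lceil\cdot\rceil^{-d}\cdot\beta/4=\beta .
\]
Hence any minimizer $\hat x^*$ of~\eqref{prob:min_hatF(x)} is $\epsilon$-optimal with probability at least $1-\beta$. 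The statement says ``any solution'', and the corollary applies to an arbitrary minimizer $\hat x^*$, so no selection issue arises. The minimum in~\eqref{problem:MCCO} is attained, as noted after Assumption~\ref{assump:lipschitz}.

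For the complexity count, recall that the SAA estimator uses $\CC(\hat F)=\prod_{t=1}^T n_t$ scenarios. Treating $d,\zeta,L_t,\sigma_t,S_t,D_\cX,\beta,T$ as constants:
\begin{itemize}
\item $n_1=\cO(\epsilon^{-2}\log(\epsilon^{-1}))$, because $d\log\lceil 8L_{[T]}D_\cX/\epsilon+1\rceil=\cO(\log(\epsilon^{-1}))$ for small $\epsilon$.
\item In the nonsmooth case, Lemma~\ref{lemma:unif_conv_saa} sets $n_t=\cO(\epsilon^{-2})$ for $t=2,\dots,T$. The product is then $\cO(\log(\epsilon^{-1})\epsilon^{-2}\cdot\epsilon^{-2(T-1)})=\cO(\log(\epsilon^{-1})\epsilon^{-2T})$.
\item In the smooth case, $n_t=\cO(\epsilon^{-1})$ for $t\geq2$. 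The product is then $\cO(\log(\epsilon^{-1})\epsilon^{-2}\epsilon^{-(T-1)})=\cO(\log(\epsilon^{-1})\epsilon^{-(T+1)})$.
\end{itemize}

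The only point needing care is the ceilings. Each $\lceil a/\epsilon^k\rceil\leq 2a/\epsilon^k$ once $\epsilon$ is small enough, which is permitted by the definition of $\cO(\cdot)$ in the notation paragraph (only small $\epsilon$ matters). The probabilistic step uses only monotonicity of $\exp$ and $n_1\geq$ the expression inside the ceiling. I expect no substantive obstacle: the work was done in Lemma~\ref{lemma:unif_conv_saa} and its corollary.
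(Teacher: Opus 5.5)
Your proposal is correct and follows the paper's proof: you substitute the prescribed $n_1$ into Corollary~\ref{cor:high_prob_bound_saa_epsilon_optimal} to bring the failure probability down to at most~$\beta$, and you then multiply the stagewise sample sizes $n_1=\cO(\log(\epsilon^{-1})\epsilon^{-2})$ and $n_t=\cO(\epsilon^{-2})$ (nonsmooth) or $n_t=\cO(\epsilon^{-1})$ (smooth) for $t\geq 2$. The only difference is that you write out the exponent arithmetic and the handling of the ceilings, which the paper leaves implicit.
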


Theorem~\ref{thm:SAA_sample_complexity_high_probability} generalizes \cite[Corollary~4.2]{hu2020sample} to general $T>2$. It shows that, like the mean squared error-based scenario complexity, the high-probability scenario complexity of the SAA method grows exponentially with~$T$. This aligns with the observation in multistage stochastic programming that SAA suffers from a curse of dimensionality \cite{shapiro2006complexity, reaiche2016note}. Note also that the exponential growth rate is reduced by a factor of two if the integrands are smooth in the sense of Assumption~\ref{assump:smooth}. In contrast to Theorem~\ref{thm:saa_sample_complexity_mse}, the bound on~$n_1$ depends now on the dimension~$d$ of the decision variable and the variance proxy $\zeta^2$ of the tree-wise SAA estimator. This captures the cost of enhancing guarantees from convergence in expectation to convergence with high probability. If the SAA estimator~$\hat{F}(x)$ further satisfies the Hölderian error bound condition proposed in \citep[Assumption~4.1]{hu2020sample}, then the bound on~$n_1$ in Theorem~\ref{thm:SAA_sample_complexity_high_probability} can be made independent of the dimension~$d$. In the special case where~$T=2$, the scenario complexity bounds of Theorem~\ref{thm:SAA_sample_complexity_high_probability} reduce to the known bounds for conditional stochastic optimization reported in~\citep{hu2020sample}. These bounds scale quartically with the inverse accuracy $1/\epsilon$ for nonsmooth integrands and cubically for smooth integrands, demonstrating the efficiency of the SAA method when~$T$ is small. However, for larger values of~$T$, the method becomes increasingly inefficient, underscoring the need for more advanced and scalable approaches to solve~\eqref{problem:MCCO}. 


\subsection{MLMC Estimator}
\label{sec:rtmlmc}
We now develop a recursive MLMC estimator for the nested expectation problem~\eqref{problem:MCCE}. Our approach generalizes the unbiased MLMC method of \cite{blanchet2015unbiased} to arbitrary depths $T\geq 2$, but introduces a controlled bias to simultaneously guarantee finite variance and computational cost under weaker conditions ({\em e.g.}, Lipschitz continuity). This contrasts with the unbiased recursive MLMC estimator of \cite{syed2023optimal}, which requires smoothness to ensure a finite variance. To this end, we need additional notation and definitions. For any $t+1\in[T]$ and Borel-measurable function~$\psi(\xi_{[t+1]})$, we continue to use the empirical conditional expectation operator~$\hat\EE_t[\psi(\xi_{[t+1]})]$ with $n_{t+1}$ samples defined in~\eqref{eq:expected_psi}. We further introduce empirical conditional expectation operators that run over $2^\ell$ samples as well as over the even or odd indices in $[2^\ell]$. In other words, we define
\begin{align*}
    \begin{array}{l@{\,}c@{\,}r@{\,}l@{\qquad}l}
        \displaystyle \hat\EE_t^{\ell}[\psi(\xi_{[t+1]})] & \coloneqq &\displaystyle \frac{1}{2^\ell} & \displaystyle\sum_{i=1}^{2^\ell} \psi(\xi_{[t]},\xi_{t+1}^{i}) & \forall\ell\in\NN_0,\\
        \hat\EE_t^{\ell,\rm o}[\psi(\xi_{[t+1]})] & \coloneqq &\displaystyle  \frac{1}{2^{\ell-1}} & \displaystyle\sum_{i=1}^{2^{\ell-1}} \psi(\xi_{[t]},\xi_{t+1}^{2i-1}) & \forall\ell\in\NN,\\
        \hat\EE_t^{\ell,\rm e} [\psi(\xi_{[t+1]})] & \coloneqq &\displaystyle  \frac{1}{2^{\ell-1}} & \displaystyle \sum_{i=1}^{2^{\ell-1}} \psi(\xi_{[t]},\xi_{t+1}^{2i}) & \forall\ell\in\NN,
    \end{array}
\end{align*}
where the samples $\xi_{t+1}^{i}$, $i\in[2^\ell]$, are drawn independently from the marginal distribution of~$\xi_1$ (if $t=0$) or the conditional distribution of $\xi_{t+1}$ given $\xi_{[t]}$ (if $t>0$). By construction, we thus have
\begin{align}
    \label{eq:even-odd-averages-identity}
    \hat\EE_t^{\ell}[\psi(\xi_{[t+1]})] =\frac{1}{2}\hat\EE_t^{\ell,{\rm o}}[\psi(\xi_{[t+1]})] + \frac{1}{2} \hat\EE_t^{\ell, {\rm e}}[\psi(\xi_{[t+1]})].
\end{align}

\begin{defn}[MLMC Estimator]
\label{def:MCCO_RTMLMC}
For any $t\in[T-1]$, select a rate parameter~$r_t \in(0,1)$ and truncation point $M_t\in\mathbb N$, and let the random variable~$\lambda_t$ follow the truncated geometric distribution $\mathrm{Geo}(r_t | M_t)$. Assume that~$\lambda_t$ is independent of all other random objects, and set $q_t(\ell)\coloneqq\PP(\lambda_t=\ell)$ for~$\ell=0,\ldots,M_t$ and~$t\in[T-1]$. We then construct a sequence of estimators $\hat H_t(x)$, $t\in[T]$, using the following backward recursion. We initialize the recursion by setting $\hat H_T(x)\coloneqq f_T(\xi_T,x)$. Next, for any $t\in[T-1]$ we set
\begin{align}
    \hat{H}_t(x)\coloneqq \frac{1}{q_{t}(\lambda_{t})}\left( \hat{h}_t^{\lambda_t}(x)-\frac{1}{2}\hat{h}_t^{\lambda_t, {\rm e}}(x)-\frac{1}{2}\hat{h}_t^{\lambda_t,{\rm o}}(x)\right),
    \label{eq:H_hat}
\end{align}
where
\begin{align*}
    \hat{h}_t^{\ell}(x)\coloneqq f_t \Big(\xi_t,\hat \EE_t^{\ell} \big[ \hat{H}_{t+1}(x) \big] \Big) \quad \forall \ell=0,\ldots, M_t
\end{align*}
as well as
\begin{align*}
    \hat{h}_t^{\ell,{\rm e}}(x) \coloneqq f_t \Big(\xi_t, \hat \EE_t^{\ell,\rm e} \big[ \hat H_{t+1}(x) \big] \Big) \quad \text{and} \quad \hat{h}_t^{\ell,{\rm o}}(x) \coloneqq f_t \Big(\xi_t, \hat \EE_t^{\ell,\rm o} \big[ \hat{H}_{t+1}(x) \big] \Big) \quad \forall \ell\in [M_t].
\end{align*}
We also set $\hat{h}_t^{0,{\rm e}}(x)\coloneqq\hat{h}_t^{0,{\rm o}}(x)\coloneqq 0$. The MLMC estimator for $F(x)$ is then defined as $\hat{F}(x)\coloneqq \hat\EE [\hat{H}_{1}(x)]$. 
\end{defn}

The MLMC estimator is built on a forest of~$n_1$ scenario trees that display a random branching structure with $2^{\lambda_t}$ branches per stage-$t$ node, where $\lambda_t$ is sampled from a truncated geometric distribution. For this reason we will sometimes refer to $\lambda_t$ as the log-branching factor of stage~$t$. If $M_t=\infty$ for all $t\in[T-1]$ and the integrands $f_t$ map to~$\RR$, then our MLMC estimator reduces to the untruncated MLMC estimator proposed in \cite{syed2023optimal}. If additionally $T=2$, then it further reduces to the estimator discussed in \cite[\S~3.2]{wang2023unbiased}.

Working with random (log-)branching factors offers significant advantages. We will show that the bias of the MLMC estimator is of the same order as that of the SAA estimator with a large fixed branching factor~$2^{M_t}$ at each stage~$t\in[T-1]$. Since the MLMC estimator is constructed on a scenario tree with a random branching factor~$2^{\lambda_t}$ that never exceeds but is typically much smaller than~$2^{M_t}$ at every stage, it achieves this bias using substantially fewer samples in total. As~$\lambda_t$ concentrates near zero, the branching factor~$2^{\lambda_t}$ often equals~$1$, so the MLMC estimator requires only a single sample in most branches. Consequently, the scenario tree grows more slowly with the time horizon~$T$, making the evaluation of the MLMC estimator significantly more efficient than that of the SAA estimator with comparable bias.

We now prove that the MLMC estimator requires fewer samples than the SAA estimator to achieve a mean squared error no greater than~$\epsilon^2$. To do so, we derive upper bounds on the variance, bias, and expected sampling cost of the MLMC estimator in Lemmas~\ref{lemma:bound_on_variance_rtmlmc}, \ref{lemma:bias_bound_rtmlmc}, and~\ref{lemma:bound_on_cost_rtmlmc}, respectively. These bounds depend on the estimator’s hyperparameters, that is, the number~$n_1$ of scenario trees as well as the truncation points~$M_t$ and the rates~$r_t$ that define the truncated geometric distributions of the log-branching factors~$\lambda_t$, $t \in [T-1]$. We then tune these hyperparameters to ensure that the bias and the variance of the MLMC estimator are bounded by~$\epsilon/\sqrt{2}$. This calibration guarantees an overall mean squared error of at most~$\epsilon^2$, enabling a fair comparison of the expected sampling costs of the optimally tuned SAA and MLMC estimators.

From this point onward, we denote the $p$-th moment of~$\hat{H}_t(x)$ conditional on time~$t-1$ information by
\begin{align}
\label{eq:mu}
    \mu_t^{p}(x) \coloneqq \EE_{t-1}\big[\|\hat{H}_t(x)\|_p^p \big],
\end{align}
where $p \geq 1$. To analyze the MLMC estimator, we also define the conditional moment bound of order~$p$ as
\begin{align}\label{eq:mu_bar_def}
    \overline\mu_T^p\coloneqq \sup_{x\in\RR^{d}}\esssup \; \EE_{T-1} \left[ \big\|f_T(\xi_T,x)\big\|^p_p \right].
\end{align}
By using Jensen's inequality, one readily verifies that if $\overline\mu_T^p<\infty$, then $\overline\mu_T^{p'}<\infty$ for every~$p'\in[1,p]$. The following assumption is crucial for our subsequent results.
\begin{assumption}\label{assump:mu_bar}
    The conditional moment bound $\overline{\mu}_T^p $ of the integrand $f_T(\xi_T,x)$ is finite up to order $p=2$ if Assumption~\ref{assump:smooth} does not hold and up to order $p=2^T$ if Assumption~\ref{assump:smooth} holds.
\end{assumption}

The following lemma establishes bounds on~$\mu_t^2(x)$ and~$\mu_t^{2^t}(x)$ for all~$t \in [T]$, corresponding to the cases of nonsmooth and smooth integrands, respectively. In particular, for~$t = 1$, these bounds can be used to derive upper bounds on the unconditional variance of the MLMC estimator in both settings. For any $t\in[T]$ and $s\in[t+1]$ we henceforth set $L_{[s:t]}\coloneqq\prod_{i=s}^t L_i$ if $s<t+1$; $\coloneqq1$ if $s=t+1$; see also Lemma~\ref{lem:lipschitz} in the appendix. In particular, note that $L_{[1:t]}$ coincides with the Lipschitz constant~$L_{[t]}$ defined before Lemma~\ref{lemma:bound_on_bias_saa}.

\begin{lemma}
\label{lemma:bound_on_variance_rtmlmc}
Suppose that Assumptions~\ref{assump:general}, \ref{assump:lipschitz} and~\ref{assump:mu_bar} hold. Then, for all $x \in \cX$ and $t \in [T]$, the conditional moments of~$\hat{H}_t(x)$ satisfy
\begin{align*}
    \begin{array}{r@{\;}ll}
    \mu_t^2(x)& \displaystyle \leq 
         C_t \displaystyle\prod_{s=t}^{T-1} \bigg(\displaystyle\sum_{\ell=0}^{M_s}\frac{1}{2^{\ell} q_s(\ell)} \bigg) & \text{if Assumption~\ref{assump:smooth} does not hold,}\\
    \mu_t^{2^t}(x) & \displaystyle \leq 
        D_t\displaystyle\prod_{s=t}^{T-1}           \bigg(\displaystyle\sum_{\ell=0}^{M_s}\frac{1}{2^{2^s \ell} q_s(\ell)^{2^{s}-1}} \bigg)& \text{if Assumption~\ref{assump:smooth} holds,}
        \end{array}
\end{align*}
where
\begin{align*}
    C_t\coloneqq \overline\mu_T^2 (2B_2)^{T-t}L_{[t:T-1]}^2\quad\text{and}\quad
    D_t\coloneqq\overline\mu_T^{2^T} \prod_{s=t}^{T-1} \bigg(\bigg(\frac{3S_s}{2}\bigg)^{2^s} d_s^{2^s-1} B_{2^{s+1}}\bigg),
\end{align*}
and where $B_2$ and $B_{2^{s+1}}$ are universal constants defined in Lemma~\ref{lemma:moment-inequality-multidimensional} in the appendix.
\end{lemma}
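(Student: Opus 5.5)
We argue by backward induction on $t$, starting from $t=T$, where $\mu_T^2(x)=\EE_{T-1}[\|f_T(\xi_T,x)\|_2^2]\le\overline\mu_T^2=C_T$ and $\mu_T^{2^T}(x)\le\overline\mu_T^{2^T}=D_T$ (the empty products equal one). For the inductive step at stage $t\le T-1$ we condition on $\xi_{[t]}$ and on $\lambda_t$, and use the law of total expectation to write
\[
\mu_t^p(x)=\EE_{t-1}\Big[\sum_{\ell=0}^{M_t}q_t(\ell)\,\frac{1}{q_t(\ell)^p}\,\EE_t\big[\|\Delta_t^\ell\|_p^p\big]\Big],\qquad \Delta_t^\ell\coloneqq \hat h_t^\ell-\tfrac12\hat h_t^{\ell,\rm e}-\tfrac12\hat h_t^{\ell,\rm o}.
\]
This produces the factors $q_t(\ell)^{-(p-1)}$ in the statement, i.e.\ $q_t(\ell)^{-1}$ for $p=2$ and $q_t(\ell)^{-(2^t-1)}$ for $p=2^t$. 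The remaining task is to bound $\EE_t[\|\Delta_t^\ell\|_p^p]$ by $2^{-\ell}$ times (nonsmooth case) or $2^{-2^t\ell}$ times (smooth case) a moment of $\hat H_{t+1}(x)$, and then apply the induction hypothesis. The inner draws $\hat H_{t+1}^{(i)}$ are conditionally i.i.d.\ given $\xi_{[t]}$, and $\lambda_t$ is independent of them.

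\textbf{Nonsmooth case ($p=2$).} For $\ell\ge1$ we use \eqref{eq:even-odd-averages-identity} and Lipschitz continuity to get
\[
\|\Delta_t^\ell\|_2\le \tfrac{L_t}{2}\big\|\hat\EE_t^\ell-\hat\EE_t^{\ell,\rm e}\big\|_2+\tfrac{L_t}{2}\big\|\hat\EE_t^\ell-\hat\EE_t^{\ell,\rm o}\big\|_2=\tfrac{L_t}{2}\big\|\hat\EE_t^{\ell,\rm o}[\hat H_{t+1}]-\hat\EE_t^{\ell,\rm e}[\hat H_{t+1}]\big\|_2 .
\]
The right-hand side involves a difference of two independent averages of $2^{\ell-1}$ i.i.d.\ terms. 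Its second moment is therefore at most $2\cdot 2^{-(\ell-1)}$ times the conditional second moment of $\hat H_{t+1}$; Lemma~\ref{lemma:moment-inequality-multidimensional} with constant $B_2$ gives this directly. Combined, this yields $\EE_t\|\Delta_t^\ell\|^2\le 2B_2L_t^2 2^{-\ell}\mu_{t+1}^2$ up to the constant absorbed in $B_2$. For $\ell=0$, $\Delta_t^0=f_t(\xi_t,\hat H_{t+1}^{(1)})$, and this is where the absence of centering matters. We bound it using Lipschitz continuity about $0$. The term $f_t(\xi_t,0)$ must be handled, or absorbed as the constant structure suggests; we expect the intended argument to treat the scaling so that the $\ell=0$ term also fits $2B_2L_t^2\mu_{t+1}^2$. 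Summing over $\ell$ with the $1/q_t(\ell)$ weights and applying the hypothesis for $\mu_{t+1}^2$ (bounded uniformly in $\xi_{[t]}$, so the outer $\EE_{t-1}$ is harmless) gives exactly $C_t\prod_{s=t}^{T-1}\sum_\ell 2^{-\ell}/q_s(\ell)$.

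\textbf{Smooth case ($p=2^t$).} We Taylor-expand all three terms around $\bar y=\hat\EE_t^\ell[\hat H_{t+1}]$ using \eqref{eq:inequality_for_smooth_funs}. The first-order terms cancel because $\bar y-\tfrac12(\bar y^{\rm e}+\bar y^{\rm o})=0$. This leaves
\[
\|\Delta_t^\ell\|\le \tfrac{S_t}{4}\big(\|\bar y^{\rm e}-\bar y\|^2+\|\bar y^{\rm o}-\bar y\|^2\big)=\tfrac{S_t}{8}\|\bar y^{\rm o}-\bar y^{\rm e}\|^2 .
\]
Converting between the $2$-norm and the $p$-norm over dimension $d_t$ costs a factor of order $d_t^{2^t-1}$. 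Raising to the power $2^t$ then requires the $2^{t+1}$-th moment of a difference of i.i.d.\ averages. The Marcinkiewicz–Zygmund/Rosenthal-type bound of Lemma~\ref{lemma:moment-inequality-multidimensional} gives $B_{2^{t+1}}\,2^{-(\ell-1)2^t}\mu_{t+1}^{2^{t+1}}$, which produces the $2^{-2^t\ell}$ factor and the constant $(3S_t/2)^{2^t}d_t^{2^t-1}B_{2^{t+1}}$. The moment order doubles from $2^t$ to $2^{t+1}$ at each step, which is why the induction hypothesis is stated at order $2^{t+1}$ for stage $t+1$ and why Assumption~\ref{assump:mu_bar} requires order $2^T$.

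\textbf{Main obstacle.} We expect the main difficulty to be the careful constant bookkeeping in the smooth case: the norm equivalences, the $\ell=0$ level where no cancellation occurs, and making the per-level bound exactly $2^{-2^t\ell}$ so that the products match the stated form.
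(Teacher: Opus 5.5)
Your route is the paper's: backward induction, conditioning on $\lambda_t$ to get the weights $q_t(\ell)^{-(p-1)}$, and then for $\ell\ge1$ two arguments. In the nonsmooth case you use the Lipschitz bound with the antithetic identity~\eqref{eq:even-odd-averages-identity} and Lemma~\ref{lemma:moment-inequality-multidimensional}. In the smooth case you use a second-order Taylor bound with cancelling first-order terms, a norm conversion costing $d_t^{2^t-1}$, and Lemma~\ref{lemma:moment-inequality-multidimensional} at order $2^{t+1}$.

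The one real deviation is that you expand around $\hat\EE_t^\ell[\hat H_{t+1}]$ instead of $\EE_t[\hat H_{t+1}]$. This is valid and leaves the single remainder $\frac{S_t}{8}\|\hat\EE_t^{\ell,\rm o}[\hat H_{t+1}]-\hat\EE_t^{\ell,\rm e}[\hat H_{t+1}]\|_2^2$. You dropped the factor $2^{2^{t+1}}$ that the moment inequality incurs for a difference of two averages. Once you restore it, you get the constant $S_t^{2^t}d_t^{2^t-1}B_{2^{t+1}}$. This is smaller than, not equal to, the paper's $(3S_t/2)^{2^t}d_t^{2^t-1}B_{2^{t+1}}$, so this part is fine.

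The genuine gap is the level $\ell=0$. You flag it and then bet on an absorption that does not exist. Since $\hat h_t^{0,\rm e}=\hat h_t^{0,\rm o}=0$, we have $\Delta_t^0=f_t(\xi_t,\hat H_{t+1}^{(1)})$. There is no antithetic difference, so neither the Lipschitz-difference estimate nor the Taylor cancellation applies. The size of $\Delta_t^0$ is governed by $L_t$ and $f_t(\xi_t,0)$, not by a centred fluctuation of $\hat H_{t+1}$ and not by $S_t$.

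The displayed bounds actually fail there. Take $T=2$ and $f_2(\xi_2,x)=\xi_2$ with $\xi_2\sim\cN(0,1)$, so $\overline\mu_2^2=1$.
\begin{enumerate}
\item Nonsmooth case: take the $1$-Lipschitz integrand $f_1(\xi_1,y)=c+|y|$ with $c>0$. The $\ell=0$ term alone gives $\mu_1^2(x)\ge\EE[(c+|\xi_2|)^2]/q_1(0)\ge c^2/q_1(0)$. But $C_1\sum_{\ell}1/(2^\ell q_1(\ell))=2B_2\sum_{\ell}1/(2^\ell q_1(\ell))$ does not depend on $c$.
\item Smooth case: take $f_1(\xi_1,y)=y$. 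Then $\Delta_1^\ell=0$ for every $\ell\ge1$ and $\mu_1^2(x)=1/q_1(0)$. Yet $S_1$ may be taken to be $0$ (or arbitrarily small), making $D_1=0$ (or arbitrarily small).
\end{enumerate}

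The paper's proof has the same hole. It runs the $\ell\ge1$ chain of inequalities for every $\ell\in\{0,\dots,M_t\}$, although $\hat\EE_t^{0,\rm e}$ and $\hat\EE_t^{0,\rm o}$ are not even defined.

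A correct argument must treat $\ell=0$ separately, for example via
\[
\EE_{t-1}\|\Delta_t^0\|_2^2\le 2L_t^2\,\EE_{t-1}[\mu_{t+1}^2]+2\,\EE_{t-1}\|f_t(\xi_t,0)\|_2^2,
\]
plus a uniform bound on the last term. In the smooth case you need the analogue at order $2^t$, which uses $L_t$ rather than $S_t$ and a moment condition of order $2^t$ on $f_t(\xi_t,0)$. This adds an $\ell=0$ contribution of order $q_t(0)^{-(p-1)}$ to each factor and changes $C_t$ and $D_t$.

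Your nonsmooth argument does go through verbatim if $f_t(\xi_t,0)\equiv0$ for all $t<T$. Then $\EE_{t-1}\|\Delta_t^0\|_2^2\le L_t^2\,\EE_{t-1}[\mu_{t+1}^2]\le 2B_2L_t^2\,\EE_{t-1}[\mu_{t+1}^2]$, since $B_2=1$ is admissible. The extra term does not grow with the truncation points, so the downstream complexity orders survive. But the lemma with the stated $C_t$ and $D_t$ cannot be proved as written.
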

Lemma~\ref{lemma:bound_on_variance_rtmlmc} allows us to bound the variance of the MLMC estimator~$\hat F(x)=\hat\EE[\hat H_1(x)]$. Indeed, as~$\hat F(x)$ constitutes an average of~$n_1$ independent copies of~$\hat H_1(x)$, its variance admits the bound
\begin{equation}
    \label{eq:variance-rtmlmc-estimator}
    \VV(\hat F(x)) = \frac{1}{n_1} \VV(\hat H_1(x)) \leq \frac{\mu_1^2(x)}{n_1},
\end{equation}
while Lemma~\ref{lemma:bound_on_variance_rtmlmc} provides a bound on~$\mu_1^2(x)$.
In fact, Lemma~\ref{lemma:bound_on_variance_rtmlmc} constructs bounds on all conditional moments~$\mu_t^2(x)$ (for nonsmooth integrands that may violate Assumption~\ref{assump:smooth}) and~$\mu_t^{2^t}(x)$ (for smooth integrands that satisfy Assumption~\ref{assump:smooth}) by backward recursion on~$t$. Note that if the integrands are nonsmooth, then the upper bound on~$\mu_1^2(x)$ is finite only if~$f_T(\xi_T,x)$ has a finite second moment. If the integrands are smooth, on the other hand, then the upper bound on~$\mu_1^2(x)$ is finite only if~$f_T(\xi_T,x)$ has a finite moment of order~$2^T$. The requirement that moments of order higher than two of~$f_T(\xi_T, x)$ be finite arises as an artifact of the bound's construction, which involves iterative application of the inequality~\eqref{eq:inequality_for_smooth_funs} for smooth functions. This inequality is conservative because it neglects the Lipschitz continuity of~$f_T(\xi_T, x)$ with respect to~$x$. Indeed, the Lipschitz continuity ensures that the global growth of the left hand side of~\eqref{eq:inequality_for_smooth_funs} is at most linear. In spite of this, we do not attempt to improve the conditional moment bounds for smooth integrands in Lemma~\ref{lemma:bound_on_variance_rtmlmc}. Deriving such bounds is cumbersome and unlikely to improve our scenario complexity results below. Moreover, to establish high-probability scenario complexity guarantees for the MLMC estimator, we will anyway assume that~$f_T(\xi_T, x)$ is sub-Gaussian, which implies that all moments of~$f_T(\xi_T, x)$ are finite. 

Lemma~\ref{lemma:bound_on_variance_rtmlmc} readily extends to the untruncated MLMC estimator. In addition, if Assumption~\ref{assump:smooth} holds while~$d_t=1$ and~$M_t = \infty$ for all~$t\in[T-1]$, then our upper bound on the $2^t$-th conditional moment of~$\hat{H}_{t}$ coincides with the $2^t$-th \emph{un}conditional moment of the untruncated MLMC estimator analyzed in~\cite{syed2023optimal}.

The next result uses Lemma~\ref{lemma:bound_on_variance_rtmlmc} to derive bounds on the bias of the MLMC estimator.

\begin{lemma}
\label{lemma:bias_bound_rtmlmc}
If Assumptions~\ref{assump:general}, \ref{assump:lipschitz} and~\ref{assump:mu_bar} hold, then the bias of the MLMC estimator satisfies
    \begin{align*}
      \big|\EE[\hat{F}(x)] - F(x)\big| \leq \begin{cases} \sum_{t=1}^{T-1}\frac{L_{[t]}\EE[\mu_{t+1}^2(x)]^\frac{1}{2}}{2^{M_t/2}}&\text{if Assumption~\ref{assump:smooth} does not hold,}\\
       \sum_{t=1}^{T-1}\frac{L_{[t-1]}S_t \EE[\mu_{t+1}^{2}(x)]}{2^{M_t + 1}} &\text{if Assumption~\ref{assump:smooth} holds.}\end{cases}
    \end{align*}
\end{lemma}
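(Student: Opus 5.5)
The plan is to prove the bound by a telescoping argument that removes the randomized level $\lambda_t$ in expectation, followed by a stagewise backward recursion on the error of conditional means. Throughout, let $V_T\coloneqq f_T(\xi_T,x)$ and $V_t\coloneqq f_t(\xi_t,\EE_t[V_{t+1}])$ for $t\in[T-1]$ denote the exact nested quantities, so that $F(x)=\EE[V_1]$.

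\emph{Step 1 (removing the randomized level).} Fix $t\in[T-1]$ and condition on $\xi_{[t]}$. By construction, the inner estimators $\hat H_{t+1}(x)$ attached to the $2^{M_t}$ potential children are i.i.d. conditionally on $\xi_{[t]}$, and they are independent of $\lambda_t$; here I assume that a fresh $\lambda_s$ is drawn at every node. By exchangeability, the averages $\hat\EE_t^{\ell,\rm e}[\hat H_{t+1}]$ and $\hat\EE_t^{\ell,\rm o}[\hat H_{t+1}]$ have the same conditional law as $\hat\EE_t^{\ell-1}[\hat H_{t+1}]$. Consequently $\EE[\hat h_t^{\ell,\rm e}\mid\xi_{[t]}]=\EE[\hat h_t^{\ell,\rm o}\mid\xi_{[t]}]=\EE[\hat h_t^{\ell-1}\mid\xi_{[t]}]$. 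Averaging~\eqref{eq:H_hat} over $\lambda_t$ cancels the weights $q_t(\ell)$, and the resulting sum telescopes to
\[
    \EE[\hat H_t(x)\mid \xi_{[t]}]=\EE\big[f_t\big(\xi_t,\hat\EE_t^{M_t}[\hat H_{t+1}(x)]\big)\,\big|\,\xi_{[t]}\big].
\]
In particular, $\EE[\hat F(x)]=\EE[\hat H_1(x)]$ equals the expectation of $f_1$ evaluated at a $2^{M_1}$-sample average of $\hat H_2(x)$.

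\emph{Step 2 (recursion on the conditional-mean error).} Define $\Delta_t\coloneqq\EE_{t-1}[\hat H_t(x)]-\EE_{t-1}[V_t]$. The bias then equals $\EE[\Delta_1]$, with the convention $\EE_0=\EE$. Using Step~1, I split
\[
    \Delta_t=\EE_{t-1}\Big[f_t\big(\xi_t,\hat\EE_t^{M_t}[\hat H_{t+1}]\big)-f_t\big(\xi_t,\EE_t[\hat H_{t+1}]\big)\Big]+\EE_{t-1}\Big[f_t\big(\xi_t,\EE_t[\hat H_{t+1}]\big)-f_t\big(\xi_t,\EE_t[V_{t+1}]\big)\Big].
\]
By Assumption~\ref{assump:lipschitz}, the second term is bounded in norm by $L_t\,\EE_{t-1}\|\Delta_{t+1}\|_2$. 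Since $\Delta_T=0$, unrolling this recursion with the tower property gives
\[
    |\EE[\Delta_1]|\le\sum_{t=1}^{T-1}L_{[t-1]}\,\EE\|E_t\|_2,
\]
where $E_t$ denotes the first term, i.e., the ``sampling error'' at stage~$t$.

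\emph{Step 3 (bounding the stage-$t$ sampling error).} Since the $2^{M_t}$ summands are conditionally i.i.d. given $\xi_{[t]}$, we have $\EE_t\|\hat\EE_t^{M_t}[\hat H_{t+1}]-\EE_t[\hat H_{t+1}]\|_2^2\le \mu_{t+1}^2(x)/2^{M_t}$.
\begin{itemize}
\item In the nonsmooth case, Lipschitz continuity and Jensen's inequality give $\EE\|E_t\|_2\le L_t\,\EE[\mu_{t+1}^2(x)]^{1/2}/2^{M_t/2}$. Multiplying by $L_{[t-1]}$ yields the factor $L_{[t]}$ in the first case of the lemma.
\item In the smooth case, I linearize $f_t$ around $\EE_t[\hat H_{t+1}]$. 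The first-order term has zero conditional mean given $\xi_{[t]}$, because $\nabla_{x_t}f_t(\xi_t,\EE_t[\hat H_{t+1}])$ is $\cF_t$-measurable and the average is conditionally unbiased. Inequality~\eqref{eq:inequality_for_smooth_funs} then bounds the remainder by $\tfrac{S_t}{2}\EE\|\cdot\|_2^2\le S_t\,\EE[\mu_{t+1}^2(x)]/2^{M_t+1}$, which gives the second case.
\end{itemize}
Finiteness of $\mu_{t+1}^2$ follows from Lemma~\ref{lemma:bound_on_variance_rtmlmc}, together with Jensen's inequality in the smooth case.

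The main obstacle is Step~1. It requires a careful statement of the conditional independence and exchangeability structure of the random tree: each child's subtree, including its own random $\lambda_{t+1}$, must be i.i.d. given the parent's history, and independent of $\lambda_t$. Establishing this is what justifies both the telescoping identity and the $1/2^{M_t}$ variance reduction in Step~3. I would first formalize this structure by induction on $t$ from $T$ downward.
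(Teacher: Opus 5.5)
Your proposal is correct and follows essentially the paper's route (the paper proves the more general conditional version, Lemma~\ref{lemma:bias_bound_rtmlmc_H_t}, and specializes to $t=1$): first the level-telescoping identity $\EE_{s-1}[\hat H_s]=\EE_{s-1}[f_s(\xi_s,\hat\EE_s^{M_s}[\hat H_{s+1}])]$, then a stagewise decomposition whose stage-$s$ sampling error is bounded as you do, via Lipschitz continuity plus the $2^{-M_s}$ variance reduction, or via the second-order expansion with a vanishing linear term. The only difference is bookkeeping: the paper writes the bias as a telescoping sum of hybrid terms and invokes the Lipschitz continuity of the exact composed mappings $F_t^{s-1}$ (Lemma~\ref{lem:lipschitz}), whereas you unroll the one-step recursion for $\Delta_t$, which amounts to proving that Lipschitz property inline.
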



Lemma~\ref{lemma:bias_bound_rtmlmc} shows that the bias of the MLMC estimator decays faster with the truncation points if the integrands are smooth ({\em i.e.}, if Assumption~\ref{assump:smooth} holds) and if $\EE[\mu_{t+1}^2(x)]$ is finite. Lemma~\ref{lemma:bias_bound_rtmlmc_H_t} in the appendix establishes a stronger result than Lemma~\ref{lemma:bias_bound_rtmlmc} in that it provides a bound on the bias of~$\hat H_t(x)$ conditional on time~$t-1$ information for every~$t\in[T]$. As~$\EE[\hat{F}(x)]=\EE[\hat{H}_1(x)]$, Lemma~\ref{lemma:bias_bound_rtmlmc} follows immediately from Lemma~\ref{lemma:bias_bound_rtmlmc_H_t} for~$t=1$, and therefore its proof is omitted for brevity.

Lemmas~\ref{lemma:bias_bound_rtmlmc} and~\ref{lemma:bound_on_bias_saa} reveal that the bias of the MLMC estimator with random branching factors~$2^{\lambda_t}$ for~$t \in [T-1]$ is of the same order of magnitude as that of the SAA estimator with deterministic branching factors~$n_{t+1}= 2^{M_t}$ for~$t \in [T-1]$. In addition, Lemma~\ref{lemma:bias_bound_rtmlmc} generalizes \cite[Proposition 4.1]{hu2021bias}, which focuses on conditional stochastic programs with~$T=2$, and \cite[Theorem~2.2]{syed2023optimal}, which focuses on untruncated and thus unbiased MLMC estimators with $M_t=\infty$ for all $t\in[T-1]$. Note that when~$T=2$, the bias of the SAA and MLMC estimators can be analyzed in the same manner. However, when $T>2$, the bias of the MLMC estimator requires a more involved analysis due to its recursive construction, and it also involves a bound on the variance of~$\hat H_t(x)$.

\begin{rem}
The bias bounds stated in Lemma~\ref{lemma:bias_bound_rtmlmc} are derived by first expressing the bias of~$\hat F(x)$ as a telescoping sum. Then, Jensen's inequality is applied to bound the absolute value of this sum by the sum of the absolute values of its terms. A tighter bias bound can be obtained using an inductive proof technique that, at each step, leverages a bias-variance decomposition. If the integrands are only known to be Lipschitz continuous (i.e., if Assumption~\ref{assump:smooth} does not hold), then this refined technique yields the bias bound
\begin{align*}
    \big|\EE[\hat{F}(x)] - F(x)\big| \leq \bigg(\sum_{t=1}^{T-1} \frac{L_{[t]}^2 \, \EE[\mu_{t+1}^2(x)]}{2^{M_t}} \bigg)^{1/2}.
\end{align*}
This bound on the bias of~$\hat F(x)$ matches the $2$-norm of the vector with entries $L_{[t]} \, \EE[\mu_{t+1}^2(x)]^\frac{1}{2}/2^{M_t/2}$, $t\in[T-1]$. Thus, it strengthens the corresponding bias bound of Lemma~\ref{lemma:bias_bound_rtmlmc},
which matches the $1$-norm of the same vector. In addition, if the integrands are known to be smooth (i.e., if Assumption~\ref{assump:smooth} holds), it is still possible to recursively construct a bias bound that improves upon the one given in Lemma~\ref{lemma:bias_bound_rtmlmc}. However, this refined bound does not admit a simple closed-form expression. As the recursively constructed bounds (for nonsmooth as well as smooth integrands) exhibit the same asymptotic dependence on the truncation points~$M_t$, $t \in [T-1]$, we choose to work with the simpler and more interpretable bounds of Lemma~\ref{lemma:bias_bound_rtmlmc}.
\end{rem}

The next lemma provides an explicit formula for the expected sampling cost of the MLMC estimator.

\begin{lemma}
\label{lemma:bound_on_cost_rtmlmc}
The expected number of scenarios needed to construct the MLMC estimator amounts to 
\begin{equation*}
   \EE\big[\CC\big(\hat{F}(x)\big)\big]= n_1 \prod_{t=1}^{T-1} \sum_{\ell=0}^{M_t} q_t(\ell)2^{\ell}.
\end{equation*}
\end{lemma}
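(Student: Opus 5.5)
The plan is a backward recursion on the stages, counting leaves of the random scenario tree. For each $t\in[T]$, let $N_t$ denote the number of stage-$T$ leaves (scenarios) needed to construct one copy of $\hat H_t(x)$ from a given node at stage $t-1$. Set $N_T=1$, since $\hat H_T(x)=f_T(\xi_T,x)$ requires a single sample path continuation.

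For $t\in[T-1]$, the key observation is that the antithetic construction in~\eqref{eq:H_hat} reuses samples. The estimators $\hat h_t^{\lambda_t,\rm e}(x)$ and $\hat h_t^{\lambda_t,\rm o}(x)$ are built from the same $2^{\lambda_t}$ i.i.d.\ copies of $\hat H_{t+1}(x)$ that enter $\hat h_t^{\lambda_t}(x)$, merely split into even and odd indices; for $\lambda_t=0$ they are zero by convention. Hence, conditional on $\lambda_t=\ell$, constructing $\hat H_t(x)$ requires exactly $2^\ell$ independent copies of $\hat H_{t+1}(x)$, each generated below its own child node $\xi_{t+1}^i$. Therefore
\[
N_t=\sum_{i=1}^{2^{\lambda_t}} N_{t+1}^{(i)},
\]
where the $N_{t+1}^{(i)}$ are i.i.d.\ copies of $N_{t+1}$. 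These copies are independent of $\lambda_t$, because each depends only on the log-branching factors $\lambda_{t+1},\ldots,\lambda_{T-1}$ drawn afresh in its subtree, and all $\lambda$'s are independent of everything else.

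Taking expectations and conditioning on $\lambda_t$ (Wald's identity, or simply the tower property with independence) gives
\[
\EE[N_t]=\EE[2^{\lambda_t}]\,\EE[N_{t+1}]=\Big(\sum_{\ell=0}^{M_t}q_t(\ell)2^\ell\Big)\EE[N_{t+1}].
\]
Unrolling from $t=T-1$ down to $t=1$ yields $\EE[N_1]=\prod_{t=1}^{T-1}\sum_{\ell=0}^{M_t}q_t(\ell)2^\ell$. Since $\hat F(x)=\hat\EE[\hat H_1(x)]$ averages $n_1$ independent copies of $\hat H_1(x)$, linearity gives the claimed formula.

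The only delicate point is the counting convention. One must argue that the even/odd sub-averages incur no additional scenarios, and that the $\lambda_t$ are resampled independently at every node rather than shared across the tree. Under the paper's construction, where $\lambda_t$ is drawn inside each recursive call, the independence required for the product of expectations holds. Nothing beyond this bookkeeping is expected to be an obstacle.
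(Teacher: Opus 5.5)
Your proof is correct and follows the paper's own argument: count the leaves of each tree with $2^{\lambda_t}$ branches per stage-$t$ node, use the independence of the log-branching factors, and multiply by $n_1$. You make that argument explicit through a Wald-type backward recursion, and one small remark applies: per-node resampling of $\lambda_t$ is not actually needed. Your conditioning step only uses that each $N_{t+1}^{(i)}$ has the same mean and is independent of $\lambda_t$, and even if $\lambda_t$ were shared across a whole tree, the leaf count $\prod_{t=1}^{T-1}2^{\lambda_t}$ would give the same expectation by independence across stages.
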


Lemmas~\ref{lemma:bound_on_variance_rtmlmc}, \ref{lemma:bias_bound_rtmlmc} and~\ref{lemma:bound_on_cost_rtmlmc} provide bounds on the variance, the bias and the expected computational cost of the MLMC estimator, respectively. They hold for any choices of the estimator's hyperparameters, that is, the number~$n_1$ of scenario trees as well as the rate parameter~$r_t$ and truncation point~$M_t$ of the geometric distribution governing the log-branching factor~$\lambda_t$ for every~$t\in[T-1]$. These hyperparameters should be tuned to guarantee a mean squared error of at most~$\epsilon^2$ at minimal computational cost. To achieve this, we select sufficiently large truncation points~$M_t$, $t\in[T-1]$, to ensure that the bias bound of Lemma~\ref{lemma:bias_bound_rtmlmc} is at most~$\epsilon/\sqrt{2}$. We also select a sufficiently large number~$n_1$ of scenario trees to ensure that the variance is at most~$\epsilon^2/2$. In view of~\eqref{eq:variance-rtmlmc-estimator}, this is achieved by setting~$n_1 \approx 2\mu_1^2(x)/\epsilon^2$, which implies that the expected sampling cost satisfies
\[
    \CC(\hat F(x)) \approx \frac{2 \mu_1^2(x)}{\epsilon^2} \prod_{t=1}^{T-1} \sum_{\ell=0}^{M_t} q_t(\ell)2^{\ell};
\]
see Lemma~\ref{lemma:bound_on_cost_rtmlmc}. Recall that we only have access to upper bounds on~$\mu_1^2(x)$ that depend indirectly on the rate parameters~$r_t$ through the probabilities~$q_t(\ell)$. We thus construct approximate upper bounds on~$\CC(\hat F(x))$ uniformly across all~$x\in\cX$ for both nonsmooth and smooth integrands by replacing~$\mu_1^2(x)$ on the right hand side of the above expression with the respective bounds given in Lemma~\ref{lemma:bound_on_variance_rtmlmc}. We denote the resulting upper bounds by~$c_{\mathrm{ns}}(r)$ and~$c_{\mathrm{s}}(r)$ for nonsmooth and smooth integrands, respectively, where~$r = (r_1, \ldots, r_{T-1})$. Ideally, we should choose~$r\in(0,1)^{T-1}$ so as to minimize these upper bounds. However, the exact minimizers are difficult to obtain in closed form. The next assumption provides near-optimal choices for~$r$.

\begin{assumption}[Rate Parameters] 
\label{assump:rate-parameters-rtmlmc-funval}
For any $t\in[T-1]$ we set
\begin{align*}
    r_t=\begin{cases} 2^{-1}&\text{if Assumption~\ref{assump:smooth} does not hold,}\\
    1-2^{-1-2^{-t}} &\text{if Assumption~\ref{assump:smooth} holds.}
    \end{cases}
\end{align*}
\end{assumption}

\begin{rem}\label{remark:optimal-prob} 
Lemma~\ref{lemma:optimal_b} in the appendix shows that if the integrands are nonsmooth (Assumption~\ref{assump:smooth} does not hold), then the rate parameters of Assumption~\ref{assump:rate-parameters-rtmlmc-funval} exactly minimize the approximate upper bound~$c_{\mathrm{ns}}(r)$ on the expected sampling cost over all~$r\in(0,1)$. If~$M_t = \infty$ for all $t\in[T-1]$, however, then the choice $r_t = 2^{-1}$ leads to infinite variance and cost bounds. In fact, no value of~$r_t$ can simultaneously ensure finite second moment and finite expected sampling cost; see also \cite{syed2023optimal}. These findings underscore the merits of truncated MLMC estimators. If the integrands are smooth (Assumption~\ref{assump:smooth} holds), then the rate parameters of Assumption~\ref{assump:rate-parameters-rtmlmc-funval} exactly minimize the approximate upper bound~$c_{\mathrm{s}}(r)$ only for~$T=2$. For $T > 2$, the rate parameters in Assumption~\ref{assump:rate-parameters-rtmlmc-funval} jointly control both the geometric growth of the second moment and the expected sampling cost of~$\hat{F}(x)$. Specifically, Lemma~\ref{lemma:bound_on_variance_rtmlmc} implies that $\mu_1^2$ is bounded in~$M_t$ whenever $r_t < 1 - 2^{-1 - (2^t - 1)^{-1}}$, while Lemma~\ref{lemma:bound_on_cost_rtmlmc} implies that the expected sampling cost is bounded in~$M_t$ whenever $r_t > 2^{-1}$ for all $t \in [T-1]$. These bounds remain valid in the special case when~$M_t=\infty$ for all~$t\in[T-1]$ addressed in~\citep{syed2023optimal}. The choice $r_t = 1 - 2^{-1 - 2^{-t}}$ for all $t \in [T-1]$ satisfies both of these strict inequalities and provides a balanced trade-off between the geometric growth of the level-$\ell$ contributions to the variance and the expected sampling cost. Hence, this choice is natural. 
\end{rem}

The MLMC estimator is fully determined by the number~$n_1$ of scenario trees as well as the rates~$r_t$ and the truncation points~$M_t$ that define the truncated geometric distributions of the log-branching factors~$\lambda_t$ for all~$t \in [T-1]$. The next theorem specifies a particular choice of these hyperparameters that guarantees the mean squared error of the MLMC estimator to be bounded by~$\epsilon^2$. It also characterizes the estimator's mean squared error-based scenario complexity, which we define as the expected value of its sampling cost.

\begin{theorem}[Mean Squared Error-Based Scenario Complexity of the MLMC Estimator]
\label{thm:rtmlmc_sample_complexity_mse} 
Suppose that Assumptions~\ref{assump:general}, \ref{assump:lipschitz}, \ref{assump:mu_bar} and~\ref{assump:rate-parameters-rtmlmc-funval} hold and that $\epsilon>0$ is a given tolerance. Set $n_1=\lceil \sup_{x\in\cX} 2\mu_1^2(x)/\epsilon^2\rceil$, and define the truncation points~$M_t$, $t\in[T-1]$, through the backward recursion
\begin{align*}
     M_{t} = \bigg\lceil 2\log_2\Big( \textstyle \sqrt{2}L_{[t]}\sqrt{C_{t+1}} \, \Big( \prod_{s=t+1}^{T-1} \sqrt{Z_s} \sqrt{M_s+1} \Big) (T-1)/\epsilon \Big) \bigg\rceil
\end{align*}
if Assumption~\ref{assump:smooth} does not hold and 
\begin{align*}
     \textstyle M_{t}  = \bigg\lceil\log_2\bigg( \sqrt{2}L_{[t-1]}S_{t}D_{t+1}^{2^{-t}} \, \bigg( \prod_{s=t+1}^{T-1}Z_s^{\frac{2^s-1}{2^t}} \Big(\frac{1-2^{-\frac{M_s+1}{2^s}}}{1-2^{-\frac{1}{2^s}}} \Big)^\frac{1}{2^t} \bigg)  (T-1)/(2\epsilon)\bigg)\bigg\rceil
\end{align*}
if Assumption~\ref{assump:smooth} holds. Here, $C_t$ and~$D_t$ are defined as in Lemma~\ref{lemma:bound_on_variance_rtmlmc}, and $Z_t\coloneqq ( 1-(1-r_t)^{M_t+1} ) / r_t$. Then, the  MLMC estimator achieves a root mean squared error of at most~$\epsilon$ uniformly across all~$x\in\cX$. Thus, the mean squared error-based scenario complexity of the MLMC estimator amounts at most to $\cO(\log(\epsilon^{-1})^{2(T-1)}\epsilon^{-2})$ if Assumption~\ref{assump:smooth} does not hold and to $\cO(\epsilon^{-2})$ if Assumption~\ref{assump:smooth} holds.
\end{theorem}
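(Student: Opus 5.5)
The argument has three parts. First, bound the MSE through the decomposition~\eqref{eq:mse_decomposition}. Second, show that the stated $n_1$ makes the variance at most $\epsilon^2/2$ and the stated $M_t$ make the bias at most $\epsilon/\sqrt{2}$. Third, bound the expected cost with Lemma~\ref{lemma:bound_on_cost_rtmlmc}.

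\textbf{Variance.} By~\eqref{eq:variance-rtmlmc-estimator}, $\VV(\hat F(x))\le \mu_1^2(x)/n_1\le\epsilon^2/2$ for every $x\in\cX$. This holds because $n_1\ge\sup_x 2\mu_1^2(x)/\epsilon^2$. That supremum is finite by Lemma~\ref{lemma:bound_on_variance_rtmlmc}, applied with $t=1$ in the nonsmooth case. In the smooth case we apply it to the $2$-nd moment via Jensen from the $2^1$-th moment bound.

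\textbf{Bias.} We use Lemma~\ref{lemma:bias_bound_rtmlmc} and make each of its $T-1$ summands at most $\epsilon/(\sqrt{2}(T-1))$.

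In the nonsmooth case, $r_s=1/2$ gives $q_s(\ell)=2^{-\ell-1}/(1-2^{-M_s-1})$. Hence $\sum_{\ell=0}^{M_s}1/(2^\ell q_s(\ell))=2(1-2^{-M_s-1})(M_s+1)=Z_s(M_s+1)$, since $Z_s=2(1-2^{-M_s-1})$. Lemma~\ref{lemma:bound_on_variance_rtmlmc} then gives $\EE[\mu_{t+1}^2]^{1/2}\le\sqrt{C_{t+1}}\prod_{s>t}\sqrt{Z_s(M_s+1)}$. The backward-recursive choice of $M_t$, which depends only on $M_s$ for $s>t$ and is therefore well defined, yields $L_{[t]}\EE[\mu_{t+1}^2]^{1/2}2^{-M_t/2}\le\epsilon/(\sqrt{2}(T-1))$.

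In the smooth case, note that $\EE[\mu_{t+1}^2]\le \EE[\mu_{t+1}^{2^{t+1}}]^{2^{-t}}$ by Jensen. We then evaluate $\sum_\ell 2^{-2^s\ell}q_s(\ell)^{1-2^s}$ with $1-r_s=2^{-1-2^{-s}}$. Here $q_s(\ell)=(1-r_s)^\ell/Z_s$, so the sum equals $Z_s^{2^s-1}\sum_\ell 2^{-2^s\ell}\,2^{(1+2^{-s})(2^s-1)\ell}=Z_s^{2^s-1}\sum_\ell 2^{-\ell 2^{-s}}$. This is a geometric sum giving the factor $(1-2^{-(M_s+1)/2^s})/(1-2^{-1/2^s})$. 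Raising it to the power $2^{-t}$ reproduces the expression in the definition of $M_t$, so that choice forces $L_{[t-1]}S_t\EE[\mu_{t+1}^2]/2^{M_t+1}\le\epsilon/(\sqrt2(T-1))$. One caveat: the index bookkeeping between $\mu_{t+1}^{2^{t+1}}$ and the stated product $D_{t+1}^{2^{-t}}$ must be checked carefully. I would adjust the Jensen step to match whatever the lemma provides.

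\textbf{Complexity.} By Lemma~\ref{lemma:bound_on_cost_rtmlmc}, the expected cost is $n_1\prod_t\sum_\ell q_t(\ell)2^\ell$.

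In the nonsmooth case, $\sum_\ell q_t(\ell)2^\ell=(M_t+1)/(2Z_t)=\cO(M_t)$ and $n_1=\cO(\epsilon^{-2}\prod_t Z_t(M_t+1))=\cO(\epsilon^{-2}\prod_t M_t)$. So the cost is $\cO(\epsilon^{-2}\prod_t M_t^2)$. The main obstacle is controlling $M_t$ through the recursion. Each $M_t$ is $2\log_2$ of $\epsilon^{-1}$ times a product of $\sqrt{M_s+1}$ with $s>t$. A backward induction shows $M_t=\cO(\log\epsilon^{-1})$, since $\log\log$ terms are dominated. This gives $\cO(\log(\epsilon^{-1})^{2(T-1)}\epsilon^{-2})$.

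In the smooth case, $\sum_\ell q_t(\ell)2^\ell=Z_t^{-1}\sum_\ell 2^{-\ell 2^{-t}}$ is bounded uniformly in $M_t$. Likewise $\mu_1^2$ is bounded uniformly by the geometric-sum evaluation above, and the $Z_s$ stay bounded. Hence $n_1=\cO(\epsilon^{-2})$ and the cost is $\cO(\epsilon^{-2})$, with the $M_t=\cO(\log\epsilon^{-1})$ entering only the bias.
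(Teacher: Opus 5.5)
Your proposal is correct and follows the paper's proof essentially step for step. You split the error budget so that bias and standard deviation are each at most $\epsilon/\sqrt{2}$, evaluate the moment sums of Lemma~\ref{lemma:bound_on_variance_rtmlmc} in closed form under the rates of Assumption~\ref{assump:rate-parameters-rtmlmc-funval} (including the Jensen step $\EE[\mu_{t+1}^2]\le\EE[\mu_{t+1}^{2^{t+1}}]^{2^{-t}}$ in the smooth case), check that the backward recursion gives $M_t=\cO(\log(\epsilon^{-1}))$, and plug everything into Lemma~\ref{lemma:bound_on_cost_rtmlmc}. One small arithmetic slip that does not affect the order of the cost: with $r_t=1/2$ you have $q_t(\ell)2^\ell=1/Z_t$, so $\sum_{\ell}q_t(\ell)2^\ell=(M_t+1)/Z_t$ rather than $(M_t+1)/(2Z_t)$.
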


Theorems~\ref{thm:saa_sample_complexity_mse} and~\ref{thm:rtmlmc_sample_complexity_mse} show that the MLMC estimator substantially outperforms the SAA estimator in terms of mean squared error-based scenario complexity. Specifically, for nonsmooth integrands, the MLMC estimator achieves a scenario complexity of~$\mathcal{O}(\log(\epsilon^{-1})^{2(T-1)}\epsilon^{-2})$, representing a significant improvement over the~$\mathcal{O}(\epsilon^{-2T})$ complexity of the SAA estimator. Similarly, for smooth integrands, the MLMC estimator improves the scenario complexity from~$\mathcal{O}(\epsilon^{-(T+1)})$ for SAA to~$\mathcal{O}(\epsilon^{-2})$. The MLMC estimator of Definition~\ref{def:MCCO_RTMLMC}, which discards all terms~$\hat{h}_t^{\ell}$, $\hat{h}_t^{\ell,{\rm e}}$ and~$\hat{h}_t^{\ell,{\rm o}}$ beyond the truncation point~$M_t$ for each~$t \in [T-1]$, also outperforms the \emph{untruncated} MLMC estimator proposed in~\cite{syed2023optimal}. While the untruncated estimator achieves a scenario complexity of~$\mathcal{O}(\epsilon^{-2})$ for smooth integrands, it suffers from infinite scenario complexity for nonsmooth integrands. In contrast, our MLMC estimator avoids this divergence by using truncation.

Note that while the scenario complexity of our MLMC estimator scales gracefully with~$1/\epsilon$, it exhibits exponential dependence on~$T$ when the integrands are nonsmooth. As a result, the expected computational cost grows rapidly with~$T$. This scaling behavior reflects the inherent difficulty of evaluating nested compositions of conditional expectations and nonsmooth functions. Nevertheless, for applications characterized by small values of~$T$, as discussed in Section~\ref{sec:motivational_ex}, the MLMC estimator can be computed to a moderate relative accuracy ({\em e.g.}, $10\%$ or $1\%$) at a practical computational cost. We emphasize, however, that Theorem~\ref{thm:rtmlmc_sample_complexity_mse} provides the first finite scenario complexity guarantee for problem~\eqref{problem:MCCE} with nonsmooth integrands that avoids exponential terms of the order $\epsilon^{-T}$. As these scenario complexity guarantees pertain only to expected errors, it is expedient to complement them with bounds that hold with high probability. As in Section~\ref{sec:saa}, such bounds are available for estimators with finite second moments under a sub-Gaussianity assumption.

\begin{assumption}[Sub-Gaussianity of $\hat H_1(x)$]
\label{assump:sub-gaussianity_mlmc} 
The estimator $\hat H_1(x)$, which was introduced in Definition~\ref{def:MCCO_RTMLMC}, is sub-Gaussian with variance proxy $\zeta^2>0$ for any fixed~$x\in\cX$.
\end{assumption}

Remark~\ref{rem:sub-gaussian-2} below outlines easily verifiable conditions under which Assumption~\ref{assump:sub-gaussianity_mlmc} holds.

\begin{rem}
\label{rem:sub-gaussian-2}
    \!Assumption~\ref{assump:sub-gaussianity_mlmc} holds under the same conditions that imply Assumption~\ref{assump:sub-gaussianity_saa} in Remark~\ref{rem:sub-gaussian}. Indeed, the sub-Gaussianity of $\hat H_t(x)$ can be shown by backward recursion on~$t\in[T-1]$, using the assumed sub-Gaussianity of~$\xi_t$ together with Lemma~\ref{lemma:sub-gaussianity-preserving-operations}, which ensures that sub-Gaussianity is preserved under finite sums, concatenations, Lipschitz continuous transformations and mixtures.
\end{rem}

Using Assumption~\ref{assump:sub-gaussianity_mlmc}, we now demonstrate that if the rates~$r_t$ and truncation points~$M_t$ for all~$t \in [T-1]$ are chosen as in Theorem~\ref{thm:rtmlmc_sample_complexity_mse}, then the worst-case absolute error of the MLMC estimator is bounded by~$\epsilon$ with high probability, where the failure probability tends to~$0$ as~$n_1$ increases. In the remainder of this section, we use~$L'\in\RR$ to denote the Lipschitz constant of the MLMC estimator~$\hat{F}(x)$ with respect to~$x$, which can be shown to exist by using similar arguments as in Lemma~\ref{lem:lipschitz}. Details are omitted for brevity.

\begin{lemma}
\label{lemma:unif_conv_rt-mlmc}
Suppose that Assumptions~\ref{assump:general}, \ref{assump:lipschitz}, \ref{assump:rate-parameters-rtmlmc-funval} and~\ref{assump:sub-gaussianity_mlmc} hold and that $\epsilon>0$. Assume also that the truncation points~$M_t$, $t\in[T-1]$ satisfy
\begin{align*}
     M_{t} = \bigg\lceil 2\log_2\Big( \textstyle 4 L_{[t]}\sqrt{C_{t+1}} \, \Big( \prod_{s=t+1}^{T-1} \sqrt{Z_s} \sqrt{M_s+1} \Big) (T-1)/\epsilon \Big) \bigg\rceil
\end{align*}
if Assumption~\ref{assump:smooth} does not hold and
\begin{align*}
     \textstyle M_{t}  = \bigg\lceil\log_2\bigg( 2 L_{[t-1]}S_{t}D_{t+1}^{2^{-t}} \, \bigg( \prod_{s=t+1}^{T-1}Z_s^{\frac{2^s-1}{2^t}} \Big(\frac{1-2^{-\frac{M_s+1}{2^s}}}{1-2^{-\frac{1}{2^s}}} \Big)^\frac{1}{2^t} \bigg)  (T-1)/\epsilon\bigg)\bigg\rceil
\end{align*}
if Assumption~\ref{assump:smooth} holds. Then, we have   
    \begin{align*}
    \PP\left(\sup_{x\in \mathcal{X}} \; \lvert \hat{F}(x) - F(x) \rvert > \epsilon \right) \leq
    2\left\lceil 8 L' D_\mathcal{X}/\epsilon +1\right\rceil^{d} \exp\bigg(- \frac{n_1\epsilon^2}{32\zeta^2} \bigg).
    \end{align*}
\end{lemma}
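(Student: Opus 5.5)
We follow the template of Lemma~\ref{lemma:unif_conv_saa} for the SAA estimator. The argument splits the error into a deterministic bias term and a stochastic fluctuation term, and controls the fluctuation uniformly over~$\cX$ with an $\epsilon$-net.

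\textbf{Step 1: bias.} For every $x\in\cX$ we write
\[
|\hat F(x)-F(x)|\le |\hat F(x)-\EE[\hat F(x)]| + |\EE[\hat F(x)]-F(x)|.
\]
We bound the second term by Lemma~\ref{lemma:bias_bound_rtmlmc}. In the nonsmooth case we use Lemma~\ref{lemma:bound_on_variance_rtmlmc}, with $q_s(\ell)=r_s(1-r_s)^\ell/(1-(1-r_s)^{M_s+1})$ and $r_s=1/2$ from Assumption~\ref{assump:rate-parameters-rtmlmc-funval}. Then $\sum_{\ell=0}^{M_s}1/(2^\ell q_s(\ell))=Z_s(M_s+1)$, so $\EE[\mu_{t+1}^2(x)]^{1/2}\le\sqrt{C_{t+1}}\prod_{s>t}\sqrt{Z_s(M_s+1)}$. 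The stated choice of $M_t$ (with constant~$4$ instead of~$\sqrt2$) makes each of the $T-1$ summands at most $\epsilon/(4(T-1))$. The total bias is therefore at most $\epsilon/4$, uniformly in $x$.

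In the smooth case we proceed analogously. We bound $\EE[\mu_{t+1}^2]\le \EE[\mu_{t+1}^{2^{t+1}}]^{2^{-t}}$ by Jensen, insert the moment bound of Lemma~\ref{lemma:bound_on_variance_rtmlmc} with $r_s=1-2^{-1-2^{-s}}$, and evaluate the geometric sums $\sum_\ell 2^{-2^s\ell}q_s(\ell)^{1-2^s}$ explicitly. This should produce exactly the factors $Z_s^{2^s-1}(1-2^{-(M_s+1)/2^s})/(1-2^{-1/2^s})$ raised to $2^{-t}$, and the chosen $M_t$ again gives bias at most $\epsilon/4$. The main bookkeeping hazard in the whole proof is matching these constants precisely.

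\textbf{Step 2: concentration at a fixed point.} Fix $x$. Since $\hat F(x)=\frac1{n_1}\sum_{i_1}\hat H_1^{i_1}(x)$ is an average of $n_1$ i.i.d.\ sub-Gaussian copies (Assumption~\ref{assump:sub-gaussianity_mlmc}), Hoeffding's inequality for sub-Gaussian variables gives
\[
\PP(|\hat F(x)-\EE\hat F(x)|>\epsilon/4)\le 2\exp(-n_1\epsilon^2/(32\zeta^2)).
\]

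\textbf{Step 3: net argument.} Take an $\epsilon/(8L')$-net $\{x^k\}$ of $\cX$ of cardinality at most $\lceil 8L'D_\cX/\epsilon+1\rceil^d$; this can be built from a grid of the bounding cube of side $D_\cX$. The MLMC estimator $\hat F$ is $L'$-Lipschitz, and $F$ is $L_{[T]}$-Lipschitz. Since we may take $L'\ge L_{[T]}$, which is automatic from the construction because the $\hat H_t$ are assembled from Lipschitz maps with a comparable constant, both $\hat F$ and $F$ move by at most $\epsilon/8$ between a point and its nearest net point. On the event that the fluctuation is at most $\epsilon/4$ at every net point, we therefore get
\[
|\hat F(x)-F(x)|\le \epsilon/8+\epsilon/4+\epsilon/4+\epsilon/8<\epsilon
\]
for all $x\in\cX$. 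A union bound over the net yields the claimed probability bound.

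The delicate point here is that $L'$ must dominate the Lipschitz modulus of $\hat F$ almost surely. It depends on the random $q_t(\lambda_t)^{-1}$, which is bounded because of truncation. We treat this as given by the statement's definition of $L'$.
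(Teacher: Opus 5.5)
Your argument is the one the paper intends. The paper omits the proof and says it parallels that of Lemma~\ref{lemma:unif_conv_saa}: a uniform bias bound of $\epsilon/4$ from Lemmas~\ref{lemma:bias_bound_rtmlmc} and~\ref{lemma:bound_on_variance_rtmlmc}, sub-Gaussian Hoeffding at level $\epsilon/4$ at each net point, and a union bound. Your evaluation of the geometric sums, the choice of $M_t$ giving $\epsilon/(4(T-1))$ per stage, and the Hoeffding constant are right. Making $L'\ge L_{[T]}$ explicit is also a fair reading of what the paper needs.

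The one step that does not deliver what you claim is the net count in Step~3. At radius $v=\epsilon/(8L')$, a grid on the bounding cube needs mesh $2v/\sqrt d$ to have Euclidean covering radius $v$. That means roughly $4\sqrt d\,L'D_\cX/\epsilon$ points per axis, which for small $\epsilon$ exceeds $8L'D_\cX/\epsilon+1$ once $d\ge 5$. Grid points also generally lie outside $\cX$, where Assumption~\ref{assump:sub-gaussianity_mlmc} is unavailable unless you project them onto $\cX$. The paper's covering bound, Lemma~\ref{lemma:v-net}, gives $\lceil 2D_\cX/v+1\rceil^d=\lceil 16L'D_\cX/\epsilon+1\rceil^d$ at your radius, which is also not the stated constant. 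Your count is in fact true (via a maximal $v$-separated subset of $\cX$ and the isodiametric inequality), but not for the reason you give.

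The simplest fix is to spend the slack you left and take $v=\epsilon/(4L')$, as in Lemma~\ref{lemma:v-net}. On the good event you then get $|\hat F(x)-F(x)|\le\epsilon/4+\epsilon/4+\epsilon/4+\epsilon/4=\epsilon$ for all $x\in\cX$. This suffices because the bad event is $\{\sup_{x\in\cX}|\hat F(x)-F(x)|>\epsilon\}$, and the net has at most $\lceil 8L'D_\cX/\epsilon+1\rceil^d$ points.

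Two smaller points are inherited from the paper rather than introduced by you. First, Step~1 needs Assumption~\ref{assump:mu_bar}. The statement omits it, but it is implicit in the finiteness of $C_{t+1}$ and $D_{t+1}$. Second, in the smooth case the step $\EE[\mu_{t+1}^2]\le\EE[\mu_{t+1}^{2^{t+1}}]^{2^{-t}}$ compares a $2$-norm with a $2^{t+1}$-norm. By Lemma~\ref{lemma:matrix-inequality-pq} it only holds up to a factor $d_t^{1-2^{-t}}$, so when $d_t>1$ the stated $M_t$ should carry that factor. The proof of Theorem~\ref{thm:rtmlmc_sample_complexity_mse} makes the same step.
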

Recall that~$\zeta^2$ is the variance proxy of $\hat{H}_1(x)$ (see Assumption~\ref{assump:sub-gaussianity_mlmc}), while~$d$ and~$D_{\cX}$ are the dimension and the diameter of~$\cX$, respectively. The proof of Lemma~\ref{lemma:unif_conv_rt-mlmc} parallels that of Lemma~\ref{lemma:unif_conv_saa} and is thus omitted. Lemma~\ref{lemma:unif_conv_rt-mlmc} readily implies that the minimizers of problem~\eqref{prob:min_hatF(x)}, which optimizes the MLMC estimator over~$\cX$, are near-optimal in~\eqref{problem:MCCO} with high probability. Unfortunately, these minimizers are often difficult to compute even if~$F(x)$ is a convex function.

\begin{rem}
    Recall that~$F(x)$ as well as the SAA estimator from Section~\ref{sec:saa} are convex if the integrand~$f_t(\xi_t, x_t)$ is convex in~$x_t$ for every~$t \in [T]$ and non-decreasing in~$x_t$ for every~$t \in [T-1]$. Perhaps surprisingly, however, these conditions fail to guarantee that the MLMC estimator is convex. To see this, assume for example that~$T=2$, $f_1(\xi_1,x_1)=\exp(x_1)$ and $f_2(\xi_2,x_2)=\xi_2+x_2$. As $f_t(\xi_t,x_t)$ is convex and non-decreasing in~$x_t$ for every~$t\in[T]$, we conclude that~$F(x)$ and its SAA estimator are convex. Next, we construct the MLMC estimator with~$n_1=1$ scenario trees in the event~$\lambda_1=1$. This estimator requires a sample~$\xi_1^1$ from the marginal distribution of~$\xi_1$ and~$2^{\lambda_1}=2$ samples~$\xi_2^1$ and~$\xi_2^2$ from the conditional distribution of~$\xi_2$ given~$\xi_1=\xi_1^1$. By Definition~\ref{def:MCCO_RTMLMC}, the estimator can be represented as
    \begin{align*}
        \hat F(x)& = \frac{1}{q_1(\lambda_1)}\Big[\exp\big((\xi_2^1 + \xi_2^2)/2 +x \big) - \frac{1}{2}\exp\big(\xi_2^1+x \big) - \frac{1}{2}\exp\big( \xi_2^2+x \big) \Big]\\
        &= \frac{\exp(x)}{q_1(1)} \Big[ \exp\big((\xi_2^1 + \xi_2^2 )/2 \big) - \frac{1}{2}\exp\big(\xi_2^1 \big) - \frac{1}{2}\exp\big(\xi_2^2 \big) \Big],
    \end{align*}
    where the second equality holds because we condition on the event~$\lambda_1=1$. Assuming that $\xi_2^1\neq \xi_2^2$, Jensen's inequality implies that the last term in square brackets is negative, which in turn implies that~$\hat{F}(x)$ is nonconvex. Hence, minimizing the MLMC estimator over~$\cX$ may, with positive probability, lead to an intractable nonconvex optimization problem. We emphasize, however, that the MLMC estimator constitutes an unbiased estimator for the SAA estimator with branching factor~$2^{M_1}$, which is convex in this example. 
\end{rem}

We now demonstrate that the minimizers of the MLMC problem~\eqref{prob:min_hatF(x)} are, regardless of whether they can be computed efficiently, near-optimal in~\eqref{problem:MCCO} with high probability.

\begin{corollary}\label{cor:high_prob_conv_rtmlmc}
Assume that all conditions of Lemma~\ref{lemma:unif_conv_rt-mlmc} hold for some~$\epsilon>0$, and let $x^*$ and $\hat{x}^*$ be minimizers of \eqref{problem:MCCO} and the MLMC problem \eqref{prob:min_hatF(x)}, respectively. Then, we have
\begin{align*}
    \PP\big( F(\hat{x}^*) - F(x^*) > \epsilon \big) \leq 4\left\lceil 8L' D_\mathcal{X}/\epsilon +1\right\rceil^{d} \exp\bigg(- \frac{n_1\epsilon^2}{128\zeta^2} \bigg).
\end{align*}   
\end{corollary}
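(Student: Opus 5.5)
The plan is to reduce Corollary~\ref{cor:high_prob_conv_rtmlmc} to the uniform convergence statement of Lemma~\ref{lemma:unif_conv_rt-mlmc}, applied at the tolerance $\epsilon/2$. This mirrors how Corollary~\ref{cor:high_prob_bound_saa_epsilon_optimal} follows from Lemma~\ref{lemma:unif_conv_saa}.

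The first step is the standard deterministic decomposition
\[
F(\hat x^*)-F(x^*) = \big(F(\hat x^*)-\hat F(\hat x^*)\big) + \big(\hat F(\hat x^*)-\hat F(x^*)\big) + \big(\hat F(x^*)-F(x^*)\big).
\]
The middle term is nonpositive because $\hat x^*$ minimizes $\hat F$ over $\cX$ and $x^*\in\cX$. The remaining two terms are each bounded by $\sup_{x\in\cX}|\hat F(x)-F(x)|$. Hence the event $\{F(\hat x^*)-F(x^*)>\epsilon\}$ is contained in $\{\sup_{x\in\cX}|\hat F(x)-F(x)|>\epsilon/2\}$.

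The second step bounds the probability of that event with Lemma~\ref{lemma:unif_conv_rt-mlmc} at tolerance $\epsilon/2$. This gives
\[
2\lceil 16L'D_\cX/\epsilon+1\rceil^d\exp\big(-n_1\epsilon^2/(128\zeta^2)\big).
\]
The exponent matches the claim exactly. What remains is to absorb the covering prefactor. The elementary inequality $\lceil 2a+1\rceil\le 2\lceil a+1\rceil$ handles this, but it produces a factor $2^d$ rather than the constant $2$ that appears in the claim. That mismatch is the one delicate point.

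To reach the claim's constant, I would avoid applying the lemma as a black box and instead reopen its proof, which parallels that of Lemma~\ref{lemma:unif_conv_saa}. That argument has three ingredients:
\begin{itemize}
\item a covering of $\cX$ by an $\epsilon/(8L')$-net of cardinality at most $\lceil 8L'D_\cX/\epsilon+1\rceil^d$;
\item Lipschitz continuity of $F$ and $\hat F$, which transfers the error from net points to all of $\cX$;
\item a bias bound of $\epsilon/4$, guaranteed by the choice of the $M_t$ (this holds a fortiori, since the stated $M_t$ were designed so that the bias is small relative to the tolerance).
\end{itemize}
The probabilistic part is a two-sided sub-Gaussian tail bound on $\hat F(x)-\EE[\hat F(x)]$ at each net point, which follows from Assumption~\ref{assump:sub-gaussianity_mlmc} since $\hat F$ averages $n_1$ i.i.d.\ copies of $\hat H_1$. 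Bounding the two one-sided deviations of the two terms separately, on the same net and with halved thresholds, costs only the factor $4$ in front instead of $2$ and turns $32$ into $128$ in the exponent, while keeping the net size $\lceil 8L'D_\cX/\epsilon+1\rceil^d$. A union bound over the two terms at $\hat x^*$ and $x^*$ then yields the stated inequality. The main obstacle is this bookkeeping of constants: making sure the bias and discretization allowances of the lemma's proof remain valid at the halved threshold, so that the prefactor stays $4\lceil 8L'D_\cX/\epsilon+1\rceil^d$.
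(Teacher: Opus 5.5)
Your first step is essentially the paper's argument. The paper proves this corollary like Corollary~\ref{cor:high_prob_bound_saa_epsilon_optimal}: drop the nonpositive middle term, split into one-sided events at $\epsilon/2$, and cite the uniform convergence lemma. You are also right that invoking Lemma~\ref{lemma:unif_conv_rt-mlmc} at tolerance $\epsilon/2$ gives $\lceil 16L'D_{\mathcal{X}}/\epsilon+1\rceil^{d}$, not $\lceil 8L'D_{\mathcal{X}}/\epsilon+1\rceil^{d}$. Strictly, it also needs the $M_t$ tuned for $\epsilon/2$, whereas the hypothesis only guarantees a bias of at most $\epsilon/4$. The paper's one-line justification does not address this either.

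\textbf{The gap is in your repair.} By Lemma~\ref{lemma:v-net}, the net of cardinality $\lceil 8L'D_{\mathcal{X}}/\epsilon+1\rceil^{d}$ in the lemma's proof has mesh $v=\epsilon/(4L')$. An $\epsilon/(8L')$-net costs $\lceil 16L'D_{\mathcal{X}}/\epsilon+1\rceil^{d}$. On the coarser net, moving from the random point $\hat x^*$ to its nearest net point costs $2L'v=\epsilon/2$, which is the entire halved threshold. The bias can take a further $\epsilon/4$, so the deviation left for the sub-Gaussian tail is nonpositive and the bound is vacuous. An $\epsilon/(8L')$-net of that size would not help either: $\epsilon/4$ of discretization plus $\epsilon/4$ of bias already uses up $\epsilon/2$. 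So ``halving the thresholds on the same net'' does not give $4\lceil 8L'D_{\mathcal{X}}/\epsilon+1\rceil^{d}\exp(-n_1\epsilon^2/(128\zeta^2))$.

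\textbf{What the hypotheses do give.} Handle $x^*$, which is deterministic, without a net, and bound the two biases jointly. Write $e(x)=\hat F(x)-\mathbb{E}[\hat F(x)]$ and $b(x)=\mathbb{E}[\hat F(x)]-F(x)$, so $|b|\le\epsilon/4$. Let $x'$ be the point of an $\epsilon/(8L')$-net nearest to $\hat x^*$, with $L'$ a common Lipschitz constant of $F$ and $\hat F$ as in Lemma~\ref{lemma:v-net}. Using $\hat F(\hat x^*)\le\hat F(x^*)$, the event $F(\hat x^*)-F(x^*)>\epsilon$ implies
\[
e(x^*)-e(x') \;>\; \epsilon-2L'v-|b(x^*)|-|b(x')| \;\ge\; \epsilon/4 .
\]
Hence $e(x^*)>\epsilon/8$ or $e(x')<-\epsilon/8$. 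The sub-Gaussian tail and a union bound over the net give $\bigl(1+\lceil 16L'D_{\mathcal{X}}/\epsilon+1\rceil^{d}\bigr)\exp(-n_1\epsilon^2/(128\zeta^2))$.

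This implies the stated inequality when $d\le 2$. For $d\ge 3$ it is larger than the stated bound by a factor of at most $2^{d-2}$. That only adds $d\log 2$ inside the bracket defining $n_1$ in Theorem~\ref{thm:rtmlmc_sample_complexity_high_probability}, so all rates are unchanged. However, with the ingredients you list, keeping the exponent $128$ forces $2L'v\le\epsilon/4$. So the prefactor $4\lceil 8L'D_{\mathcal{X}}/\epsilon+1\rceil^{d}$ for general $d$ cannot be reached by this bookkeeping.
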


The proof of Corollary~\ref{cor:high_prob_conv_rtmlmc} parallels that of Corollary~\ref{cor:high_prob_bound_saa_epsilon_optimal} and is thus omitted. As the computational cost of the MLMC estimator is {\em random}, we generalize our definition of the high-probability scenario complexity as the
{\em expected} number of scenarios used by a method that is guaranteed to find an $\epsilon$-optimal solution of problem~\eqref{problem:MCCO} with probability~$1-\beta$. The next result follows immediately from Corollary~\ref{cor:high_prob_conv_rtmlmc}.

\begin{theorem}[High-Probability Scenario Complexity of the MLMC Estimator]
    \label{thm:rtmlmc_sample_complexity_high_probability}
  Suppose that all conditions of Lemma~\ref{lemma:unif_conv_rt-mlmc} hold for some~$\epsilon>0$ and that
    \begin{align*}
    n_1 =\left\lceil \frac{128\zeta^2}{\epsilon^2} \left(d \log\left( \left\lceil8L' D_\mathcal{X}/\epsilon +1\right\rceil\right) + \log\left(4/\beta \right) \right) \right\rceil
\end{align*}
for some $\beta\in(0,1)$. Then any solution of the MLMC problem \eqref{prob:min_hatF(x)} is an $\epsilon$-optimal solution of~\eqref{problem:MCCO} with probability at least $1-\beta$. Thus, the high-probability scenario complexity of the MLMC estimator equals $\cO(\log(\epsilon^{-1})^{T}\epsilon^{-2})$ if Assumption~\ref{assump:smooth} does not hold and $\cO(\log(\epsilon^{-1})\epsilon^{-2})$ if Assumption~\ref{assump:smooth} holds.
\end{theorem}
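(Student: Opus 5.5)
The proof has two parts: the probabilistic guarantee, and then the complexity count.

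\textbf{Probabilistic guarantee.} We invoke Corollary~\ref{cor:high_prob_conv_rtmlmc} with the prescribed $n_1$. Write $K\coloneqq\lceil 8L'D_\cX/\epsilon+1\rceil$. The choice of $n_1$ gives $n_1\epsilon^2/(128\zeta^2)\geq d\log K+\log(4/\beta)$. Hence the right-hand side of the corollary satisfies
\[
4K^d\exp\bigl(-n_1\epsilon^2/(128\zeta^2)\bigr)\leq 4K^d\,K^{-d}\,\beta/4=\beta.
\]
So any minimizer $\hat x^*$ of the MLMC problem satisfies $F(\hat x^*)-F(x^*)\leq\epsilon$ with probability at least $1-\beta$. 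This part is immediate.

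\textbf{Expected cost.} By Lemma~\ref{lemma:bound_on_cost_rtmlmc}, the expected cost is $n_1\prod_{t=1}^{T-1}\sum_{\ell=0}^{M_t}q_t(\ell)2^\ell$. The first factor is $n_1=\cO(\log(\epsilon^{-1})\epsilon^{-2})$, since $\zeta$, $d$, $\beta$ are fixed and $L'$ is a fixed Lipschitz constant. I take $L'$ to be bounded independently of $\epsilon$. This is a point I would verify: it amounts to bounding the Lipschitz constant of $\hat H_1$ via the Lemma~\ref{lem:lipschitz} argument, and any polynomial growth in $1/\epsilon$ would only change the logarithmic factor by a constant. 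It then remains to bound each stagewise cost factor $c_t\coloneqq\sum_{\ell\le M_t}q_t(\ell)2^\ell$.

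\emph{Nonsmooth case.} Here $r_t=1/2$, so $q_t(\ell)2^\ell=2^{-1}/(1-2^{-M_t-1})$ and $c_t\leq M_t+1$. We need $M_t=\cO(\log\epsilon^{-1})$. I would prove this by backward induction from $t=T-1$. For $t=T-1$ the formula gives $M_{T-1}=\cO(\log\epsilon^{-1})$ directly. For general $t$, assume the claim for all $s>t$. Then $Z_s\leq 1/r_s=2$ and $M_s+1=\cO(\log\epsilon^{-1})$, so the argument of $\log_2$ in the definition of $M_t$ is $\mathrm{polylog}(\epsilon^{-1})/\epsilon$. Its logarithm is therefore $\cO(\log\epsilon^{-1})$, which closes the induction. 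Multiplying the $T-1$ factors gives $\cO(\log(\epsilon^{-1})^{T-1})$. Combined with $n_1$, the total is $\cO(\log(\epsilon^{-1})^{T}\epsilon^{-2})$.

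\emph{Smooth case.} Here $1-r_t=2^{-1-2^{-t}}$, so $q_t(\ell)2^\ell\propto 2^{-\ell 2^{-t}}$. This is a convergent geometric series, so $c_t\leq r_t/\bigl((1-2^{-2^{-t}})(1-(1-r_t)^{M_t+1})\bigr)$, which is bounded by a constant independent of $M_t$ and $\epsilon$. The product over $t$ is therefore $\cO(1)$, and the total is $\cO(\log(\epsilon^{-1})\epsilon^{-2})$. For the hypotheses of Lemma~\ref{lemma:unif_conv_rt-mlmc} to make sense, the $M_t$ need only be finite. This holds because the factors $Z_s$ and $\bigl(1-2^{-(M_s+1)/2^s}\bigr)/\bigl(1-2^{-1/2^s}\bigr)$ are bounded uniformly in $M_s$, which gives $M_t=\cO(\log\epsilon^{-1})$, although the smooth cost bound does not need this.

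The main obstacle is the backward induction on $M_t$ in the nonsmooth case, namely checking that the nested $\sqrt{M_s+1}$ factors only contribute $\log\log$ terms inside $\log_2$. The uniformity of $L'$ is the second point to confirm.
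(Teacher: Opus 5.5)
Your proposal is correct and follows the paper's route: apply Corollary~\ref{cor:high_prob_conv_rtmlmc} with the prescribed $n_1$, then multiply $n_1=\cO(\log(\epsilon^{-1})\epsilon^{-2})$ by the stagewise cost factors of Lemma~\ref{lemma:bound_on_cost_rtmlmc}, each $\cO(\log(\epsilon^{-1}))$ in the nonsmooth case and $\cO(1)$ in the smooth case, and you spell out the backward induction on $M_t$ that the paper only alludes to. One correction: $L'$ is not independent of~$\epsilon$, because the weights $1/q_t(\lambda_t)$ make it scale like $\prod_t Z_t(1-r_t)^{-M_t}$, which is polynomial in $1/\epsilon$ precisely because $M_t=\cO(\log(\epsilon^{-1}))$ (so that bound is needed in the smooth case as well), but your fallback remark that polynomial growth only changes the constant in $\log(\lceil 8L'D_\cX/\epsilon+1\rceil)$ covers this, and the paper itself passes over this point without comment.
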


Theorems~\ref{thm:SAA_sample_complexity_high_probability} and~\ref{thm:rtmlmc_sample_complexity_high_probability} show that the MLMC estimator significantly outperforms the SAA estimator in terms of high-probability scenario complexity. Indeed, it improves the high-probability scenario complexity from~$\cO(\log(\epsilon^{-1})\epsilon^{-2T})$ to~$\cO(\log(\epsilon^{-1})^{T}\epsilon^{-2})$ for nonsmooth integrands and from~$\cO(\log(\epsilon^{-1})\epsilon^{-(T+1)})$ to~$\cO(\log(\epsilon^{-1})\epsilon^{-2})$ for smooth integrands. In contrast, the \emph{un}truncated MLMC estimator studied in~\cite{syed2023optimal} lacks a finite Lipschitz constant. Indeed, as is evident from~\eqref{eq:H_hat}, the Lipschitz modulus of~$\hat{H}_t(x)$ scales with~$1/q(\lambda_t)$. As the random variable~$\lambda_t$ has unbounded support when~$M_t=\infty$, $1/q(\lambda_t)$ can be arbitrarily large. Hence, $\hat{H}_t(x)$ admits no Lipschitz constant that applies uniformly across all realizations of~$\lambda_t$. Our bound on the high-probability scenario complexity of the untruncated MLMC estimator thus diverges.

Note that the high-probability guarantees of Theorem~\ref{thm:rtmlmc_sample_complexity_high_probability} for smooth integrands require a larger number~$n_1$ of scenario trees than the mean squared error-based scenario complexity results in Theorem~\ref{thm:rtmlmc_sample_complexity_mse}. Specifically, the high-probability scenario complexity incurs an additional multiplicative factor of~$\log(\epsilon^{-1})$. Despite this increase, the $\epsilon$-scaling of the MLMC estimator remains better than that of the SAA estimator.

For non‐smooth integrands, the high‐probability scenario complexity $\cO(\log(\epsilon^{-1})^T \epsilon^{-2})$ even grows slower with $\epsilon^{-1}$ than the mean squared error‐based sample complexity $\cO(\log(\epsilon^{-1})^{2(T-1)} \epsilon^{-2})$ from Theorem~\ref{thm:rtmlmc_sample_complexity_mse}. This phenomenon arises because of the sub-Gaussian tail assumption on the estimator~$\hat{H}_1(x)$ in Theorem~\ref{thm:rtmlmc_sample_complexity_high_probability}, which yields tighter control over the worst-case behavior than variance-based bounds. 

Finally, we emphasize that while the scenario complexity of the truncated MLMC estimator grows gracefully with~$\epsilon^{-1}$ for (moderate) fixed values of~$T$, it usually grows exponentially with~$T$. Such exponential growth appears to be unavoidable even for smooth integrands, in which case the scenario complexity can be shown to include a constant term that is exponential in~$T$. In addition, the untruncated MLMC estimator from~\cite{syed2023optimal} as well as the SAA estimator from Section~\ref{sec:saa} also display an exponential cost growth in~$T$. 



\section{Gradient Methods for MCCO}
\label{sec:mcco_optimization}
From Section~\ref{sec:mcco_estimation} we know that the minimizers of problem~\eqref{prob:min_hatF(x)}, equipped with the SAA or the MLMC estimator, are near-optimal in~\eqref{problem:MCCO} with high probability. Despite their favorable statistical properties, computing these minimizers can be challenging in the absence of restrictive convexity and monotonicity conditions. This motivates us in this section to develop efficient estimators for~$\nabla F(x)\in\RR^d$ that can be employed within first-order optimization methods for solving problem~\eqref{problem:MCCO} to stationarity. 

We aim to recursively construct MLMC gradient estimators using similar ideas as in Section~\ref{sec:rtmlmc}. These estimators are again based on a forest of scenario trees with a random branching structure, and the underlying log-branching factors are modeled by truncated geometric distributions. The resulting estimators are biased, but the bias can be reduced by increasing the truncation points. In the special case where the truncation points are set to infinity, we obtain {\em un}truncated, and therefore {\em un}biased, estimators. Both the truncated and untruncated gradient estimators proposed here are novel for problems with more than two stages ($T>2$). Truncated as well as untruncated MLMC gradient estimators for two-stage problems are discussed in~\citep{hu2021bias, goda2023constructing}. As will become clear below, the generalization to~$T\geq 2$ poses significant technical challenges.

Throughout, we work under Assumptions~\ref{assump:lipschitz} and~\ref{assump:smooth}, which guarantee the existence and well-defined\-ness of the gradients. We first show that the gradient~$\nabla F(x)$ can be constructed via the backward recursion
\begin{align}\label{eq:G_t}
    G_t(x) \coloneqq \EE_{t-1}\left[G_{t+1}(x) \nabla_{x_t} f_t(\xi_t, F_{t+1}^T(x))\right]\in\RR^{d\times d_{t-1}}\quad \forall t\in[T-1]
\end{align}
initialized by $G_T(x) \coloneqq \EE_{T-1}[\nabla_x f_T(\xi_T, x)]\in\RR^{d\times d_{T-1}}$, where~$F_{t+1}^T(x)$ represents the nest of the last $T-t$ conditional expectations and nonlinear integrands in~\eqref{problem:MCCO}. Hence, we have $F(x)=F^T_1(x)$; see also Definition~\ref{def:risk-mapping} in the appendix. All gradients in~\eqref{eq:G_t} exist by Assumption~\ref{assump:smooth}. Moreover, since $\nabla_{x_t} f_t(\xi_t, x_t)$ is uniformly bounded under Assumption~\ref{assump:lipschitz}, the same holds for $G_t(x)$. Consequently, the conditional expectation in~\eqref{eq:G_t} is well-defined. We can now prove that~$\nabla F(x)= G_1(x)$.

\begin{lemma}
\label{lem:gradient-recursion}
If Assumptions~\ref{assump:general}, \ref{assump:lipschitz} and~\ref{assump:smooth} hold, then $\nabla F(x)= G_1(x)$ for all $x\in\RR^{d}$.
\end{lemma}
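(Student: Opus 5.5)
We prove the stronger, stagewise statement by backward induction on $t$: for every $t\in[T]$ and every $x\in\RR^d$, the $\cF_{t-1}$-measurable map $x\mapsto F_t^T(x)$ is differentiable almost surely, and its transposed Jacobian equals $G_t(x)$. Here $F_T^T(x)=\EE_{T-1}[f_T(\xi_T,x)]$ and $F_t^T(x)=\EE_{t-1}[f_t(\xi_t,F_{t+1}^T(x))]$. Taking $t=1$ then gives $\nabla F(x)=G_1(x)$, because $\cF_0$ is trivial and $F=F_1^T$.

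For the base case $t=T$, we differentiate under the conditional expectation. Fix $x$ and a direction $h$, and consider the difference quotient
\[
\frac{f_T(\xi_T,x+sh)-f_T(\xi_T,x)}{s}.
\]
By Assumption~\ref{assump:lipschitz} this quotient is bounded in norm by $L_T\|h\|_2$ uniformly in $s$ and $\xi_T$. By Assumption~\ref{assump:smooth} it converges pointwise to $\nabla_x f_T(\xi_T,x)^\top h$. The conditional dominated convergence theorem therefore yields
\[
\lim_{s\to0}\frac{F_T^T(x+sh)-F_T^T(x)}{s}=G_T(x)^\top h \quad \text{a.s.}
\]
To obtain full (Fréchet) differentiability rather than only directional derivatives, we use the quadratic remainder bound~\eqref{eq:inequality_for_smooth_funs}:
\[
\big\|F_T^T(x+h)-F_T^T(x)-G_T(x)^\top h\big\|_2\le \EE_{T-1}\big[\tfrac{S_T}{2}\|h\|_2^2\big]=\tfrac{S_T}{2}\|h\|_2^2 .
\]
This gives differentiability directly. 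Working with this explicit remainder bound also sidesteps null-set issues over uncountably many $h$: the exceptional null set comes only from the conditional-expectation versions, and the inequality holds for all $h$ once we fix a regular conditional distribution of $\xi_T$ given $\xi_{[T-1]}$.

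For the induction step, assume $F_{t+1}^T$ is differentiable with gradient $G_{t+1}(x)$. We also use Lemma~\ref{lem:lipschitz}, which says $F_{t+1}^T$ is $L_{[t+1:T]}$-Lipschitz, so $\|G_{t+1}(x)\|$ is bounded. We again work under a regular conditional distribution of $\xi_t$ given $\xi_{[t-1]}$. The chain rule then shows that the pointwise composition $x\mapsto f_t(\xi_t,F_{t+1}^T(x))$ has gradient $G_{t+1}(x)\nabla_{x_t}f_t(\xi_t,F_{t+1}^T(x))$. Its difference quotients are dominated by $L_tL_{[t+1:T]}\|h\|_2$. Conditional dominated convergence then lets us interchange limit and $\EE_{t-1}$, which gives exactly the recursion~\eqref{eq:G_t}.

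The main obstacle is the measure-theoretic care needed to interchange differentiation with conditional expectation simultaneously for all $x$ and $h$. Conditional expectations are defined only up to null sets, so "almost surely for each $x$" does not immediately give a.s. differentiability of the random function. We handle this by defining every $F_t^T$ through regular conditional distributions, i.e. as genuine integrals $\int f_t(\xi_t,F_{t+1}^T(x;\xi_{[t]}))\,\PP(d\xi_t\mid\xi_{[t-1]})$. With that choice the dominated convergence argument applies pathwise for every $\xi_{[t-1]}$ and every $x$, and the Lipschitz bound supplies the dominating constant. Alternatively, one can restrict $x$ and $h$ to rational points, which gives countably many null sets, and extend to all points by Lipschitz continuity.
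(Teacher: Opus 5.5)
Your proposal is correct and follows essentially the same route as the paper: a backward induction proving $\nabla F_t^T(x)=G_t(x)$, where both the base case and the induction step interchange the gradient with $\EE_{t-1}[\cdot]$ by dominated convergence, with the Lipschitz constants supplying the dominating bound, and then apply the chain rule. Your use of regular conditional distributions and your distinction between Fr\'echet and directional differentiability add rigor that the paper leaves implicit. One small point: in the induction step you only derive directional derivatives, so to pass Fr\'echet differentiability on to the next stage you should either dominate the full remainder quotient (it is bounded by $2L_tL_{[t+1:T]}$) or note that a Lipschitz map with a linear G\^ateaux derivative on $\RR^d$ is Fr\'echet differentiable.
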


Lemma~\ref{lem:gradient-recursion} shows that the gradient~$\nabla F(x)$ admits a recursive representation analogous to that of~$F(x)$ itself. This observation allows us to construct gradient estimators using methods similar to those in Section~\ref{sec:mcco_estimation}. In particular, following the analogy with Definition~\ref{def:MCCO_RTMLMC}, we can define an MLMC estimator for~$\nabla F(x)$.

\begin{defn}[MLMC Gradient Estimator]
\label{def:rumlmc-grad}
For any $t\in[T-1]$, select a rate parameter~$r_t \in (0,1)$ and truncation point $M_t\in\mathbb N$, and let $\lambda_t$ be a random variable that follows the truncated geometric distribution~$\mathrm{Geo}(r_t | M_t)$. Assume that~$\lambda_t$ is independent of all other random objects, and set $q_t(\ell)\coloneqq\PP(\lambda_t=\ell)$ for~$\ell=0,\ldots,M_t$ and~$t\in[T-1]$. We construct gradient estimators $\hat G_t(x)$, $t\in[T]$, using the following backward recursion. We initialize the recursion by $\hat G_T(x)\coloneqq\nabla_x f_T(\xi_T,x)$. Next, for any $t\in[T-1]$, we~set
\begin{align}
    \hat{G}_t(x) \coloneqq \frac{1}{q_{t}(\lambda_{t})}\left( \hat{g}_t^{\lambda_t}(x)-\frac{1}{2}\hat{g}_t^{\lambda_t, {\rm e}}(x)-\frac{1}{2}\hat{g}_t^{\lambda_t, {\rm o}}(x)\label{eq:G_hat}\right),
\end{align}
where
\begin{equation*}
    \hat{g}_t^{\ell}(x) \coloneqq \hat\EE_t^{\ell}[\hat{G}_{t+1}(x) ]\; \nabla_{x_t} f_t \big(\xi_t, \hat\EE_t^{\ell}[ \hat{H}_{t+1}(x)] \big)\quad \forall \ell=0,\ldots,M_t
\end{equation*}
as well as
\begin{equation*}
\begin{aligned}
     \hat{g}_t^{\ell, {\rm e}}(x) \coloneqq& \hat\EE_t^{\ell, {\rm e}}[ \hat{G}_{t+1}(x)] \; \nabla_{x_t} f_t \big(\xi_t, \hat\EE_t^{\ell, {\rm e}}[\hat{H}_{t+1}(x)] \big)\quad \forall \ell\in[M_t], \\
     \hat{g}_t^{\ell, {\rm o}}(x) \coloneqq& \hat\EE_t^{\ell, {\rm o}}[ \hat{G}_{t+1}(x)] \; \nabla_{x_t} f_t \big(\xi_t, \hat\EE_t^{\ell, {\rm o}}[\hat{H}_{t+1}(x)] \big)\quad \forall \ell\in[M_t].
\end{aligned}
\end{equation*}
We also set $\hat{g}_t^{0,{\rm e}}(x)\coloneqq\hat{g}_t^{0,{\rm o}}(x)\coloneqq0$. The MLMC estimator for $\nabla F(x)$ is then defined as $ \hat G(x)\coloneqq\hat\EE [\hat{G}_{1}(x)]$. 
\end{defn}

A special case arises when the truncation points in Definition~\ref{def:rumlmc-grad} are set to $M_t = \infty$ for all $t \in [T-1]$. In this case, we refer to the resulting estimator as the \emph{untruncated} MLMC gradient estimator. Recall that~$\hat{H}_t(x)$ was introduced in Definition~\ref{def:MCCO_RTMLMC} and can be viewed as an estimator for the conditional risk mapping~$F_t^T(x)$ from Definition~\ref{def:risk-mapping}. Similarly, $\hat{G}_t(x)$ constitutes an estimator for~$G_t(x)$. Crucially, the construction above reveals an important structural feature, namely that the gradient estimator $\hat{G}_t(x)$ depends on the function value estimator $\hat{H}_{t+1}(x)$ as an argument at each stage, and both sequences of estimators must therefore be maintained and propagated simultaneously. This coupling is a novel structural feature of our construction, \emph{absent} from classical MLMC gradient estimators. Both $\hat{G}_t(x)$ and $\hat{H}_t(x)$ are built on the same forest of~$n_1$ scenario trees displaying a random branching structure with $2^{\lambda_t}$ branches per stage-$t$ node, where~$\lambda_t$ is sampled from a truncated geometric distribution. Note that the forest of trees is defined implicitly through the nesting of the empirical conditional expectations $\hat\EE[\cdot]$ and $\hat\EE_t^{\lambda_t}[\cdot]$ for~$t\in[T-1]$, which are used both for the construction of the function value and gradient estimators. This implies that~$\hat{G}_t(x)$ and~$\hat{H}_t(x)$, $t\in[T-1]$, are constructed from the same samples and are therefore correlated. Reusing the same samples across both estimators reduces the sampling cost, but it complicates the analysis of the trade-off between variance and computational cost. Our analysis uses the following H\"older condition.

\begin{assumption}[Hölder Continuous Hessians]\label{assump:holder}
     The Hessian matrix $\nabla^2_{x_t} f_{t,i} (\xi_t,x_t)$ is $\rho_t$-Hölder continuous in $x_t$ across all $\xi_t$. That is, for all $t\in[T-1]$, there exist $R_t>0$ and $\rho_t\in( 1-2^{1-t}, 1]$ such that
    \begin{equation*}
        \|\nabla^2_{x_t} f_{t,i} (\xi_t,x_t)-\nabla^2_{x_t}  f_{t,i}(\xi_t,y_t)\|_2 \leq R_t \|x_t-y_t\|_2^{\rho_t} \quad \forall x_t,y_t\in\RR^{d_t}, \;\forall \xi_t\in\RR^{m_t}, \; \forall i\in[d_{t-1}].
    \end{equation*}
\end{assumption}

Note that when $\rho_t=1$, Assumption~\ref{assump:holder} requires the Hessian matrices of the components of $f_t(\xi_t,x_t)$ to be Lipschitz continuous in~$x_t$. Assumption~\ref{assump:holder} has been used in~\cite{goda2023constructing} to control the variance and the sampling costs of MLMC gradient estimators for conditional stochastic optimization problems, which arise as special cases of~\eqref{problem:MCCO} with~$T=2$ nests. It implies via the fundamental theorem of calculus that
\begin{align}\label{eq:inequality_for_holder_funs}
    \left\|\nabla_{x_t} f_{t,i} (\xi_t,x_t)-\nabla_{x_t} f_{t,i} (\xi_t,y_t) -\nabla^2_{x_t} f_{t,i} (\xi_t,y_t) (x_t-y_t) \right\|_2 \leq \frac{R_t}{\rho_t +1}\|x_t-y_t\|_2^{\rho_t+1}
\end{align}
for all $x_t,y_t\in\RR^{d_t}$ and $\xi_t\in\RR^{m_t}$. 

In order to derive bounds on the mean squared error-based scenario complexity of the MLMC gradient estimator, we first analyze its variance, bias, and sampling cost. In analogy with~\eqref{eq:mu}, for any $p \geq 1$, we define the $p$-th conditional moment of~$\hat{G}_t(x)$ given the information available up to time~$t-1$ by
\begin{align}\label{eq:nu}
    \nu_t^{p}(x) \coloneqq \EE_{t-1}\!\left[ \|\hat{G}_t(x)\|_2^p \right].
\end{align}
Recall that $\hat{G}_t(x)\in\RR^{d \times d_{t-1}}$ is a matrix, and thus $\|\hat{G}_t(x)\|_2$ stands for its Frobenius norm. Note also that the $p$-th conditional moment of~$\hat{H}_t(x)$ in~\eqref{eq:mu} was defined with respect to the vector $p$-norm. To facilitate the analysis of the MLMC gradient estimator, we define the conditional moment bound of order~$p$ as
\begin{align*}
    \overline\nu_T^{p}\coloneqq \sup_{x\in\RR^{d}}\esssup \; \EE_{T-1}\! \left[ \big\|\nabla_x f_T(\xi_T,x)\big\|^{p}_{p} \right].
\end{align*}
The following assumption is crucial for our subsequent results.
\begin{assumption}\label{assump:nu_bar}
    The conditional moment bound $\overline{\mu}_T^p $ of the integrand $ f_T(\xi_T,x)$ is finite up to order $p=2^T(\rho_{T-1}+1)$, and the conditional moment bound $\overline{\nu}_T^p $ of the gradient $\nabla f_T(\xi_T,x)$ is finite up to order $p=2^T$.
\end{assumption}
Note that Assumption~\ref{assump:nu_bar} implies Assumption~\ref{assump:mu_bar} because~$\rho_{T-1}>0$. The following lemma establishes a bound on~$\nu_t^{2^t}(x)$ for all~$t \in [T]$. In particular, for~$t = 1$, this bound can be used to derive an upper bound on the unconditional variance of the MLMC gradient estimator. 

\begin{lemma}
\label{lemma:bound_on_variance_rumlmc_grad} 
Suppose that Assumptions~\ref{assump:general}, \ref{assump:lipschitz}, \ref{assump:smooth}, \ref{assump:holder} and~\ref{assump:nu_bar} hold. Then, for every~$t\in[T-1]$ there exists~$E_t$ that depends only on~$T$, the moment bounds from Assumption~\ref{assump:nu_bar}, the dimensions~$d_s$ and smoothness constants~$S_s$ for $s = t+1,\ldots,T-1$, and the Hölder constants~$R_s$ and~$\rho_s$ for~$s = t,\ldots,T$ such that
    \begin{align*}
        \nu_t^{2^t}(x) \leq  E_t\prod_{s=t}^{T-1}\max\bigg\{ \sum_{\ell=0}^{M_s}\frac{1}{2^{(2^{s-1})(\rho_s+1)\ell} q_s(\ell)^{2^{s}-1}}, \sum_{\ell=0}^{M_s}\frac{1}{2^{2^s(\rho_{s-1}+1) \ell} q_s(\ell)^{2^{s}(\rho_{s-1}+1)-1}} \bigg\}
    \end{align*}
for all $x\in\cX$ and $t \in[T]$. Here, we adopt the convention~$\rho_0 = 0$ for notational convenience.
\end{lemma}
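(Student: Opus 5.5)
We argue by backward induction on $t$, mirroring the proof of the smooth case of Lemma~\ref{lemma:bound_on_variance_rtmlmc}. The base case $t=T$ is immediate: $\nu_T^{2^T}(x)=\EE_{T-1}[\|\nabla_x f_T(\xi_T,x)\|_2^{2^T}]$ is bounded by a dimension-dependent multiple of $\overline\nu_T^{2^T}$, which is finite by Assumption~\ref{assump:nu_bar}. For the inductive step we fix $t\in[T-1]$, condition on $\xi_{[t]}$ and on $\lambda_t=\ell\ge1$, and write $\bar G^{\ell}=\hat\EE_t^{\ell}[\hat G_{t+1}]$, $\bar G^{\rm e}, \bar G^{\rm o}$ for the even/odd averages, and analogously $\bar H^\ell,\bar H^{\rm e},\bar H^{\rm o}$. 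Since $\bar G^\ell=\tfrac12(\bar G^{\rm e}+\bar G^{\rm o})$ and $\bar H^\ell=\tfrac12(\bar H^{\rm e}+\bar H^{\rm o})$ by~\eqref{eq:even-odd-averages-identity}, we decompose the antithetic difference as
\[
\hat g_t^\ell-\tfrac12\hat g_t^{\ell,\rm e}-\tfrac12\hat g_t^{\ell,\rm o}
=\tfrac12\sum_{\kappa\in\{\rm e,o\}}\bar G^\kappa\big(\nabla f_t(\xi_t,\bar H^\ell)-\nabla f_t(\xi_t,\bar H^\kappa)\big).
\]
Setting $\Delta=\tfrac12(\bar H^{\rm e}-\bar H^{\rm o})$, so that $\bar H^\ell-\bar H^{\rm e}=-\Delta$ and $\bar H^\ell-\bar H^{\rm o}=\Delta$, we Taylor-expand each gradient difference around $\bar H^\ell$ using~\eqref{eq:inequality_for_holder_funs}. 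The first-order terms combine into $\tfrac12(\bar G^{\rm o}-\bar G^{\rm e})\,\nabla^2 f_t(\xi_t,\bar H^\ell)[\Delta]$, which is a product of two centered differences bounded in norm by $S_t\|\bar G^{\rm e}-\bar G^{\rm o}\|_2\|\Delta\|_2$. The remainders are bounded by $\tfrac{R_t}{\rho_t+1}\big(\|\bar G^{\rm e}\|_2+\|\bar G^{\rm o}\|_2\big)\|\Delta\|_2^{\rho_t+1}$.

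We then raise to the power $2^t$ and take $\EE_{t-1}$, using $\|a+b\|^{p}\le 2^{p-1}(\|a\|^p+\|b\|^p)$. Both the cross term and the remainder term are controlled by Cauchy--Schwarz, which splits each into a $G$-factor and an $H$-factor each raised to the power $2^{t+1}=2^{t+1}$. These conditional moments of averages of $2^{\ell-1}$ conditionally i.i.d.\ copies are bounded through the Marcinkiewicz--Zygmund/Rosenthal-type inequality of Lemma~\ref{lemma:moment-inequality-multidimensional}. For the centered differences this gives $\EE_t[\|\bar G^{\rm e}-\bar G^{\rm o}\|^{p}]\lesssim B_p\,2^{-p\ell/2}\nu_{t+1}^{p}$ and $\EE_t[\|\Delta\|^{p}]\lesssim B_p 2^{-p\ell/2}\mu_{t+1}^{p}$, while the plain averages have $p$-th moments at most $\nu_{t+1}^{p}$ by Jensen.

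With $p=2^{t+1}$ the cross term yields decay $2^{-2^t\ell}$, which is at least as fast as $2^{-2^{t-1}(\rho_t+1)\ell}$. The remainder, via Hölder with exponents chosen so the $\Delta$-factor needs moments of order $2^{t+1}(\rho_t+1)$, yields decay $2^{-2^{t-1}(\rho_t+1)\ell}$; this is what produces the first sum in the statement after multiplying by $q_t(\ell)^{-2^t}$ and averaging over $\lambda_t$ with weights $q_t(\ell)$. For $\ell=0$ there is no difference, and $\hat G_t$ is simply $\bar G^0\nabla f_t/q_t(0)$, bounded by $L_t$ times the $G$-moment. The second sum, with exponents involving $\rho_{t-1}$, arises because the induction hypothesis at the \emph{next} step demands moments of $\hat G_{t}$ of order $2^{t}(\rho_{t-1}+1)$ rather than $2^t$. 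We therefore strengthen the induction to bound $\EE_{t-1}[\|\hat G_t\|^{2^t(\rho_{t-1}+1)}]$; the factor $q_t(\ell)^{-2^t(\rho_{t-1}+1)}$ and decay $2^{-2^t(\rho_{t-1}+1)\ell}$ come from the cross-term bound at that moment order. Taking the max of the two sums covers both requirements, and the convention $\rho_0=0$ makes the two orders coincide at $t=1$.

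The required $\hat H$-moments $\mu_{t+1}^{p}$, of order up to $2^{t+1}(\rho_t+1)\le 2^{T}(\rho_{T-1}+1)$, are supplied by a higher-moment version of Lemma~\ref{lemma:bound_on_variance_rtmlmc} under Assumption~\ref{assump:nu_bar}. These contribute only constants depending on $S_s,d_s$, which are absorbed into $E_t$.

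The main obstacle is the exponent bookkeeping. We must choose a strengthened induction hypothesis tracking exactly the moment orders $2^t$ and $2^t(\rho_{t-1}+1)$ of $\hat G_t$ and the orders of $\hat H_t$, so that the Hölder/Cauchy--Schwarz splittings close without demanding moments beyond Assumption~\ref{assump:nu_bar}. We must also check that the condition $\rho_t>1-2^{1-t}$ is not needed for finiteness here: it only affects whether the sums stay bounded in $M_s$, not the validity of the bound itself.
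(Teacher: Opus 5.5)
Your decomposition is correct and is a legitimate variant of the paper's. You expand both gradient differences around $\bar H^\ell$, so the first-order terms collapse into the cross term $\tfrac12(\bar G^{\rm o}-\bar G^{\rm e})\nabla^2 f_t(\xi_t,\bar H^\ell)[\Delta]$. The paper instead centers at $\EE_t[\hat G_{t+1}]$ and $\EE_t[\hat H_{t+1}]$ and isolates $\EE_t[\hat G_{t+1}]A_t^\ell$, where $A_t^\ell$ is a second-order antithetic difference of $\nabla_{x_t}f_t$. Both routes give the decay rates $2^{-2^t\ell}$ and $2^{-2^{t-1}(\rho_t+1)\ell}$. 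The important point is that both use $\hat G_{t+1}$ only at order $2^{t+1}$, which is exactly the lemma at $t+1$. The higher order $2^{t+1}(\rho_t+1)$ is demanded only of $\hat H_{t+1}$.

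\textbf{The gap is in how you account for the second sum in the max.} You say the $\hat H$-moment bounds ``contribute only constants depending on $S_s,d_s$, which are absorbed into $E_t$.'' They do not. The higher-moment version of Lemma~\ref{lemma:bound_on_variance_rtmlmc} bounds $\EE_t[\|\hat H_{t+1}\|_p^p]$, with $p=2^{t+1}(\rho_t+1)$, by a constant times a product over $s\ge t+1$ of sums $\sum_\ell 2^{-2^s(\rho_{s-1}+1)\ell}q_s(\ell)^{-(2^s(\rho_{s-1}+1)-1)}$.
\begin{itemize}
\item These sums depend on $r_s$ and $M_s$, so they cannot be hidden in $E_t$. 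The statement requires $E_t$ to be free of rate parameters and truncation points, and Theorem~\ref{thm:rtmlmc_grad_sample_complexity_mse} relies on this to keep the bound uniform in $M_s$.
\item These products are precisely the second sum $\cS_s'$ in the max.
\item The order-$2^{t+1}$ moment of $\hat H_{t+1}$ produces sums dominated by the first sum $\cS_s$, because $2^{s-1}(\rho_s+1)\le 2^s$.
\end{itemize}

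The mechanism you propose instead is to strengthen the induction to $\EE_{t-1}[\|\hat G_t\|^{2^t(\rho_{t-1}+1)}]$. Your own estimates never need this, and it would not close:
\begin{itemize}
\item At the base it asks for $\EE_{T-1}[\|\nabla_x f_T(\xi_T,x)\|^{2^T(\rho_{T-1}+1)}]$. That order exceeds the $2^T$ granted by Assumption~\ref{assump:nu_bar}.
\item At order $p=2^t(\rho_{t-1}+1)$, your remainder term decays only like $2^{-p(\rho_t+1)\ell/2}$, not like the $2^{-p\ell}$ you attribute to the cross term.
\item It also requires $\hat H_{t+1}$ at order $2p(\rho_t+1)$.
\end{itemize}
So neither the claimed exponent nor the moment budget works out.

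The repair is as follows:
\begin{enumerate}
\item Drop the strengthening and keep $\hat G_{t+1}$ at order $2^{t+1}$.
\item Bound $\mu_{t+1}^{2^{t+1}}$ by the smooth-case lemma, which gives $\prod_s\cS_s$.
\item Bound $\mu_{t+1}^{2^{t+1}(\rho_t+1)}$ by its higher-moment adaptation, which gives $\prod_s\cS_s'$.
\item Combine via $(\prod_s\max\{\cS_s,\cS_s'\})^{1/2}(\prod_s\cS_s')^{1/2}\le\prod_s\max\{\cS_s,\cS_s'\}$.
\end{enumerate}
Only the constants $D_{t+1}$ and $D_{t+1}'$ then enter $E_t$.
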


The exact closed-form expressions for the constants $E_t$ are cumbersome and thus omitted from the statement of Lemma~\ref{lemma:bound_on_variance_rumlmc_grad}. Instead, an explicit recursive definition is given in the proof. Lemma~\ref{lemma:bound_on_variance_rumlmc_grad} allows us to bound the variance of the MLMC gradient estimator $\hat G(x)=\hat\EE[\hat G_1(x)]$. Indeed, as~$\hat G(x)$ constitutes an average of~$n_1$ independent copies of~$\hat G_1(x)$, its variance admits the bound
\begin{align}\label{eq:variance-rumlmc-estimator}
    \VV(\hat{G}(x))=\frac{1}{n_1}\VV(\hat{G}_1(x))\leq  \frac{\nu_1^2(x)}{n_1}, 
\end{align}
while Lemma~\ref{lemma:bound_on_variance_rumlmc_grad} provides a bound on~$\nu_1^2(x)$. Note that this bound is finite only if $\nabla f_T(\xi_T, x)$ admits finite moments of order $2^T$, and $f_T(\xi_T, x)$ admits finite moments of order $2^T(\rho_{T-1}+1)$. We further emphasize that Lemma~\ref{lemma:bound_on_variance_rumlmc_grad} also applies to the {\em untruncated} MLMC gradient estimator obtained by setting $M_t = \infty$ for all $t \in [T-1]$. In this case, however, the resulting variance bound remains finite only if the rate parameters $r_t$, $t \in [T-1]$, are sufficiently large to ensure convergence of all infinite series appearing in the bound.

Lemma~\ref{lemma:bound_on_variance_rumlmc_grad} is closely related to Lemma~\ref{lemma:bound_on_variance_rtmlmc}, as both establish bounds on conditional moments of MLMC estimators by means of backward recursions in time. The key differences lie in the underlying assumptions and in the complexity of the resulting recursions. For function-value estimators, Lemma~\ref{lemma:bound_on_variance_rtmlmc} requires only Lipschitz continuity of the integrands, yielding relatively simple product-form bounds that admit closed-form expressions. If the integrands are also smooth, the bounds strengthen while retaining a simple closed form. By contrast, for gradient estimators, Lemma~\ref{lemma:bound_on_variance_rumlmc_grad} requires not only smoothness of the integrands but also Hölder continuity of their Hessian matrices, which introduces the Hölder exponents~$\rho_t$ into the analysis and makes the proof technically more demanding. Nevertheless, both lemmas serve the same purpose. They provide explicit upper bounds on conditional moments that can be used to control the growth of the estimator’s variance and to establish scenario complexity guarantees. When $\rho_t=1$ for all $t\in[T-1]$ ({\em i.e.}, when all Hessian matrices are Lipschitz continuous), the second term in the max operator of the moment bound in Lemma~\ref{lemma:bound_on_variance_rumlmc_grad} dominates the first term for all $s\in[T-1]$—recall that $\rho_{0}=0$—which implies that the conditional moment bounds for the gradient estimator impose stricter requirements on the rate parameters $r_s$ governing $q_s(\ell)$ than those for the function-value estimator in Lemma~\ref{lemma:bound_on_variance_rtmlmc}.

Next, we analyze the bias of the MLMC gradient estimator using the moment bounds established in Lemmas~\ref{lemma:bound_on_variance_rtmlmc} and~\ref{lemma:bound_on_variance_rumlmc_grad}. This analysis is cumbersome because~$\hat{g}_t^\ell$ and its variants based on only even- or odd-indexed samples, $\hat{g}_t^{\ell, {\rm e}}$ and~$\hat{g}_t^{\ell, {\rm o}}$, respectively (see Definition~\ref{def:rumlmc-grad}), exhibit a product structure. More precisely, any bias in~$\hat{H}_{t+1}$ is propagated through the gradient mapping~$\nabla_{x_t} f_t(\xi_t,\cdot)$, and its magnitude is thus controlled by the smoothness of~$f_t$. Furthermore, the biases of~$\hat{G}_{t+1}$ and~$\hat{H}_{t+1}$ interact multiplicatively, giving rise to cross terms that must be carefully controlled. This interplay significantly complicates the analysis compared with the case of the function value estimators studied in Section~\ref{sec:rtmlmc}.

\begin{lemma}\label{lemma:bias_bound_rtmlmc_grad}
    If Assumptions~\ref{assump:general}, \ref{assump:lipschitz}, \ref{assump:smooth} and~\ref{assump:nu_bar} hold, the bias of the MLMC gradient estimator satisfies
    \begin{align*}
        \big\| \EE[\hat{G}(x)]- G_1(x) \big\|_2 \leq \sum_{t=1}^{T-1}L_{[t-1]}S_t \EE[\nu_{t+1}^{2}(x)]^\frac{1}{2}\bigg(  \frac{ \EE[\mu_{t+1}^{2}(x)]^\frac{1}{2}}{2^{M_t/2}}   + \sum_{s=t+1}^{T-1} \frac{L_{[t+1:s-1]}S_s\EE[(\mu_{s+1}^{2}(x))^2]^\frac{1}{2}}{2^{M_s+1}} \bigg).
    \end{align*}
\end{lemma}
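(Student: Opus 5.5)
We bound the bias through a backward recursion on the conditional biases of $\hat G_t(x)$, in the same way that Lemma~\ref{lemma:bias_bound_rtmlmc} follows from the stagewise bias bound of Lemma~\ref{lemma:bias_bound_rtmlmc_H_t}. Since $\EE[\hat G(x)]=\EE[\hat G_1(x)]$, it suffices to control
\[
\beta_t(x)\coloneqq\big\|\EE[\hat G_t(x)]-\EE[G_t(x)]\big\|_2,\qquad t\in[T],
\]
with $\beta_T=0$ because $\hat G_T$ is unbiased for $G_T$. We then show $\beta_t\le L_t\beta_{t+1}+(\text{local error at stage }t)$ and unroll.

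\textbf{Telescoping.} First we collapse the telescoping sum. Conditioning on $\xi_{[t]}$ and averaging over $\lambda_t$ with weights $q_t(\ell)$, the even and odd halves at level $\ell$ have the same conditional law as a level-$(\ell-1)$ average. This is the identity~\eqref{eq:even-odd-averages-identity} together with exchangeability. Hence
\[
\EE_{t-1}[\hat G_t(x)]=\EE_{t-1}\big[\hat g_t^{M_t}(x)\big],
\]
where $\hat g_t^{M_t}$ uses $2^{M_t}$ i.i.d.\ copies of $(\hat G_{t+1},\hat H_{t+1})$. Writing $\bar G\coloneqq\hat\EE_t^{M_t}[\hat G_{t+1}]$ and $\bar H\coloneqq\hat\EE_t^{M_t}[\hat H_{t+1}]$, we split
\begin{align*}
\bar G\,\nabla f_t(\xi_t,\bar H)-G_{t+1}\nabla f_t(\xi_t,F^T_{t+1})
&=(\bar G-G_{t+1})\nabla f_t(\xi_t,F^T_{t+1})\\
&\quad+\bar G\big(\nabla f_t(\xi_t,\bar H)-\nabla f_t(\xi_t,F^T_{t+1})\big).
\end{align*}

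\textbf{The two pieces.} For the first piece, we take conditional expectations. Because $\nabla f_t(\xi_t,F_{t+1}^T)$ is $\cF_t$-measurable and bounded by $L_t$, its contribution is at most $L_t\,\EE\|\EE_t[\hat G_{t+1}]-G_{t+1}\|$, which feeds the recursion. For the second piece, Cauchy--Schwarz together with $S_t$-smoothness (Assumption~\ref{assump:smooth}) gives the bound
\[
S_t\,\EE\big[\|\bar G\|_2^2\big]^{1/2}\,\EE\big[\|\bar H-F^T_{t+1}\|_2^2\big]^{1/2}.
\]
Note that $\|\bar G\|_2$ is controlled by the second moment $\nu_{t+1}^2$, since averaging does not increase second moments. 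The term $\EE\|\bar H-F_{t+1}^T\|^2$ splits into a variance part and a bias part. The variance part is $\EE[\mu_{t+1}^2]/2^{M_t}$, which yields the term $\EE[\mu^2_{t+1}]^{1/2}/2^{M_t/2}$. The bias part is $|\EE_t[\hat H_{t+1}]-F^T_{t+1}|$, which we bound using the smooth case of Lemma~\ref{lemma:bias_bound_rtmlmc_H_t} at stage $t+1$. That gives $\sum_{s\ge t+1}L_{[t+1:s-1]}S_s\,\mu_{s+1}^2/2^{M_s+1}$, still conditional. Taking $L^2$ norms, Minkowski's inequality converts each conditional $\mu_{s+1}^2$ into $\EE[(\mu_{s+1}^2)^2]^{1/2}$. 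This produces exactly the bracket in the statement, multiplied by $S_t\EE[\nu_{t+1}^2]^{1/2}$.

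\textbf{Unrolling and the main obstacle.} Unrolling $\beta_t\le L_t\beta_{t+1}+\ldots$ from $t=1$ produces the prefactor $L_{[t-1]}$. One subtlety is that the recursion must be run on $\EE\|\EE_t[\hat G_{t+1}]-G_{t+1}\|$, and not merely on the norm of the unconditional bias. This works because the conditional stagewise error bounds are linear, so the same argument applies pathwise before taking expectations. The step I expect to be the main obstacle is the cross term: $\bar G$ and $\bar H$ are built from the same samples and are therefore correlated, which is why I use Cauchy--Schwarz rather than any independence. The same step also requires a careful check that the telescoping collapse holds for products of averaged quantities. To make sure the collapse is exactly valid, I would first verify it on the joint pair $(\hat\EE[\hat G_{t+1}],\hat\EE[\hat H_{t+1}])$ viewed as a single function of the samples.
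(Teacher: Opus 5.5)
Your proposal is correct and follows essentially the same route as the paper. Both arguments run a backward induction on the conditional bias $\|\EE_{t-1}[\hat G_t]-G_t\|_2$ and collapse the telescoping sum to $\hat g_t^{M_t}$. Both then split into the term $(\bar G-G_{t+1})\nabla_{x_t} f_t(\xi_t,F_{t+1}^T)$, controlled by $L_t$ times the induction hypothesis, and the term $\bar G\,(\nabla_{x_t} f_t(\xi_t,\bar H)-\nabla_{x_t} f_t(\xi_t,F_{t+1}^T))$, controlled by Cauchy--Schwarz, smoothness, a bias--variance split and the smooth case of Lemma~\ref{lemma:bias_bound_rtmlmc_H_t}.

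One small correction: the conditional stagewise bounds are not linear, since they contain products such as $\EE_t[\nu_{s+1}^2]^{1/2}\EE_t[\mu_{s+1}^2]^{1/2}$. Passing from $\EE_t$ to $\EE_{t-1}$ therefore needs $\EE_{t-1}\big[\EE_t[X]^{1/2}\EE_t[Y]^{1/2}\big]\le\EE_{t-1}[X]^{1/2}\EE_{t-1}[Y]^{1/2}$, which follows from Cauchy--Schwarz or Jensen for the concave map $(u,v)\mapsto\sqrt{uv}$. Separately, your bound $\EE_{t-1}\|\bar G\|_2^2\le\EE_{t-1}[\nu_{t+1}^2]$ via convexity of the squared norm is a cleaner way than the paper's to remove the outer sample average.
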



As in the case of the MLMC function value estimator discussed in Section~\ref{sec:rtmlmc}, Lemma~\ref{lemma:bias_bound_rtmlmc_grad} shows that the MLMC gradient estimator is unbiased if $M_t=\infty$ for all~$t\in[T-1]$ provided that all expectations in the bias bound are finite. Hence, the \emph{untruncated} MLMC gradient estimator is \emph{unbiased}. We now quantify the expected sampling cost of the MLMC gradient estimator.

\begin{lemma}\label{lemma:bound_on_cost_rumlmc_grad} The expected number of scenarios needed to construct the MLMC gradient estimator equals
\begin{equation*}
    \EE\big[\CC\big(\hat{G}(x)\big)\big]= n_1\prod_{t=1}^{T-1} \sum_{\ell=0}^{M_t} q_{t}(\ell) 2^{\ell}.
\end{equation*}
\end{lemma}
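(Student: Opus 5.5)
The plan is to observe that the gradient estimator $\hat G(x)$ of Definition~\ref{def:rumlmc-grad} is built on exactly the same random forest of scenario trees as the function value estimator $\hat F(x)$ of Definition~\ref{def:MCCO_RTMLMC}. Every quantity $\hat g_t^{\ell}$, $\hat g_t^{\ell,{\rm e}}$, $\hat g_t^{\ell,{\rm o}}$ is evaluated on the samples $\xi_{t+1}^{i}$, $i\in[2^{\lambda_t}]$, that already serve to form $\hat\EE_t^{\lambda_t}[\hat H_{t+1}(x)]$ and its even and odd variants. Both $\hat G_{t+1}$ and $\hat H_{t+1}$ are evaluated on the same subtree, and the even/odd averages reuse these samples. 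No additional scenarios are drawn, so $\CC(\hat G(x))=\CC(\hat F(x))$ pathwise, and the claim reduces to Lemma~\ref{lemma:bound_on_cost_rtmlmc}.

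To keep the argument self-contained, I would also verify the formula directly. Let $N_t$ denote the number of leaf paths (scenarios) of a subtree rooted at a stage-$t$ node, with $N_T=1$. By construction, a stage-$t$ node spawns $2^{\lambda_t}$ children, and each child carries an independent subtree whose log-branching factors $\lambda_{t+1},\dots,\lambda_{T-1}$ are drawn afresh. This gives the recursion $N_t=\sum_{i=1}^{2^{\lambda_t}} N_{t+1}^{(i)}$.

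Conditioning on $\lambda_t$ and using its independence from the child subtrees, I obtain
\[
\EE[N_t]=\EE[2^{\lambda_t}]\,\EE[N_{t+1}]=\Big(\sum_{\ell=0}^{M_t}q_t(\ell)2^{\ell}\Big)\EE[N_{t+1}]
\]
by Wald's identity. Backward induction from $t=T-1$ to $t=1$ then yields $\EE[N_1]=\prod_{t=1}^{T-1}\sum_{\ell=0}^{M_t}q_t(\ell)2^\ell$. Since $\hat G(x)=\hat\EE[\hat G_1(x)]$ averages $n_1$ i.i.d.\ such trees, linearity of expectation gives the factor $n_1$.

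The only delicate point is bookkeeping. One must check that the even/odd estimators $\hat g_t^{\lambda_t,{\rm e}}$ and $\hat g_t^{\lambda_t,{\rm o}}$ do not require separate samples: by definition they are sub-averages over the same $2^{\lambda_t}$ children. One must also make sure that the independence of $\lambda_t$ across distinct nodes is used correctly, namely that each node draws its own $\lambda_t$ independently of everything else. Once these are confirmed, the computation above is routine.
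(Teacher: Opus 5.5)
Your proposal is correct and matches the paper's argument. The paper omits this proof as parallel to Lemma~\ref{lemma:bound_on_cost_rtmlmc}, which uses the same two facts you use: the gradient and value estimators share a forest of $n_1$ independent trees with $2^{\lambda_t}$ branches per stage-$t$ node, and each $\lambda_t$ is independent of everything else, so the expected per-tree cost factorizes as $\prod_{t=1}^{T-1}\EE[2^{\lambda_t}]$. Your backward recursion on $N_t$ just makes that factorization explicit, and it holds equally if a single $\lambda_t$ is shared by all stage-$t$ nodes of a tree, since then $\EE[2^{\lambda_t}N_{t+1}]=\EE[2^{\lambda_t}]\,\EE[N_{t+1}]$ by independence across stages.
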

The proof of Lemma~\ref{lemma:bound_on_cost_rumlmc_grad} widely parallels that of Lemma~\ref{lemma:bound_on_cost_rtmlmc} and is thus omitted. As in Section~\ref{sec:rtmlmc}, we now tune the hyperparameters~$n_1$, $r_t$ and~$M_t$ of the MLMC estimator to guarantee a mean squared error of at most~$\epsilon^2$ at minimal computational cost. To achieve this, we select sufficiently large truncation points~$M_t$, $t\in[T-1]$, to ensure that the bias bound of Lemma~\ref{lemma:bias_bound_rtmlmc_grad} is at most~$\epsilon/\sqrt{2}$. We also select a sufficiently large number~$n_1 \approx 2\nu_1^2(x)/\epsilon^2$ of scenario trees to ensure that the variance is at most~$\epsilon^2/2$; see~\eqref{eq:variance-rumlmc-estimator}. By Lemma~\ref{lemma:bound_on_cost_rumlmc_grad}, the expected sampling cost can thus be approximated by
\[
    \CC(\hat G(x)) \approx \frac{ 2\nu_1^2(x)}{\epsilon^2} \prod_{t=1}^{T-1} \sum_{\ell=0}^{M_t} q_t(\ell)2^{\ell}.
\]
We can construct an approximate upper bound on~$\CC(\hat G(x))$ uniformly across all~$x\in\cX$ by replacing~$\nu_1^2(x)$ with the respective bound given in Lemma~\ref{lemma:bound_on_variance_rumlmc_grad}. The resulting upper bound~$c_{\mathrm{h}}(r)$ depends on the rate parameters~$r = (r_1, \ldots, r_{T-1})$ of the truncated geometric distributions used to model the log-branching factors. Here, the subscript ``h" indicates that the bound was obtained under the assumption of Hölder continuous Hessians. Ideally, $r\in (0,1)^{T-1}$ should be selected to minimize the given upper bound. As in Section~\ref{sec:rtmlmc}, closed-form minimizers are elusive. The next assumption specifies a simple near-optimal choice for~$r$.

\begin{assumption}[Rate Parameters] 
\label{assump:rate-parameters-rumlmc-grad}
For any $t\in[T-1]$, we select $r_t$ satisfying
    \[
       2^{-1} < r_t < \min\bigg\{1-2^{-\frac{2^{t-1}(\rho_t+1)}{2^t-1}},\; 1-2^{-\frac{2^t(\rho_{t-1}+1)}{2^t(\rho_{t-1}+1)-1}}\bigg\}.
    \]
\end{assumption}
A rate parameter~$r_t$ that satisfies both strict inequalities is guaranteed to exist whenever Assumption~\ref{assump:holder} holds. Indeed, in this case the H\"older exponent satisfies~$\rho_t > 1-2^{1-t}$, which guarantees that the first term in the minimum is strictly greater than~$2^{-1}$. The second term is strictly greater than~$2^{-1}$ for any~$\rho_t\geq 0$. Assumption~\ref{assump:rate-parameters-rumlmc-grad} ensures that the given rate parameters jointly control the geometric growth of the second moment as well as the expected sampling cost of~$\hat{G}(x)$. Specifically, Lemma~\ref{lemma:bound_on_variance_rumlmc_grad} implies that~$\nu_1^2$ remains bounded in~$M_t$ if $r_t$ obeys the given upper bound in Assumption~\ref{assump:rate-parameters-rumlmc-grad}, and Lemma~\ref{lemma:bound_on_cost_rumlmc_grad} implies that the expected sampling cost is bounded in~$M_t$ if $r_t > 2^{-1}$ for all~$t \in [T-1]$.  Note also that if~$M_t=\infty$ for all~$t\in[T-1]$, then Assumption~\ref{assump:rate-parameters-rumlmc-grad} guarantees that the infinite sums in Lemmas~\ref{lemma:bound_on_variance_rumlmc_grad} and~\ref{lemma:bound_on_cost_rumlmc_grad} converge. Hence, the variance and the expected sampling cost of the untruncated MLMC gradient estimator are finite. 

The MLMC gradient estimator is fully characterized by the number~$n_1$ of scenario trees as well as the rates~$r_t$ and truncation points~$M_t$ governing the geometric distributions of the log-branching factors~$\lambda_t$ for~$t \in [T-1]$. The following theorem presents a specific configuration of these hyperparameters that ensures the root mean squared error of the truncated MLMC gradient estimator to be bounded by~$\epsilon$.

\begin{theorem}[Mean Squared Error-Based Scenario Complexity of the MLMC Gradient Estimator]\label{thm:rtmlmc_grad_sample_complexity_mse}
    Let Assumptions~\ref{assump:general}, \ref{assump:lipschitz}, \ref{assump:smooth}, \ref{assump:holder}, \ref{assump:nu_bar} and~\ref{assump:rate-parameters-rumlmc-grad} hold, and let~$\epsilon>0$ be a given tolerance. Then, for every~$t\in[T-1]$ there exists~$W_t$ that depends only on~$T$, the moment bounds from Assumption~\ref{assump:nu_bar}, the dimensions~$d_s$ and smoothness constants~$S_s$ for~$s = t+1,\ldots,T-1$, the Lipschitz constants~$L_s$ and the rate parameters~$r_s$ for~$s \in[T-1]$, and the Hölder constants~$R_s$ and~$\rho_s$ for~$s = t,\ldots,T$ such that the following holds. If
    \[
        n_1=\lceil \sup_{x\in\cX} 2\nu_1^2(x)/\epsilon^2\rceil\quad \text{and} \quad M_{t}  =\lceil 2\log_2(W_t /\epsilon)\rceil\quad \forall t\in[T-1],
    \]
    then the MLMC gradient estimator achieves a root mean squared error of at most~$\epsilon$ uniformly for all~$x\in\cX$. Thus, its mean squared error-based scenario complexity is at most~$\cO(\epsilon^{-2})$.
\end{theorem}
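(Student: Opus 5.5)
We follow the same template as for Theorem~\ref{thm:rtmlmc_sample_complexity_mse}. Start from the bias--variance decomposition~\eqref{eq:mse_decomposition}, applied entrywise to the matrix-valued estimator with the Frobenius norm. It then suffices to show two things uniformly over~$x\in\cX$: that $\|\EE[\hat G(x)]-\nabla F(x)\|_2\le\epsilon/\sqrt2$, and that $\EE\|\hat G(x)-\EE[\hat G(x)]\|_2^2\le\epsilon^2/2$.

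\textbf{Variance.} Since $\hat G(x)$ averages $n_1$ i.i.d.\ copies of $\hat G_1(x)$, \eqref{eq:variance-rumlmc-estimator} with $n_1=\lceil\sup_x 2\nu_1^2(x)/\epsilon^2\rceil$ settles the variance part immediately. It remains to check that $\sup_x\nu_1^2(x)$ is finite and bounded \emph{independently of the truncation points}. By Jensen, $\nu_1^2\le(\nu_1^{2})$, so it is enough to bound $\nu_1^{2^1}$ via Lemma~\ref{lemma:bound_on_variance_rumlmc_grad} with $t=1$. Under Assumption~\ref{assump:rate-parameters-rumlmc-grad}, each factor $q_s(\ell)\propto r_s(1-r_s)^\ell$ makes the summands of both series in the maximum decay geometrically in~$\ell$. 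Concretely, $(1-r_s)^{2^s-1}2^{2^{s-1}(\rho_s+1)}>1$ and the analogous inequality for the second series are exactly the upper bounds on~$r_s$. The normalizing constant $1-(1-r_s)^{M_s+1}\in[r_s,1]$ only contributes a bounded factor. Hence the product is bounded by a constant $K_1$ independent of $M_1,\dots,M_{T-1}$, and $\nu_1^2\le E_1K_1$.

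\textbf{Bias.} We use Lemma~\ref{lemma:bias_bound_rtmlmc_grad} together with $\nabla F=G_1$ from Lemma~\ref{lem:gradient-recursion}. We need $M$-independent bounds on $\EE[\nu_{t+1}^2]$, $\EE[\mu_{t+1}^2]$ and $\EE[(\mu_{s+1}^2)^2]$.
\begin{itemize}
\item For $\EE[\nu_{t+1}^2]$, bound it by $(\nu_{t+1}^{2^{t+1}})^{2^{-t}}$ using Jensen, then apply Lemma~\ref{lemma:bound_on_variance_rumlmc_grad} as above.
\item For $\EE[\mu_{t+1}^2]$ and $\EE[(\mu_{s+1}^2)^2]\le\EE[\mu_{s+1}^{4}]$, use Jensen again to reduce to $\mu_{s+1}^{2^{s+1}}$ when $s\ge1$. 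Then apply the smooth case of Lemma~\ref{lemma:bound_on_variance_rtmlmc}. Its series $\sum_\ell 2^{-2^s\ell}q_s(\ell)^{1-2^s}$ converges uniformly in $M_s$ provided $r_s<1-2^{-2^s/(2^s-1)}$. This follows from the second constraint in Assumption~\ref{assump:rate-parameters-rumlmc-grad}, since $\rho_{s-1}\ge0$ and $y\mapsto y/(y-1)$ is decreasing.
\end{itemize}
The finiteness of $\overline\mu_T^{2^T}$ is implied by Assumption~\ref{assump:nu_bar}. Collecting constants, the bias is at most $\sum_{t}A_t2^{-M_t/2}+\sum_t B_t2^{-M_t-1}$, with $A_t,B_t$ independent of all $M_s$. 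Because the moment bounds no longer depend on the $M_s$, no backward recursion is needed: pick $W_t$ so that each of the at most $2(T-1)$ terms is at most $\epsilon/(2\sqrt2(T-1))$. For example, $W_t=\max\{2\sqrt2(T-1)A_t,\sqrt{\sqrt2(T-1)B_t}\}$ (taking $\epsilon\le1$ w.l.o.g.) works, since then $2^{-M_t/2}\le\epsilon/W_t$ and $2^{-M_t}\le\epsilon^2/W_t^2$.

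\textbf{Cost.} By Lemma~\ref{lemma:bound_on_cost_rumlmc_grad}, the expected cost is $n_1\prod_t\sum_\ell q_t(\ell)2^\ell$. Since $r_t>1/2$, each sum is bounded by $r_t/((2r_t-1)r_t)$ uniformly in $M_t$. Together with $n_1=\cO(\epsilon^{-2})$, this gives the $\cO(\epsilon^{-2})$ complexity.

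\textbf{Main obstacle.} The delicate step is verifying that every moment series entering the bias bound converges under Assumption~\ref{assump:rate-parameters-rumlmc-grad}, which constrains $r_s$ only through the gradient-moment exponents. In particular, one must check that the function-value moment orders needed (up to $2^{s+1}$, and the $2^T(\rho_{T-1}+1)$ order hidden in $E_t$) are dominated by the stated constraints. If a direct comparison fails for some $s$, I would fall back on bounding $\EE[(\mu_{s+1}^2)^2]$ via Hölder by the moments already controlled inside the proof of Lemma~\ref{lemma:bound_on_variance_rumlmc_grad}.
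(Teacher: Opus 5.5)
Your proposal is correct and follows the paper's proof essentially step for step. The variance is handled via \eqref{eq:variance-rumlmc-estimator} and Lemma~\ref{lemma:bound_on_variance_rumlmc_grad}, with the series bounded independently of the $M_s$; the bias via Lemma~\ref{lemma:bias_bound_rtmlmc_grad} after Jensen reductions to $2^{t+1}$-th moments; and the cost via Lemma~\ref{lemma:bound_on_cost_rumlmc_grad} using $r_t>1/2$. The only differences are cosmetic: you certify convergence of the function-value moment series through the second rate constraint and the monotonicity of $y\mapsto y/(y-1)$, whereas the paper uses $\rho_s\le 1$ to dominate them by $\mathcal{S}_s$. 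Your restriction to $\epsilon\le 1$ can also be dropped by relaxing $2^{-M_s-1}\le 2^{-M_s/2}$, as the paper does.
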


The exact closed-form expressions for the constants~$W_t$ are cumbersome and thus omitted from the statement of Theorem~\ref{thm:rtmlmc_grad_sample_complexity_mse}. The explicit construction of these constants will become clear in the proof.

Theorem~\ref{thm:rtmlmc_grad_sample_complexity_mse} establishes that, if the  integrands have Hölder continuous Hessians, then the MLMC gradient estimator achieves the optimal scenario complexity of~$\cO(\epsilon^{-2})$. This may be surprising because the MLMC gradient estimator has a significantly more complex structure than the MLMC function value estimator, implying that it is more difficult to control its variance; see also the discussion after Lemma~\ref{lemma:bound_on_variance_rumlmc_grad}. We emphasize that Theorem~\ref{thm:rtmlmc_grad_sample_complexity_mse} remains valid for the \emph{untruncated} MLMC gradient estimator, which is obtained by setting~$M_t = \infty$ for all~$t \in [T-1]$. This result is  new for~$T > 2$. When~$T = 2$, Theorem~\ref{thm:rtmlmc_grad_sample_complexity_mse} recovers the~$\cO(\epsilon^{-2})$ scenario complexity for estimating the gradients of conditional stochastic optimization problems established in~\cite[Theorem~2]{goda2023constructing}. The H\"older continuity of the Hessians is a crucial assumption for Theorem~\ref{thm:rtmlmc_grad_sample_complexity_mse}. 
Indeed, if the integrands are merely Lipschitz continuous and smooth, then \emph{no} choice of~$r_t$, $t \in [T-1]$, guarantees that the variance and the expected sampling cost remain simultaneously bounded~\citep{hu2021bias}. 
Hence, the scenario complexity of the \emph{untruncated} MLMC gradient estimator diverges.

\begin{rem} \label{rem:extension_to_untruncated_grad}
    The MLMC gradient estimator introduced in Definition~\ref{def:rumlmc-grad} employs the same set of samples to construct both~$\hat{G}_t(x)$ and~$\hat{H}_t(x)$. This shared use of samples reduces the expected sampling cost. However, it also induces correlation between~$\hat{G}_t(x)$ and~$\hat{H}_t(x)$, which complicates the analysis of the scenario complexity and necessitates the assumption that the integrands possess Hölder continuous Hessians. Alternatively, one could construct a simpler MLMC gradient estimator that employs independent sets of samples to compute the components~$\hat{G}_t(x)$ and~$\hat{H}_t(x)$. The independence between these components simplifies the moment bounds in Lemma~\ref{lemma:bound_on_variance_rumlmc_grad} and, in fact, eliminates the need to establish moment bounds of orders~$p > 2$. Consequently, it suffices to derive a second-order moment bound analogous to that obtained in Lemma~\ref{lemma:bound_on_variance_rtmlmc} for nonsmooth integrands. However, this alternative estimator entails a higher expected sampling cost. One can show that its mean squared error-based scenario complexity amounts to $\cO(\log(\epsilon^{-1})^{2(T-1)}\epsilon^{-2})$, which matches that of the function value estimator for nonsmooth integrands from Theorem~\ref{thm:rtmlmc_sample_complexity_mse}. The proof follows similar arguments to those used in Theorem~\ref{thm:rtmlmc_grad_sample_complexity_mse}, albeit in a simpler form. Detailed derivations are therefore omitted for brevity. We emphasize that, unless the integrands have Hölder continuous Hessians, the truncation points of the simpler MLMC gradient estimator cannot be sent to infinity without causing either the variance or the expected sampling cost to diverge. Hence, in this simpler setting, it is impossible to construct an untruncated unbiased estimator without imposing additional regularity conditions.  
\end{rem}

We can now show that the MLMC gradient estimator introduced in Definition~\ref{def:rumlmc-grad} can be incorporated into an SGD algorithm (see Algorithm~\ref{algo:sgd}) to efficiently compute an $\epsilon$-stationary point of problem~\eqref{problem:MCCO}. In the following, we use $S_{[T]}\coloneqq\sum_{t=1}^{T} L_{[1:t-1]} S_t L_{[t+1:T]}^2$ as a measure for the smoothness of~$F(x)$, where~$L_{[s:t]}$ for $t\in[T]$ and $s\in[T+1]$ is defined as before Lemma~\ref{lemma:bound_on_variance_rtmlmc}. For more details see Lemma~\ref{lem:lipschitz}.

\begin{theorem}
\label{theorem:sgd-num-steps_rt-mlmc}
Let Assumptions~\ref{assump:general}, \ref{assump:lipschitz}, \ref{assump:smooth}, \ref{assump:holder}, \ref{assump:nu_bar} and~\ref{assump:rate-parameters-rumlmc-grad} hold, and set $\overline{\nu}_1^2:=\sup_{x\in\cX} \nu_1^2(x)$. Assume that Algorithm~\ref{algo:sgd} uses a constant stepsize $\eta=\overline{\nu}_1 \sqrt{n_1/K}$ and uses the MLMC gradient estimator with~$n_1=\cO(1)$ and~$M_t$, $t\in[T-1]$, defined as in Theorem~\ref{thm:rtmlmc_grad_sample_complexity_mse}. If Algorithm~\ref{algo:sgd} is initialized at~$x_1\in\cX$ and runs over
\begin{align*}
    K\geq \frac{\overline{\nu}_1^2}{n_1\epsilon^4} \Big(2 \Big[F(x_1)-\min_{x\in\cX}F(x) \Big]+S_{[T]} \Big)^2 
\end{align*}
iterations, then it outputs an $\epsilon$-stationary point~$\bar x_K$ with $\EE[\|\nabla F(\bar{x}_K)\|^2_2] \leq \epsilon^2$. 
\end{theorem}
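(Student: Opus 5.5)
The plan is the standard nonconvex SGD descent argument for an $S_{[T]}$-smooth objective with a biased stochastic gradient. The bias is controlled through Theorem~\ref{thm:rtmlmc_grad_sample_complexity_mse} and the variance through~\eqref{eq:variance-rumlmc-estimator}. I assume Algorithm~\ref{algo:sgd} performs $x_{k+1}=x_k-\eta\,\hat G(x_k)$, where $\hat G(x_k)$ is built from fresh samples, and returns $\bar x_K$ drawn uniformly from $\{x_1,\dots,x_K\}$. If the algorithm projects onto $\cX$, I would run the same argument with the gradient mapping, using nonexpansiveness of the projection.

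\textbf{Smoothness and one-step descent.} By Lemma~\ref{lem:lipschitz}, $\nabla F$ is $S_{[T]}$-Lipschitz, so
\[
F(x_{k+1})\le F(x_k)-\eta\langle\nabla F(x_k),\hat G(x_k)\rangle+\tfrac{S_{[T]}\eta^2}{2}\|\hat G(x_k)\|_2^2.
\]
Take expectations conditional on $x_k$ and write $b_k=\EE_k[\hat G(x_k)]-\nabla F(x_k)$, which by Lemma~\ref{lem:gradient-recursion} is exactly the bias controlled in Lemma~\ref{lemma:bias_bound_rtmlmc_grad}. Then
\[
-\langle\nabla F,\EE_k\hat G\rangle=-\|\nabla F\|^2-\langle\nabla F,b_k\rangle\le-\tfrac12\|\nabla F\|^2+\tfrac12\|b_k\|^2 .
\]
For the quadratic term I would use $\EE_k\|\hat G\|^2\le\nu_1^2(x_k)/n_1+\|\EE_k\hat G\|^2$. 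The choice of $M_t$ in Theorem~\ref{thm:rtmlmc_grad_sample_complexity_mse} gives $\|b_k\|\le\epsilon/\sqrt2$ uniformly on $\cX$, and Lemma~\ref{lemma:bound_on_variance_rumlmc_grad} gives a uniform second-moment bound $\overline\nu_1^2$.

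\textbf{Telescoping.} Summing over $k=1,\dots,K$ and dividing by $\eta K/2$ should give
\[
\EE\|\nabla F(\bar x_K)\|^2\le \frac{2[F(x_1)-\min F]}{\eta K}+\|b\|^2_{\max}+S_{[T]}\eta\,\frac{\overline\nu_1^2}{n_1}+\dots
\]
I would absorb the remaining $\|\nabla F\|^2$ contribution into the left-hand side when $S_{[T]}\eta\le1/2$, or crudely bound it using that $\overline\nu_1^2$ dominates the second moment. Substituting $\eta=\overline\nu_1\sqrt{n_1/K}$ makes the first and third terms equal to
\[
\frac{\overline\nu_1}{\sqrt{n_1K}}\bigl(2[F(x_1)-\min F]+S_{[T]}\bigr).
\]
The stated lower bound on $K$ turns this into at most $\epsilon^2$. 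The bias term is made small by the same truncation choice, possibly after rescaling $\epsilon$ by a constant inside $W_t$.

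\textbf{Main obstacle.} The delicate step is matching the constants exactly. The bias contributes additively, so I would need to fold it into the budget. The cleanest route is to tighten $M_t$ so that $\|b_k\|^2\le\epsilon^2/2$, and let the $K$ condition handle a target of $\epsilon^2/2$ for the other terms. This is harmless in the $\cO$ sense, and $n_1=\cO(1)$ keeps the per-iteration expected cost $\cO(1)$ by Lemma~\ref{lemma:bound_on_cost_rumlmc_grad} and Assumption~\ref{assump:rate-parameters-rumlmc-grad}. The total cost is therefore $\cO(\epsilon^{-4})$.
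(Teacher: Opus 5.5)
Your descent-lemma computation is exactly what the paper outsources. Its proof only checks that Lemmas~\ref{lemma:bound_on_variance_rumlmc_grad}, \ref{lemma:bias_bound_rtmlmc_grad}, \ref{lemma:bound_on_cost_rumlmc_grad} and Theorem~\ref{thm:rtmlmc_grad_sample_complexity_mse} apply, and then invokes the generic biased-SGD guarantee \cite[Theorem~C.1]{hu2021bias}.

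The genuine gap is the projection. Algorithm~\ref{algo:sgd} projects onto $\cX$, but your one-step inequality is for the unprojected update. For that update the telescoping only gives $F(x_1)-\inf_{\RR^d}F$, not $F(x_1)-\min_{x\in\cX}F$. Your fallback (``the same argument with the gradient mapping'') fails for $n_1=\cO(1)$. Write $\hat G(x_k)=\nabla F(x_k)+\delta_k$, $\tilde{\mathcal G}_k=(x_k-x_{k+1})/\eta$ and $\mathcal G_k=(x_k-\Pi_\cX(x_k-\eta\nabla F(x_k)))/\eta$. Nonexpansiveness only yields $\eta\langle\delta_k,\tilde{\mathcal G}_k\rangle\le\eta\langle\delta_k,\mathcal G_k\rangle+\eta\|\delta_k\|_2^2$. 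So the noise enters at first order in $\eta$ and leaves a floor of order $\overline\nu_1^2/n_1$ after dividing by $\eta$. This is why the projected analysis in \cite{ghadimi2016mini} needs mini-batches growing with $1/\epsilon$; constant batches call for a different analysis, e.g.\ via the Moreau envelope. More fundamentally, the stated conclusion cannot hold for projected iterates. Take $T=2$, $f_1(\xi_1,x_1)=x_1$, $f_2(\xi_2,x)=\sin x$ and $\cX=[0,1/2]$: all hypotheses of the theorem can be met, yet $|\nabla F(x)|=\cos x\ge\cos(1/2)$ on $\cX$. So your argument can only establish the unconstrained version, or a version with a different stationarity measure. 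The unconstrained version is the setting of the cited result, with $\min_{\cX}F$ and $\overline\nu_1^2$ replaced by their $\RR^d$ counterparts, which stay finite here.

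Even unconstrained, the bookkeeping does not close as you assert. With $\eta=\overline\nu_1\sqrt{n_1/K}$, your first and third terms equal $2[F(x_1)-\min F]/(\overline\nu_1\sqrt{n_1K})$ and $S_{[T]}\overline\nu_1^3/\sqrt{n_1K}$. They combine into $\overline\nu_1(2[F(x_1)-\min F]+S_{[T]})/\sqrt{n_1K}$ only for $\eta=\sqrt{n_1/K}/\overline\nu_1$, so the stated stepsize must be read inverted.

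The constants also do not match. The displayed threshold on $K$ spends the entire budget $\epsilon^2$ on these two terms. The truncation points of Theorem~\ref{thm:rtmlmc_grad_sample_complexity_mse} give precisely $\|b_k\|_2^2\le\epsilon^2/2$, so ``tightening'' $M_t$ to that level changes nothing. You must then add the bias and absorb $S_{[T]}\eta^2\|\EE_k\hat G(x_k)\|_2^2$, which is possible only when $S_{[T]}\eta<1/2$ and hence guaranteed only for small $\epsilon$. This leaves about $\tfrac32\epsilon^2$ at best. Your ``crude'' alternative $\EE_k\|\hat G(x_k)\|_2^2\le\overline\nu_1^2$ additionally loses the factor $1/n_1$ that the threshold relies on.

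Enlarging $K$ by a constant factor repairs this and preserves the $\cO(\epsilon^{-4})$ total cost, but then you prove a variant with different constants rather than the displayed statement. With that caveat and the unconstrained reading, your argument is a correct self-contained substitute for the citation.
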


The last part of the proof of Theorem~\ref{thm:rtmlmc_grad_sample_complexity_mse} shows that if $n_1 = \cO(1)$, then the expected sampling cost of constructing a single MLMC gradient estimator is independent of~$\epsilon$ (\textit{i.e.}, $\cO(1)$). Theorem~\ref{theorem:sgd-num-steps_rt-mlmc} thus implies that Algorithm~\ref{algo:sgd} requires~$\cO(\epsilon^{-4})$ scenarios to compute an $\epsilon$-stationary point of~\eqref{problem:MCCO}. Theorem~\ref{theorem:sgd-num-steps_rt-mlmc} generalizes a known scenario complexity result for conditional stochastic optimization problems, which correspond to instances of~\eqref{problem:MCCO} with~$T=2$ stages \citep[Corollary~4.1]{hu2021bias}. When employing {\em un}truncated MLMC gradient estimators, obtained by setting $M_t = \infty$ for all $t \in [T-1]$, a general complexity result for constrained nonconvex stochastic optimization with unbiased gradient oracles~\citep[Corollary~4]{ghadimi2016mini} implies that an $\epsilon$-stationary point of problem~\eqref{problem:MCCO} can still be computed in~$\cO(\epsilon^{-4})$ iterations. This convergence rate is known to be optimal for unbiased gradient oracles in the absence of variance reduction~\citep{arjevani2023lower}. Variance-reduction techniques can further improve the optimal rate for unbiased gradient oracles to~$\cO(\epsilon^{-3})$~\citep{arjevani2023lower}.

\begin{algorithm}
\caption{Projected SGD}\label{algo:sgd}
\begin{algorithmic}
\Require Number of iterations $K$, stepsize schedule $\{\eta_k\}_{k=1}^K$, initial iterate~$x_1$
\For{$k=1,\ldots,K$}
\State Construct an estimator $\hat{G}(x_k)$ for the gradient $\nabla F(x_k)$
\State Update $x_{k+1}=\Pi_\cX (x_k-\eta_k\hat{G}(x_k))$, where $\Pi_\cX$ is the Euclidean projection on~$\cX$
\EndFor
\Ensure $\bar{x}_K$ chosen uniformly at random from  $\{x_k\}_{k=1}^K$
\end{algorithmic}
\end{algorithm}

\section{Numerical Experiments} \label{sec:experiments}
We now assess the benefits of our MLMC function value and gradient estimators for solving MCCE and MCCO problems, and we experimentally validate our theoretical results. All experiments are conducted on a MacBook Pro equipped with an Apple M1 Max chip and 32 GB RAM, running macOS Sonoma~14.5. Code and data for reproducing the numerical experiments are available at \url{https://github.com/busesen/MCCO}. 


\paragraph{MCCE with a Known Ground-Truth Solution}
The first experiment is centered around a synthetic instance of~\eqref{problem:MCCE}, originally introduced in \citep[\S~3.1]{syed2023optimal}, with~$T=3$ stages and integrands $f_1(\xi_1,x_1)=\sin(\xi_1+x_1)$, $f_2(\xi_2,x_2)=\sin(\xi_2-x_2)$ and $f_3(\xi_3,x)=\xi_3$. The disturbances follow a Markovian structure with $\xi_1\sim \cN(\pi/2,1)$, $\xi_2|\xi_1\sim \cN(\xi_1,1)$ and $\xi_3|\xi_2\sim \cN(\xi_2,1)$. This problem has the closed-form solution $F(x)=\exp(-1/2)\approx 0.6065$, against which the accuracy of different estimators can be assessed.

We compare two SAA estimators in the sense of Definition~\ref{def:MCCO_sample_average_approximation} with two MLMC estimators in the sense of Definition~\ref{def:MCCO_RTMLMC}. The first SAA estimator (SAA1) employs a uniform branching structure with $n_1=n_2=n_3$, while the second SAA estimator (SAA2) emphasizes the first stage by setting $n_1=n_2^2=n_3^2$; see \cite{syed2023optimal,rainforth2018nesting} for motivation and discussion. The first MLMC estimator, taken from \cite{syed2023optimal}, is untruncated, uses the recommended rate parameters $r_1=0.74$ and $r_2=0.60$, and sets $M_1=M_2=\infty$, resulting in an expected sampling cost of $4.6250$ per each of the $n_1$ underlying scenario trees. The second MLMC estimator is truncated, adopts the rates $r_1=1-2^{-3/2}$ and $r_2=1-2^{-5/4}$ proposed in Assumption~\ref{assump:rate-parameters-rtmlmc-funval} for smooth integrands, and sets $M_1=6$ and $M_2=5$, yielding an expected sampling cost of $4.7674$ per scenario tree. The truncation points are chosen to satisfy $M_1> M_2$, in accordance with Lemma~\ref{lemma:unif_conv_rt-mlmc}, while maintaining an expected sampling cost per tree comparable to that of the untruncated MLMC estimator.

For each of the four estimators, we consider a sequence of instances indexed by~$n_1$, yielding decreasing mean squared errors as~$n_1$ increases. For the SAA estimators, the associated scenario trees become increasingly bushy, and arguments analogous to those in Theorem~\ref{thm:saa_sample_complexity_mse} show that their mean squared errors converge to~$0$, implying asymptotic unbiasedness. By Lemma~\ref{lemma:bias_bound_rtmlmc}, the untruncated MLMC estimator is unbiased for every fixed~$n_1$, and similar reasoning as in Theorem~\ref{thm:rtmlmc_sample_complexity_mse} establishes that its mean squared error, which coincides with its variance, also vanishes as~$n_1$ grows. In contrast, the truncated MLMC estimator exhibits a nonvanishing bias as~$n_1$ grows. Thus, increasing~$n_1$ serves only to reduce its variance. 

Figure~\ref{fig:toy_example_estim}\,(a) displays the two MLMC estimators (lines) together with their $95\%$ confidence intervals (shaded regions) as functions of~$n_1$. Since any MLMC estimator $\hat F(x)$ is formed by averaging $n_1$ independent tree-wise estimators with finite second moments (see Definition~\ref{def:MCCO_RTMLMC} and Lemma~\ref{lemma:bound_on_variance_rtmlmc}), the central limit theorem implies approximate normality, and we therefore report confidence bounds of the form $\hat F(x)\pm 1.96\,\hat\sigma_{n_1}/\sqrt{n_1}$, where $\hat\sigma_{n_1}$ denotes the empirical standard deviation of the $n_1$ tree-wise estimators. For $n_1<500$ the variance becomes excessively large, and the plot is therefore restricted to $n_1\ge 500$. Although unbiased, the untruncated MLMC estimator exhibits substantially larger variance across all values of~$n_1$ and is notably fragile, as illustrated by the discontinuity at $n_1=17{,}212$. Such jumps are caused by the occasional sampling of large log-branching factors, which can occur because the underlying geometric distribution is {\em not} truncated.\footnote{The discontinuity in Figure~\ref{fig:toy_example_estim}\,(a) occurs because a large log-branching factor equal to $\ell=10$ was sampled in the $17{,}212$-th tree, which incurs a sampling cost that scales with $2^{\ell}$ and also introduces a large importance weight $1/q_t(\ell)$ in~\eqref{eq:H_hat}.} The figure shows the first of $10$ independent simulation runs, with similarly pronounced discontinuities observed in $4$ out of the $10$ runs for the untruncated estimator. Overall, truncation yields markedly more stable MLMC estimators with reduced variance, and although it introduces bias, this bias remains negligible at least up to $n_1=10^5$. The two SAA estimators are known to converge slower to~$F(x)$ than the MLMC estimators and are thus not shown in Figure~\ref{fig:toy_example_estim}\,(a); see, {\em e.g.}, \cite[\S~3]{syed2023optimal}. 

Figure~\ref{fig:toy_example_estim}\,(b) illustrates the trade-off between mean squared error and sampling cost for the four estimators. As before, different instances of each estimator are parametrized by~$n_1$. Since the truncated MLMC estimator is biased, its mean squared error is eventually dominated, for sufficiently large~$n_1$, by the squared bias. This causes the error to saturate as~$n_1$ (and thus the total sampling cost) increases. By Lemma~\ref{lemma:bias_bound_rtmlmc}, the squared bias is bounded above by $(2^{-7}+2^{-6})^2\approx 5.4932\times10^{-4}$, implying that saturation occurs below a log-mean squared error of $\log_{10}(5.4932\times10^{-4})\approx -3.2602$. It actually lies outside of the plotting range. The untruncated MLMC estimator is unbiased for any~$n_1$, yet it attains a larger mean squared error for all shown computational budgets due to its substantially higher variance. Although for sufficiently large~$n_1$ this variance can be reduced below the squared bias of the truncated MLMC estimator, so that the untruncated estimator may eventually outperform its truncated counterpart with fixed truncation levels, the required sampling costs typically exceed practical computational limits. Moreover, with sufficiently large computational budgets, one could jointly increase~$n_1$ as well as the truncation points, thereby reducing both variance and bias of the truncated estimator and preserving its advantage. The mean squared errors of the two MLMC estimators decay at comparable empirical rates of $-0.8707$ and $-0.7871$ for the truncated and untruncated variants, respectively, in agreement with the theoretical $n_1^{-1}$ rate for smooth integrands (see Theorem~\ref{thm:rtmlmc_sample_complexity_mse} and \cite[Corollary~2.3]{syed2023optimal}). In contrast, the SAA estimators perform substantially worse, with decay rates of $-0.3352$ for SAA1 and $-0.4936$ for SAA2, consistent with the theoretical rates $n_1^{-1/3}$ and $n_1^{-1/2}$, respectively (see Theorem~\ref{thm:saa_sample_complexity_mse} and \cite[Appendix~G]{rainforth2018nesting}). Taken together, these findings indicate that the truncated MLMC estimator performs best when computational resources are limited.

\begin{figure}[htbp]
    \centering
    \begin{subfigure}{0.4973\textwidth} 
        \includegraphics[ width=\textwidth, keepaspectratio]{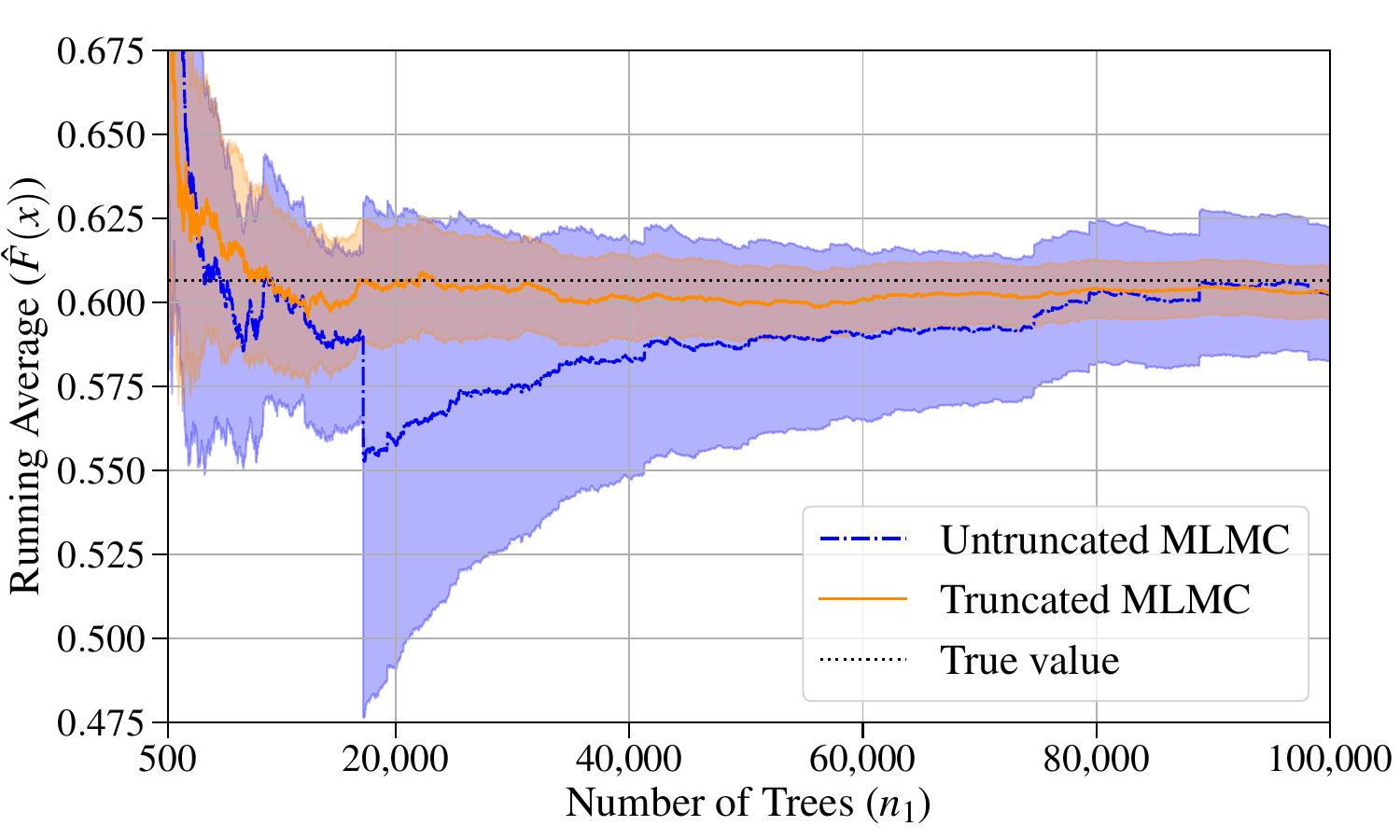}
        \caption{MLMC estimators with 95\% confidence intervals  vs~$n_1$.}
    \end{subfigure}
    \hfill
    \begin{subfigure}{0.4973\textwidth} 
        \includegraphics[ width=\textwidth, keepaspectratio]{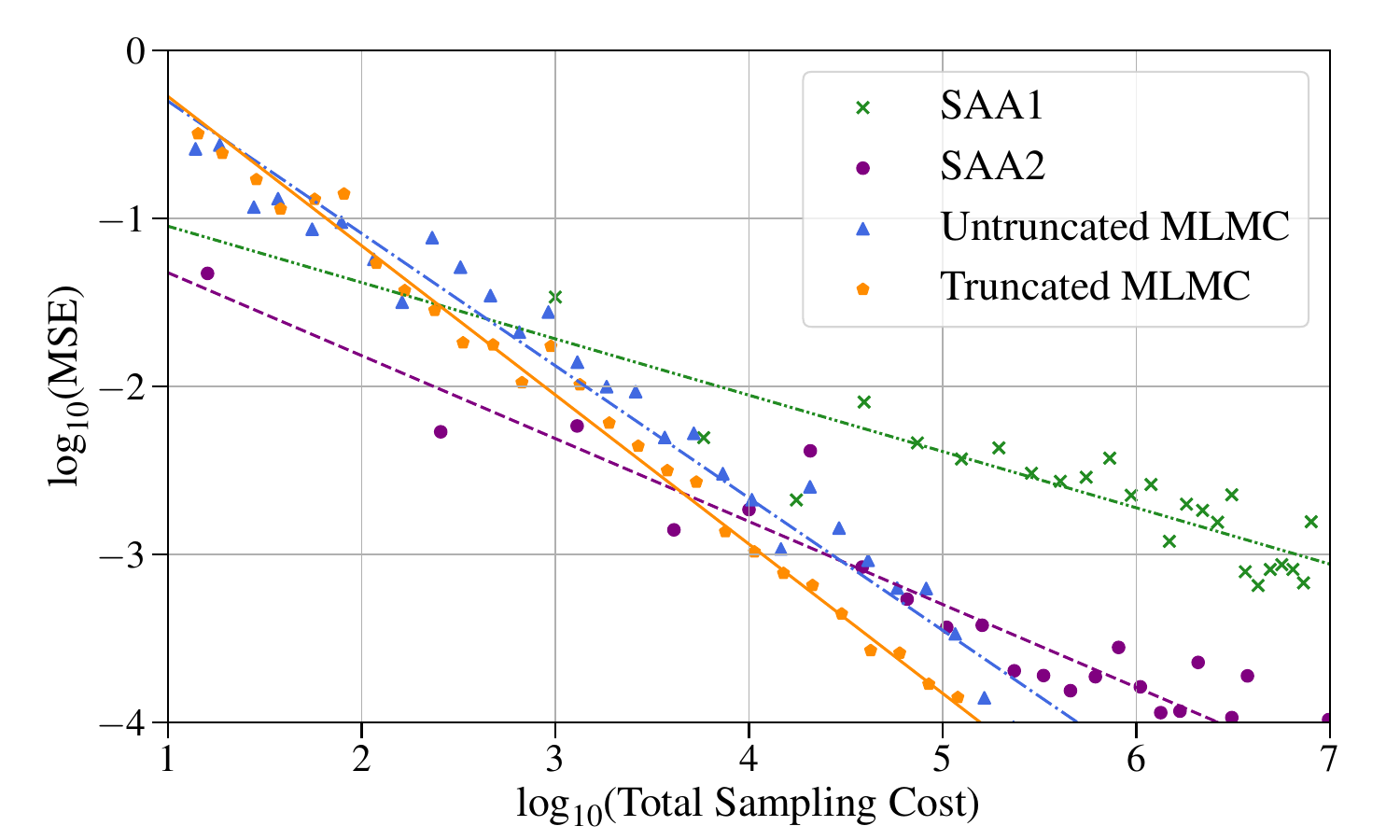}
        \caption{Mean squared error (MSE) vs total sampling cost.}
    \end{subfigure}
    \caption{Left panel: dependence of the untruncated and truncated MLMC estimators on~$n_1$. Right panel: trade-off between the mean squared error and the total sampling cost of all MLMC and SAA estimators. Each marker corresponds to an average of $10$ independent estimators of the same type for a fixed~$n_1$.}
    \label{fig:toy_example_estim}
\end{figure}

\paragraph{Pricing Bermudan Options} Consider a Bermudan basket option that grants its holder the right to sell an equally weighted basket of~$m=5$ assets at a fixed strike price~$K=100$ on one of~$T=4$ predetermined exercise dates $t\in[T]$. We use~$\xi_t\in\mathbb R^m$ to denote the random vector of asset prices at time~$t$ and assume that the risk-free interest rate amounts to~$\gamma=0.05$. Exercising the option at time~$t\in[T]$ yields an immediate payoff $g_t(\xi_t)=\max\left\{0,\, K-\frac{1}{m}\mathrm e_m^\top\xi_t\right\}$, where $\mathrm e_m\in\RR^m$ denotes the vector of ones. Under the risk-neutral measure, the asset prices follow independent geometric Brownian motions with drift~$\gamma$ and volatility~$\sigma=0.2$. Hence, the asset prices at the exercise dates satisfy $\xi_{t+1}=\operatorname{diag}(\varepsilon_t)\,\xi_t$, where the disturbances $\varepsilon_t\in\RR^m$, $t\in[T-1]$, are serially independent and identically distributed with $\ln(\varepsilon_t)\sim\cN((\gamma-\sigma^2 /2)\,\mathrm e_m,\sigma^2 I_m)$. We also assume that~$\xi_1=100\,\mathrm e_m$ is deterministic. The price of the option is given by the risk-neutral expected net present value of the payoffs under the optimal exercise strategy, which can be found by solving an optimal stopping problem as described in Section~\ref{sec:motivational_ex}. If~$x_t$ is the expected value of the option at time~$t+1$ conditional on time-$t$ information, then we have $x_{t-1}=\EE_{t-1}[\max\{g_t(\xi_t), e^{-\gamma} x_t\}]$ for every $t\in[T]$. Thus, computing today's price~$x_0$ amounts to solving the instance of~\eqref{problem:MCCE} with $f_t(\xi_t, x_t) = \max\{  g_t(\xi_t), e^{-\gamma} x_t \}$ for all~$t\in[T-1]$ and~$f_T(\xi_T,x)=g_T(\xi_T)$. For a more detailed discussion of Bermudan options see \cite{jain2012pricing}.

The option pricing problem under consideration does not admit a closed-form solution and must therefore be tackled using sampling-based methods. Approximate prices obtained with classical approaches are reported in \cite[Table~4]{jain2012pricing}; for instance, least squares Monte Carlo and the stochastic grid method yield estimates of $2.163(0.001)$ and $2.141(0.008)$, respectively, where numbers in parentheses denote standard errors and do not account for bias. In contrast, a policy improvement method~\cite{bender2006policy} produces a $95\%$ confidence interval of $[2.154,2.164]$ for the true option value. All of these classical methods are specifically designed for option pricing and do not readily extend to general MCCE problems. Furthermore, as noted in \cite{zhou2022unbiased}, their computational cost typically scales superlinearly with the dimension~$m$, which limits their applicability to high-dimensional basket options, where $m$ can be as large as $5{,}000$ \cite{becker2021solving}. In \cite{zhou2022unbiased} an untruncated MLMC estimator is shown to produce an unbiased price estimate of $2.161$ for the Bermudan option under consideration. Further numerical experiments suggest that the computational cost of this estimator scales sublinearly with~$m$. However, the analysis of this estimator heavily exploits the special structure of optimal stopping problems. As for any MLMC estimator, its efficiency depends critically on the rate parameters~$r_t$ governing the geometric distributions of the log-branching factors. If~$r_t\leq0.5$, then the estimator's expected cost diverges. In addition, there exists a positive tolerance $\delta\le 10^{-4}$ such that the estimator’s variance cannot be guaranteed to be finite for any $r_t\ge 0.5+\delta$~\cite[Theorem 2]{zhou2022unbiased}. Within the theoretically admissible narrow range of~$r_t$ close to~$0.5$, the untruncated MLMC estimator typically incurs high computational costs. For instance, setting $r_t=0.5001$ for $t\in[T-1]$ ($\delta=10^{-4}$) results in an expected cost of $2500.5^{T-1}$ \emph{per tree}.

We now compute the option price using three truncated MLMC estimators with $M_t=9,10,11$, as proposed in this paper, and two untruncated MLMC estimators with~$M_t=\infty$ proposed in~\cite{zhou2022unbiased}. The selected truncation points for the truncated estimators empirically provide the best balance between bias and computational cost. For the truncated MLMC estimators, we adopt a heuristic proposed in~\cite{zhou2022unbiased} to choose identical rate parameters $r_t=r$ for all geometric distributions governing the log-branching factors. As discussed after Lemma~\ref{lemma:bound_on_cost_rtmlmc}, the rate~$r$ should approximately minimize the expected sampling cost while ensuring that the estimator's variance does not exceed~$\epsilon^2/2$. This is achieved by minimizing the work-normalized second moment \cite{blanchet2015unbiased}, defined as $\mu_1^2(x)\prod_{t=1}^{T-1}\sum_{\ell=0}^{M_t} q_t(\ell)2^\ell$ (see also the proof of Lemma~\ref{lemma:optimal_b}). We approximate this quantity for each $r\in\{0.51,0.52,\ldots,0.70\}$ by estimating the expectation in the definition~\eqref{eq:mu} of~$\mu_1^2(x)$ via a sample average over $10^6$ independent copies of $\hat H_1(x)$, each computed as in Definition~\ref{def:MCCO_RTMLMC} but from a simplified and computationally cheap option pricing problem in which the average asset price $\frac{1}{m}e^\top\xi_t$ is replaced by a standard normal distribution. Next, we fit a piecewise linear convex function of~$r$ to the resulting approximate work-normalized second moments via a least squares procedure and select the rate parameter for all geometric distributions as a minimizer of this function. This yields~$r=0.59$ for the truncation points $M_t=9$ and~$M_t=11$ and~$r=0.58$ for~$M_t=10$. These rates are consistent with the theory of Section~\ref{sec:rtmlmc}, that is, they yield estimators with finite biases, variances and expected costs. As in~\cite{zhou2022unbiased}, we use the same heuristic approach to find a near-optimal rate of~$r=0.60$ for the first untruncated MLMC estimator. However, this rate is inconsistent with the theory of~\cite{zhou2022unbiased}, that is, for this choice of~$r$ the variance of the estimator is not guaranteed to be finite. The rate of the second untruncated MLMC estimator is set to~$r=0.5001$, which is the largest rate that guarantees the estimator to have a finite variance. Table~\ref{tab:option} reports the option prices predicted by all truncated and untruncated MLMC estimators when the number of scenario trees is set to~$n_1=5\times 10^6$. Even though they only use half as many scenario trees as the methods in~\cite{bender2006policy, zhou2022unbiased}, our truncated MLMC estimators provide comparable $95\%$~confidence intervals. Note that the untruncated MLMC estimator with~$r=0.6$ offers no confidence interval because its variance is not guaranteed to be finite, while the untruncated MLMC estimator with~$r=0.5001$ suffers from an excessive sampling cost per tree.

\begin{table}[htb]
\centering
\begin{tabular}{crrlc}
\hline
$M_t$    & \multicolumn{1}{c}{$r_t$} & \multicolumn{1}{c}{Cost per tree} & \multicolumn{1}{c}{Estimate (se)} & Confidence interval                 \\ \hline
9        & 0.5900                    & $22.6084 $               & $2.1684 (0.0076)$                 & $[2.1536, 2.1832]$ \\
10       & 0.5800                    & $29.5795$                & $2.1562 (0.0072)$                 & $[2.1421, 2.1703]$ \\
11       & 0.5900                    & $26.3283$                & $2.1641 (0.0080)$                 & $[2.1485, 2.1797]$ \\
$\infty$ & 0.5001                    & $1.5634 \times 10^{10}$  & $2.1547 (0.0073)$                         & $[2.1403, 2.1690]$ \\
$\infty$ & 0.6000                    & $27.0000$                & $2.1589 (-)$                      & $-$        
        \\ \hline
\end{tabular}
\caption{MLMC estimators for pricing Bermudan basket options: expected costs per tree, estimated option prices with standard errors (se) and $95\%$ confidence intervals for different truncation points and rates.}
\label{tab:option}
\end{table}

\paragraph{Contextual Bandits} 

In the last experiment we solve an instance of the off-policy learning problem~\eqref{eq:dro-offline-bandits-dual-softmax} borrowed from~\citep{shen2024wasserstein}, which determines one of two treatments to be administered to hospital patients suffering from acute ischaemic stroke. Each patient is characterized by a context vector~$c \in \mathcal{C} \subseteq \mathbb{Z}_+^6$. The first context feature $c_1 \in \{0,1,2\}$ represents the patient's level of consciousness (0: fully conscious, 1: drowsy, 2: unconscious), while $c_2 \in \{0, \dots, 4\}$, $c_3, c_4 \in \{0, 1\}$,  $c_5 \in \{0, \dots, 5\}$ and $c_6 \in \{0, \dots, 3\}$ capture other categorical features such as the patient's age etc. The overall context space~$\mathcal{C}$ is thus finite, and its cardinality equals~1{,}440. As in~\citep{shen2024wasserstein}, we assume that there are two actions $a\in\mathcal A=\{1,2\}$ (1: prescribing both aspirin and heparin, 2: not administering any treatment), and we parametrize the context-dependent policy~$\pi_\theta$ with $\theta \in[0,1]^2$, where the agent selects action~1 with probability~$\theta_1$ if~$c_1 \neq 0$ and with probability~$\theta_2$ if~$c_1 = 0$. 

We will solve the resulting instance of~\eqref{problem:MCCO} with a variant of the Adam optimizer \cite{kingma2014adam} equipped with the MLMC gradient estimator of Definition~\ref{def:rumlmc-grad}. This allows us to assess our MLMC gradient estimator in an interesting scenario where we have no theoretical results guaranteeing its superiority over basic SAA estimators. Indeed, as  explained in Section~\ref{sec:application_dro}, the integrands~$f_1$, $f_2$ and~$f_3$ involve logarithms as well as quadratic and exponential terms. Thus, they fail to satisfy the Lipschitz continuity and smoothness conditions detailed in Assumptions~\ref{assump:lipschitz} and~\ref{assump:smooth}, which are needed for all theoretical results in Section~\ref{sec:mcco_optimization}.

We prescribe the joint distribution of~$c$, $u$ and~$y$ under~$\mathbb P$ synthetically, which allows us to solve problem~\eqref{eq:dro-offline-bandits-dual-softmax} exactly. The resulting ground-truth solution serves as a benchmark against which any data-driven policies can be compared. First, we assume that the context~$c$ as well as the auxiliary random vector~$u$ follow independent uniform distributions on~$\cC$. Next, the mean of the cost vector~$y$ conditional on~$c$ is set to
\[
    \mathbb E[y|c] = \left\{ \begin{array}{ll}
    (3.0 + 5.0 c_5+p(c), 5.5 + 1.0 c_5+p(c)) & \text{if }c_1 = 0 ,\\
     (1.7 + 3.5 c_5+p(c) , 3.0 + 1.0 c_5+p(c)) & \text{if }c_1 \neq 0,
    \end{array} \right.
\]
where $p(c)=2.4+1.92(c_5/5-2.5)^2$ if $c_2=4$, $c_3=1$, $c_4=1$ and $c_6=3$; $p(c)=0$ otherwise. Conditional on~$c$, the actual cost vector~$y$ follows a bivariate log-normal distribution, that is, $\log(y)$ follows the normal distribution with mean $\log(\mathbb E[y|c]) - \frac{1}{2}\text{diag}(\Sigma)$ and covariance matrix~$\Sigma$, where the diagonal and off-diagonal elements of~$\Sigma$ are set to~$5.0$ and~$2.5$, respectively. The heterogeneity of $\mathbb E[y|c]$ ensures that the exact optimal solution of problem~\eqref{eq:dro-offline-bandits-dual-softmax} displays a sufficient level of complexity, whereas the positive correlation between the costs of the two actions captures the effect of an unobserved confounder that simultaneously affects both treatment outcomes. Finally, we set the smoothness temperature in~\eqref{eq:dro-offline-bandits-dual-softmax} to~$\mu=2$, and we set the ambiguity radii to~$r_c=0.4$ and~$r_y = 0.15$. This choice roughly accounts for a covariate shift whereby the subpopulation of patients with~$c_1=0$ is over-sampled by~$50\%$. 

By leveraging known properties of log-normal distributions, the innermost integral in~\eqref{eq:dro-offline-bandits-dual-softmax} can be evaluated analytically. As~$\cC$ is finite, the objective of~\eqref{eq:dro-offline-bandits-dual-softmax} thus reduces to a standard log-sum-exp function. Hence, the exact global minimizer $(\lambda^*, \theta_1^*, \theta_2^*) \approx (11.829, 0.589, 0.713)$ of problem~\eqref{eq:dro-offline-bandits-dual-softmax}
can be computed efficiently to any desired accuracy with any off-the-shelf convex optimization solver.

We now address problem~\eqref{eq:dro-offline-bandits-dual-softmax} with oracle-based methods that have only sample access to~$\mathbb P$. Specifically, we solve~\eqref{eq:dro-offline-bandits-dual-softmax} with a variant of the Adam optimizer using learning rates of 0.75 for~$\lambda$ and 0.025 for~$\theta_1$ and~$\theta_2$, alongside default decay rate parameters. To prevent Adam's momentum from accumulating against a hard boundary and stalling convergence, we enforce the $\lambda \geq 0$ constraint via a softplus reparameterization rather than a standard projection. Concretely, we set $\lambda=\log(1+\exp(\lambda'))$ for some $\lambda'\in\RR$ and run Adam entirely on~$\lambda'$. However, we simply project~$\theta_1$ and~$\theta_2$ to~$[0,1]$ in each iteration of the algorithm. We also add the $L_2$~regularization term~$0.005\cdot\|\theta\|_2^2$ to the objective of~\eqref{eq:dro-offline-bandits-dual-softmax} to enhance stability. As stochastic gradient estimators we use the SAA-type estimator proposed in~\cite{shen2024wasserstein} (adapted to account for stochastic costs) as well as the MLMC gradient estimator of Definition~\ref{def:rumlmc-grad}. As the Lipschitz continuity and smoothness conditions required to control the bias and variance of the MLMC estimator fail to hold in this example, we clip the norms of the truncated MLMC gradient estimators with respect to~$\lambda$, $\theta_1$ and~$\theta_2$ at 100, 50 and 50, respectively. Note that problem~\eqref{eq:dro-offline-bandits-dual-softmax} as described in~\citep{shen2024wasserstein} constitutes an instance of~\eqref{problem:MCCO} with~$T=3$. However, the numerical experiments in~\citep{shen2024wasserstein} focus on a simplified model with deterministic costs and~$T=2$---presumably due to the hardness of the original three-stage problem. In contrast, our experiments faithfully capture the three-stage nature of the more challenging original problem with stochastic costs.

Figure~\ref{fig:contextual} illustrates the convergence of~$\lambda$, $\theta_1$ and~$\theta_2$ as a function of the cumulative number of scenarios over 2,000 Adam iterations and for different choices of the estimators' hyperparameters. Solid lines and shaded regions represent means as well as corresponding 95\% confidence intervals obtained from 20 independent simulation runs, whereas dotted lines represent ground-truth minimizers. We observe that Adam converges significantly faster when equipped with MLMC estimators instead of SAA estimators. Recall from Lemma~\ref{lemma:bias_bound_rtmlmc_grad} that the truncated MLMC estimator is biased, and note that gradient clipping introduces an additional bias. Nevertheless, MLMC estimators lead to less biased optimizers than SAA estimators. Even though Assumptions~\ref{assump:lipschitz} and~\ref{assump:smooth} fail to hold, MLMC estimators thus enable faster and tighter convergence to the true optimal solution than SAA estimators equipped with a comparable computational budget.

\begin{figure}[ht]
    \centering
    \includegraphics[width=1\linewidth]{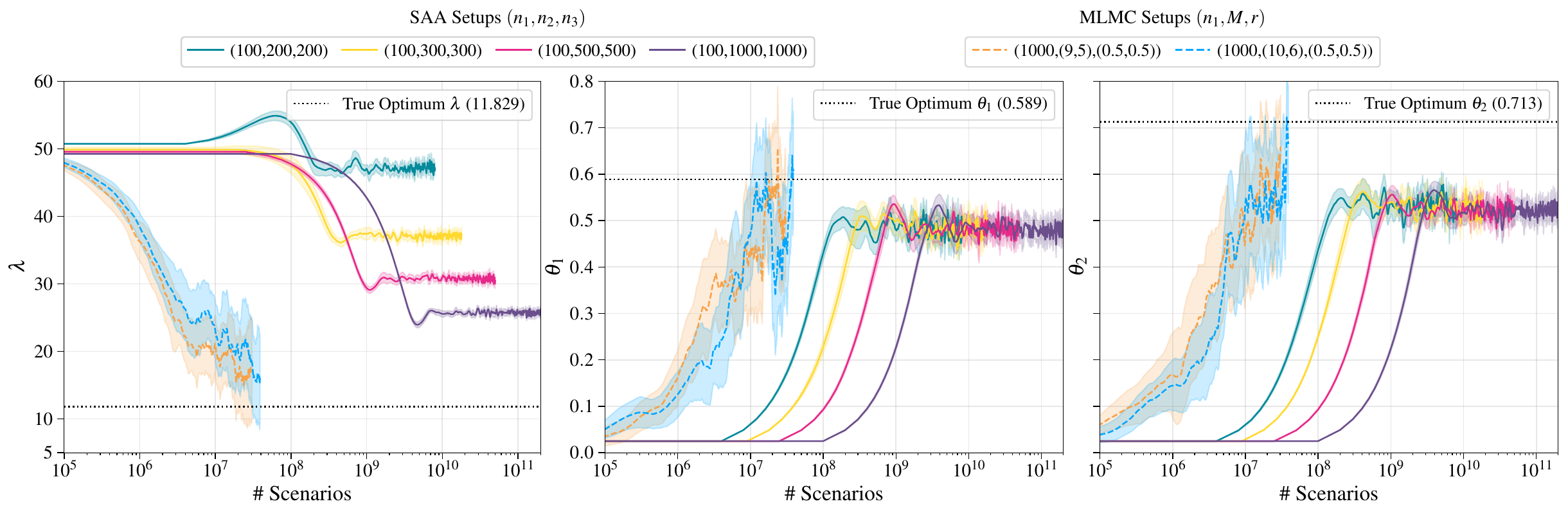}
    \caption{Convergence of the decision variables $\lambda$ (left), $\theta_1$ (middle), and $\theta_2$ (right) generated by Adam. Results show mean trajectories with 95\% confidence intervals based on 20 independent simulation runs.}
    \label{fig:contextual}
\end{figure}


\paragraph{Acknowledgments}
This work was supported as a part of NCCR Automation, a National Centre of Competence in Research, funded by the Swiss National Science Foundation (grant number 51NF40\_225155). We thank Guanyang Wang for providing the code for the experiments in~\cite{syed2023optimal}, as well as Yi Shen and Pan Xu for the code corresponding to~\cite{shen2024wasserstein}. We also thank Guanyang Wang, Lie He and Niao He for helpful discussions.

\medskip
\bibliographystyle{plainnat}
\bibliography{ref.bib}

\appendix
\section*{Appendix}
\section{Auxiliary Results} \label{appendix:auxiliary}
\renewcommand{\thelemma}{A.\arabic{lemma}}  
The following lemma establishes a uniform deviation bound based on covering numbers. It is a standard result in stochastic programming, and we include a concise proof to keep the presentation self-contained.

\begin{lemma}[{\citep[Proof of Theorem~1]{shapiro2006complexity}}]\label{lemma:v-net}
Consider the problem of minimizing an $L$-Lipschitz continuous function $F:\RR^d\to\RR$ over a compact set $\cX\subseteq\RR^d$ with diameter $D_{\cX}$. Let $\hat F:\RR^d\to\RR$ be an $L$-Lipschitz continuous random function that approximates~$F$. Then, for every $v>0$, there is a $v$-net $\cX_v\subseteq \cX$ such that $|\cX_v|\leq\lceil 2D_{\cX}/v+1\rceil^{d}$ and $\min_{x^\prime\in\cX_v}\|x-x^\prime\|_2\leq v$ for every $x\in\cX$. If $\epsilon>0$ and $v=\frac{\epsilon}{4L}$,  then we have
    \begin{align*} 
        \PP\Big(\sup_{x\in\mathcal{X}} \lvert \hat F(x) - F(x) \rvert > \epsilon \Big)\leq\sum_{x^\prime\in\cX_v} \PP\Big( \lvert  \hat F(x^\prime) - F(x^\prime) \rvert  > \frac{\epsilon}{2}\Big).
    \end{align*}
\end{lemma}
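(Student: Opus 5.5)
The proof has two independent parts: the existence of a small $v$-net, which is a volumetric covering argument, and the deviation bound, which follows from a deterministic event inclusion plus a union bound.

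\textbf{Step 1 (the $v$-net).} Since $\cX$ has diameter $D_\cX$, it lies in an axis-aligned cube of side length $D_\cX$: take any point $x_0\in\cX$, so that each coordinate of every $x\in\cX$ lies within an interval of length at most $D_\cX$. Partition each coordinate interval into $k=\lceil D_\cX\sqrt{d}/v\rceil$ pieces, or use any comparable grid. I would then count cells. In each cell meeting $\cX$, pick one point of $\cX$; any $x$ in that cell is within the cell diameter of the chosen point. The grid spacing must be calibrated so that this diameter is at most $v$ while the cell count stays at most $\lceil 2D_\cX/v+1\rceil^d$. A cleaner alternative is the standard maximal $v$-packing argument within $\cX$ combined with a volume comparison. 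Since the lemma is cited from \citep{shapiro2006complexity}, I would follow their grid construction and accept the stated count. Matching the exact constant $\lceil 2D_\cX/v+1\rceil^d$ with the Euclidean norm is the only mildly delicate point, and I expect it to be the main obstacle; it is bookkeeping rather than a conceptual difficulty.

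\textbf{Step 2 (the deviation bound).} Fix $v=\epsilon/(4L)$. For any $x\in\cX$, choose $x'\in\cX_v$ with $\|x-x'\|_2\le v$. By the triangle inequality and the $L$-Lipschitz continuity of both $\hat F$ and $F$,
\[
|\hat F(x)-F(x)|\le |\hat F(x)-\hat F(x')|+|\hat F(x')-F(x')|+|F(x')-F(x)|\le 2Lv+|\hat F(x')-F(x')| = \tfrac{\epsilon}{2}+|\hat F(x')-F(x')|.
\]
Hence the event $\{\sup_{x\in\cX}|\hat F(x)-F(x)|>\epsilon\}$ is contained in $\bigcup_{x'\in\cX_v}\{|\hat F(x')-F(x')|>\epsilon/2\}$. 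This inclusion holds pointwise for every realization, because the Lipschitz bound on $\hat F$ holds almost surely. A union bound over the finite set $\cX_v$ then gives the claim.

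Measurability of the supremum is handled by the same inclusion. By continuity, the supremum over $\cX$ equals the supremum over a countable dense subset. Alternatively, the inclusion shows that the event is dominated by a measurable union, so the probability bound holds in the outer-probability sense.
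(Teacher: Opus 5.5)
Your Step~2 is the paper's argument (triangle inequality, $2Lv=\epsilon/2$, then a union bound over $\cX_v$), and it is correct. The gap is in Step~1.

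You commit to a cubic grid and defer the cardinality bound to the citation. That route cannot deliver $\lceil 2D_\cX/v+1\rceil^d$, and the obstruction is not bookkeeping. A cube of side $h$ has Euclidean diameter $h\sqrt d$. Cells of diameter at most $v$ therefore force $h\le v/\sqrt d$, which gives roughly $(\sqrt d\,D_\cX/v)^d$ cells. Already for $d=5$ and $D_\cX/v=10$ your choice $k=\lceil \sqrt d\,D_\cX/v\rceil$ gives $23^5$ cells, while the lemma claims at most $21^5$. No recalibration of a cubic grid removes the $\sqrt d$ in the base. Even using cell centers, where half-diagonals at most $v$ suffice, the base is about $\sqrt d\,D_\cX/(2v)$, which loses to $2D_\cX/v+1$ once $d>16$. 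So following a grid construction and ``accepting the stated count'' leaves the claimed bound on $|\cX_v|$ unproved in general dimension.

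The repair is the packing argument you mention but do not carry out. Take a maximal subset $\{x^1,\dots,x^N\}\subseteq\cX$ whose points are pairwise more than $v$ apart.
\begin{itemize}
\item Maximality makes it a $v$-net lying inside $\cX$.
\item The open balls of radius $v/2$ around the $x^i$ are pairwise disjoint and contained in the ball of radius $D_\cX+v/2$ around $x^1$.
\item Comparing volumes gives $N(v/2)^d\le (D_\cX+v/2)^d$, that is, $N\le(1+2D_\cX/v)^d\le\lceil 2D_\cX/v+1\rceil^d$.
\end{itemize}
The same volume bound caps the size of every $v$-separated set, so a maximal one exists.

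The paper argues differently. It covers a Euclidean ball of radius $D_\cX$ containing $\cX$ using the volumetric covering bound of \cite[Lemma~5.7]{wainwright2019high}, then projects the centers onto $\cX$. That projection step relies on nonexpansiveness of projection onto a convex set, which is a standing assumption of the paper but not part of the lemma as stated. The packing argument needs only compactness of $\cX$ and places the net in $\cX$ directly.
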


\begin{proof}
The set~$\cX$ is contained in a ball of radius~$D_\cX$ and can thus be covered by $\lceil 2D_{\cX}/v+1\rceil^{d}$ smaller balls of diameter~$v$~\cite[Lemma~5.7]{wainwright2019high}. By construction, the centroids of these smaller balls, projected onto~$\cX$, constitute a $v$-net $\cX_v$. That is, for every $x\in\cX$, there is an $x'\in\cX_v$ in its closed $v$-neighborhood. Assume now that $v=\frac{\epsilon}{4L}$ for some $\epsilon>0$. Then, for every $x\in\cX$ and $x^\prime\in\argmin_{x^\prime\in\cX_v}\|x-x^\prime\|_2$, we have
\begin{align*}
    \lvert \hat F(x) - F(x) \rvert \leq \lvert \hat F(x) - \hat F(x^\prime) \rvert + \lvert  \hat F(x^\prime) - F(x^\prime) \rvert + \lvert F(x^\prime) - F(x) \rvert \leq \lvert  \hat F(x^\prime) - F(x^\prime) \rvert+ \frac{\epsilon}{2} ,
\end{align*}
where the second inequality follows from the $L$-Lipschitz continuity of~$F$ and~$\hat F$ and from the observation that $\|x'-x\|_2\leq v$. As the above bound holds for every $x\in\cX$, we readily obtain
\begin{align*}
    \sup_{x\in\cX} \lvert \hat F(x) - F(x) \rvert  \leq \frac{\epsilon}{2} + \max_{x^\prime\in\cX_v} \lvert  \hat F(x^\prime) - F(x^\prime) \rvert.
\end{align*}
This further implies via the union bound that
\begin{align*}
    \PP\left(\sup_{x\in\cX} \lvert \hat F(x) - F(x) \rvert > \epsilon \right) \leq \PP\left( \max_{x^\prime\in\cX_v} \lvert  \hat F(x^\prime) - F(x^\prime) \rvert > \frac{\epsilon}{2} \right) \leq \sum_{x^\prime\in\cX_v} \PP\left( \lvert  \hat F(x^\prime) - F(x^\prime) \rvert  > \frac{\epsilon}{2}\right).
\end{align*}
Hence, the claim follows.
\end{proof}

The following matrix norm bounds will be used repeatedly in proving the main results of this paper.

\begin{lemma}[{\cite[\S~5.4.P3]{Horn_Johnson1985matrixanalysis}}]\label{lemma:matrix-inequality-pq}
    If $A\in\RR^{m\times n}$ and $1\leq p <q\leq\infty$, then
    $\|A\|_q \leq \|A\|_p \leq (mn)^{\frac{1}{p}-\frac{1}{q}}\|A\|_q$.
\end{lemma}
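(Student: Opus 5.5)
The inequality $\|A\|_q\le\|A\|_p$ and its reverse-type counterpart concern only the entries of $A$, so I will identify $A$ with the vector $a=(a_1,\dots,a_N)\in\RR^N$, $N=mn$, of its entries. Both sides are positively homogeneous, so the case $A=0$ is trivial and otherwise I may rescale $A$ freely.

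For the left inequality, first assume $q<\infty$ and normalize so that $\|a\|_p=1$. Then $|a_k|\le1$ for every $k$. Since $q>p$, this gives $|a_k|^q\le|a_k|^p$ entrywise. Summing yields $\|a\|_q^q\le\|a\|_p^p=1$, hence $\|a\|_q\le1=\|a\|_p$. For $q=\infty$ the claim is immediate, because $\max_k|a_k|=(\max_k|a_k|^p)^{1/p}\le(\sum_k|a_k|^p)^{1/p}$.

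For the right inequality with $q<\infty$, I will apply Hölder's inequality to the product $|a_k|^p\cdot1$ with conjugate exponents $q/p$ and $(1-p/q)^{-1}$. This gives
\[
\sum_k|a_k|^p\le\Big(\sum_k|a_k|^q\Big)^{p/q}N^{1-p/q}.
\]
Taking $p$-th roots gives $\|a\|_p\le N^{1/p-1/q}\|a\|_q$. For $q=\infty$ I will instead use the direct bound $\sum_k|a_k|^p\le N\max_k|a_k|^p$, which gives $\|a\|_p\le N^{1/p}\|a\|_\infty$, consistent with $1/q=0$.

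Substituting $N=mn$ yields the stated bound. There is no real obstacle here; the only step requiring care is treating the endpoint case $q=\infty$ separately in both directions.
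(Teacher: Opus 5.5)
Your proof is correct, including the separate treatment of the $q=\infty$ endpoints. The paper gives no proof of this lemma and simply cites Horn and Johnson, so there is no competing argument to compare against. Yours is the standard self-contained one: flatten $A$ into a vector in $\RR^{mn}$, normalize to $\|a\|_p=1$ for the monotonicity bound, and apply H\"older with exponents $q/p$ and $(1-p/q)^{-1}$ for the reverse bound. Since it never uses integrality of the exponents, it also covers the non-integer exponents such as $2^t(\rho_t+1)$ at which the paper later applies the lemma.
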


\begin{lemma}[{\cite[\S~5.6.0.2]{Horn_Johnson1985matrixanalysis}}]\label{lemma:cauchy-schwarz-matrix}
    If $A\in\RR^{m\times n}$ and $B\in\RR^{n\times k}$, then $\|A B\|_2 \leq \|A\|_2 \|B\|_2$.
\end{lemma}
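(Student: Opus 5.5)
The plan is to prove the inequality entrywise with the Cauchy--Schwarz inequality in $\RR^n$ and then sum over all entries. Recall from the notation section that $\|\cdot\|_2$ applied to a matrix is the entrywise $2$-norm, i.e., the Frobenius norm, so $\|A\|_2^2=\sum_{i=1}^m\sum_{k=1}^n A_{ik}^2$. I will write $a_i\in\RR^n$ for the $i$-th row of $A$ (as a column vector), $i\in[m]$, and $b_j\in\RR^n$ for the $j$-th column of $B$, $j\in[k]$. Then $\sum_{i=1}^m\|a_i\|_2^2=\|A\|_2^2$ and $\sum_{j=1}^k\|b_j\|_2^2=\|B\|_2^2$.

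First, I express each entry of the product as an inner product: $(AB)_{ij}=\sum_{l=1}^n A_{il}B_{lj}=a_i^\top b_j$. The Cauchy--Schwarz inequality then gives $(AB)_{ij}^2\le \|a_i\|_2^2\,\|b_j\|_2^2$ for all $i\in[m]$ and $j\in[k]$.

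Next, I sum this bound over all index pairs. The right-hand side factorizes:
\[
\|AB\|_2^2=\sum_{i=1}^m\sum_{j=1}^k (AB)_{ij}^2\le \Big(\sum_{i=1}^m\|a_i\|_2^2\Big)\Big(\sum_{j=1}^k\|b_j\|_2^2\Big)=\|A\|_2^2\,\|B\|_2^2 .
\]
Taking square roots yields the claim.

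I do not expect a genuine obstacle. The only point needing care is to keep the row/column bookkeeping consistent, so that the double sum splits into the product of the two squared Frobenius norms.
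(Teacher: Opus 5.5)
Your proof is correct and is the standard argument: the paper gives no proof of its own and simply cites Horn and Johnson, where submultiplicativity of the Frobenius norm is shown by this same entrywise Cauchy--Schwarz bound followed by factoring the double sum. The only blemish is notational: in $\|A\|_2^2=\sum_{i}\sum_{k}A_{ik}^2$ you use $k$ as a summation index, although $k$ already denotes the number of columns of $B$.
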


Finally, we state a well-known corollary of the Marcinkiewicz-Zygmund inequality as an auxiliary lemma, which is useful for deriving variance bounds of the estimators proposed in the main text.

\begin{lemma}
\label{lemma:moment-inequality-multidimensional} Assume that $Y$ and $Z$ are random matrices of arbitrary dimensions with finite $p$-th moments for some $p\geq 2$, and let $Z_i$, $i\in[n]$, be independent samples from the conditional distribution $\mathbb P_{Z|Y}$. Then there exists a constant $B_p$ depending only on $p$ such that
    \begin{equation*}
        \EE_{Z|Y}\bigg[\Big\|\frac{1}{n}\sum_{i=1}^n Z_i-\EE_{Z|Y}[Z]\Big\|^p_p\bigg] \leq \frac{B_p}{n^{p/2}}  \EE_{Z|Y}\left[ \|Z\|^{p}_p \right].
    \end{equation*}
\end{lemma}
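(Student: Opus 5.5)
The plan is to reduce the matrix statement to the classical scalar Marcinkiewicz–Zygmund inequality, applied entry by entry and pointwise in the conditioning variable. Fix a realization of~$Y$ and work under a regular version of the conditional distribution~$\mathbb P_{Z|Y}$. Under this law the $Z_i$ are i.i.d.\ with mean $m\coloneqq\EE_{Z|Y}[Z]$, which is well defined because $p$-th moments, and hence first moments, are finite. Every quantity in the claim then becomes an ordinary expectation under a fixed product measure, and measurability in~$Y$ is inherited from the regular conditional distribution.

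\textbf{Step 1 (entrywise reduction).} Because we use the entrywise $p$-norm, we have $\|\frac1n\sum_i Z_i-m\|_p^p=\sum_{j}\big|\frac1n\sum_{i}(Z_{i,j}-m_j)\big|^p$, where $j$ ranges over all matrix entries. It therefore suffices to prove the scalar bound
\[
\EE\Big|\tfrac1n\textstyle\sum_{i}X_i\Big|^p\le A_p\, n^{-p/2}\,\EE|X_1|^p
\]
for i.i.d.\ centered real random variables $X_i=Z_{i,j}-m_j$.

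\textbf{Step 2 (scalar bound).} The Marcinkiewicz–Zygmund inequality gives $\EE|\sum_i X_i|^p\le A_p\,\EE[(\sum_i X_i^2)^{p/2}]$ for a constant $A_p$ depending only on~$p$. Since $p\ge2$, the map $u\mapsto u^{p/2}$ is convex, so Jensen's (or the power-mean) inequality yields $(\sum_i X_i^2)^{p/2}\le n^{p/2-1}\sum_i|X_i|^p$. Taking expectations gives $\EE|\sum_i X_i|^p\le A_p n^{p/2}\EE|X_1|^p$, and dividing by $n^p$ produces the $n^{-p/2}$ rate.

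\textbf{Step 3 (remove the centering).} Write $\EE|Z_j-m_j|^p\le 2^{p-1}(\EE|Z_j|^p+|m_j|^p)$. By Jensen's inequality $|m_j|^p\le\EE|Z_j|^p$, so $\EE|Z_j-m_j|^p\le 2^p\EE|Z_j|^p$. Summing over the entries~$j$ turns the right-hand side into $\EE_{Z|Y}\|Z\|_p^p$, and the claim follows with $B_p\coloneqq2^pA_p$, which depends only on~$p$ and is independent of the matrix dimensions.

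\textbf{Main obstacle.} The only delicate point is justifying that the scalar inequality may be applied conditionally. I would handle this by invoking it under the regular conditional law for each fixed~$y$, which is legitimate because the constant $A_p$ is universal. The resulting inequality then holds $\mathbb P_Y$-almost surely.
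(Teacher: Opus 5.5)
Your proposal is correct and takes essentially the same route as the paper. The paper omits the proof, deferring to the Marcinkiewicz–Zygmund-based corollary of Zhou et al.\ applied entrywise to matrices. Your steps supply exactly those omitted details: the entrywise reduction under the $p$-norm, the power-mean bound $(\sum_i X_i^2)^{p/2}\le n^{p/2-1}\sum_i |X_i|^p$, the $2^p$ de-centering factor, and the pointwise application under the regular conditional law.
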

The proof parallels that of~\cite[Corollary 2]{zhou2022unbiased}, generalized to matrices. Details are omitted for brevity. 

\section{Proofs of Section~\ref{sec:motivational_ex}}
\label{sec:proofs-section-2}
\begin{proof}[Proof of Lemma~\ref{lem:Q-lqr}]
We prove the claim by backward induction on~$t$. At $t=T$ we have $Q_T(s,a)=s^\top P_T s$ by assumption, which is of the form~\eqref{eq:Q-Bellman} if the coefficients satisfy~\eqref{eq:Q-initialization}. This establishes the base step. As for the induction step, assume that~\eqref{eq:Q-Bellman} holds for $Q_{t+1}$. Thus, we find 
\[
	\min_{a'\in\mathbb R^n} Q_{t+1}(s',a')  = {s'}^\top P_{t+1} s' + 2\,g_{t+1}^\top s' + d_{t+1},
\]
with the Schur-complement expressions for $P_{t+1}$, $g_{t+1}$ and $d_{t+1}$ given in~\eqref{eq:Q-Schur}. Substituting the above formula with $s'=A s + B a + \xi_{t+1}$ into the Bellman recursion~\eqref{eq:Qdef} and expanding the quadratic and linear terms shows that $Q_t(s,a)$ is again a quadratic function in $(s,a)$. Collecting coefficients of $s^\top s$, $s^\top a$, $a^\top a$, $s$, $a$ and the constant term then yields exactly the expressions~\eqref{eq:Q-recursion}. Thus the quadratic structure of the Q-function is preserved backward in time, completing the induction. 
\end{proof}

The proof of Lemma~\ref{lem:Q-lqr} readily reveals that the optimal control at time~$t$ is given by
\begin{equation*}
a_t^\star = -\big(Q_t^{aa}\big)^{-1} \big((Q_t^{sa})^\top s + b_t^a \big)
\end{equation*}
and thus constitutes an affine function of the state~$s$ with $\mathcal F_t$-measurable random coefficients.

\section{Proofs of Section~\ref{sec:mcco_estimation}}

To streamline the proofs in Section~\ref{sec:mcco_estimation}, we first introduce some key notation and conventions that will be useful for our derivations. Throughout this discussion we use $\mathcal L_t$ as shorthand for the space $\mathcal L^1(\Omega,\mathcal F,\PP;\RR^{d_t})$ of all $\PP$-integrable $d_t$-dimensional random vectors on $(\Omega,\mathcal F,\PP)$. 

\begin{defn}
\label{def:risk-mapping}
For any $t\in[T]$ and $s\in[t+1]$, we define the conditional risk mapping $F_s^t : \mathcal L_t\to \mathcal L_{s-1}$~via
\begin{align*}
    F_s^t(z_t)\coloneqq \begin{cases}
        \EE_{s-1} \big[f_{s} \big(\xi_{s},\ldots ,\EE_{t-1} \big[ f_t(\xi_t, z_t ) \big] \ldots \big) \big] & \text{if } s<t+1, \\ z_t & \text{if } s=t+1.
    \end{cases}
\end{align*}
\end{defn}

Note that $F_s^t$ maps $d_t$-dimensional random vectors to $d_{s-1}$-dimensional random vectors. By construction, we have $F(x)=F_1^T(x)$ for every (deterministic) vector $x\in\RR^d$. If Assumptions~\ref{assump:general} and~\ref{assump:lipschitz} hold, then $F_s^t(z_t)$ is well-defined and constitutes indeed an element of $\mathcal L_{s-1}$ for every $z_t\in \mathcal L_{t}$.

\begin{lemma}
\label{lem:well-defined}
If Assumptions~\ref{assump:general} and~\ref{assump:lipschitz} hold, then $F_s^t(z_t)\in \mathcal L_{s-1}$ for all $z_t\in \mathcal L_{t}$, $t\in[T]$ and $s\in[t+1]$.
\end{lemma}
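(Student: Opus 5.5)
We argue by backward induction on $s$, for fixed $t\in[T]$, starting from $s=t+1$ and descending to $s=1$. The base case $s=t+1$ is immediate, since $F_{t+1}^t(z_t)=z_t\in\mathcal L_t$ by definition.

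For the induction step, fix $s\le t$ and suppose $w\coloneqq F_{s+1}^t(z_t)\in\mathcal L_s$. By Definition~\ref{def:risk-mapping}, $F_s^t(z_t)=\EE_{s-1}[f_s(\xi_s,w)]$. We first show that $f_s(\xi_s,w)$ is a well-defined integrable random vector. Since $f_s$ is Borel measurable in $\xi_s$ by Assumption~\ref{assump:general}(i) and Lipschitz, hence continuous, in its second argument by Assumption~\ref{assump:lipschitz}, it is a Carathéodory function and therefore jointly Borel measurable. Consequently $\omega\mapsto f_s(\xi_s(\omega),w(\omega))$ is $\mathcal F$-measurable. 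Integrability follows from the Lipschitz bound
\[
\|f_s(\xi_s,w)\|_2\le \|f_s(\xi_s,0)\|_2+L_s\|w\|_2 .
\]
Taking expectations gives $\EE\|f_s(\xi_s,w)\|_2\le \EE\|f_s(\xi_s,0)\|_2+L_s\EE\|w\|_2<\infty$, where we use Assumption~\ref{assump:general}(i) for the first term and the induction hypothesis for the second. Hence $f_s(\xi_s,w)\in\mathcal L^1(\Omega,\mathcal F,\PP;\RR^{d_{s-1}})$. Its conditional expectation given $\mathcal F_{s-1}$ therefore exists, is a.s.\ unique, and is integrable: by conditional Jensen and the tower property, $\EE\|\EE_{s-1}[\cdot]\|_2\le\EE\|\cdot\|_2$. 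Thus $F_s^t(z_t)\in\mathcal L_{s-1}$, which completes the induction. For $s=1$ we interpret $\EE_0$ as the unconditional expectation, so the result is a deterministic vector and trivially integrable.

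The only subtle point is the joint measurability of $(\xi_s,w)\mapsto f_s(\xi_s,w)$, since Assumption~\ref{assump:general} gives measurability only in $\xi_s$. We handle it with the standard Carathéodory argument: approximate $w$ by simple (countably-valued) random vectors $w_k\to w$, observe that each $f_s(\xi_s,w_k)$ is measurable, and conclude by pointwise convergence, using continuity of $f_s$ in its second argument. Everything else reduces to the linear-growth bound above together with basic properties of conditional expectation.
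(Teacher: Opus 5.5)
Your proposal is correct and follows essentially the same route as the paper. Both arguments use backward induction on $s$ with the trivial base case $s=t+1$. Both obtain joint measurability of $f_s(\xi_s,F_{s+1}^t(z_t))$ from the Carath\'eodory property; the paper simply cites Aliprantis--Border, Lemma~4.51, whereas you sketch the approximation argument. Both get integrability from conditional Jensen together with the linear-growth bound $\|f_s(\xi_s,w)\|_2\le\|f_s(\xi_s,0)\|_2+L_s\|w\|_2$.
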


\begin{proof}
We fix $t\in[T]$ and $z_t\in\mathcal L_t$ and prove the claim by induction on~$s$. If $s=t+1$, then we have $F_s^t(z_t)=z_t\in\mathcal L_t$, and thus the base case is trivially true. As for the induction step corresponding to any~$s\in[t]$, assume that $F_{s+1}^t(z_t)\in\mathcal L_s$. Then, $f_{s}(\xi_{s}, F_{s+1}^t(z_t))$ is $\mathcal F$-measurable thanks to \cite[Lemma~4.51]{aliprantisinfinite} and because measurability is preserved under Borel-measurable transformations. Also, we have
\begin{align*}
    \EE\left[ \big\| F_s^t(z_t)\big\|_2 \right] & = \EE\left[ \big\| \EE_{s-1} [ f_s(\xi_s,F_{s+1}^t(z_t))] \big\|_2 \right]\\
    & \leq \EE\left[ \big\| f_s(\xi_s,F_{s+1}^t(z_t)) \big\|_2 \right] \\
    & \leq \EE\left[ \big\| f_s(\xi_s,F_{s+1}^t(z_t))-f_s(\xi_s,0) \big\|_2 + \big\| f_s(\xi_s,0) \big\|_2 \right] \\
    & \leq L_s\, \EE\left[ \big\|F_{s+1}^t(z_t)\big\|_2\right] + \EE\left[\big\| f_s(\xi_s,0) \big\|_2 \right] <\infty,
\end{align*}
where the first inequality follows from Jensen's inequality and the law of total expectation, while the second and the third inequalities follow from the triangle inequality and Assumption~\ref{assump:lipschitz}, respectively. The resulting bound is finite by the induction hypothesis, which assumes that $F_{s+1}^t(z_t)\in\mathcal L_s$, and by Assumption~\ref{assump:general}. We may thus conclude that $F_{s}^t(z_t)\in\mathcal L_{s-1}$. This observation completes the induction step.
\end{proof}

Lemma~\ref{lem:well-defined} ensures that the function~$F(x)$ is well-defined. If Assumptions~\ref{assump:general} and~\ref{assump:lipschitz} hold, then all conditional risk mappings of Definition~\ref{def:risk-mapping} are Lipschitz continuous. If additionally Assumption~\ref{assump:smooth} holds, then the conditional risk mappings are also Lipschitz smooth.

\begin{lemma}\label{lem:lipschitz}
Fix any $t\in[T]$ and $s\in[t+1]$. If Assumptions~\ref{assump:general} and~\ref{assump:lipschitz} hold, then $F_s^t$ is an $L_{[s:t]}$-Lipschitz continuous function on the space $\mathcal L_t$ with $L_{[s:t]}\coloneqq\prod_{i=s}^t L_i$ if $s<t+1$; $\coloneqq1$ if $s=t+1$. This means that
\[
     \big\|F_s^t(z_t) - F_s^t(z'_t) \big\|_2 \leq L_{[s:t]} \, \EE_{s-1} \left[ \|z_t-z'_t\|_2\right] \quad \forall z_t,z'_t\in\mathcal L_t.
\]
If additionally Assumption~\ref{assump:smooth} holds, then $F_s^t$ is an $S_{[s:t]}$-smooth function on the space $\mathcal L_t$ with $S_{[s:t]}\coloneqq\sum_{i=s}^{t} S_i L_{[s:i-1]} L_{[i+1:t]}^2$ if $s<t+1$; $\coloneqq0$ if $s=t+1$. This means that
\[
     \big\|\nabla F_s^t(z_t) - \nabla F_s^t(z'_t) \big\|_2 \leq S_{[s:t]} \, \EE_{s-1} \left[ \|z_t-z'_t\|_2\right] \quad \forall z_t,z'_t\in\mathcal L_t.
\]
\end{lemma}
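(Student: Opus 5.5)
We argue by backward induction on~$s$, going from $s=t+1$ down to $s=1$, in the same way as in the proof of Lemma~\ref{lem:well-defined}. The base case $s=t+1$ is immediate. Here $F_{t+1}^t$ is the identity and $L_{[t+1:t]}=1$. Since $z_t,z'_t\in\mathcal L_t$ are only $\mathcal F$-measurable, the displayed inequality should be read with $\EE_t$ replaced by the identity. For the smoothness claim, $\nabla F_{t+1}^t$ is the constant identity map, so its difference vanishes, which is consistent with $S_{[t+1:t]}=0$.

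For the Lipschitz part, assume the bound holds for $F_{s+1}^t$. We use $F_s^t(z)=\EE_{s-1}[f_s(\xi_s,F_{s+1}^t(z))]$. Conditional Jensen gives
\[
\big\|F_s^t(z)-F_s^t(z')\big\|_2\le \EE_{s-1}\big[\|f_s(\xi_s,F_{s+1}^t(z))-f_s(\xi_s,F_{s+1}^t(z'))\|_2\big].
\]
By Assumption~\ref{assump:lipschitz}, the right-hand side is at most $L_s\,\EE_{s-1}[\|F_{s+1}^t(z)-F_{s+1}^t(z')\|_2]$. The induction hypothesis bounds the inner norm by $L_{[s+1:t]}\EE_s[\|z-z'\|_2]$, and the tower property $\EE_{s-1}\EE_s=\EE_{s-1}$ then yields $L_{[s:t]}\EE_{s-1}[\|z-z'\|_2]$.

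For smoothness, we first interpret $\nabla F_s^t$ via the chain rule, in the spirit of the recursion~\eqref{eq:G_t}:
\[
\nabla F_s^t(z)=\EE_{s-1}\big[\nabla F_{s+1}^t(z)\,\nabla_{x_s}f_s(\xi_s,F_{s+1}^t(z))\big],
\]
where $\nabla F_{s+1}^t(z)$ is itself an $\mathcal F_s$-measurable random matrix. Making this derivative rigorous is the main technical obstacle. The plan is to justify it pointwise in the direction of a deterministic (or $\mathcal F_t$-measurable) perturbation by dominated convergence, using the uniform bound $\|\nabla_{x_s}f_s\|_2\le L_s$ from Assumption~\ref{assump:lipschitz}. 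With this definition, the norm bound $\|\nabla F_{s+1}^t\|_2\le L_{[s+1:t]}$ also follows by induction via Lemma~\ref{lemma:cauchy-schwarz-matrix}.

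To bound the difference of gradients, add and subtract $\nabla F_{s+1}^t(z')\nabla f_s(\xi_s,F_{s+1}^t(z))$ and apply conditional Jensen together with Lemma~\ref{lemma:cauchy-schwarz-matrix}. This splits the difference into two terms:
\begin{itemize}
\item The first term is $\|\nabla F_{s+1}^t(z)-\nabla F_{s+1}^t(z')\|_2\cdot L_s$. By the induction hypothesis it is at most $L_s S_{[s+1:t]}\EE_s\|z-z'\|_2$.
\item The second term is $L_{[s+1:t]}\cdot S_s\|F_{s+1}^t(z)-F_{s+1}^t(z')\|_2$. By the Lipschitz part already proved it is at most $S_sL_{[s+1:t]}^2\EE_s\|z-z'\|_2$.
\end{itemize}
Taking $\EE_{s-1}$ and applying the tower property gives the constant $L_sS_{[s+1:t]}+S_sL_{[s+1:t]}^2$. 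It remains to check that this equals $S_{[s:t]}$. Indeed,
\[
L_s\sum_{i=s+1}^t S_iL_{[s+1:i-1]}L_{[i+1:t]}^2=\sum_{i=s+1}^t S_iL_{[s:i-1]}L_{[i+1:t]}^2,
\]
and adding the $i=s$ term $S_sL_{[s:s-1]}L_{[s+1:t]}^2=S_sL_{[s+1:t]}^2$ recovers the full sum defining $S_{[s:t]}$. This closes the induction.
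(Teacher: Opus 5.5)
Your proof follows the paper's argument essentially step for step. The Lipschitz part uses backward induction on $s$, conditional Jensen with Assumption~\ref{assump:lipschitz}, and the tower property. The smoothness part uses the same chain-rule representation of $\nabla F_s^t$, the same add-and-subtract split, and the recursion $S_{[s:t]}=L_sS_{[s+1:t]}+S_sL_{[s+1:t]}^2$, which you verify explicitly. You are also more careful than the paper about the $s=t+1$ base case when $z_t$ is not $\mathcal F_t$-measurable.

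One small repair is needed, which the paper also glosses over. The bounds $\|\nabla_{x_s}f_s\|\le L_s$ and $\|\nabla F_{s+1}^t\|\le L_{[s+1:t]}$ hold in the spectral norm, not the Frobenius norm; already at the base, $\|\nabla F_{t+1}^t\|_2=\|I_{d_t}\|_2=\sqrt{d_t}$. So combine them via $\|AB\|_2\le\|A\|_{\mathrm{op}}\|B\|_2$ and $\|AB\|_2\le\|A\|_2\|B\|_{\mathrm{op}}$ instead of Lemma~\ref{lemma:cauchy-schwarz-matrix}. With that change you get exactly your constants.
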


\begin{proof}
We fix $t\in[T]$ and use backward induction on~$s$ to prove that $F_s^t$ is $L_{[s:t]}$-Lipschitz continuous. In the base case corresponding to~$s=t+1$, $F_s^t$ collapses to the identity mapping on~$\mathcal L_t$, and thus the claim is trivially true. As for the induction step corresponding to any $s\in[t]$, assume that $F_{s+1}^t$ is $L_{[s+1:t]}$-Lipschitz continuous. Since $F_s^t(z_t)=\EE_{s-1}[f_s(\xi_s,F_{s+1}^t(z_t))]$, we thus have
\begin{align*}
   \big\| F_s^t(z_t)- F_s^t(z_t^\prime) \big\|_2 \leq & \EE_{s-1} \big[ \big\| f_s\left(\xi_s,F_{s+1}^t(z_t)\right)-f_s\left(\xi_s,F_{s+1}^t(z_t^\prime)\right) \big\|_2 \big]\\
   \leq  &L_s \; \EE_{s-1} \big[ \big\|F_{s+1}^t(z_t)-F_{s+1}^t(z_t^\prime) \big\|_2 \big] \\
   \leq& L_s L_{[s+1:t]} \; \EE_{s-1} \left[\| z_t-z_t^\prime\|_2\right] = L_{[s:t]} \;\EE_{s-1} \left[ \| z_t-z_t^\prime\|_2 \right]
\end{align*}
for any $z_t,z'_t\in\mathcal L_t$. Here, the first two inequalities follow from Jensen's inequality and from Assumption~\ref{assump:lipschitz}, respectively, while the third inequality exploits the induction hypothesis and the law of total expectation. This completes the induction step, and thus the claim follows.

Suppose now that Assumption~\ref{assump:smooth} holds, too. One readily verifies that the smoothness constants obey the recursion $S_{[s:t]} =  L_sS_{[s+1:t]} + S_s L_{[s+1:t]}^2$ for all~$s\in[t]$ and~$t\in[T]$. In the remainder we fix any~$t\in[T]$ and use backward induction on~$s$ to prove that $F_s^t$ is $S_{[s:t]}$-smooth. In the base case corresponding to~$s=t+1$, the claim is trivially true because~$F_s^t$ is the identity mapping on~$\cL_t$. As for the induction step corresponding to any $s\in[t]$, assume that $F_{s+1}^t$ is $S_{[s+1:t]}$-smooth. By the definition of $F_{s}^t(z_t)$, we have
\begin{align*}
    \nabla F_{s}^t(z_t)= \nabla \EE_{s-1} \big[ f_s\left(\xi_s,F_{s+1}^t(z_t)\right)\big]= \EE_{s-1} \big[ \nabla_{z_t} f_s\left(\xi_s,F_{s+1}^t(z_t)\right)\big].
\end{align*}
Note that the gradient may be interchanged with the conditional expectation by the dominated convergence theorem, which applies thanks to Assumption~\ref{assump:lipschitz}. Jensen's inequality and the chain rule then imply that
\begin{align*}
    & \big\| \nabla F_{s}^t(z_t) - \nabla F_{s}^t(z_t^\prime)\big\|_2  \leq \EE_{s-1} \big[\big\| \nabla_{z_t} f_s\left(\xi_s,F_{s+1}^t(z_t)\right)-\nabla_{z_t} f_s\left(\xi_s,F_{s+1}^t(z_t^\prime)\right) \big\|_2\big]\\
    & \hspace{2.5cm} = \EE_{s-1} \big[\big\| \nabla F_{s+1}^t(z_t)  \nabla_{x_s} f_s\left(\xi_s,F_{s+1}^t(z_t)\right) -\nabla F_{s+1}^t(z_t^\prime) \nabla_{x_s} f_s\left(\xi_s,F_{s+1}^t(z_t^\prime)\right) \big\|_2\big].
\end{align*}
Next, we may use the triangle inequality to conclude that
\begin{align*}
    \big\| \nabla F_{s}^t(z_t) - \nabla F_{s}^t(z_t^\prime)\big\|_2 & \leq \EE_{s-1} \big[ \big\| \left(\nabla  F_{s+1}^t(z_t) -\nabla  F_{s+1}^t(z_t^\prime)\right) \nabla_{x_s}  f_s\left(\xi_s,F_{s+1}^t(z_t)\right) \big\|_2 \big]\\
    & \quad +\EE_{s-1} \big[ \big\| \nabla  F_{s+1}^t(z_t^\prime) \left(\nabla_{x_s}  f_s\left(\xi_s,F_{s+1}^t(z_t)\right) -  \nabla_{x_s}  f_s\left(\xi_s,F_{s+1}^t(z_t^\prime)\right) \right)\big\|_2 \big]\\
    & \leq L_s\, \EE_{s-1} \big[ \big\|\nabla  F_{s+1}^t(z_t) -\nabla  F_{s+1}^t(z_t^\prime) \big\|_2 \big] \\
    &\quad + S_s \, \EE_{s-1} \big[ \big\|\nabla F_{s+1}^t(z_t^\prime) \big\|_2\,  \big\|F_{s+1}^t(z_t) - F_{s+1}^t(z_t^\prime) \big\|_2 \big] \\
    & \leq \big( L_s S_{[s+1:t]} + S_s L_{[s+1:t]}^2 \big) \big\| z_t - z_t^\prime \big\|_2 = S_{[s:t]}\, \big\| z_t - z_t^\prime \big\|_2,
\end{align*}
where the second inequality follows from Assumptions~\ref{assump:lipschitz} and~\ref{assump:smooth} and from the Cauchy-Schwarz inequality. The third inequality exploits the induction hypothesis, whereby $F_{s+1}^t$ is $S_{[s+1:t]}$-smooth, as well as the first part of the proof, whereby $F_{s+1}^t$ is $L_{[s+1:t]}$-Lipschitz continuous. Finally, the equality follows from the recursive formula for~$S_{[s:t]}$ established earlier. Hence, $F_{s}^t(z_t)$ is indeed $S_{[s:t]}$-smooth in~$z_t$.
\end{proof}

In analogy to Definition~\ref{def:risk-mapping}, we can now introduce {\em empirical} conditional risk mappings on $\mathcal L_t$. 
\begin{defn}
\label{def:risk-mapping-hat}
For any $t\in[T]$ and $s\in[t+1]$, we define the conditional risk mapping $\hat F_s^t : \mathcal L_t\to \mathcal L_{s-1}$~via
\begin{align*}
    \hat F_s^t(z_t)\coloneqq \begin{cases}
        \hat \EE_{s-1} \big[f_{s} \big(\xi_{s},\ldots ,\hat \EE_{t-1} \big[ f_t(\xi_t, z_t ) \big] \ldots \big) \big] & \text{if } s<t+1, \\ z_t & \text{if } s=t+1.
    \end{cases}
\end{align*}
Here, $\hat \EE_{t-1}[\cdot]$, $t\in[T]$, denote the empirical conditional expectation operators defined in~\eqref{eq:expected_psi}.
\end{defn}

Note that $\hat F(x)=\hat F_1^T(x)$ for every (deterministic) vector $x\in\RR^d$. If Assumptions~\ref{assump:general} and~\ref{assump:lipschitz} hold, one can proceed as in Lemmas~\ref{lem:well-defined} and~\ref{lem:lipschitz} to show that the conditional risk mapping~$\hat F_s^t$ is well-defined and $L_{[s:t]}$-Lipschitz continuous for any~$t\in [T]$ and~$s\in[t+1]$. In fact, the proofs simplify because  $\PP$-integrability and Lipschitz continuity are trivially preserved under finite sums. Details are omitted for brevity.
We are now ready to prove the formal results of Section~\ref{sec:saa}.

\begin{proof}[Proof of Lemma~\ref{lemma:bound_on_bias_saa}] 
Suppose first that only Assumptions~\ref{assump:general}, \ref{assump:lipschitz} and \ref{assump:finite_variance} hold. By the construction of the conditional risk mappings in Definitions~\ref{def:risk-mapping} and~\ref{def:risk-mapping-hat}, we have 
\[
    F(x) = F_1^T(x) = F_1^{T}(\hat F_{T+1}^T(x)) \quad \text{and} \quad \EE[\hat F(x)] = \EE[\hat F_1^T(x)]= F_1^1(\hat F_2^T(x))
\]
for all $x\in\RR^d$. We can thus represent the bias of the SAA estimator $\hat F(x)$ as a telescoping sum and then use the triangle inequality to conclude that
\begin{align}
    \label{eq:telescoping}
    \big|\EE [\hat F(x)] - F(x) \big| \leq \sum_{t=1}^{T-1} \left| F_1^t \big( \hat F_{t+1}^T(x) \big) - F_1^{t+1} \big( \hat F_{t+2}^T(x) \big) \right|. 
\end{align}
The $t$-th term in the resulting upper bound satisfies
\begin{align*}
    \left|F_1^t \big( \hat F_{t+1}^T(x) \big) - F_1^{t+1} \big( \hat F_{t+2}^T(x) \big) \right|= & \left| F_1^t \big( \hat F_{t+1}^T(x) \big) - F_1^{t} \big( \EE_t [\hat F_{t+1}^T(x) ]\big) \right| \\
    \leq & L_{[t]} \, \EE \big[ \big\| \hat F_{t+1}^T(x) - \EE_t[\hat F_{t+1}^T(x)] \big\|_2 \big]\\
    \leq & L_{[t]} \, \EE \big[ \big\| \hat F_{t+1}^T(x) - \EE_t[\hat F_{t+1}^T(x)] \big\|_2^2 \big]^{\frac{1}{2}} ,
\end{align*} 
where the two inequalities follow from the Lipschitz continuity of $F_1^t$ established in Lemma~\ref{lem:lipschitz} and from Jensen's inequality, respectively. By the definition of $\hat F_t^T$, we then have
\begin{align*}
& L_{[t]} \, \EE \big[ \big\| \hat F_{t+1}^T(x) - \EE_t[\hat F_{t+1}^T(x)] \big\|_2^2 \big]^{\frac{1}{2}}\\
    = & L_{[t]} \, \EE \Big[ \Big\| \hat \EE_t\big[ f_{t+1}\big( \xi_{t+1}, \hat F_{t+2}^T(x)\big) \big] - \EE_t\big[ f_{t+1}\big( \xi_{t+1}, \hat F_{t+2}^T(x)\big) \big] \Big\|_2^2 \Big]^{\frac{1}{2}} \\
    = & \frac{L_{[t]}}{\sqrt{n_{t+1}}} \, \EE \Big[ \Big\| f_{t+1}\big( \xi_{t+1}, \hat F_{t+2}^T(x)\big) - \EE_t\big[ f_{t+1}\big( \xi_{t+1}, \hat F_{t+2}^T(x)\big) \big] \Big\|_2^2 \Big]^{\frac{1}{2}} 
    \leq \frac{L_{[t]}\sigma_{t+1}}{\sqrt{n_{t+1}}},
\end{align*}   
where the second equality holds because, conditional on $\xi_{[t]}$, 
the variance of an average of $n_{t+1}$ i.i.d.\ random variables coincides with the variance of a single random variable divided by~$n_{t+1}$. The inequality in the last line follows from Assumption~\ref{assump:finite_variance}. Substituting the resulting estimate into~\eqref{eq:telescoping} yields
\begin{align*}
    \big|\EE [\hat F(x)] - F(x) | \leq \sum_{t=1}^{T-1} \frac{L_{[t]}\sigma_{t+1}}{\sqrt{n_{t+1}}}. 
\end{align*}

Suppose now that Assumption~\ref{assump:smooth} holds, too. Hence, the $t$-th term in~\eqref{eq:telescoping} admits the tighter estimate
\begin{equation}
\label{eq:smooth_case_lipschitz_step}
    \begin{aligned}
    & \left|F_1^t \big( \hat F_{t+1}^T(x) \big) - F_1^{t+1} \big( \hat F_{t+2}^T(x) \big) \right| \\
    = & \left| F_1^{t-1} \big( \EE_{t-1} \big[ f_t \big(\xi_t,\hat F_{t+1}^T(x) \big) \big] \big) - F_1^{t-1} \big( \EE_{t-1} \big[ f_t \big(\xi_t,\EE_t [\hat F_{t+1}^T(x)] \big) \big] \big) \right| \\
    \leq & L_{[t-1]} \; \EE \left[ \Big\|\EE_{t-1} \Big[ f_t \big(\xi_t,\hat F_{t+1}^T(x) \big) - f_t \big(\xi_t,\EE_t [\hat F_{t+1}^T(x)] \big) \Big] \Big\|_2 \right]
    \end{aligned}
\end{equation}
where the inequality follows from the Lipschitz continuity of $F_1^{t-1}$ established in Lemma~\ref{lem:lipschitz}.
\begin{align*}
    \EE_{t-1} \Big[\nabla_{x_t} f_t\big(\xi_t,\EE_t [\hat F_{t+1}^T(x)]\big) ^\top  \big(\hat F_{t+1}^T(x) - \EE_t [\hat F_{t+1}^T(x)] \big) \Big] =0
\end{align*}
as $\nabla_{x_t} f_t(\xi_t,\EE_t [\hat F_{t+1}^T(x)])$ is $\mathcal F_t$-measurable. The conditional expectation in the last line of~\eqref{eq:smooth_case_lipschitz_step} thus satisfies
\begin{align*}
    & \Big\|  \EE_{t-1} \Big[f_t \big(\xi_t,\hat F_{t+1}^T(x) \big) - f_t \big(\xi_t,\EE_t [\hat F_{t+1}^T(x)] \big)\Big]\Big\|_2  \\
    = &  \Big\|\EE_{t-1} \Big[ f_t \big(\xi_t,\hat F_{t+1}^T(x) \big) - f_t \big(\xi_t,\EE_t [\hat F_{t+1}^T(x)] \big) + \nabla_{x_t} f_t(\xi_t,\EE_t[\hat F_{t+1}^T(x)])^\top \big(\hat F_{t+1}^T(x) - \EE_t[\hat F_{t+1}^T(x)]\big) \Big]\Big\|_2  \\
   \leq &\frac{S_t}{2} \; \EE_{t-1}\Big[ \Big\|\hat F_{t+1}^T(x) - \EE_t[\hat F_{t+1}^T(x)]\Big\|_2^2 \Big] \\
   = & \frac{S_t}{2} \; \EE_{t-1} \Big[ \Big\| \hat\EE_t \Big[ f_{t+1}\big( \xi_{t+1}, \hat F_{t+2}^T(x) \big) \Big] - \EE_t \Big[ f_{t+1}\big( \xi_{t+1}, \hat F_{t+2}^T(x) \big) \Big] \Big\|_2^2\Big] \leq \frac{S_t\sigma_{t+1}^2}{2n_{t+1}}.
\end{align*}
Here, the first inequality follows from the bound in~\eqref{eq:inequality_for_smooth_funs}, which applies thanks to Assumption~\ref{assump:smooth}. The second inequality holds again because, conditional on $\xi_{[t]}$, 
the variance of an average of $n_{t+1}$ i.i.d.\ random variables coincides with the variance of a single random variable divided by~$n_{t+1}$. Substituting the above estimate into~\eqref{eq:smooth_case_lipschitz_step} and combining the resulting bound with~\eqref{eq:telescoping} finally yields 
\begin{align*}
    \big|\EE [\hat F(x)] - F(x) | \leq \sum_{t=1}^{T-1} \frac{L_{[t-1]}S_t\sigma_{t+1}^2}{2n_{t+1}}.
\end{align*}
This observation completes the proof.
\end{proof}

\begin{proof}[Proof of Lemma~\ref{lemma:bound_on_variance_saa}] 
Since $F(x)$ is deterministic, the variance of the SAA estimator $\hat F(x)$ satisfies
\begin{align*}
    \VV \big( \hat{F}(x) \big)= \VV \big( \hat{F}(x) - F(x) \big) \leq \EE\Big[ \big(\hat{F}(x) -F(x) \big)^2 \Big].
\end{align*}
To bound the variance of $\hat F(x)$, it thus suffices to bound the second moment of the estimation error $\hat{F}(x) -F(x)$.  By the construction of the conditional risk mappings in Definitions~\ref{def:risk-mapping} and~\ref{def:risk-mapping-hat}, we have 
\[
    F(x) = F_1^T(x) = \hat F_1^0(F_1^T(x)) \quad \text{and} \quad \hat{F}(x) = \hat F_1^T(x) = \hat F_1^T(F_{T+1}^T(x))
\]
for all $x\in\RR^d$. We can thus represent the estimation error as the telescoping sum 
\begin{align*}
     \hat{F}(x) - F(x)=\sum_{t=0}^{T-1} \Big(\hat F_1^t(F_{t+1}^T(x)) - \hat F_1^{t+1}(F_{t+2}^T(x))\Big).
\end{align*}
Therefore, the variance of $\hat{F}(x)$ satisfies
\begin{align*}
    \VV \big( \hat{F}(x) \big)^{\frac{1}{2}} \leq &  \EE\bigg[\sum_{t=0}^{T-1} \Big( \hat F_1^t(F_{t+1}^T(x)) - \hat F_1^{t+1}(F_{t+2}^T(x)) \Big)^2\bigg]^\frac{1}{2}\\
    \leq & \sum_{t=0}^{T-1} \EE\bigg[ \Big(\hat F_1^t(F_{t+1}^T(x)) - \hat F_1^{t+1}(F_{t+2}^T(x))\Big)^2\bigg]^\frac{1}{2}\\
    = & \sum_{t=0}^{T-1} \EE\bigg[ \bigg(\hat F_1^t \Big( \EE_t\big[ f_{t+1}\big( \xi_{t+1}, F_{t+2}^T(x) \big) \big] \Big) - \hat F_1^t \Big( \hat \EE_t\big[ f_{t+1}\big( \xi_{t+1}, F_{t+2}^T(x) \big) \big] \Big) \bigg)^2\bigg]^\frac{1}{2},
\end{align*}
where the second inequality follows from the Minkowski inequality. From now on, we use the notational shorthand $A_t\coloneqq f_{t}(\xi_{t}, F_{t+1}^T(x))$ for all $t\in[T]$. As $\hat{F}_1^t$ is $L_{[t]}$-Lipschitz continuous, we thus find
  \begin{align*}
    \VV \big( \hat{F}(x) \big)^{\frac{1}{2}} \leq & \sum_{t=0}^{T-1} L_{[t]}\; \EE\bigg[ \Big(\hat \EE\Big[\Big\|\hat \EE_t[ A_{t+1} ]- \EE_t [A_{t+1} ]\Big\|_2\Big]\Big)^2 \bigg]^\frac{1}{2}\\
    \leq &\sum_{t=0}^{T-1} L_{[t]}\; \EE\bigg[ \Big\|\hat \EE_t[ A_{t+1} ]- \EE_t [A_{t+1} ]\Big\|_2^2 \bigg]^\frac{1}{2} \\
    \leq &  \sum_{t=0}^{T-1} \frac{L_{[t]}}{\sqrt{n_{t+1}}}\; \EE\bigg[ \Big\|A_{t+1}- \EE_t [A_{t+1} ]\Big\|_2^2 \bigg]^\frac{1}{2}\leq \sum_{t=0}^{T-1} \frac{L_{[t]}\sigma_{t+1}}{\sqrt{n_{t+1}}}.
\end{align*}
Here, the second inequality exploits Jensen's inequality and the observation that $\EE[\hat\EE[\cdot]] = \EE[\cdot]$. The third inequality follows from the formula for the (conditional) variance of a sample average, and the last inequality follows from Assumption~\ref{assump:finite_variance}. Squaring both sides of the above inequality then yields the basic bound
\begin{align}
    \label{eq:basic-variance-bound}
    \VV \big( \hat{F}(x) \big) \leq  \bigg(\sum_{t=0}^{T-1} \frac{L_{[t]}\sigma_{t+1}}{\sqrt{n_{t+1}}}\bigg)^2.
\end{align}
This basic bound can be strengthened by recalling that $\hat F(x)=\frac{1}{n_1}\sum_{i_1=1}^{n_1} \hat F_{i_1}(x)$, where $\hat F_{i_1}(x)$, $i_1\in[n_1]$, constitute independent and identically distributed random variables that depend on the samples of the $i_1$-th scenario tree used in the construction of the SAA estimator; see Definition~\ref{def:MCCO_sample_average_approximation}. Thus, we have
\begin{align*}
     \VV \big( \hat{F}(x) \big) =\frac{1}{n_1^2}\sum_{i_1=1}^{n_1} \VV\big(\hat F_{i_1}(x)\big) \leq  \frac{1}{n_1}\bigg(\sum_{t=1}^{T-1} \frac{L_{[t]}\sigma_{t+1}}{\sqrt{n_{t+1}}}+\sigma_1\bigg)^2
\end{align*}
where the inequality uses~\eqref{eq:basic-variance-bound} with $n_1=1$ to bound the variance of $\hat F_{i_1}(x)$. Hence, the claim follows.
\end{proof}

The next lemma shows that sub-Gaussianity is preserved under averaging and concatenation as well as under transformations with Lipschitz continuous functions and mixtures. Among other things, this lemma enables us to identify easily verifiable conditions under which Assumption~\ref{assump:sub-gaussianity_saa} holds.

\begin{lemma}[Operations Preserving Sub-Gaussianity]
\label{lemma:sub-gaussianity-preserving-operations} The following hold.
\begin{enumerate}[label=(\roman*)]
    \item If $z_i$, $i\in[n]$, are i.i.d.\ sub-Gaussian random vectors with common variance proxy~$\zeta^2$, then their average $\frac{1}{n}\sum_{i=1}^n z_i$ constitutes another sub-Gaussian random vector with variance proxy~$\zeta^2/n$.
    \item If $z_1$ and $z_2$ are (possibly dependent)\ sub-Gaussian random vectors valued in $\RR^{m_1}$ and $\RR^{m_2}$ with respective variance proxies $\zeta_1^2$ and $\zeta_2^2$, then $(z_1,z_2)$ is sub-Gaussian with variance proxy $2\max\{\zeta_1^2,\zeta_2^2\}$.
    \item If $z$ is an $m$-dimensional sub-Gaussian random vector with variance proxy $\zeta^2$, and if $f:\RR^m\rightarrow \RR^{n}$ is an $L$-Lipschitz continuous function with $\|f(z)-f(z^\prime)\|_2\leq L\|z-z^\prime\|_2$ for all $z,z'\in\RR^m$, then $f(z)$ is a sub-Gaussian random vector with variance proxy that grows monotonically with $L^2m^2\zeta^2$.
    \item If $z_i$, $i\in[n]$ are (possibly dependent) sub-Gaussian random vectors valued in $\RR^{m}$ with respective variance proxies $\zeta_i^2$, $i\in[n]$, and if $\alpha$ is a random index independent of all other random objects with $\PP(\alpha=i)=w_i$ for all $i\in[n]$, then the mixture $z_{\alpha}$ is a sub-Gaussian random vector with variance proxy $\max_{i\in[n]}\{\zeta_i^2\} +\max_{i \in [n]} \{\|\EE[z_i] - \EE[z_\alpha]\|_2\}$.
\end{enumerate}
\end{lemma}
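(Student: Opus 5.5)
The plan is to verify the moment generating function bound in the definition of a sub-Gaussian vector separately for each operation. Throughout I write $\mu=\EE[z]$ and $\mu_i=\EE[z_i]$.

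\emph{Item (i).} The centered average is $\frac1n\sum_i(z_i-\mu)$. By independence its moment generating function at $\lambda$ factorizes into $\prod_i\EE[\exp((\lambda/n)^\top(z_i-\mu))]$. Each factor is at most $\exp(\|\lambda\|_2^2\zeta^2/(2n^2))$, so the product is at most $\exp(\|\lambda\|_2^2(\zeta^2/n)/2)$, which is the claimed variance proxy $\zeta^2/n$.

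\emph{Item (ii).} Split $\lambda=(\lambda_1,\lambda_2)$ and apply Cauchy--Schwarz. This gives
\[
\EE\big[e^{\lambda_1^\top(z_1-\mu_1)+\lambda_2^\top(z_2-\mu_2)}\big]\le \EE\big[e^{2\lambda_1^\top(z_1-\mu_1)}\big]^{1/2}\,\EE\big[e^{2\lambda_2^\top(z_2-\mu_2)}\big]^{1/2}.
\]
The right-hand side is at most $\exp(\|\lambda_1\|_2^2\zeta_1^2+\|\lambda_2\|_2^2\zeta_2^2)\le\exp(\|\lambda\|_2^2\cdot 2\max\{\zeta_1^2,\zeta_2^2\}/2)$. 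Dependence between $z_1$ and $z_2$ causes no difficulty, since Cauchy--Schwarz never uses independence.

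\emph{Item (iii).} Here I would pass through Orlicz norms.
\begin{enumerate}[label=(\alph*)]
\item Each coordinate $e_j^\top(z-\mu)$ is sub-Gaussian with proxy $\zeta^2$.
\item Since $\|z-\mu\|_2\le\sum_j|e_j^\top(z-\mu)|$, the triangle inequality for the $\psi_2$-norm gives $\|\,\|z-\mu\|_2\,\|_{\psi_2}\lesssim m\zeta$.
\item Fix a unit vector $u$. Lipschitz continuity gives $|u^\top(f(z)-f(\mu))|\le L\|z-\mu\|_2$, so $u^\top(f(z)-\EE f(z))$ has $\psi_2$-norm $\lesssim Lm\zeta$; centering costs only a constant factor.
\item Converting a $\psi_2$ bound on a centered variable into a moment generating function bound (for instance via Wainwright, Prop.~2.5.2-type equivalences) yields a proxy $cL^2m^2\zeta^2$ with a universal $c$. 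This is monotone in $L^2m^2\zeta^2$, as claimed.
\end{enumerate}

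\emph{Item (iv).} Set $\delta=\max_i\|\mu_i-\EE[z_\alpha]\|_2$ and $\zeta^2=\max_i\zeta_i^2$. Condition on $\alpha$, using that $\alpha$ is independent of the $z_i$:
\[
\EE\big[e^{\lambda^\top(z_\alpha-\EE z_\alpha)}\big]=\sum_i w_i\,e^{\lambda^\top(\mu_i-\EE z_\alpha)}\,\EE\big[e^{\lambda^\top(z_i-\mu_i)}\big]\le e^{\|\lambda\|_2^2\zeta^2/2}\sum_i w_i e^{a_i}.
\]
Here $a_i=\lambda^\top(\mu_i-\EE z_\alpha)$. These numbers have $w$-weighted mean zero and satisfy $|a_i|\le\|\lambda\|_2\delta$. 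Hoeffding's lemma applied to the discrete variable $a_\alpha$ then gives $\sum_iw_ie^{a_i}\le e^{\|\lambda\|_2^2\delta^2/2}$. Consequently $z_\alpha$ is sub-Gaussian with proxy $\zeta^2+\delta^2$.

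This is the main obstacle. The statement's form $\zeta^2+\delta$, with $\delta$ unsquared, follows from the above only when $\delta\le1$. I do not believe it can hold in general. Take $\zeta_i\to0$, $n=2$, equal weights, and $z_{1,2}=\pm\delta$ in one dimension. Then the mixture is $\delta$ times a Rademacher variable, and its best variance proxy is exactly $\delta^2$. For small $\lambda$ the bound $\cosh(\lambda\delta)\le e^{\lambda^2\delta/2}$ fails once $\delta>1$, and the two sides also scale differently under $z\mapsto cz$. I would therefore first try to extract a linear-in-$\delta$ term from a sharper handling of the sum $\sum_iw_ie^{a_i}$. If that fails, as the Rademacher example indicates it must, I would prove (iv) with proxy $\max_i\zeta_i^2+\delta^2$ and record that this agrees with the stated proxy when $\delta\le1$. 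This suffices for its use in Remark~\ref{rem:sub-gaussian-2}, where only the existence of some finite proxy matters.
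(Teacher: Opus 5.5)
Your proposal is correct. Items (i), (ii) and (iv) follow the paper's proof essentially step for step. In (i) you factorize the moment generating function using independence. In (ii) you apply Cauchy--Schwarz (the paper's H\"older step) to the split $\lambda=(\lambda_1,\lambda_2)$. In (iv) you condition on $\alpha$ and apply Hoeffding's lemma to the mean-zero discrete variable with values $\lambda^\top(\EE[z_i]-\EE[z_\alpha])$. The paper's proof of (iv) itself ends with the proxy $\max_i\zeta_i^2+R^2$, where $R=\max_i\|\EE[z_i]-\EE[z_\alpha]\|_2$. The unsquared $R$ in the statement is therefore a typo. Your two-point example correctly shows that the printed version fails when $R>1$ and is not invariant under rescaling. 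The squared version you prove is the right statement, and it is all that Remark~\ref{rem:sub-gaussian-2} needs.

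Item (iii) is where you genuinely depart from the paper. The paper symmetrizes with an independent copy $z'$ and bounds $\lambda^\top(f(z)-f(z'))\le L\|\lambda\|_2\|z-z'\|_1$. It then factorizes the exponential of the $\ell_1$-norm over coordinates with the generalized H\"older inequality and uses $e^{|a|}\le e^{a}+e^{-a}$. This gives $\EE[\exp(\lambda^\top(f(z)-\EE[f(z)]))]\le 2\exp(\|\lambda\|_2^2L^2m^2\zeta^2)$. The prefactor $2$ is removed by a Taylor comparison with $\exp(\|\lambda\|_2^2\varsigma_1^2/2)$ near $\lambda=0$, combined with the global bound away from $0$.

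You instead work with $\psi_2$-norms: coordinatewise bounds, the triangle inequality, Lipschitz domination in every unit direction $u$, and centering. You then invoke the equivalence between a $\psi_2$ bound and an MGF bound for centered variables. That equivalence is Vershynin's Proposition~2.5.2, or Theorem~2.6 in Wainwright; it is not a Wainwright Proposition~2.5.2.

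What each route buys:
\begin{itemize}
\item The paper's argument is elementary. However, its last step produces a radius and a constant $\varsigma_1^2$ that depend on the particular law of $f(z)$. The local comparison also needs $\varsigma_1^2$ strictly above the top eigenvalue of the MGF's Hessian at $0$. So the paper's proxy is not a function of $L^2m^2\zeta^2$ alone.
\item Your route costs unspecified universal constants from the Orlicz-norm machinery. In exchange it yields an explicit proxy $cL^2m^2\zeta^2$ with universal $c$. This is a cleaner and more informative form of the monotonicity claim in (iii).
\end{itemize}
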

\begin{proof} Assertion~(i) follows from an immediate generalization of  \cite[Exercise~2.13]{wainwright2019high}.
    
As for assertion~(ii), observe that
\begin{align*}
    \EE\big[\exp\big((\lambda_1, \lambda_2)^\top  ((z_1,z_2)- \EE&[(z_1,z_2)])\big)\big] = \EE\big[\exp\big(\lambda_1^\top (z_1-\EE[z_1])\big)\exp\big(\lambda_2^\top(z_2-\EE[z_2])\big)\big]\\
    & \leq \EE\big[\exp\big(2\lambda_1^\top (z_1-\EE[z_1])\big)\big]^{1/2} \, \EE\big[\exp\big(2\lambda_2^\top(z_2-\EE[z_2])\big)\big]^{1/2}\\
    & \leq \exp(\|\lambda_1\|_2^2 \, \zeta_1^2)\exp(\|\lambda_2\|_2^2 \, \zeta_2^2 )\leq \exp(\|(\lambda_1,\lambda_2)\|_2^2 \, \max\{\zeta_1^2,\zeta_2^2\})
\end{align*}
for all $\lambda_1\in\mathbb R^{m_1}$ and $\lambda_2 \in\mathbb R^{m_2}$. Here, 
the three inequalities follow from Hölder's inequality, the sub-Gaussianity of $z_1$ and $z_2$ and the monotonicity of the exponential function, respectively. This shows that the combined random vector $(z_1,z_2)$ is indeed sub-Gaussian with variance proxy $2\max\{\zeta_1^2, \zeta_2^2\}$.

As for assertion~(iii), let $z^\prime$ be an independent copy of~$z$. Thus, $-z^\prime$ has the same variance proxy as $z$, which implies via~\cite[Exercise~2.13]{wainwright2019high} that $z - z^\prime$ is sub-Gaussian with variance proxy $2\zeta^2$. Hence, we find 
\begin{align*}
    \EE\big[ \exp\big(\lambda^\top(f(z)-\EE[f(z)])\big) \big] 
    & = \EE \big[ \exp\big(\lambda^\top(\EE [f(z)-f(z^\prime) | z])\big) \big] \\
    & \leq \EE \big[ \exp\big(\lambda^\top\big(f(z)-f(z^\prime)\big)\big) \big] \\ 
    & \leq \EE \big[ \exp\big(\|\lambda\|_2 \, \|f(z)-f(z^\prime)\|_2\big) \big] \\
    & \leq \EE \big[ \exp\big(L\,\|\lambda\|_2 \,\|z-z^\prime\|_2\big) \big] ,
\end{align*}
where we have used Jensen's inequality, the Cauchy-Schwarz inequality and the $L$-Lipschitz continuity of~$f$. Since $\|\cdot\|_2\leq\|\cdot\|_1$ and as the exponential function is monotonically increasing, we also find
\begin{align*}
    \exp(\|z-z^\prime\|_2)\leq\exp(\|z-z^\prime\|_1)=\prod_{i=1}^m\exp(|z_i-z^\prime_i|).
\end{align*}
Combining the above estimates and applying the multi-variable Hölder inequality then yields
\begin{align}
    \nonumber
    \EE\big[ \exp\big(\lambda^\top(f(z)-\EE[f(z)])\big) \big] & \leq \EE \bigg[ \prod_{i=1}^m\exp\big(L \, \|\lambda\|_2 \, |z_i-z^\prime_i|\big) \bigg] \\
    & \leq \prod_{i=1}^m \EE \big[ \exp\big(Lm \, \|\lambda\|_2 \, |z_i-z^\prime_i|\big) \big]^{1/m}.
    \label{eq:subgaussian_holder_step}
\end{align}
As $z^{1/m}$ increases with~$z\geq 0$, an upper bound on~\eqref{eq:subgaussian_holder_step} follows by bounding the underlying expectations as 
\begin{align*}
    \EE\big[ \exp\big(Lm\,\|\lambda\|_2 \,|z_i-z^\prime_i|\big) \big]
    & \leq \EE \big[ \exp\big(Lm \, \|\lambda\|_2 \, (z_i-z^\prime_i)\big) \big] + \EE \big[ \exp\big(Lm \, \|\lambda\|_2\, (z_i^\prime -z_i)\big) \big] \\
    & \leq 2\exp\big(\|\lambda\|_2^2 \, L^2m^2\zeta^2 \big).
\end{align*}
Here, the two inequalities hold because $\exp(|a|)\leq\exp(a)+\exp(-a)$ for any $a\in\RR$ and because $z_i - z_i^\prime$ is sub-Gaussian with variance proxy~$2\zeta^2$. Substituting this estimate into \eqref{eq:subgaussian_holder_step} shows that
\begin{align}
    \label{eq:preliminary-subgaussian}
    \EE\big[ \exp\big(\lambda^\top(f(z)-\EE[f(z)])\big) \big]
    & \leq 2\exp\big(\|\lambda\|_2^2 \, L^2m^2\zeta^2 \big).
\end{align}
It remains to be shown that the factor~$2$ can be absorbed in the variance proxy. For ease of notation, we henceforth use $\psi(\lambda)$ as shorthand for the left hand side of~\eqref{eq:preliminary-subgaussian}. That is, $\psi(\lambda)$ denotes the moment-generating function of $f(z)-\EE[f(z)]$. We must now prove that there exists $\varsigma>0$ such that $\psi(\lambda)\leq \exp(\|\lambda\|_2^2\varsigma^2/2)$ for all~$\lambda\in\RR^n$. To this end, note that~\eqref{eq:preliminary-subgaussian} ensures that $\psi(\lambda)$ is analytic on a neighborhood of~$0$ \cite[Section~2]{wainwright2019high}. Thus, there exists~$\varsigma^2_1>0$ with $\nabla^2\psi(0)\preceq \varsigma_1^2 I_m$. In addition, we have $\psi(0)=1$ and $\nabla\psi(0)=0$. Next, define $\phi_1(\lambda) \coloneqq \exp(\|\lambda\|^2 \varsigma_1^2 / 2)$, and note that $\phi_1(0)= 1$, $\nabla \phi_1(0) = 0$ and $\nabla^2 \phi_1(0) = \varsigma_1^2 I_m$. Thus, Taylor's theorem guarantees that there exists~$\epsilon>0$ such that $\psi(\lambda)\leq \phi_1(\lambda)$ whenever $\|\lambda\|_2\leq \epsilon$. Define now $\varsigma_2^2\coloneqq 2(\log(2)/\epsilon^2 + L^2m^2\zeta^2)$, and set $\phi_2(\lambda)\coloneqq \exp(\|\lambda\|^2 \varsigma_2^2 / 2)$. By~\eqref{eq:preliminary-subgaussian}, we then have
\begin{align*}
    \psi(\lambda) &\leq  \exp\big(\log(2) + \|\lambda\|_2^2L^2m^2\zeta^2\big) = \exp\Big(\|\lambda\|_2^2 \big(\log(2)/ \|\lambda\|_2^2 + L^2m^2\zeta^2 \big)\Big) 
    < \phi_2(\lambda)
\end{align*}
whenever $\|\lambda\|_2>\epsilon$. We may thus conclude that 
\[
    \psi(\lambda)\leq \max\{\phi_1(\lambda),\phi_2(\lambda)\} = \exp\left( \|\lambda\|^2 \max\{\varsigma_1^2, \varsigma_2^2\}/2 \right) \quad \forall \lambda\in\mathbb R^m .
\]
Consequently, $f(z)$ is sub-Gaussian with variance proxy $\max\{\varsigma_1^2, \varsigma_2^2\}$.

As for assertion~(iv), observe that
\begin{equation*}
    \begin{aligned}
    \EE\big[ \exp(\lambda^\top (z_\alpha - \EE[z_\alpha]))\big] 
    & = \sum_{i=1}^n w_i\,\EE\big[\exp(\lambda^\top (z_i-\EE[z_\alpha])) \big] \\
    & = \sum_{i=1}^n w_i\,\exp(\lambda^\top (\EE[z_i]-\EE[z_\alpha])) \, \EE\big[\exp(\lambda^\top (z_i-\EE[z_i])) \big], 
\end{aligned}
\end{equation*}
where the first equality holds because $\alpha$ is independent of~$z_i$, $i\in[n]$, and the second equality centers each $z_i$ around its own mean. The sub-Gaussianity of each $z_i$ implies that $\mathbb{E}[\exp(\lambda^\top (z_i - \EE[z_i]))] \leq \exp(\|\lambda\|_2^2 \zeta_i^2 / 2)$. Factoring out a uniform bound involving $\zeta_{\max}^2 := \max_{i \in [n]} \{\zeta_i^2\}$, we obtain
\[
   \EE\big[ \exp(\lambda^\top (z_\alpha - \EE[z_\alpha]))\big]  \leq \exp(\|\lambda\|_2^2 \zeta_{\max}^2/2) \, \sum_{i=1}^n w_i \exp(\lambda^\top (\EE[z_i]-\EE[z_\alpha])) .
\]
Here, the sum can be interpreted as the moment generating function of a discrete random variable~$Y$ taking values $y_i=\lambda^\top (\EE[z_i]-\EE[z_\alpha])$ with probabilities~$w_i$. Note that $\mathbb{E}[Y] = \sum w_i \lambda^\top (\EE[z_i] - \EE[z_\alpha]) = 0$. Note also that $|y_i |\leq \|\lambda\|_2 R$ for $R := \max_{i \in [n]} \{\|\EE[z_i] - \EE[z_\alpha]\|_2\}$, that is, the variable $Y$ is bounded. Hoeffding’s Lemma (see, {\em e.g.}, \cite[Example~2.4]{wainwright2019high}) thus implies that $\mathbb{E}[\exp(Y)] \leq \exp(\|\lambda\|_2^2 R^2 / 2)$. In summary, we find
\[
\EE\big[ \exp(\lambda^\top (z_\alpha - \EE[z_\alpha]))\big] \leq \exp\big(\|\lambda\|_2^2 (\zeta_{\max}^2 + R^2) /2 \big),
\]
which establishes that $z_\alpha$ is sub-Gaussian with variance proxy $\zeta_{\max}^2 + R^2$.
\end{proof}

\begin{proof}[Proof of Lemma~\ref{lemma:unif_conv_saa}] 
Assumptions~\ref{assump:general} and~\ref{assump:lipschitz} ensure that~$\cX$ is compact and that both~$F$ and~$\hat{F}$ are $L_{[T]}$-Lipschitz continuous; see Lemma~\ref{lem:lipschitz} as well as the discussion after Definition~\ref{def:risk-mapping-hat}. Next, set $v=\epsilon/(4L_{[T]})$. By Lemma~\ref{lemma:v-net}, there exists a $v$-net $\cX_v\subseteq \cX$ with $|\cX_v|\leq \lceil 2D_{\cX}/v + 1\rceil^d$ such that
\begin{align}
    \label{eq:sumoverJ}
    \PP\Big(\sup_{x\in \mathcal{X}} \lvert \hat{F}(x) - F(x) \rvert > \epsilon \Big) \leq \sum_{x^\prime\in\cX_v} \PP\Big( \lvert  \hat{F}(x^\prime) - F(x^\prime) \rvert  > \frac{\epsilon}{2}\Big).
\end{align}
Recall that the SAA estimator is given by $\hat{F}(x) = \frac{1}{n_1} \sum_{i_1=1}^{n_1} \hat{F}_{i_1}(x)$, where the~$\hat{F}_{i_1}(x)$, $i_1\in[n_1]$, can be interpreted as independent and identically distributed estimators for~$F(x)$, each of which is constructed from a single scenario tree; see Definition~\ref{def:MCCO_sample_average_approximation}. In the following we introduce the auxiliary bias function
\begin{align*}
    \mu(x)\coloneqq\EE\big[\hat{F}_{i_1}(x)-F(x)\big] = \EE\big[\hat{F}(x)-F(x)\big].
\end{align*}
Given any $\epsilon>0$, we now seek sufficiently large sample sizes to ensure that $|\mu(x')|\leq \frac{\epsilon}{4}$ for every~$x'\in\cX_v$. If the integrands are Lipschitz continuous but not smooth ({\em i.e.}, only Assumption~\ref{assump:lipschitz} holds), then we have
\begin{align*}
    n_{t+1} =\bigg\lceil\left( \frac{4L_{[t]}\sigma_{t+1} (T-1)}{\epsilon} \right)^2\bigg\rceil \quad \forall t\in[T-1]\quad \implies\quad  \frac{L_{[t]}\sigma_{t+1}}{\sqrt{n_{t+1}}} \leq \frac{\epsilon}{4(T-1)} \quad \forall t\in[T-1],
\end{align*}
which ensures via Lemma~\ref{lemma:bound_on_bias_saa} that
\begin{align*}
    \left| \mu(x^\prime) \right| \leq \sum_{t=1}^{T-1} \frac{L_{[t]}\sigma_{t+1}}{\sqrt{n_{t+1}}} \leq \frac{\epsilon}{4} \quad \forall x'\in\cX_v.
\end{align*}
Similarly, if the integrands are Lipschitz continuous and smooth ({\em i.e.}, both Assumptions~\ref{assump:lipschitz} and~\ref{assump:smooth} hold), then we have
\begin{align*}
    n_{t+1}=\bigg\lceil \frac{2L_{[t-1]}S_t\sigma_{t+1}^2 (T-1)}{\epsilon} \bigg\rceil \quad \forall t\in[T-1] \quad \implies \quad \frac{L_{[t-1]}S_t\sigma_{t+1}^2}{ 2 n_{t+1}} \leq \frac{\epsilon}{4(T-1)} \quad \forall t\in[T-1],
\end{align*}
which ensures via Lemma~\ref{lemma:bound_on_bias_saa} that
\begin{align*}
    \left| \mu(x^\prime) \right|\leq \sum_{t=1}^{T-1} \frac{L_{[t-1]}S_t\sigma_{t+1}^2}{2n_{t+1}} \leq \frac{\epsilon}{4} \quad \forall x'\in\cX_v.
\end{align*}
In both cases, we may conclude that 
\begin{align*}
    \PP\left( \hat{F}(x^\prime) - F(x^\prime) > \frac{\epsilon}{2}\right) 
    &\leq \PP\left( \hat{F}(x^\prime) - F(x^\prime)  > \frac{\epsilon}{4} + \mu(x^\prime) \right)  \\
    &= \PP \left( \frac{1}{n_1}\sum_{i_1=1}^{n_1}\big(\hat F_{i_1}(x^\prime)-F(x^\prime)-\mu(x^\prime) \big) > \frac{\epsilon}{4} \right) \leq \exp\left(- \frac{n_1\epsilon^2}{32 \zeta^2} \right)
\end{align*}
for all $x'\in\cX_v$. The second inequality follows from a standard Hoeffding bound \cite[Proposition~2.5]{wainwright2019high}, which applies because the i.i.d.\ centered random variables $\hat F_{i_1}(x)-F(x)-\mu(x)$, $i\in[n_1]$, are sub-Gaussian with variance proxy~$\zeta^2$ thanks to Assumption~\ref{assump:sub-gaussianity_saa}. Following a similar reasoning, one can also show that 
\begin{align*}
    \PP\left( F(x^\prime)- \hat{F}(x^\prime)  > \frac{\epsilon}{2}\right) \leq \exp\left(- \frac{n_1\epsilon^2}{32\zeta^2} \right)
\end{align*}
for all $x'\in\cX_v$. Combining the above probability bounds with~\eqref{eq:sumoverJ} finally yields
\begin{align*}
    \PP\left(\sup_{x\in \mathcal{X}} \lvert \hat{F}(x) - F(x) \rvert > \epsilon \right) 
    &\leq 2|\cX_v| \exp\bigg(- \frac{n_1\epsilon^2}{32 \zeta^2} \bigg) \leq 2\left\lceil 8L_{[T]} D_\mathcal{X}/\epsilon + 1 \right\rceil^{d} \exp\bigg(- \frac{n_1\epsilon^2}{32\zeta^2} \bigg),
\end{align*}
and thus the claim follows.
\end{proof}

\begin{proof}[Proof of Corollary~\ref{cor:high_prob_bound_saa_epsilon_optimal}] 
Problems~\eqref{problem:MCCO} and~\eqref{prob:min_hatF(x)} are solvable by virtue of Weierstrass' extreme value theorem, which applies thanks to Assumptions~\ref{assump:general} and~\ref{assump:lipschitz} and Lemma~\ref{lem:lipschitz}. Hence, $x^*$ and $\hat x^*$ exist. Elementary manipulations then show that
\begin{align*}
    \PP\left(   F(\hat{x}^*) - F(x^*)  > \epsilon \right) 
    &= \PP\left(   F(\hat{x}^*) - \hat{F}(\hat{x}^*) + \hat{F}(\hat{x}^*) - \hat{F}(x^*) + \hat{F}(x^*) - F(x^*)  > \epsilon \right) \\
    &\leq \PP\left( F(\hat{x}^*) - \hat{F}(\hat{x}^*) > \epsilon/2 \;\text{ or }\;  \hat{F}(x^*) - F(x^*)  > \epsilon/2 \right) \\
    &\leq   \PP\left( F(\hat{x}^*) - \hat{F}(\hat{x}^*) > \epsilon/2 \right) + \PP\left( \hat{F}(x^*) - F(x^*)  > \epsilon/2 \right)   \\
    &\leq 4\left\lceil 8L_{[T]} D_\mathcal{X}/\epsilon +1\right\rceil^{d} \exp\bigg(- \frac{n_1\epsilon^2}{128\zeta^2} \bigg),
\end{align*}
where the first inequality holds because $\hat{F}(\hat{x}^*) - \hat{F}(x^*) \leq 0$. The second and the third inequalities follow from the union bound and from Lemma~\ref{lemma:unif_conv_saa}, respectively.
\end{proof}

\begin{proof}[Proof of Theorem~\ref{thm:SAA_sample_complexity_high_probability}]
    The condition on $n_1$ ensures via Corollary~\ref{cor:high_prob_bound_saa_epsilon_optimal} that $ \PP\left( F(\hat{x}^*) - F(x^*)  > \epsilon \right) \leq \beta$ for any minimizers~$x^*$ and~$\hat{x}^*$ of problems \eqref{problem:MCCO} and~\eqref{prob:min_hatF(x)}, respectively. Hence, any minimizer of problem~\eqref{prob:min_hatF(x)} is an $\epsilon$-optimal solution to~\eqref{problem:MCCO} with probability at least $1-\beta$. The scenario complexity of the SAA estimator is thus obtained by multiplying the given expressions for the sample sizes $n_t$, $t\in[T]$.
\end{proof}


Next, we provide the proofs of all formal results in Section~\ref{sec:rtmlmc}.

\begin{proof}[Proof of Lemma~\ref{lemma:bound_on_variance_rtmlmc}]
Since $x$ is fixed, we simplify notation by suppressing the dependence on $x$ for all functions of $x$ throughout the proof. Suppose first that only Assumptions~\ref{assump:general}, \ref{assump:lipschitz} and~\ref{assump:mu_bar} hold. We prove the upper bound on~$\mu_t^2$ by backward induction on~$t$. As for the base case corresponding to~$t=T$, note that 
\[
    \mu_T^2=\EE_{T-1}[\|\hat{H}_T\|_2^2]= \EE_{T-1}[\|f_T(\xi_T,x)\|_2^2]\leq \overline \mu_T^2,
\]
where the inequality follows from the definition of~$\overline \mu_T^2$. As for the induction step corresponding to any~$t$ with~$t+1\in[T]$, we use the law of total expectation and the definition of $\hat H_t(x)$ in~\eqref{eq:H_hat} to obtain
\begin{equation}
    \label{eq:moment_of_H_telescoping}
    \begin{aligned}
    \mu_t^2 = \EE_{t-1}\big[\|\hat{H}_t\|^2_2 \big] & =\sum_{\ell=0}^{M_{t}} q_t(\ell)\,\EE_{t-1}\left[ \left. \frac{1}{q_t(\lambda_t)^2} \big\| \hat{h}_{t}^{\lambda_t}-\frac{1}{2}\hat{h}_t^{\lambda_t, {\rm e}}-\frac{1}{2}\hat{h}_t^{\lambda_t, {\rm o}}\big\|^2_2 \,\right| \lambda_t=\ell\right] \\
    & =\sum_{\ell=0}^{M_{t}} \frac{1}{q_t(\ell)} \EE_{t-1}\Big[\big\|\hat{h}_{t}^{\ell}-\frac{1}{2}\hat{h}_t^{\ell, {\rm e}}-\frac{1}{2}\hat{h}_t^{\ell, {\rm o}}\big\|^2_2\Big].
\end{aligned}
\end{equation}
In the second line we have eliminated the condition on the event $\lambda_t=\ell$, which is allowed because~$\lambda_t$ is independent of all other random objects. Next, define $\Delta^\ell\coloneqq\hat{h}_{t}^{\ell}-\frac{1}{2}\hat{h}_t^{\ell, {\rm e}}-\frac{1}{2}\hat{h}_t^{\ell, {\rm o}}$ to avoid clutter, and note~that
\begin{align*}
    \|\Delta^\ell\|_2 & \leq \frac{L_t}{2}\big( \big\|\hat\EE_t^{\ell,{\rm e}}[\hat{H}_{t+1}] - \hat\EE_t^{\ell}[\hat{H}_{t+1}]\big\|_2 +\big\|\hat \EE_t^{\ell,{\rm o}}[\hat{H}_{t+1}] -\hat\EE_{t}^{\ell}[\hat{H}_{t+1}] \big\|_2 \big)\\
    & = \frac{L_t}{2} \big\|\hat \EE_t^{\ell,{\rm e}}[\hat{H}_{t+1}] -\hat\EE_{t}^{\ell,{\rm o}}[\hat{H}_{t+1}]\big\|_2 \\
    & \leq \frac{L_t}{2} \big(\big\|\hat \EE_t^{\ell,{\rm e}}[\hat{H}_{t+1}] -\EE_{t}[\hat{H}_{t+1}] \big\|_2 + \big\|\hat \EE_t^{\ell,{\rm o}}[\hat{H}_{t+1}] - \EE_{t}[\hat{H}_{t+1}] \big\|_2\big),
\end{align*}
where the first inequality holds because of Assumption~\ref{assump:lipschitz}, which ensures that~$f_t(\xi_t,x_t)$ is $L_t$-Lipschitz continuous in~$x_t$, and because of the triangle inequality. The equality follows from the antithetic sample average identity~\eqref{eq:even-odd-averages-identity}, and the second inequality follows again from the triangle inequality. As $(a+b)^2\leq 2a^2+2b^2$ for all $a,b\in\mathbb R$, the conditional second moment of $\Delta^\ell$ admits the following upper bound.
\begin{align*}
    \EE_{t-1}\big[\|\Delta^\ell\|_2^2\big] & \leq \frac{L_t^2}{2} \left(\EE_{t-1}\Big[ \big\| \hat \EE_t^{\ell, {\rm e}}[\hat{H}_{t+1}] -\EE_{t}[\hat{H}_{t+1}] \big\|_2^2 \Big]
    + \EE_{t-1} \Big[ \big\|\hat \EE_t^{\ell,{\rm o}}[\hat{H}_{t+1}] -
    \EE_{t}[\hat{H}_{t+1}] \big\|_2^2 \Big] \right)\\
    & = L_t^2 \;\EE_{t-1}\Big[\big\|\hat \EE_t^{\ell, {\rm e}}[\hat{H}_{t+1}] -\EE_{t}[\hat{H}_{t+1}] \big\|_2^2 \Big] \leq \frac{B_2 L_t^2}{2^{\ell-1}} \; \EE_{t-1}\big[\|\hat{H}_{t+1}\|_2^2\big] =\frac{B_2 L_t^2}{2^{\ell-1}} \; \EE_{t-1}\big[\mu_{t+1}^2\big].
\end{align*}
Here, the first equality holds because the even and the odd sample averages of $\hat H_t$ have the same distribution conditional on~$\mathcal F_{t}$, and the second inequality follows from Lemma~\ref{lemma:moment-inequality-multidimensional}. The second equality follows from the definition of~$\mu_{t+1}^2$ and the law of total expectation. Substituting this bound back into~\eqref{eq:moment_of_H_telescoping} yields
\begin{align*}
    \mu_t^2 &\leq 2B_2L_t^2 \; \EE_{t-1}\big[ \mu_{t+1}^2 \big]\sum_{\ell=0}^{M_{t}}\frac{1}{q_t(\ell) 2^{
    \ell}}  \\
    &\leq 2B_2L_t^2 \overline\mu_T^2 \prod_{s=t+1}^{T-1}\bigg(2B_2L^2_s\sum_{\ell=0}^{M_s}\frac{1}{2^s q_s(\ell)} \bigg) \sum_{\ell=0}^{M_{t}}\frac{1}{q_t(\ell)2^{\ell} } = \overline\mu_T^2 \prod_{s=t}^{T-1}\bigg(2B_2 L_s^2 \sum_{\ell=0}^{M_s}\frac{1}{2^s q_s(\ell)} \bigg),
\end{align*}
where the second inequality follows from the induction hypothesis. This establishes the first claim.

Suppose now that Assumption~\ref{assump:smooth} holds, too. In this case, we prove the upper bound on $\mu_t^{2^t}$ by backward induction on~$t$. The base case corresponding to~$t=T$ is trivially true because
\[
    \mu_T^{2^T} =\EE_{T-1}\big[\|\hat{H}_T\|_{2^T}^{2^T}\big] =\EE_{T-1}\big[\|f_T(\xi_T,x)\|_{2^T}^{2^T}\big]\leq \overline\mu_T^{2^T},
\]
where the inequality uses the definition of~$\overline \mu_T^{2^T}$. As for the induction step corresponding to any~$t$ with~$t+1\in[T]$, we can use the law of total expectation and define $\Delta^\ell$ as in the first part of the proof to demonstrate that
\begin{align}
    \label{eq:mu_t^2^t}
    \mu_t^{2^t} = \EE_{t-1}[\|\hat{H}_t\|^{2^t}_{2^t}] 
    =\sum_{\ell=0}^{M_{t}}\frac{\EE_{t-1}\big[\|\Delta^\ell\|^{2^t}_{2^t}\big]}{q_t(\ell)^{2^t-1}}.
\end{align}
Next, we can use the antithetic sample average identity~\eqref{eq:even-odd-averages-identity} to reformulate $\Delta^\ell$ as
\begin{align*}
    \Delta^\ell & = \hat{h}_{t}^{\ell} - f_t(\xi_t,\EE_t[\hat{H}_{t+1}]) -\nabla_{x_t} f_t(\xi_t,\EE_t[\hat{H}_{t+1}])^\top \big(\hat\EE_t^\ell[\hat{H}_{t+1}] -\EE_t[\hat{H}_{t+1}] \big)\\
    &\quad -\frac{1}{2}\Big[\hat{h}_t^{\ell, {\rm e}} - f_t(\xi_t,\EE_t[\hat{H}_{t+1}]) -\nabla_{x_t} f_t(\xi_t,\EE_t[\hat{H}_{t+1}])^\top \big(\hat\EE_t^{\ell,{\rm e}}[\hat{H}_{t+1}] -\EE_t[\hat{H}_{t+1}] \big)\Big]\\
    &\quad -\frac{1}{2}\Big[\hat{h}_t^{\ell, {\rm o}} - f_t(\xi_t,\EE_t[\hat{H}_{t+1}]) -\nabla_{x_t} f_t(\xi_t,\EE_t[\hat{H}_{t+1}])^\top \big(\hat\EE_t^{\ell,{\rm o}}[\hat{H}_{t+1}] -\EE_t[\hat{H}_{t+1}] \big)\Big].
\end{align*}
Taking norms on both sides and replacing $\hat{h}_{t}^{\ell}$, $\hat{h}_t^{\ell, {\rm e}}$ and $\hat{h}_t^{\ell, {\rm o}}$ by their definitions then yields
\begin{align*}
    \big\|\Delta^\ell \big\|_{2^t} \leq \big\| \Delta^\ell \big\|_2 & \leq \frac{S_t}{2}\, \Big\|\hat\EE_t^\ell[\hat{H}_{t+1}] - \EE_t[\hat{H}_{t+1}] \Big\|_2^2 \\ &\quad + \frac{S_t}{4} \, \Big\|\hat\EE_t^{\ell, {\rm e}}[\hat{H}_{t+1}] - \EE_t[\hat{H}_{t+1}] \Big\|_2^2 + \frac{S_t}{4}\, \Big\|\hat\EE_t^{\ell, {\rm o}}[\hat{H}_{t+1}] - \EE_t[\hat{H}_{t+1}] \Big\|_2^2.
\end{align*}
Here, the two inequalities follow from Lemma~\ref{lemma:matrix-inequality-pq} and from a combination of the triangle inequality and the inequality~\eqref{eq:inequality_for_smooth_funs} for smooth functions, respectively. By the Minkowski inequality, we then obtain
\begin{align*}
    &\EE_{t-1}\big[\|\Delta^\ell\|_{2^t}^{2^t}\big]^{\frac{1}{2^t}} \\
    & \leq \frac{S_t}{2}\Big(\EE_{t-1} \Big[\Big\|\hat\EE_t^\ell[\hat{H}_{t+1}] - \EE_t[\hat{H}_{t+1}] \big\|_2^{2^{t+1}}\Big]\Big)^{\frac{1}{2^t}} 
    + \frac{S_t}{4}\Big(\EE_{t-1} \Big[ \big\|\hat\EE_t^{\ell, {\rm e}}[\hat{H}_{t+1}] - \EE_t[\hat{H}_{t+1}]\big\|_2^{2^{t+1}}\Big]\Big)^{\frac{1}{2^t}} \\
    & \quad + \frac{S_t}{4}\Big(\EE_{t-1} \Big[ \big\|\hat\EE_t^{\ell, {\rm o}}[\hat{H}_{t+1}] - \EE_t[\hat{H}_{t+1}]\big\|_2^{2^{t+1}} \Big]\Big)^{\frac{1}{2^t}}\\
    & = \frac{S_t}{2}\Big(\EE_{t-1} \Big[\big\|\hat\EE_t^\ell[\hat{H}_{t+1}] - \EE_t[\hat{H}_{t+1}]\big\|_2^{2^{t+1}} \Big]\Big)^{\frac{1}{2^t}} + \frac{S_t}{2}\Big(\EE_{t-1} \Big[ \big\|\hat\EE_t^{\ell, {\rm e}}[\hat{H}_{t+1}] - \EE_t[\hat{H}_{t+1}]\big\|_2^{2^{t+1}}\Big]\Big)^{\frac{1}{2^t}},
\end{align*}
where the equality holds because the even and the odd sample averages of $\hat H_t$ have the same distribution conditional on~$\mathcal F_t$. Applying Lemma~\ref{lemma:matrix-inequality-pq} with $p=2$, $q=2^{t+1}$, $m=1$ and~$n=d_t$ to the two differences of conditional expectations in the above expression then yields 
\begin{align*}
    & \EE_{t-1}\big[\|\Delta^\ell \|_{2^t}^{2^t}\big]^{\frac{1}{2^t}} \\
    &\leq \frac{S_t}{2}\Big((d_t)^{2^t-1}\EE_{t-1} \Big[\big\|\hat\EE_t^\ell[\hat{H}_{t+1}] - \EE_t[\hat{H}_{t+1}]\big\|_{2^{t+1}}^{2^{t+1}} \Big]\Big)^{\frac{1}{2^t}} \\
    & \hspace{1cm} + \frac{S_t}{2}\Big((d_t)^{2^t-1}\EE_{t-1} \Big[ \big\|\hat\EE_t^{\ell, {\rm e}}[\hat{H}_{t+1}] - \EE_t[\hat{H}_{t+1}]\big\|_{2^{t+1}}^{2^{t+1}}\Big]\Big)^{\frac{1}{2^t}}\\
    &\leq \frac{S_t}{2}\Big((d_t)^{2^t-1}\frac{B_{2^{t+1}}}{2^{2^{t}\ell}}\EE_{t-1}\Big[\big\|  \hat{H}_{t+1}\big\|_{2^{t+1}}^{2^{t+1}}\Big] \Big)^{\frac{1}{2^t}} + \frac{S_t}{2}\Big((d_t)^{2^t-1}\frac{B_{2^{t+1}}}{2^{2^t(\ell-1)}}\EE_{t-1}\Big[\big\|  \hat{H}_{t+1}\big\|_{2^{t+1}}^{2^{t+1}}\Big]\Big)^{\frac{1}{2^t}} \\
    &= \frac{3S_t}{2}\Big((d_t)^{2^t-1}\frac{B_{2^{t+1}}}{2^{2^t\ell}}\EE_{t-1}\Big[\big\|  \hat{H}_{t+1}\big\|_{2^{t+1}}^{2^{t+1}}\Big]\Big)^{\frac{1}{2^t}}= \frac{3S_t}{2}\Big((d_t)^{2^t-1}\frac{B_{2^{t+1}}}{2^{2^t\ell}}\EE_{t-1}\Big[ \mu_{t+1}^{2^{t+1}}\Big]\Big)^{\frac{1}{2^t}}.
\end{align*}
Here, the second inequality follows from Lemma~\ref{lemma:moment-inequality-multidimensional} with $p=2^{t+1}$ and $n=2^\ell$ (for sample averages involving even {\em as well as} odd samples) or $n=2^{\ell-1}$ (for sample averages involving only even {\em or} odd samples). Next, by raising both sides of the resulting inequality to the $2^t$-th power, we find
\begin{align*}
    \EE_{t-1}\big[\|\Delta^\ell \|_{2^t}^{2^t}\big] &\leq \bigg(\frac{3S_t}{2}\bigg)^{2^t} (d_t)^{2^{t}-1} B_{2^{t+1}} \frac{1}{2^{2^t\ell}} \, \EE_{t-1}\Big[ \mu_{t+1}^{2^{t+1}} \Big]\\
    &\leq \bigg(\frac{3S_t}{2}\bigg)^{2^t} (d_t)^{2^{t}-1} B_{2^{t+1}} \frac{1}{2^{2^t\ell}}  \; \overline\mu_T^{2^T} \prod_{s=t+1}^{T-1} \bigg(\bigg(\frac{3S_s}{2}\bigg)^{2^s} d_s^{2^s-1} B_{2^{s+1}} \sum_{\ell^\prime=0}^{M_s} \frac{1}{2^{2^s \ell^\prime} q_s(\ell^\prime)^{2^{s}-1}} \bigg),
\end{align*}
where the second inequality uses the induction hypothesis. Substituting this back into~\eqref{eq:mu_t^2^t} finally yields
\begin{align*}
    \mu_t^{2^t} \leq \overline\mu_T^{2^T} \prod_{s=t}^{T-1} \bigg(\bigg(\frac{3S_s}{2}\bigg)^{2^s} d_s^{2^s-1} B_{2^{s+1}} \sum_{\ell=0}^{M_s} \frac{1}{2^{2^s \ell} q_s(\ell)^{2^{s}-1}} \bigg).
\end{align*}
This observation completes the proof.
\end{proof}


The random variable~$\hat{H}_t(x)$ represents an estimator for the conditional risk mapping~$F_t^T(x)$ introduced in Definition~\ref{def:risk-mapping}. Lemma~\ref{lemma:bias_bound_rtmlmc_H_t} below bounds the conditional bias of the estimator~$\hat{H}_t(x)$ for any~$t\in[T]$. As~$\EE[\hat{F}(x)]=\EE[\hat{H}_1(x)]$ for any $x\in\cX$, Lemma~\ref{lemma:bias_bound_rtmlmc} follows immediately from Lemma~\ref{lemma:bias_bound_rtmlmc_H_t}. Its proof is thus omitted for brevity. We point out that the stronger result of Lemma~\ref{lemma:bias_bound_rtmlmc_H_t} will also be used in Section~\ref{sec:mcco_optimization}.

\begin{lemma}
\label{lemma:bias_bound_rtmlmc_H_t} 
If Assumptions~\ref{assump:general}, \ref{assump:lipschitz} and~\ref{assump:mu_bar} hold and if $t\in[T]$, then we have
\begin{align*}
    \big\|\EE_{t-1}[\hat{H}_t(x)] - F_t^T(x)\big\|_2 \leq \begin{cases}
        \sum_{s=t}^{T-1}\frac{L_{[t:s]}\;\EE_{t-1}[\mu_{s+1}^2(x)]^\frac{1}{2}}{2^{M_s/2}}&\text{if Assumption~\ref{assump:smooth} does not hold,}\\
         \sum_{s=t}^{T-1}\frac{L_{[t:s-1]}S_s\;\EE_{t-1}[\mu_{s+1}^{2}(x)]}{2^{M_s + 1}} &\text{if Assumption~\ref{assump:smooth} holds.}
    \end{cases}
\end{align*}
\end{lemma}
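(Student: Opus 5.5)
The plan is to telescope across stages, mirroring the proof of Lemma~\ref{lemma:bound_on_bias_saa}. The key identity is that, conditional on $\mathcal F_t$ and after averaging over $\lambda_t$, the truncated MLMC term collapses to the finest level. Because the $\lambda_t$-weights cancel the factor $1/q_t(\ell)$, and because the even and odd subsamples have the same conditional law as a level-$(\ell-1)$ sample, we get
\begin{align*}
\EE_{t}\big[\hat H_t(x)\,\big|\,\xi_{[t]}\big]\text{-type identity:}\quad \EE_{t-1}[\hat H_t(x)] = \EE_{t-1}\Big[f_t\big(\xi_t,\hat\EE_t^{M_t}[\hat H_{t+1}(x)]\big)\Big].
\end{align*}
This is the telescoping sum $\sum_{\ell=0}^{M_t}\EE[\hat h^\ell-\hat h^{\ell-1}]$, with $\hat h^{-1}=0$.

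Next I would set up the interpolants $A_s \coloneqq F_t^{s-1}\big(\EE_{s-1}[\hat H_s(x)]\big)$ for $s=t,\ldots,T$, suitably nested. The sequence starts at $A_t=\EE_{t-1}[\hat H_t]$ and ends at $A_T=F_t^{T-1}(\EE_{T-1}[f_T(\xi_T,x)])=F_t^T(x)$. Each step $A_s\to A_{s+1}$ replaces $f_s(\xi_s,\hat\EE_s^{M_s}[\hat H_{s+1}])$ by $f_s(\xi_s,\EE_s[\hat H_{s+1}])$ inside $F_t^{s-1}$; the inner expectation is moved using the identity above together with the tower property. The triangle inequality then bounds the bias by a sum of $T-t$ such differences.

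In the nonsmooth case, I would apply the Lipschitz property of $F_t^{s-1}$ from Lemma~\ref{lem:lipschitz} (constant $L_{[t:s-1]}$) and then $L_s$. This gives $L_{[t:s]}\,\EE_{t-1}\|\hat\EE_s^{M_s}[\hat H_{s+1}]-\EE_s[\hat H_{s+1}]\|_2$. By Jensen, this is at most the square root of the conditional variance of an average of $2^{M_s}$ i.i.d.\ copies, which is $2^{-M_s/2}\EE_{t-1}[\mu_{s+1}^2]^{1/2}$.

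In the smooth case, I would use Lipschitz continuity only for the outer $F_t^{s-1}$ (constant $L_{[t:s-1]}$). Inside, I would subtract the linear term $\nabla f_s(\xi_s,\EE_s[\hat H_{s+1}])^\top(\hat\EE_s^{M_s}-\EE_s)$, whose $\EE_{s-1}$ vanishes by $\mathcal F_s$-measurability, exactly as in the proof of Lemma~\ref{lemma:bound_on_bias_saa}. Applying \eqref{eq:inequality_for_smooth_funs} then yields $\frac{S_s}{2}\cdot 2^{-M_s}\EE_{t-1}[\mu_{s+1}^2]$.

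The main obstacle is the bookkeeping in the telescoping. The quantities $\hat H_{s+1}$ are themselves random and biased, so I must order the replacements so that at step $s$ only the stage-$s$ empirical average is swapped for its conditional mean. The deeper estimators stay intact; their bias is absorbed only at later steps, since $\EE_s[\hat H_{s+1}]$ is again of the form that the next step handles. One also needs $\mu_{s+1}^2$ (a conditional second moment, not a variance) to dominate the conditional variance; this is immediate, and finiteness follows from Lemma~\ref{lemma:bound_on_variance_rtmlmc}.
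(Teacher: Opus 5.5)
Your proposal is correct and follows essentially the same route as the paper. Both use the telescoping bound $\|\EE_{t-1}[\hat H_t]-F_t^T\|_2\le\sum_{s=t}^{T-1}\|F_t^{s-1}(\EE_{s-1}[\hat H_s])-F_t^{s}(\EE_s[\hat H_{s+1}])\|_2$ and the collapse $\EE_{s-1}[\hat H_s]=\EE_{s-1}[f_s(\xi_s,\hat\EE_s^{M_s}[\hat H_{s+1}])]$. The nonsmooth case then uses the constant $L_{[t:s]}$ together with the $2^{-M_s}$ variance reduction. The smooth case subtracts the mean-zero linear term and applies \eqref{eq:inequality_for_smooth_funs} with the outer constant $L_{[t:s-1]}$.

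Like the paper, you implicitly require the $2^{M_s}$ copies of $\hat H_{s+1}$ to be i.i.d.\ given $\mathcal F_s$. That is, the deeper log-branching factors must be drawn afresh at each node rather than shared across the sample average.
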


\begin{proof}
We simplify notation by suppressing the dependence on $x$ for all functions of $x$ throughout the proof. Suppose first that only Assumptions~\ref{assump:general} and~\ref{assump:lipschitz} hold. By the construction of the conditional risk mappings $F_s^t$ in Definition~\ref{def:risk-mapping} and the estimators $\hat H_s$ in Definition~\ref{def:MCCO_RTMLMC}, we have
\begin{align*}
    F_t^T = F_t^{T-1}\big(\EE_{T-1}[\hat{H}_{T}]\big) \quad \text{and}\quad \EE_{t-1}[\hat{H}_{t}]=F_t^{t-1}\big(\EE_{t-1}[\hat{H}_{t}]\big).
\end{align*}
The triangle inequality thus implies that the squared conditional bias of $\hat H_t$ admits the estimate
\begin{align}
    \label{eq:rtmlmc_telescoping_sum}
    \big\| \EE_{t-1}[\hat{H}_{t}] - F_t^T \big\|_2 \leq \sum_{s=t}^{T-1}\big\| F_t^{s-1}\big(\EE_{s-1}[\hat{H}_s]\big) - F_t^s\big(\EE_s[\hat{H}_{s+1}]\big)\big\|_2.
\end{align}
To reformulate the resulting upper bound, observe that for any~$s \in [T-1]$, we have
\begin{equation}
    \label{eq:rtmlmc_bias}
    \begin{aligned}
    \EE_{s-1}[\hat{H}_s] & = \EE_{s-1} \left[ \frac{1}{q_{s}(\lambda_{s})}\left( \hat{h}_s^{\lambda_s} -\frac{1}{2}\hat{h}_s^{\lambda_s, {\rm e}}-\frac{1}{2}\hat{h}_s^{\lambda_s,{\rm o}} \right)\right] \\
    & = \sum_{\ell=0}^{M_s} \EE_{s-1} \left[ \hat{h}_s^{\ell} -\frac{1}{2}\hat{h}_s^{\ell, {\rm e}}-\frac{1}{2}\hat{h}_s^{\ell,{\rm o}} \right] = \EE_{s-1}[\hat{h}_s^{M_s}] 
    = \EE_{s-1}\big[f_s(\xi_s, \hat\EE_s^{M_s}[\hat{H}_{s+1}])\big],
\end{aligned}
\end{equation}
where the first equality follows from the definition of~$\hat H_s$ in~\eqref{eq:H_hat}, and the second uses the law of total expectation along with the independence of~$\lambda_s$ from all other random objects. The third and fourth equalities hold because~$\hat{h}_s^{\ell,{\rm e}}$ and~$\hat{h}_s^{\ell,{\rm o}}$ share the distribution of~$\hat{h}_s^{\ell-1}$ conditional on $\mathcal F_s$ for all~$\ell \in [M_s]$ and because~$\hat{h}_s^{0,{\rm e}} = \hat{h}_s^{0,{\rm o}} = 0$, respectively. Hence, the $s$-th term in the sum on the right hand side of~\eqref{eq:rtmlmc_telescoping_sum} satisfies
\begin{align*}
    & \big\| F_t^{s-1}\big(\EE_{s-1}[\hat{H}_s]\big) - F_t^s\big(\EE_s[\hat{H}_{s+1}]\big)\big\|_2\\
    &\quad = \big\| F_t^{s-1}\big(\EE_{s-1}[f_s(\xi_s,\hat\EE_s^{M_s}[\hat{H}_{s+1}])]\big) - F_t^{s-1}\big(\EE_{s-1}[f_s(\xi_s,\EE_s[\hat{H}_{s+1}])]\big)\big\|_2\\
    &\quad \leq L_{[t:s]} \; \EE_{t-1}\big[ \big\|\hat\EE_s^{M_s}[\hat{H}_{s+1}] - \EE_s[\hat{H}_{s+1}] \big\|_2 \big] \\
    &\quad \leq L_{[t:s]} \; \EE_{t-1}\big[ \big\|\hat\EE_s^{M_s}[\hat{H}_{s+1}] - \EE_s[\hat{H}_{s+1}] \big\|_2^2 \big]^\frac{1}{2} \\
    & \quad \leq \frac{L_{[t:s]}}{2^{M_s/2}} \; \EE_{t-1}\big[\big\|\hat{H}_{s+1} - \EE_s[\hat{H}_{s+1}]\big\|_2^2\big]^{\frac{1}{2}} \leq \frac{L_{[t:s]}}{2^{M_s/2}} \; \EE_{t-1}\big[ \mu^2_{s+1} \big]^\frac{1}{2},
\end{align*}
where the first inequality follows from Lemma~\ref{lem:lipschitz}, which guarantees that $F_t^{s-1}(z_{s-1})$ is $L_{[t:s-1]}$-Lipschitz continuous in~$z_{s-1}$, and from Assumption~\ref{assump:lipschitz}, which ensures that $f_s(\xi_s, x_s)$ is $L_s$-Lipschitz continuous in~$x_s$. The second and third inequalities follow from Jensen's inequality and the fact that the variance of the average of i.i.d.\ random variables equals the variance of one of these random variables divided by the sample size. The last inequality holds because the variance of any random variable is bounded by its second moment. The claim then follows by substituting the emerging bound into~\eqref{eq:rtmlmc_telescoping_sum}.

If Assumption~\ref{assump:smooth} holds, too,  then the $s$-th term on the right hand side of~\eqref{eq:rtmlmc_telescoping_sum} satisfies
\begin{equation}
\label{eq:telescopting-smooth-rtmlmc}
\begin{aligned}
    & \big\| F_t^{s-1}\big(\EE_{s-1}[\hat{H}_s]\big) - F_t^s\big(\EE_s[\hat{H}_{s+1}]\big)\big\|_2 \\
    & \quad =  \big\| F_t^{s-1}\big(\EE_{s-1}[f_s(\xi_s,\hat\EE_s^{M_s}[\hat{H}_{s+1}])]\big) - F_t^{s-1}\big(\EE_{s-1}[f_s(\xi_s,\EE_s[\hat{H}_{s+1}])]\big)\big\|_2\\
    & \quad \leq L_{[t:s-1]}\; \EE_{t-1}\big[\big\|\EE_{s-1}\big[f_s(\xi_s,\hat\EE_s^{M_s}[\hat{H}_{s+1}]) - f_s(\xi_s,\EE_s[\hat{H}_{s+1}])\big] \big\|_2\big].
\end{aligned}
\end{equation}
As $ \nabla_{x_s} f_s(\xi_s, \EE_{s} [\hat{H}_{s+1}])$ is $\cF_s$-measurable and as $\EE_{s}[\hat\EE_s^{M_s}[\cdot]] = \EE_{s}[\cdot]$, we may conclude that
\begin{align*}
    \EE_{s-1}\big[\nabla_{x_s}f_s(\xi_s,\EE_{s}[\hat{H}_{s+1}])^\top (\hat\EE_s^{M_s}[\hat{H}_{s+1}] - \EE_{s}[\hat{H}_{s+1}])\big] =0.
\end{align*}
Hence, the last expression in~\eqref{eq:telescopting-smooth-rtmlmc} equals
\begin{align*}
    & L_{[t:s-1]}\;\EE_{t-1}\big[\big\|\EE_{s-1}\big[f_s(\xi_s, \hat\EE_s^{M_s}[\hat{H}_{s+1}]) - f_s(\xi_s,\EE_s[\hat{H}_{s+1}]) \\
    &\qquad\;\qquad - \nabla_{x_s}f_s(\xi_s,\EE_s[\hat{H}_{s+1}])^\top (\hat\EE_s^{M_s}[\hat{H}_{s+1}] - \EE_s[\hat{H}_{s+1}]) \big]\big\|_2\big]\\
    & \quad \leq \frac{L_{[t:s-1]}S_s}{2}\;\EE_{t-1}\big[ \big\|\hat\EE_s^{M_s}[\hat{H}_{s+1}] - \EE_s[\hat{H}_{s+1}] \big\|_2^2 \big]\\
    & \quad \leq \frac{L_{[t:s-1]}S_s}{2^{M_s+1}}\; \EE_{t-1}\big[ \big\|\hat{H}_{s+1} - \EE_s[\hat{H}_{s+1}] \big\|_2^2 \big]\leq \frac{L_{[t:s-1]}S_s}{2^{M_s+1}}\; \EE_{t-1}\big[\mu_{s+1}^{2}\big].
\end{align*}
Here, the first inequality follows by applying Jensen's inequality to move the conditional expectation $\EE_{s-1}[\cdot]$ outside the norm, and then using the smoothness inequality~\eqref{eq:inequality_for_smooth_funs}, which holds by Assumption~\ref{assump:smooth}. The second inequality holds because the variance of an average of i.i.d.\ samples equals the variance of a single sample divided by the sample size. The third inequality follows from the fact that the variance of any random variable is bounded by its second moment, together with the definition of $\mu_{s+1}^{2}$ in~\eqref{eq:mu}. Substituting the emerging bound into~\eqref{eq:rtmlmc_telescoping_sum} yields the desired result.
\end{proof}

\begin{proof}[Proof of Lemma~\ref{lemma:bound_on_cost_rtmlmc}] 

The MLMC estimator of Definition~\ref{def:MCCO_RTMLMC} is built on~$n_1$ independent scenario trees, and thus, we have $\EE[\CC(\hat{F}(x))]=n_1\EE[\CC(\hat{H}_{1}(x))]$. Each tree accommodates~$T$ stages, $2^{\lambda_t}$ branches per stage-$t$ node for every~$t\in[T-1]$ and one sample per node. As each log-branching factor $\lambda_t$, $t\in[T-1]$, is independent of all other random objects, the expected number of scenarios per tree is given by
\begin{align*}
   \EE\big[\CC(\hat{H}_{1}(x))\big] =  \prod_{t=1}^{T-1}\sum_{\ell=0}^{M_t} q_t(\ell)2^{\ell}.
\end{align*}
Hence, the claim follows. 
\end{proof}

The following lemma shows that if the integrands fail to be smooth, then the rate parameters in Assumption~\ref{assump:rate-parameters-rtmlmc-funval} are optimal in a precise sense, as detailed in Remark~\ref{remark:optimal-prob}.

\begin{lemma}\label{lemma:optimal_b}
    If Assumptions~\ref{assump:general}, \ref{assump:lipschitz} and~\ref{assump:mu_bar} hold, then the choice $r = (r_1, \ldots, r_{T-1})$ with $r_t = 1/2$ for all $t \in [T-1]$ minimizes the approximate sampling cost~$c_{\mathrm{ns}}(r)$ over all~$r\in(0, 1)^{T-1}$.
\end{lemma}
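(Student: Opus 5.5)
First I will write out $c_{\mathrm{ns}}(r)$ explicitly. By the discussion after Lemma~\ref{lemma:bound_on_cost_rtmlmc}, it is obtained by replacing $\mu_1^2(x)$ in $\frac{2\mu_1^2(x)}{\epsilon^2}\prod_{t=1}^{T-1}\sum_{\ell=0}^{M_t}q_t(\ell)2^\ell$ with the first bound of Lemma~\ref{lemma:bound_on_variance_rtmlmc} at $t=1$. This gives
\[
c_{\mathrm{ns}}(r)=\frac{2C_1}{\epsilon^2}\prod_{t=1}^{T-1}\bigg(\sum_{\ell=0}^{M_t}\frac{1}{2^{\ell}q_t(\ell)}\bigg)\bigg(\sum_{\ell=0}^{M_t}q_t(\ell)2^{\ell}\bigg),
\]
where $C_1$ does not depend on $r$. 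The $t$-th factor depends only on $r_t$ through $q_t=\mathrm{Geo}(r_t|M_t)$, and every factor is positive. So minimizing over $r\in(0,1)^{T-1}$ splits into $T-1$ independent one-dimensional problems: minimize $\phi_t(r_t)\coloneqq\big(\sum_{\ell}\frac{1}{2^\ell q_t(\ell)}\big)\big(\sum_\ell q_t(\ell)2^\ell\big)$ over $r_t\in(0,1)$.

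For each factor I will use the Cauchy--Schwarz inequality with $a_\ell=(2^\ell q_t(\ell))^{-1/2}$ and $b_\ell=(2^\ell q_t(\ell))^{1/2}$:
\[
\bigg(\sum_{\ell=0}^{M_t}\frac{1}{2^{\ell}q_t(\ell)}\bigg)\bigg(\sum_{\ell=0}^{M_t}q_t(\ell)2^{\ell}\bigg)\ \ge\ \bigg(\sum_{\ell=0}^{M_t}1\bigg)^2=(M_t+1)^2 .
\]
This lower bound holds for every probability vector $q_t$ on $\{0,\ldots,M_t\}$, and in particular for every truncated geometric distribution. Equality holds if and only if $a_\ell$ and $b_\ell$ are proportional, which here means that $2^\ell q_t(\ell)$ does not depend on $\ell$.

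Next I check that $r_t=1/2$ attains equality. In that case $q_t(\ell)=\tfrac12 2^{-\ell}/(1-2^{-(M_t+1)})$, so $2^\ell q_t(\ell)$ is constant in $\ell$. Hence $\phi_t(1/2)=(M_t+1)^2$ is the global minimum of $\phi_t$. Since this holds for every $t$ simultaneously, $r=(1/2,\ldots,1/2)$ minimizes $c_{\mathrm{ns}}$.

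The only step that needs care is the first one: I must confirm that the constant $C_1$ and the other prefactors in the $\mu_1^2$ bound really do not depend on $r$. From Lemma~\ref{lemma:bound_on_variance_rtmlmc}, $C_1=\overline\mu_T^2(2B_2)^{T-1}L_{[1:T-1]}^2$, which is indeed independent of $r$. Once the product structure is established, the Cauchy--Schwarz argument is routine.
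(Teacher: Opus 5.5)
Your proof is correct, but it takes a different route from the paper. Both arguments first split $c_{\mathrm{ns}}(r)$ into $T-1$ positive factors that each depend on a single $r_t$. From there the paper takes logarithms and reparametrizes $r_t=1-2^{-b_t}$, so that the normalizing constants cancel and the $t$-th factor becomes $\log\sum_{\ell}2^{(b_t-1)\ell}+\log\sum_{\ell}2^{(1-b_t)\ell}$. It then uses convexity of log-sum-exp in $b_t$ and checks that $b_t=1$ satisfies the first-order condition by symmetry. This is the calculus template of \cite{blanchet2015unbiased}, carried out entirely within the truncated geometric family; it certifies optimality through a stationary point and never computes the minimum value.

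Your Cauchy--Schwarz argument avoids the reparametrization, the convexity claim and the derivative computation. It also gives more than the paper's argument in two ways:
\begin{itemize}
\item It yields the minimum value $(M_t+1)^2$ of each factor explicitly. This is exactly the $\prod_t (M_t+1)^2$ growth behind the $\cO(\log(\epsilon^{-1})^{2(T-1)}\epsilon^{-2})$ bound in Theorem~\ref{thm:rtmlmc_sample_complexity_mse}.
\item It shows that $r_t=1/2$ is optimal among all probability vectors on $\{0,\ldots,M_t\}$, not only within the truncated geometric family. The equality case $2^\ell q_t(\ell)\equiv\text{const}$ picks out $\mathrm{Geo}(1/2\,|\,M_t)$ uniquely when $M_t\geq 1$, which justifies the geometric ansatz after the fact.
\end{itemize}
Your check that $C_1$ does not depend on $r$ is the right thing to verify. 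The paper simply drops this constant from $c_{\mathrm{ns}}(r)$, which is harmless for the minimization.
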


\begin{proof}[Proof of Lemma~\ref{lemma:optimal_b}] 
We adopt a similar proof strategy as in~\cite[\S~3]{blanchet2015unbiased}, which proposes an {\em untruncated} MLMC estimator for a subclass of~\eqref{problem:MCCE} problems with $T = 2$. Using the definition of~$c_{\mathrm{ns}}(r)$ and the uniform upper bound on $\mu_t^2(x)$ for nonsmooth integrands established in Lemma~\ref{lemma:bound_on_variance_rtmlmc}, we obtain
\[
    c_{\mathrm{ns}}(r) = \frac{2}{\epsilon^2} \left(\prod_{t=1}^{T-1} \sum_{\ell=0}^{M_t} \frac{1}{q_t(\ell)2^{\ell}} \right) \left(\prod_{t=1}^{T-1} \sum_{\ell=0}^{M_t} q_t(\ell)2^{\ell}\right) = \frac{2}{\epsilon^2} \prod_{t=1}^{T-1} \left(\sum_{\ell=0}^{M_t} \frac{1}{q_t(\ell)2^{\ell}} \right) \left( \sum_{\ell=0}^{M_t} q_t(\ell)2^{\ell}\right).
\]
As $q_t(\ell) = r_t(1-r_t)^\ell/(1-(1-r_t)^{M_t+1})$ is independent of~$r_{t'}$ for all $t'\neq t$, we can find the optimal $r_t$ by minimizing the logarithm of the $t$-th term in the resulting product over $r_t\in(0,1)$. Thus, we must solve
\begin{align*}
    \min_{r_t\in(0,1) }\; \log\bigg(\sum_{\ell=0}^{M_t}\frac{1}{2^{\ell}q_t(\ell)} \bigg) +\log\bigg(\sum_{\ell=0}^{M_t}2^{\ell}q_t(\ell)  \bigg),
\end{align*}
where $q_t(\ell)$ is interpreted as a function of~$r_t$.  This minimization problem can be simplified by representing the rate as $r_t=1-2^{-b_t}$ using an auxiliary decision variable $b_t\in (0,\infty)$. Under this re-parametrization we have $q_t(\ell)=2^{-b_t\ell}/Z_t$, and the normalization constant amounts to $Z_t=(1-2^{-b_t(M_t+1)}) / (1-2^{-b_t})$. Replacing~$q_t(\ell)$ with this formula allows us to reformulate the above minimization problem equivalently as
\begin{align*}
    \min_{b_t \in (0,\infty)}\; \log\bigg(\sum_{\ell=0}^{M_t}2^{(b_t-1)\ell} \bigg) +\log\bigg(\sum_{\ell=0}^{M_t}2^{(1-b_t)\ell}  \bigg).
\end{align*}
The objective function of the re-parametrized optimization problem is convex in~$b_t$ because logarithms of sums of exponential functions are known to be convex. An optimal~$b_t$ can thus be found from the problem's first-order optimality condition. Indeed, the gradient of the objective function vanishes if and only if
\begin{align*}
    \frac{\sum_{\ell=0}^{M_t} \ell\,2^{(b_t-1)\ell}}{\sum_{\ell'=0}^{M_t} 2^{(b_t-1)\ell'}}= \frac{\sum_{\ell=0}^{M_t} \ell\,2^{(1-b_t)\ell}}{\sum_{\ell'=0}^{M_t} 2^{(1-b_t)\ell'}}.
\end{align*}
Clearly, the above equation is solved by $b_t=1$ for every $t\in[T-1]$, which implies that $c_{\rm ns}(r)$ is minimized by setting $r_t=1-2^{-b_t}=1/2$ for every $t\in[T-1]$. Hence, the claim follows.
\end{proof}

\begin{proof}[Proof of Theorem~\ref{thm:rtmlmc_sample_complexity_mse}]
To guarantee a root mean squared error of at most~$\epsilon$, we require both the bias and the standard deviation of the MLMC estimator to be bounded by~$\epsilon/\sqrt{2}$ for any~$x\in\cX$. Suppose first that Assumption~\ref{assump:smooth} does not hold ({\em i.e.}, the integrands are only Lipschitz continuous). Hence, the bias satisfies
\begin{align*}
  \big|\EE[\hat{F}(x)] - F(x)\big| \leq \sum_{t=1}^{T-1}\frac{L_{[t]}\EE[\mu_{t+1}^2(x)]^\frac{1}{2}}{2^{M_t/2}} 
\end{align*}
thanks to Lemma~\ref{lemma:bias_bound_rtmlmc}. Lemma~\ref{lemma:bound_on_variance_rtmlmc} further implies that
\[
    \EE[\mu_{t+1}^2(x)]^\frac{1}{2} \leq \sqrt{C_{t+1}} \prod_{s=t+1}^{T-1} \bigg( \sum_{\ell=0}^{M_s}\frac{1}{2^{\ell} q_s(\ell)} \bigg)^{\frac{1}{2}} = \sqrt{C_{t+1}} \prod_{s=t+1}^{T-1} \sqrt{Z_s} \sqrt{M_s+1} \quad \forall t\in[T-1],
\]
where the equality holds thanks to Assumption~\ref{assump:rate-parameters-rtmlmc-funval}, which implies that~$r_t=1/2$. Indeed, for this choice of the rate parameter~$r_t$ we have $q_t(\ell) = 2^{-\ell}/Z_t$, and the corresponding normalization constant simplifies to~$Z_t=2(1-2^{-(M_t+1})$. Combining the above estimates then yields
\begin{align*}
  \big|\EE[\hat{F}(x)] - F(x)\big| \leq \sum_{t=1}^{T-1} \frac{L_{[t]} \sqrt{C_{t+1}}}{2^{M_t/2}} \prod_{s=t+1}^{T-1} \sqrt{Z_s} \sqrt{M_s+1}.
\end{align*}
Recalling from the theorem statement that the $t$-th truncation point is given by
\begin{align*}
     M_{t} = \bigg\lceil 2\log_2\Big( \textstyle \sqrt{2}L_{[t]}\sqrt{C_{t+1}} \, \Big( \prod_{s=t+1}^{T-1} \sqrt{Z_s} \sqrt{M_s+1} \Big) (T-1)/\epsilon \Big) \bigg\rceil
\end{align*}
for every $t \in [T-1]$, we finally obtain the desired estimate $|\EE[\hat{F}(x)] - F(x)| \leq \epsilon/\sqrt{2}$ for every $x\in\cX$. Similarly, the variance of the MLMC estimator satisfies
\[
    \VV\big(\hat{F}(x) \big) = \VV\big(\hat{\EE}[\hat{H}_{1}(x)]\big) = \frac{1}{n_1}\VV\big(\hat{H}_{1}(x)\big) \leq \frac{\mu_1^2(x)}{n_1}\leq \frac{\epsilon^2}{2}
\]
for every $x\in\cX$, where the two equalities exploit the definitions of~$\hat F(x)$ and $\hat \EE[\cdot]$, respectively. The first inequality holds because the variance of any random variable is bounded by its second moment, and the second inequality follows from our choice of~$n_1=\lceil \sup_{x\in\cX} 2\mu_1^2/\epsilon^2 \rceil$. In summary, the above reasoning implies that the root mean squared error of $\hat F(x)$ is bounded by~$\epsilon$ uniformly across all~$x\in\cX$.

Next, Lemma~\ref{lemma:bound_on_cost_rtmlmc} implies that the expected sampling cost of the MLMC estimator amounts to
\begin{align*}
    \EE[\CC(\hat{F}(x))] = n_1 \prod_{t=1}^{T-1} \sum_{\ell=0}^{M_t} q_t(\ell)2^{\ell} = n_1 \prod_{t=1}^{T-1} Z_t^{-1} (M_t+1), 
\end{align*}
where the second equality follows from 
Assumption~\ref{assump:rate-parameters-rtmlmc-funval}. It remains to analyze how the resulting expression scales with respect to~$1/\epsilon$. The recursive definition of the truncation points readily implies that~$M_t = \cO(\log(\epsilon^{-1}))$ for all $t \in [T-1]$. By Lemma~\ref{lemma:bound_on_variance_rtmlmc} and the definition of~$n_1$, we further have
\begin{align*}
    n_1\leq \frac{2C_1}{\epsilon^2} \prod_{s=1}^{T-1} \bigg(\sum_{\ell=0}^{M_s}\frac{1}{2^{\ell} q_s(\ell)} \bigg) +1 = \frac{2C_1}{\epsilon^2} \prod_{t=1}^{T-1}Z_t(M_t+1) +1 = \cO(\log(\epsilon^{-1})^{T-1}\epsilon^{-2}).
\end{align*}
Hence, the expected sampling cost of the MLMC estimator satisfies $\EE[\CC(\hat{F}(x))] \leq \cO\big(\log(\epsilon^{-1})^{2(T-1)}\epsilon^{-2}\big)$.

Suppose now that Assumption~\ref{assump:smooth} holds, too. In this case, Lemma~\ref{lemma:bias_bound_rtmlmc} and Jensen's inequality imply that the bias of $\hat F(x)$ satisfies 
\begin{align*}
  \big|\EE[\hat{F}(x)] - F(x)\big| \leq \sum_{t=1}^{T-1}\frac{L_{[t-1]}S_t\EE[\mu_{t+1}^{2}(x)]}{2^{M_t + 1}}  \leq \sum_{t=1}^{T-1}\frac{L_{[t-1]}S_t\EE[\mu_{t+1}^{2^{t+1}}(x)]^\frac{1}{2^t}}{2^{M_t + 1}}.
\end{align*}
Lemma~\ref{lemma:bound_on_variance_rtmlmc} and Assumption~\ref{assump:rate-parameters-rtmlmc-funval} further imply that
\begin{align*}
    \EE[\mu_{t+1}^{2^{t+1}}(x)]^\frac{1}{2^t} \leq \Big(D_{t+1} \prod_{s=t+1}^{T-1}\sum_{\ell=0}^{M_s} Z_s^{2^s-1}2^{-\frac{\ell}{2^s}}\Big)^\frac{1}{2^t}
    =D_{t+1}^{2^{-t}} \prod_{s=t+1}^{T-1} Z_s^{\frac{2^s-1}{2^t}} \bigg(\frac{1-2^{-\frac{M_s+1}{2^s}}}{1-2^{-\frac{1}{2^s}}}\bigg)^\frac{1}{2^t} \quad \forall t\in[T-1].
\end{align*}
Indeed, setting the $t$-th rate parameter to $r_t=1-2^{-1-2^{-t}}$ implies that $q_t(\ell)=2^{-(1+2^{-t})\ell}/Z_t$ with normalization constant $Z_t=(1-2^{-(1+2^{-t})(M_t+1)})/(1-2^{-1-2^{-t}})$. Combining these estimates then yields
\begin{align*}
    \big|\EE[\hat{F}(x)] - F(x)\big| \leq \sum_{t=1}^{T-1}\frac{L_{[t-1]}S_tD_{t+1}^{2^{-t}}}{2^{M_t + 1}} \prod_{s=t+1}^{T-1} Z_s^{\frac{2^s-1}{2^t}} \bigg(\frac{1-2^{-\frac{M_s+1}{2^s}}}{1-2^{-\frac{1}{2^s}}}\bigg)^\frac{1}{2^t}.
\end{align*}
Recall now from the theorem statement that the $t$-th truncation point is given by
\begin{align*}
     \textstyle M_{t}  = \bigg\lceil\log_2\bigg( \sqrt{2}L_{[t-1]}S_{t}D_{t+1}^{2^{-t}} \, \bigg( \prod_{s=t+1}^{T-1}Z_s^{\frac{2^s-1}{2^t}} \Big(\frac{1-2^{-\frac{M_s+1}{2^s}}}{1-2^{-\frac{1}{2^s}}} \Big)^\frac{1}{2^t} \bigg)  (T-1)/(2\epsilon)\bigg)\bigg\rceil
\end{align*}
for every $t\in[T-1]$. This choice then yields the desired bias estimate $|\EE[\hat F(x)]-F(x)|\leq \epsilon/\sqrt{2}$ for every $x\in\cX$. To bound the variance of the MLMC estimator, we may use the exact same reasoning as in the first part of the proof. Thus, we may conclude that $\VV(\hat{F}(x)) \leq \epsilon^2/2$ uniformly across all $x\in\cX$. 

In addition, we may use a similar reasoning as in the first part of the proof to conclude that
\begin{align*}
    \EE[\CC(\hat{F}(x))] = n_1 \prod_{t=1}^{T-1} \sum_{\ell=0}^{M_t}Z_t^{-1} 2^{-2^{-t}\ell} = n_1 \prod_{t=1}^{T-1} Z_t^{-1}\frac{1-2^{-\frac{M_{t}+1}{2^{t}}}}{1-2^{-\frac{1}{2^t}}}. 
\end{align*}
The recursive definition of the truncation points readily implies that~$M_t = \cO(\log(\epsilon^{-1}))$ for all $t \in [T-1]$. Furthermore, the definition of~$n_1$ implies via Lemma~\ref{lemma:bound_on_variance_rtmlmc} that
\begin{align*}
    n_1 \leq \frac{2D_1}{\epsilon^2}\prod_{t=1}^{T-1} \sum_{\ell=0}^{M_t} Z_t^{2^t-1}2^{-2^{-t}\ell}+1
    =\frac{2D_1}{\epsilon^2}\prod_{t=1}^{T-1}Z_t^{2^t-1}\frac{1-2^{-\frac{M_t+1}{2^t}}}{1-2^{-\frac{1}{2^t}}} + 1=\cO(\epsilon^{-2}).
\end{align*}
Hence, the expected sampling cost of the MLMC estimator satisfies $\EE[\CC(\hat{F}(x))] \leq \cO(\epsilon^{-2})$.
\end{proof}
\begin{proof}[Proof of Theorem~\ref{thm:rtmlmc_sample_complexity_high_probability}] 
    The condition on~$n_1$ ensures via Corollary~\ref{cor:high_prob_conv_rtmlmc} that $ \PP\left( F(\hat{x}^*) - F(x^*)  > \epsilon \right) \leq \beta$ for any minimizers~$x^*$ and~$\hat{x}^*$ of problems~\eqref{problem:MCCO} and~\eqref{prob:min_hatF(x)}, respectively. Hence, any minimizer of problem~\eqref{prob:min_hatF(x)} constitutes an $\epsilon$-optimal solution to~\eqref{problem:MCCO} with probability at least $1-\beta$. 
    
    By Lemma~\ref{lemma:bound_on_cost_rtmlmc}, the expected sampling cost of the MLMC estimator amounts to
    \[
        \EE[\CC(\hat{F}(x))] = n_1 \prod_{t=1}^{T-1} \sum_{\ell=0}^{M_t} q_t(\ell)2^{\ell}.
    \] 
    By using a similar reasoning as in the proof of Theorem~\ref{thm:rtmlmc_sample_complexity_mse}, one can show that the product term is of order~$\cO(\log(\epsilon^{-1})^{T-1})$ for Lipschitz continuous integrands and of order~$\cO(1)$ for smooth integrands. Moreover, the number~$n_1$ of scenario trees appearing in the statement of the corollary is of order~$\cO(\log(\epsilon^{-1})\epsilon^{-2})$. The desired scenario complexity bounds then follow by multiplying these estimates. 
\end{proof}


\section{Proofs of Section~\ref{sec:mcco_optimization}}

\begin{proof}[Proof of Lemma~\ref{lem:gradient-recursion}]
    We use backward induction on~$t$ to prove that $\nabla F_t^T(x)= G_t(x)$ for all $x\in \RR^d$ and $t\in[T]$. As for the base case corresponding to~$t=T$, fix any $x\in\RR^d$, and observe that
    \begin{align*}
        \nabla F_T^T(x) = \nabla \EE_{T-1}[f_T(\xi_T,x)]=\EE_{T-1}[\nabla_x f_T(\xi_T,x)]=G_T(x).
    \end{align*}
    Here, the first equality follows from the construction of $F_T^T(x)$ in Definition~\ref{def:risk-mapping}. Next, the gradient may be interchanged with the conditional expectation by the dominated convergence theorem, which applies thanks to Assumptions~\ref{assump:lipschitz} and~\ref{assump:smooth}. The resulting expression coincides with the definition of $G_T(x)$. 
    
    As for the induction step corresponding to any $t\in[T-1]$, assume that $\nabla F_{t+1}^T(x) = G_{t+1}(x)$ for all~$x\in\RR^d$. Next, fix an arbitrary $x\in\RR^d$, and observe that
    \begin{align*}
        \nabla F_t^T(x) =& \nabla_{x_t} \EE_{t-1}[f_t(\xi_t,F_{t+1}^T(x))]\\
        =& \EE_{t-1}\left[ \nabla F_{t+1}^T(x)  \nabla_{x_t} f_t(\xi_t,F_{t+1}^T(x))\right] \\
        =& \EE_{t-1}\left[ G_{t+1}(x) \nabla_{x_t} f_t(\xi_t,F_{t+1}^T(x))\right] = G_t(x).
    \end{align*}
    Here, the first equality follows from the definition of~$F_t^T$, while the second equality follows from the dominated convergence theorem and the chain rule. The third and the fourth equalities then exploit the induction hypothesis and the definition of~$G_t$, respectively. The claim ultimately follows because $F(x)=F_1^T(x)$.
\end{proof}

\begin{proof}[Proof of Lemma~\ref{lemma:bound_on_variance_rumlmc_grad}] 

Throughout the proof we fix~$x$, and for simplicity we suppress the dependence on~$x$ in all functions. We then find an upper bound on~$\nu_t^{2^t}$ and a recursive formula for~$E_t$ by backward induction on~$t$. As for the base case corresponding to~$t=T$, the definition of~$\hat{G}_T$ and Assumption~\ref{assump:nu_bar} imply that
\begin{align*}
    \nu_T^{2^T} = \EE_{T-1}\big[ \|\hat{G}_T\|_2^{2^T} \big] \leq \overline\nu_T^{2^T} \eqqcolon E_T<\infty.
\end{align*}
As for the induction step corresponding to any~$t$ with~$t+1\in[T]$, we introduce the auxiliary variables
\begin{align}
\label{eq:S-definition}
    \cS_s \coloneqq \sum_{\ell=0}^{M_s}\frac{1}{2^{(2^{s-1})(\rho_s+1)\ell} q_s(\ell)^{2^{s}-1}} \quad\text{and} \quad\cS_s^\prime\coloneqq\sum_{\ell=0}^{M_s}\frac{1}{2^{2^s(\rho_{s-1}+1) \ell} q_s(\ell)^{2^{s}(\rho_{s-1}+1)-1}} \quad \forall s\in[T-1],
\end{align}
which allows us to express the induction hypothesis compactly as
\begin{equation}
  \begin{aligned}\label{eq:moment_lemma_inductive_hypothesis_grad}
    \nu_{t+1}^{2^{t+1}} &= \EE_t\big[\|\hat{G}_{t+1}\|_{2}^{2^{t+1}}\big]\leq E_{t+1}\prod_{s=t+1}^{T-1} \max\big\{ \cS_s, \cS_s^\prime \big\}.
\end{aligned}  
\end{equation}
In order to bound $\nu_t^{2^t}$, we use the law of total expectation and the definition of $\hat{G}_t$ in \eqref{eq:G_hat} to obtain
\begin{equation}
\begin{aligned} \label{eq:telescoping_step_rumlmc}
    \nu_t^{2^t}=\EE_{t-1} \big[ \| \hat{G}_t \|^{2^t}_{2} \big] 
    = & \sum_{\ell=0}^{M_t} q_t(\ell) \, \EE_{t-1} \bigg[ \left. \frac{1}{q_t(\lambda_t)^{2^t}} \Big\| \hat{g}_t^{\lambda_t}-\frac{1}{2}\hat{g}_t^{\lambda_t, {\rm e}}-\frac{1}{2}\hat{g}_t^{\lambda_t, {\rm o}} \Big\|^{2^t}_{2} \right| \lambda_t=\ell \bigg]\\
    = & \sum_{\ell=0}^{M_t} \frac{1}{q_t(\ell)^{2^t-1}} \, \EE_{t-1} \bigg[ \Big\| \hat{g}_t^{\ell}-\frac{1}{2}\hat{g}_t^{\ell, {\rm e}}-\frac{1}{2}\hat{g}_t^{\ell, {\rm o}} \Big\|^{2^t}_{2} \bigg].
\end{aligned}
\end{equation}
In the second line we have eliminated the condition on the event $\lambda_t=\ell$, which is allowed because $\lambda_t$ is independent of all other random objects. Next, we define $\Delta^\ell \coloneqq \hat{g}_t^{\ell}-\frac{1}{2}\hat{g}_t^{\ell, {\rm e}} - \frac{1}{2} \hat{g}_t^{\ell, {\rm o}}$ and
\begin{align*}
    A_t^\ell \coloneqq \nabla_{x_t} f_t \Big(\xi_t, \hat\EE_t^{\ell}\big[\hat{H}_{t+1}\big]\Big) 
    -\frac{1}{2}\nabla_{x_t} f_t \Big(\xi_t, \hat\EE_t^{\ell, {\rm e}}\big[\hat{H}_{t+1}\big]\Big) -\frac{1}{2}\nabla_{x_t} f_t \Big(\xi_t, \hat\EE_t^{\ell, {\rm o}}\big[\hat{H}_{t+1}\big]\Big)
\end{align*}
to avoid clutter. By the construction of $\hat g_t^\ell$, $\hat{g}_t^{\ell, {\rm e}}$ and $\hat{g}_t^{\ell, {\rm o}}$ in Definition~\ref{def:rumlmc-grad}, we thus have
\begin{align*}
    \Delta^\ell 
    & = \hat\EE_t^{\ell}\big[\hat{G}_{t+1}\big]  \nabla_{x_t} f_t \big(\xi_t, \hat\EE_t^{\ell}\big[\hat{H}_{t+1}\big]\big)-\frac{1}{2} \hat\EE_t^{\ell, {\rm e}}\big[\hat{G}_{t+1}\big]  \nabla_{x_t} f_t \big(\xi_t, \hat\EE_t^{\ell, {\rm e}}\big[\hat{H}_{t+1}\big]\big) \\
    & \quad -\frac{1}{2} \hat\EE_t^{\ell, {\rm o}}\big[\hat{G}_{t+1}\big] \nabla_{x_t} f_t \big(\xi_t, \hat\EE_t^{\ell, {\rm o}}\big[\hat{H}_{t+1}\big]\big) \\ 
    & = \Big( \hat\EE_t^{\ell}\big[\hat{G}_{t+1}\big] -\EE_t[\hat{G}_{t+1}] \Big) \Big[\nabla_{x_t} f_t \big(\xi_t, \hat\EE_t^{\ell}\big[\hat{H}_{t+1}\big]\big) - \nabla_{x_t} f_t\big(\xi_t, \EE_t[\hat{H}_{t+1}] \big)\Big] \\
    & \quad -\frac{1}{2}\Big( \hat\EE_t^{\ell, {\rm e}}\big[\hat{G}_{t+1}\big] -\EE_t[\hat{G}_{t+1}] \Big) \Big[\nabla_{x_t} f_t \big(\xi_t, \hat\EE_t^{\ell, {\rm e}}\big[\hat{H}_{t+1}\big]\big) - \nabla_{x_t} f_t\big(\xi_t, \EE_t[\hat{H}_{t+1}] \big)\Big] \\
    & \quad - \frac{1}{2}\Big( \hat\EE_t^{\ell, {\rm o}}\big[\hat{G}_{t+1}\big] -\EE_t[\hat{G}_{t+1}] \Big) \Big[ \nabla_{x_t} f_t \big(\xi_t, \hat\EE_t^{\ell, {\rm o}}\big[\hat{H}_{t+1}\big]\big) - \nabla_{x_t} f_t\big(\xi_t, \EE_t[\hat{H}_{t+1}] \big)\Big] +\EE_t[\hat{G}_{t+1}] \, A_t^\ell,
\end{align*}
where the second equality is obtained by adding terms that sum to~$0$ and using the antithetic sample average identity~\eqref{eq:even-odd-averages-identity}. Next, we observe that Jensen's inequality yields $|\sum_{i=1}^4 a_i |^p \leq 4^{p-1} \sum_{i=1}^4  |a_i |^p$ for all $a_1,\ldots, a_4\in\RR$ and $p\geq 1$. Hence, the $(p=2^t)$-th conditional moment of~$\Delta^\ell$ obeys the estimate
\begin{align*}
    & \EE_{t-1}\big[\|\Delta^\ell \|_{2}^{2^t}\big]\\
    & \leq {\textstyle 4^{2^t-1}} \, \EE_{t-1}\Big[\Big\|\Big( \hat\EE_t^{\ell}\big[\hat{G}_{t+1}\big] -\EE_t[\hat{G}_{t+1}] \Big) \Big[\nabla_{x_t} f_t \big(\xi_t, \hat\EE_t^{\ell}\big[\hat{H}_{t+1}\big]\big) - \nabla_{x_t} f_t\big(\xi_t, \EE_t[\hat{H}_{t+1}] \big)\Big] \Big\|_{2}^{2^t}\Big]\\
    & \quad +{\textstyle \frac{4^{2^t-1}}{2^{2^t}}} \, \EE_{t-1}\Big[\Big\|\Big( \hat\EE_t^{\ell, {\rm e}}\big[\hat{G}_{t+1}\big] -\EE_t[\hat{G}_{t+1}] \Big) \Big[\nabla_{x_t} f_t \big(\xi_t, \hat\EE_t^{\ell, {\rm e}}\big[\hat{H}_{t+1}\big]\big) - \nabla_{x_t} f_t\big(\xi_t, \EE_t[\hat{H}_{t+1}] \big)\Big] \Big\|_{2}^{2^t}\Big]\\
    & \quad +{\textstyle \frac{4^{2^t-1}}{2^{2^t}}} \, \EE_{t-1}\Big[\Big\|\Big( \hat\EE_t^{\ell, {\rm o}}\big[\hat{G}_{t+1}\big] -\EE_t[\hat{G}_{t+1}] \Big) \Big[ \nabla_{x_t} f_t \big(\xi_t, \hat\EE_t^{\ell, {\rm o}}\big[\hat{H}_{t+1}\big]\big) - \nabla_{x_t} f_t\big(\xi_t, \EE_t[\hat{H}_{t+1}] \big)\Big]\Big\|_{2}^{2^t}\Big]\\
    & \quad +{\textstyle 4^{2^t-1}} \, \EE_{t-1}\Big[\big\|\EE_t[\hat{G}_{t+1}] \; A^\ell_t\big\|_{2}^{2^t}\Big].
\end{align*}
To bound the first three terms above, we recall that $2ab\leq a^2 + b^2$ for all $a,b\in\RR$. Hence, we obtain
\begin{equation}
\label{eq:2t-th-moment-delta}
 \begin{aligned}
    & \EE_{t-1}\big[\|\Delta^\ell \|_{2}^{2^t}\big]\\
    & \leq {\textstyle \frac{4^{2^t-1}}{2}} \, \EE_{t-1}\Big[\Big\|\hat\EE_t^{\ell}\big[\hat{G}_{t+1}\big] \! - \! \EE_t[\hat{G}_{t+1}]  \Big\|_{2}^{2^{t+1}} \hspace{-2mm} + \Big\|\nabla_{x_t} f_t \big(\xi_t, \hat\EE_t^{\ell}\big[\hat{H}_{t+1}\big]\big) \! - \!  \nabla_{x_t} f_t\big(\xi_t, \EE_t[\hat{H}_{t+1}] \big) \Big\|_{2}^{2^{t+1}}\Big]\\
    &+{\textstyle \frac{4^{2^t-1}}{2^{2^t+1}}} \, \EE_{t-1}\Big[\Big\| \hat\EE_t^{\ell, {\rm e}}\big[\hat{G}_{t+1}\big] \! - \! \EE_t[\hat{G}_{t+1}] \Big\|_{2}^{2^{t+1}} \hspace{-2mm} + \Big\|\nabla_{x_t} f_t \big(\xi_t, \hat\EE_t^{\ell, {\rm e}}\big[\hat{H}_{t+1}\big]\big) \! - \! \nabla_{x_t} f_t\big(\xi_t, \EE_t[\hat{H}_{t+1}] \big)\Big\|_{2}^{2^{t+1}}\Big]\\
    &+ {\textstyle \frac{4^{2^t-1}}{2^{2^t+1}}} \, \EE_{t-1}\Big[\Big\| \hat\EE_t^{\ell, {\rm o}}\big[\hat{G}_{t+1}\big] \! - \! \EE_t[\hat{G}_{t+1}] \Big\|_{2}^{2^{t+1}} \hspace{-2mm} + \Big\| \nabla_{x_t} f_t \big(\xi_t, \hat\EE_t^{\ell, {\rm o}}\big[\hat{H}_{t+1}\big]\big) \! - \! \nabla_{x_t} f_t\big(\xi_t, \EE_t[\hat{H}_{t+1}] \big)\Big\|_{2}^{2^{t+1}}\Big]\\
    &+4^{2^t-1}\, \EE_{t-1}\Big[\big\|\EE_t[\hat{G}_{t+1}] \, A^\ell_t\big\|_{2}^{2^t}\Big].
 \end{aligned}
\end{equation}
The remainder of the proof proceeds in three steps, where we analyze separately the terms appearing in~\eqref{eq:2t-th-moment-delta}. In Step~(i), we bound the $2^t$-th conditional moment of $\mathbb{E}_t[\hat{G}_{t+1}] \, A_t^\ell$, which appears on the last line of~\eqref{eq:2t-th-moment-delta}. In Step~(ii), we bound the $2^{t+1}$-th conditional moments of the three gradient-difference terms involving~$\hat{H}_{t+1}$. Finally, in Step~(iii), we bound the $2^{t+1}$-th conditional moments of the three value-difference terms involving~$\hat{G}_{t+1}$. To complete the induction step, we then substitute all obtained bounds into~\eqref{eq:2t-th-moment-delta} and~\eqref{eq:telescoping_step_rumlmc}. %

\paragraph{Step (i)} To bound the $2^t$-th conditional moment of $\EE_t[\hat{G}_{t+1}] \, A^\ell_t$, we first observe that
\begin{align}
    \label{eq:last-term}
    \EE_{t-1}\Big[\big\|\EE_t[\hat{G}_{t+1}] \, A^\ell_t\big\|_{2}^{2^t}\Big]
   \leq \EE_{t-1}\Big[\big\|\EE_t[\hat{G}_{t+1}] \big\|_{2}^{2^t} \big\| A^\ell_t\big\|_{2}^{2^t}\Big],
\end{align}
where the inequality follows from the submultiplicativity of the Frobenius norm established in Lemma~\ref{lemma:cauchy-schwarz-matrix}. We emphasize that entry-wise matrix $p$-norms are not submultiplicative for $p>2$. This is our reason for using the Frobenius norm (corresponding to $p=2$) in the definition of the $p$-th conditional moment of $\hat{G}_t(x)$ in~\eqref{eq:nu}. 
Next, we bound $\| A^\ell_t\|_{2}^{2^t}$. Jensen's inequality readily implies that
\begin{equation}
    \label{eq:A^ell_t-norm-bound}
    \| A^\ell_t\|_{2}^{2^t}= \bigg(\sum_{i=1}^{d_{t-1}} \big\| A^\ell_t  e_i \big\|_{2}^2 \bigg)^{2^{t-1}}\leq (d_{t-1})^{2^{t-1}-1}\sum_{i=1}^{d_{t-1}} \| A^\ell_t e_i\|_{2}^{2^{t}},
\end{equation}
where $e_i$ stands for the $i$-th standard basis vector in~$\RR^{d_{t-1}}$. By the definition of $A_t^\ell$, we can rewrite $A_t^\ell e_i$ as
\begin{align*}
   & \nabla_{x_t} f_{t,i} \big(\xi_t, \hat\EE_t^{\ell}\big[\hat{H}_{t+1}\big]\big) 
    \! - \! \frac{1}{2}\nabla_{x_t} f_{t,i} \big(\xi_t, \hat\EE_t^{\ell, {\rm e}}\big[\hat{H}_{t+1}\big]\big) 
    \! - \! \frac{1}{2}\nabla_{x_t} f_{t,i} \big(\xi_t, \hat\EE_t^{\ell, {\rm o}}\big[\hat{H}_{t+1}\big]\big)\\
    = & \nabla_{x_t} f_{t,i} \big(\xi_t, \hat\EE_t^{\ell}\big[\hat{H}_{t+1}\big]\big) \! - \! \nabla_{x_t} f_{t,i} \big(\xi_t, \EE_t[\hat{H}_{t+1}]\big) \! - \! \nabla^2_{x_t} f_{t,i} \big(\xi_t,\EE_t[\hat{H}_{t+1}]\big) \big(\hat\EE_t^{\ell}\big[\hat{H}_{t+1}\big]\! - \!\EE_t[\hat{H}_{t+1}]\big)\\
    &+ \! \frac{1}{2}\Big[\nabla_{x_t} f_{t,i} \big(\xi_t, \EE_t[\hat{H}_{t+1}]\big) 
    \! - \! \nabla_{x_t} f_{t,i} \big(\xi_t, \hat\EE_t^{\ell, {\rm e}}\big[\hat{H}_{t+1}\big]\big) \! - \! \nabla^2_{x_t} f_{t,i} \big(\xi_t,\EE_t[\hat{H}_{t+1}]\big) \big(\hat\EE_t^{\ell, {\rm e}}\big[\hat{H}_{t+1}\big]\! - \!\EE_t[\hat{H}_{t+1}]\big) \Big]\\
    & + \! \frac{1}{2}\Big[\nabla_{x_t} f_{t,i} \big(\xi_t, \EE_t[\hat{H}_{t+1}]\big)  
    \! - \! \nabla_{x_t} f_{t,i} \big(\xi_t, \hat\EE_t^{\ell, {\rm o}}\big[\hat{H}_{t+1}\big]\big) \! - \! \nabla^2_{x_t} f_{t,i} \big(\xi_t,\EE_t[\hat{H}_{t+1}]\big) \big(\hat\EE_t^{\ell, {\rm o}}\big[\hat{H}_{t+1}\big]\! - \!\EE_t[\hat{H}_{t+1}]\big)\Big],
\end{align*}
where the equality follows from the antithetic sample average identity~\eqref{eq:even-odd-averages-identity}. The norm of $A_t^\ell e_i$ thus satisfies
\begin{align*}
   \|A_t^\ell e_i\|_{2} 
    \leq &\; \big\|\nabla_{x_t} f_{t,i} \big(\xi_t, \hat\EE_t^{\ell}\big[\hat{H}_{t+1}\big]\big) - \nabla_{x_t} f_{t,i} \big(\xi_t, \EE_t[\hat{H}_{t+1}]\big) \\
    &\qquad\;- \nabla^2_{x_t} f_{t,i} \big(\xi_t,\EE_t[\hat{H}_{t+1}]\big)\; \big(\hat\EE_t^{\ell}\big[\hat{H}_{t+1}\big]-\EE_t[\hat{H}_{t+1}]\big)\big\|_2\\
    & +\frac{1}{2}\big\|\nabla_{x_t} f_{t,i} \big(\xi_t, \EE_t[\hat{H}_{t+1}]\big) 
    - \nabla_{x_t} f_{t,i} \big(\xi_t, \hat\EE_t^{\ell, {\rm e}}\big[\hat{H}_{t+1}\big]\big) \\
    &\qquad\;  - \nabla^2_{x_t} f_{t,i} \big(\xi_t,\EE_t[\hat{H}_{t+1}]\big)\; \big(\hat\EE_t^{\ell, {\rm e}}\big[\hat{H}_{t+1}\big]-\EE_t[\hat{H}_{t+1}]\big) \big\|_2\\
    & +\frac{1}{2}\big\|\nabla_{x_t} f_{t,i} \big(\xi_t, \EE_t[\hat{H}_{t+1}]\big)  
    - \nabla_{x_t} f_{t,i} \big(\xi_t, \hat\EE_t^{\ell, {\rm o}}\big[\hat{H}_{t+1}\big]\big) \\
    &\qquad\; - \nabla^2_{x_t} f_{t,i} \big(\xi_t,\EE_t[\hat{H}_{t+1}]\big)\; \big(\hat\EE_t^{\ell, {\rm o}}\big[\hat{H}_{t+1}\big]-\EE_t[\hat{H}_{t+1}]\big)\big\|_2
\end{align*}
by virtue of the triangle inequality. Next, we bound the three norm terms by using the estimate~\eqref{eq:inequality_for_holder_funs}, which is allowed because the Hessians are H\"older continuous thanks to 
Assumption~\ref{assump:holder}. We therefore obtain
\begin{align*}
   \|A_t^\ell e_i\|_{2} 
    \leq &\;\frac{R_t}{\rho_t+1} \, \big\|\hat\EE_t^{\ell}\big[\hat{H}_{t+1}\big] - \EE_t[\hat{H}_{t+1}]\big\|_2^{\rho_t+1} 
    + \frac{R_t}{2(\rho_t+1)} \, \big\|\hat\EE_t^{\ell, {\rm e}}\big[\hat{H}_{t+1}\big] - \EE_t[\hat{H}_{t+1}]\big\|_2^{\rho_t+1} \\
    &+ \frac{R_t}{2(\rho_t+1)} \, \big\|\hat\EE_t^{\ell, {\rm o}}\big[\hat{H}_{t+1}\big] - \EE_t[\hat{H}_{t+1}]\big\|_2^{\rho_t+1}.
\end{align*}
Two successive applications of Jensen's inequality yield $|a+b+c|^p \le 2^{p-1}(|a|^p+2^{p-1}(|b|^p+|c|^p))$ for all $a,b,c\in\RR$ and~$p\geq 1$. By raising both sides of the previous inequality to the power $p=2^t$ we thus find
\begin{align*}
    \|A_t^\ell e_i\|_{2}^{2^t}
    & \leq \Big(\frac{R_t}{\rho_t+1}\Big)^{2^t}2^{2^t-1} \; \Big[
    \big\|\hat\EE_t^{\ell}\big[\hat{H}_{t+1}\big] - \EE_t[\hat{H}_{t+1}]\big\|_2^{2^t(\rho_t+1)} \\
    &\qquad\;\quad + \frac{1}{2} \; \big\|\hat\EE_t^{\ell, {\rm e}}\big[\hat{H}_{t+1}\big] - \EE_t[\hat{H}_{t+1}]\big\|_2^{2^t(\rho_t+1)} + \frac{1}{2} \; \big\|\hat\EE_t^{\ell, {\rm o}}\big[\hat{H}_{t+1}\big] - \EE_t[\hat{H}_{t+1}]\big\|_2^{2^t(\rho_t+1)} \Big]\\
    & \leq \Big(\frac{R_t}{\rho_t+1}\Big)^{2^t} 2^{2^t-1} (d_t)^{2^{t-1}(\rho_t+1)-1} \; \Big[
    \big\|\hat\EE_t^{\ell}\big[\hat{H}_{t+1}\big] - \EE_t[\hat{H}_{t+1}]\big\|_{2^t(\rho_t+1)}^{2^t(\rho_t+1)} \\
    &\qquad\;\quad + \frac{1}{2} \; \big\|\hat\EE_t^{\ell, {\rm e}}\big[\hat{H}_{t+1}\big] - \EE_t[\hat{H}_{t+1}]\big\|_{2^t(\rho_t+1)}^{2^t(\rho_t+1)} + \frac{1}{2} \; \big\|\hat\EE_t^{\ell, {\rm o}}\big[\hat{H}_{t+1}\big] - \EE_t[\hat{H}_{t+1}]\big\|_{2^t(\rho_t+1)}^{2^t(\rho_t+1)} \Big],
\end{align*}
where the second inequality follows from Lemma~\ref{lemma:matrix-inequality-pq} with $p=2$, $q=2^{t}(\rho_t+1)$, $m=1$ and $n=d_t$. Substituting the above bound on~$\|A_t^\ell e_i\|_{2}^{2^t}$ back into~\eqref{eq:A^ell_t-norm-bound} then results in the estimate
\begin{equation}
    \label{eq:A_ell_t_bound}
    \| A^\ell_t\|_{2}^{2^t} \leq \Big(\frac{R_t}{\rho_t+1}\Big)^{2^t} 2^{2^t-1} (d_{t-1})^{2^{t-1}}\; (d_t)^{2^{t-1}(\rho_t+1)-1} \bar{A}_t^\ell,
\end{equation}
where we have used the shorthand 
\begin{align*}
    \bar{A}_t^\ell & = \big\|\hat\EE_t^{\ell}\big[\hat{H}_{t+1}\big] - \EE_t[\hat{H}_{t+1}]\big\|_{2^t(\rho_t+1)}^{2^t(\rho_t+1)} \\
    &\quad + \frac{1}{2} \; \big\|\hat\EE_t^{\ell, {\rm e}}\big[\hat{H}_{t+1}\big] - \EE_t[\hat{H}_{t+1}]\big\|_{2^t(\rho_t+1)}^{2^t(\rho_t+1)} + \frac{1}{2} \; \big\|\hat\EE_t^{\ell, {\rm o}}\big[\hat{H}_{t+1}\big] - \EE_t[\hat{H}_{t+1}]\big\|_{2^t(\rho_t+1)}^{2^t(\rho_t+1)}.
\end{align*}
Substituting~\eqref{eq:A_ell_t_bound} into~\eqref{eq:last-term} and applying the Cauchy-Schwarz and Jensen inequalities then yields
\begin{equation}
\label{eq:last-term-grad-proof}
\begin{aligned}
    & \EE_{t-1}\Big[\big\|\EE_t[\hat{G}_{t+1}] \big\|_{2}^{2^t} \big\| A^\ell_t\big\|_{2}^{2^t}\Big]\\
    & \leq \Big(\frac{R_t}{\rho_t+1}\Big)^{2^t} 2^{2^t-1} (d_{t-1})^{2^{t-1}} (d_t)^{2^{t-1}(\rho_t+1)-1} \EE_{t-1}\Big[\big\|\EE_t[\hat{G}_{t+1}] \big\|_{2}^{2^t}  \bar{A}_t^\ell \Big] \\
    & \leq  \Big(\frac{R_t}{\rho_t+1}\Big)^{2^t} 2^{2^t-1} (d_{t-1})^{2^{t-1}} (d_t)^{2^{t-1}(\rho_t+1)-1} \; \EE_{t-1}\Big[\big\|\hat{G}_{t+1} \big\|_{2}^{2^{t+1}}\Big]^\frac{1}{2} \EE_{t-1}\Big[ \big(\bar{A}_t^\ell\big)^2\Big]^\frac{1}{2} .
\end{aligned}
\end{equation}
Note that $\hat G_{t+1}$ and $\hat H_{t+1}$ are constructed from the same sets of samples, which implies that~$\EE_t[\hat{G}_{t+1}]$ and~$\bar{A}_t^\ell$ are dependent. 
To further simplify the bound in~\eqref{eq:last-term-grad-proof}, we use the Minkowski inequality to obtain
\begin{align*}
    & \EE_{t-1}\Big[ \big(\bar{A}_t^\ell\big)^2\Big]^\frac{1}{2} \\
    & \leq \EE_{t-1}\Big[\big\|\hat\EE_t^{\ell}\big[\hat{H}_{t+1}\big] - \EE_t[\hat{H}_{t+1}]\big\|_{2^t(\rho_t+1)}^{2^{t+1}(\rho_t+1)}\Big]^\frac{1}{2} \\
    &\quad +\frac{1}{2} \, \EE_{t-1}\Big[\big\|\hat\EE_t^{\ell, {\rm e}}\big[\hat{H}_{t+1}\big] - \EE_t[\hat{H}_{t+1}]\big\|_{2^t(\rho_t+1)}^{2^{t+1}(\rho_t+1)}\Big]^\frac{1}{2} +\frac{1}{2} \, \EE_{t-1}\Big[\big\|\hat\EE_t^{\ell, {\rm o}}\big[\hat{H}_{t+1}\big] - \EE_t[\hat{H}_{t+1}]\big\|_{2^t(\rho_t+1)}^{2^{t+1}(\rho_t+1)}\Big]^\frac{1}{2}\\
    & \leq (d_t)^\frac{1}{2} \bigg( \EE_{t-1}\Big[\big\|\hat\EE_t^{\ell}\big[\hat{H}_{t+1}\big] - \EE_t[\hat{H}_{t+1}]\big\|_{2^{t+1}(\rho_t+1)}^{2^{t+1}(\rho_t+1)}\Big]^\frac{1}{2} \\
    & \quad +\frac{1}{2} \, \EE_{t-1}\Big[\big\|\hat\EE_t^{\ell, {\rm e}}\big[\hat{H}_{t+1}\big] - \EE_t[\hat{H}_{t+1}]\big\|_{2^{t+1}(\rho_t+1)}^{2^{t+1}(\rho_t+1)}\Big]^\frac{1}{2} +\frac{1}{2} \, \EE_{t-1}\Big[\big\|\hat\EE_t^{\ell, {\rm o}}\big[\hat{H}_{t+1}\big] - \EE_t[\hat{H}_{t+1}]\big\|_{2^{t+1}(\rho_t+1)}^{2^{t+1}(\rho_t+1)}\Big]^\frac{1}{2} \bigg).
\end{align*}
Here, the second inequality follows from Lemma~\ref{lemma:matrix-inequality-pq} with $p=2^t(\rho_t+1)$, $q=2^{t+1}(\rho_t+1)$, $m=1$ and~$n=d_t$.
By using Lemma~\ref{lemma:moment-inequality-multidimensional} with $p=2^{t+1}(\rho_t+1)$ and $n=2^\ell$ or $n=2^{\ell-1}$, we further obtain
\begin{align*}
    \EE_{t-1}\Big[ \big(\bar{A}_t^\ell\big)^2\Big]^\frac{1}{2} & \leq (d_t)^\frac{1}{2}\,\frac{B_{2^{t+1}(\rho_t+1)}^\frac{1}{2}}{2^{(2^{t-1})(\rho_t+1)\ell}} \;\EE_{t-1}\Big[\big\|\hat{H}_{t+1}\big\|_{2^{t+1}(\rho_t+1)}^{2^{t+1}(\rho_t+1)}\Big]^\frac{1}{2} \\
    &\quad + 
    (d_t)^\frac{1}{2} \; \frac{B_{2^{t+1}(\rho_t+1)}^\frac{1}{2}}{2^{(2^{t-1)}(\rho_t+1)(\ell-1)}}\, \EE_{t-1}\Big[\big\|\hat{H}_{t+1}\big\|_{2^{t+1}(\rho_t+1)}^{2^{t+1}(\rho_t+1)}\Big]^\frac{1}{2}\\
    & = \Big(1 + 2^{2^{t-1}(\rho_t+1)}\Big) \, (d_t)^\frac{1}{2} \frac{B_{2^{t+1}(\rho_t+1)}^\frac{1}{2}}{2^{(2^{t-1})(\rho_t+1)\ell}}\, \EE_{t-1}\Big[\big\|\hat{H}_{t+1}\big\|_{2^{t+1}(\rho_t+1)}^{2^{t+1}(\rho_t+1)}\Big]^\frac{1}{2}  .
\end{align*}
Substituting the resulting expression into \eqref{eq:last-term-grad-proof} and combining it with \eqref{eq:last-term} finally yields
\begin{equation*}
    \begin{aligned}
    \EE_{t-1}\Big[\big\|\EE_t[\hat{G}_{t+1}] A^\ell_t\big\|_{2}^{2^t}\Big] & \leq \Big(\frac{R_t}{\rho_t+1}\Big)^{2^t} 2^{2^t-1} (d_{t-1})^{2^{t-1}}(d_t)^{2^{t-1}(\rho_t+1)-1} \, \EE_{t-1}\Big[\big\| \hat{G}_{t+1} \big\|_{2}^{2^{t+1}}\Big]^\frac{1}{2} \\
    &\quad \times \Big(1 + 2^{2^{t-1}(\rho_t+1)}\Big) (d_t)^\frac{1}{2}\, \frac{B_{2^{t+1}(\rho_t+1)}^\frac{1}{2}}{2^{(2^{t-1})(\rho_t+1)\ell}} \, \EE_{t-1}\Big[\big\|\hat{H}_{t+1}\big\|_{2^{t+1}(\rho_t+1)}^{2^{t+1}(\rho_t+1)}\Big]^\frac{1}{2} .
\end{aligned}
\end{equation*}
We have thus established an upper bound on the last term in~\eqref{eq:2t-th-moment-delta}, which depends on conditional moments of~$\hat G_{t+1}$ and~$\hat H_{t+1}$ and decays geometrically with~$\ell$. This concludes Step~(i).

\paragraph{Step (ii)} To bound the gradient difference terms in~\eqref{eq:2t-th-moment-delta} depending on~$\hat H_{t+1}$, we note that 
\begin{subequations}
\begin{equation*}
\begin{aligned}
    & \EE_{t-1} \Big[\Big\|\nabla_{x_t} f_t \big(\xi_t, \hat\EE_t^{\ell}\big[\hat{H}_{t+1}\big]\big) - \nabla_{x_t} f_t\big(\xi_t, \EE_t[\hat{H}_{t+1}] \big) \Big\|_{2}^{2^{t+1}} \Big]
    \leq S_t^{2^{t+1}} \EE_{t-1} \Big[ \big\|\hat\EE_t^{\ell}\big[\hat{H}_{t+1}\big] - \EE_t[\hat{H}_{t+1}] \big\|_{2}^{2^{t+1}} \Big] \\
    & \leq S_t^{2^{t+1}} (d_t)^{2^t-1} \EE_{t-1} \Big[ \big\|\hat\EE_t^{\ell}\big[\hat{H}_{t+1}\big] - \EE_t[\hat{H}_{t+1}] \big\|_{2^{t+1}}^{2^{t+1}} \Big] \leq S_t^{2^{t+1}} (d_t)^{2^t-1} \frac{B_{2^{t+1}}}{2^{2^{t}\ell}} \, \EE_{t-1}\Big[\big\|\hat{H}_{t+1}\big\|_{2^{t+1}}^{2^{t+1}}\Big],
\end{aligned}
\end{equation*}
where the first inequality exploits Assumption~\ref{assump:smooth}, and the second inequality follows from Lemma~\ref{lemma:matrix-inequality-pq} with~$p=2$, $q=2^{t+1}$, $m=1$ and~$n=d_t$. The third inequality follows from the law of total expectation and from Lemma~\ref{lemma:moment-inequality-multidimensional} with~$p=2^{t+1}$ and~$n=2^\ell$. A similar reasoning implies that
\begin{equation*}
\begin{aligned}
    \EE_{t-1} \Big[ \Big\|\nabla_{x_t} f_t \big(\xi_t, \hat\EE_t^{\ell, {\rm e}}\big[\hat{H}_{t+1}\big]\big) - \nabla_{x_t} f_t\big(\xi_t, \EE_t[\hat{H}_{t+1}] \big) \Big\|_{2}^{2^{t+1}} \Big]
    &\! \leq  S_t^{2^{t+1}} (d_t)^{2^t-1} \frac{B_{2^{t+1}}}{2^{2^{t}(\ell-1)}}  \EE_{t-1}\Big[\big\|\hat{H}_{t+1}\big\|_{2^{t+1}}^{2^{t+1}}\Big] ,\\
    \EE_{t-1} \Big[ \Big\|\nabla_{x_t} f_t \big(\xi_t, \hat\EE_t^{\ell, {\rm o}}\big[\hat{H}_{t+1}\big]\big) - \nabla_{x_t} f_t\big(\xi_t, \EE_t[\hat{H}_{t+1}] \big) \Big\|_{2}^{2^{t+1}} \Big]
    & \! \leq  S_t^{2^{t+1}} (d_t)^{2^t-1} \frac{B_{2^{t+1}}}{2^{2^{t}(\ell-1)}} \EE_{t-1}\Big[\big\|\hat{H}_{t+1}\big\|_{2^{t+1}}^{2^{t+1}}\Big].
\end{aligned}
\end{equation*}
\end{subequations}
As the empirical averages here are taken only over samples with even or odd indices, we set~$n = 2^{\ell-1}$ when applying Lemma~\ref{lemma:moment-inequality-multidimensional}. In summary, we have found upper bounds on the gradient difference terms in~\eqref{eq:2t-th-moment-delta}, which depend on conditional moments of~$\hat H_{t+1}$ and decay geometrically with~$\ell$. This concludes Step~(ii).

\paragraph{Step (iii)} To bound the terms in~\eqref{eq:2t-th-moment-delta} depending on~$\hat{G}_{t+1}$, we note that
\begin{align*}
    \EE_{t-1} \Big[ \Big\|\hat\EE_t^{\ell}\big[\hat{G}_{t+1}\big] -\EE_t[\hat{G}_{t+1}]  \Big\|_{2}^{2^{t+1}}\Big] & \leq (d_t d)^{2^t-1} \, \EE_{t-1} \Big[ \Big\|\hat\EE_t^{\ell}\big[\hat{G}_{t+1}\big] -\EE_t[\hat{G}_{t+1}]  \Big\|_{2^{t+1}}^{2^{t+1}}\Big] \\
    & \leq (d_t d)^{2^t-1} \frac{B_{2^{t+1}}}{2^{2^{t}\ell}} \, \EE_{t-1}\Big[\big\|\hat{G}_{t+1}\big\|_{2^{t+1}}^{2^{t+1}}\Big]\\
    & \leq (d_t d)^{2^t-1} \frac{B_{2^{t+1}}}{2^{2^{t}\ell}} \, \EE_{t-1}\Big[\big\|\hat{G}_{t+1}\big\|_{2}^{2^{t+1}}\Big],
\end{align*}
where the first inequality follows from Lemma~\ref{lemma:matrix-inequality-pq} with~$p=2$, $q=2^{t+1}$, $m=d_{t}$ and~$n=d$, while the second inequality follows from the law of total expectation and from Lemma~\ref{lemma:moment-inequality-multidimensional} with~$p=2^{t+1}$ and~$n=2^\ell$. The third inequality follows from Jensen's inequality (or from Lemma~\ref{lemma:matrix-inequality-pq} with~$p=2$ and~$q=2^{t+1}$). 
A similar reasoning involving Lemma~\ref{lemma:moment-inequality-multidimensional} with~$p=2^{t+1}$ and~$n=2^{\ell-1}$ implies that
\begin{align*}
    \EE_{t-1}\Big[\big\|\hat\EE_t^{\ell, {\rm e}}\big[\hat{G}_{t+1}\big] - \EE_t[\hat{G}_{t+1}] \big\|_{2}^{2^{t+1}}\Big] & 
    \leq (d_t d)^{2^t-1} \frac{B_{2^{t+1}}}{2^{2^{t}(\ell-1)}} \EE_{t-1}\Big[\big\|\hat{G}_{t+1}\big\|_{2}^{2^{t+1}}\Big],\\
    \EE_{t-1}\Big[\big\|\hat\EE_t^{\ell, {\rm o}}\big[\hat{G}_{t+1}\big] - \EE_t[\hat{G}_{t+1}] \big\|_{2}^{2^{t+1}}\Big] 
    & \leq (d_t d)^{2^t-1} \frac{B_{2^{t+1}}}{2^{2^{t}(\ell-1)}} \EE_{t-1}\Big[\big\|\hat{G}_{t+1}\big\|_{2}^{2^{t+1}}\Big].
\end{align*}
These bounds depend on conditional moments of~$\hat G_{t+1}$ and decay geometrically with~$\ell$, completing Step~(iii).

Having analyzed all terms in~\eqref{eq:2t-th-moment-delta} in Steps~(i), (ii) and~(iii), we are now ready to combine the resulting estimates to a bound on the conditional moments of~$\Delta^\ell$. For notational convenience, we define the constants
\begin{equation*}
\begin{aligned}
    Y_t \coloneqq & 3 \cdot 2^{2^{t+1}-3}(d_t d)^{2^t-1} B_{2^{t+1}},\\
    Y^{\prime}_t \coloneqq & 3 \cdot 2^{2^{t+1}-3}  S_t^{2^{t+1}} (d_t)^{2^t-1}B_{2^{t+1}},\\
    Y^{\prime\prime}_t \coloneqq & 2^{2^{t+1}+2^t-3}\Big(\frac{R_t}{\rho_t+1}\Big)^{2^t} (d_{t-1})^{2^{t-1}} (d_t)^{2^{t-1}(\rho_t+1)-\frac{1}{2}}  \Big(1 + 2^{2^{t-1}(\rho_t+1)}\Big)  B_{2^{t+1}(\rho_t+1)}^\frac{1}{2},
\end{aligned}
\end{equation*}
which are independent of~$\ell$. Substituting the bounds from Steps~(i), (ii) and~(iii) into~\eqref{eq:2t-th-moment-delta} then yields
\begin{equation}\label{eq:delta_bound_with_moments}
    \begin{aligned}
    \EE_{t-1}\big[\|\Delta^\ell \|_{2}^{2^t}\big]
    & \leq  \frac{Y_t}{2^{2^{t}\ell}} \; \EE_{t-1}\Big[\big\|\hat{G}_{t+1}\big\|_{2}^{2^{t+1}}\Big] 
    + \frac{Y_t^\prime}{2^{2^{t}\ell}} \;  \EE_{t-1}\Big[\big\|\hat{H}_{t+1}\big\|_{2^{t+1}}^{2^{t+1}}\Big] \\
    &\qquad + \frac{Y_t^{\prime\prime}}{2^{(2^{t-1})(\rho_t+1)\ell}} \; \EE_{t-1}\Big[\big\| \hat{G}_{t+1} \big\|_{2}^{2^{t+1}}\Big]^\frac{1}{2} \; \EE_{t-1}\Big[\big\|\hat{H}_{t+1}\big\|_{2^{t+1}(\rho_t+1)}^{2^{t+1}(\rho_t+1)}\Big]^\frac{1}{2}.
\end{aligned}
\end{equation}
Next, we bound the three conditional expectations appearing in~\eqref{eq:delta_bound_with_moments}. By Lemma~\ref{lemma:bound_on_variance_rtmlmc}, under Assumption~\ref{assump:smooth}, and using the fact that $2^{s-1}(\rho_s+1)\leq 2^{s}$, the second conditional expectation satisfies

\[
    \EE_{t-1}\big[\|\hat H_{t+1}\|_{2^{t+1}}^{2^{t+1}}\big]
    \leq D_{t+1}\prod_{s=t+1}^{T-1}\bigg(\sum_{\ell=0}^{M_s}\frac{1}{2^{2^{s}\ell} q_s(\ell)^{2^{s}-1}}\bigg) \leq D_{t+1}\prod_{s=t+1}^{T-1} \cS_s, 
\]  
where
\[
 D_{t+1} = \overline\mu_T^{2^T} \prod_{s=t+1}^{T-1} \bigg(\bigg(\frac{3S_s}{2}\bigg)^{2^s} d_s^{2^s-1} B_{2^{s+1}}\bigg)
\]
and~$\cS_s$ is defined in~\eqref{eq:S-definition}. By adapting Lemma~\ref{lemma:bound_on_variance_rtmlmc} in the obvious way, one can further show that 
\begin{align*}
     \EE_{t}\Big[\big\|\hat{H}_{t+1}\big\|_{2^{t+1}(\rho_t+1)}^{2^{t+1}(\rho_t+1)}\Big]  \leq 
        D_{t+1}^\prime \prod_{s=t+1}^{T-1} \cS_s^\prime, 
\end{align*}
where
\begin{align*}
    D_{t+1}^\prime\coloneqq\overline\mu_T^{2^T(\rho_{T-1}+1)} \prod_{s=t+1}^{T-1} \bigg(\bigg(\frac{3S_s}{2}\bigg)^{2^{s}(\rho_{s-1}+1)} d_s^{2^{s}(\rho_{s-1}+1)-1} B_{2^{s+1}(\rho_{s-1}+1)}\bigg)
\end{align*}
and~$\cS_s^\prime$ is defined in~\eqref{eq:S-definition}. Moreover, by the induction hypothesis, the conditional moment of $\hat{G}_{t+1}$ in~\eqref{eq:delta_bound_with_moments} satisfies the bound~\eqref{eq:moment_lemma_inductive_hypothesis_grad}. Substituting all  of these conditional moment bounds into~\eqref{eq:delta_bound_with_moments} then yields
\begin{equation*}
    \begin{aligned}
    \EE_{t-1}\big[\|\Delta^\ell \|_{2}^{2^t}\big] &\leq   \frac{Y_t E_{t+1} }{2^{2^{t}\ell}}\prod_{s=t+1}^{T-1}\max\big\{\cS_s, \cS_s^\prime \big\}
    + \frac{ Y^\prime_t D_{t+1} }{2^{2^t \ell}}\prod_{s=t+1}^{T-1} \cS_s \\
    &\quad + \frac{Y_t^{\prime\prime} \sqrt{E_{t+1} {D_{t+1}^\prime}}}{2^{(2^{t-1})(\rho_t+1)\ell}}\prod_{s=t+1}^{T-1}\max\big\{\cS_s, \cS_s^\prime \big\}^\frac{1}{2} \big(\cS_s^\prime\big)^\frac{1}{2} \leq \frac{E_t }{2^{(2^{t-1})(\rho_t+1)\ell}} \prod_{s=t+1}^{T-1}\max\big\{\cS_s, \cS_s^\prime \big\},
\end{aligned}
\end{equation*}
where the second inequality follows from the trivial observations that $2^{t-1}(\rho_t+1)\leq 2^{t}$ and that both~$\cS_s$ as well as~$\cS_s'$ are dominated by $\max\{\cS_s, \cS_s^\prime \}$. The inequality also exploits the (recursive) definition of~$E_t$ as
\[
    E_t \coloneqq Y_t E_{t+1} + Y^\prime_t D_{t+1} + Y_t^{\prime\prime} \sqrt{E_{t+1} {D_{t+1}^\prime}}.
\]
Replacing the conditional moment $\EE_{t-1}[\|\Delta^\ell \|_{2}^{2^t}]$ in~\eqref{eq:telescoping_step_rumlmc} with the above bound finally yields
\begin{align*}                  
    \nu_t^{2^t} & = \sum_{\ell=0}^{M_t} \frac{\EE_{t-1}\big[\|\Delta^\ell\|^{2^t}_{2} \big]}{q_t(\ell)^{2^t-1}} 
    \leq E_t\, \cS_t \prod_{s=t+1}^{T-1} \max\big\{\cS_s, \cS_s^\prime \big\} \leq E_t\prod_{s=t}^{T-1}\max\big\{\cS_s, \cS_s^\prime \big\}.
\end{align*}
This completes the induction step. By their recursive definition and thanks to Assumption~\ref{assump:nu_bar}, the constants~$E_t$, $t \in [T-1]$, are finite whenever~$E_T$ is finite. Hence, the claim follows.
\end{proof}

\begin{proof}[Proof of Lemma~\ref{lemma:bias_bound_rtmlmc_grad}] 
We simplify notation by suppressing the dependence on~$x$ for all functions of~$x$ in this proof. In the following we will show that the conditional bias of the estimator~$\hat{G}_t(x)$ obeys the estimate
\begin{equation}
    \label{eq:bias-G_t}
    \begin{aligned}
    & \big\|\EE_{t-1}[\hat{G}_{t}]-G_{t} \big\|_2 \\ 
    & \qquad \leq \sum_{s=t}^{T-1}L_{[t:s-1]}S_s \EE_{t-1} [\nu_{s+1}^{2}]^\frac{1}{2}\bigg(\frac{ \EE_{t-1}[\mu_{s+1}^{2}]^\frac{1}{2}}{2^{M_s/2}} + \sum_{k=s+1}^{T-1} \frac{L_{[s+1:k-1]}S_k\EE_{t-1} [(\mu_{k+1}^{2})^2]^\frac{1}{2}}{2^{M_k+1}} \bigg)
    \end{aligned}
\end{equation}
for all~$t\in[T]$. As~$\EE[\hat{G}]=\EE[\hat{G}_1]$, the claim then follows immediately. We prove~\eqref{eq:bias-G_t} by backward induction on~$t$. As for the base case corresponding to~$t = T$, note that~$\hat{G}_T$ constitutes an unbiased estimator for~$G_T$. Thus, the left hand side of~\eqref{eq:bias-G_t} evaluates to~$0$, and the claim holds trivially (in fact, the right hand side of~\eqref{eq:bias-G_t} vanishes, too). As for the induction step corresponding to any~$t$ with~$t+1\in[T]$, assume that
\begin{align*}\label{eq:induction_hypothesis_bias_grad}
     \big\|\EE_{t}[\hat{G}_{t+1}]-G_{t+1} \big\|_2\leq \! \sum_{s=t+1}^{T-1}L_{[t+1:s-1]}S_s \EE_t[\nu_{s+1}^{2}]^\frac{1}{2}\bigg(\frac{ \EE_t[\mu_{s+1}^{2}]^\frac{1}{2}}{2^{M_s/2}}\! + \!\sum_{k=s+1}^{T-1} \!\frac{L_{[s+1:k-1]}S_k\EE_t[(\mu_{k+1}^{2})^2]^\frac{1}{2}}{2^{M_k+1}} \bigg).
\end{align*}
In addition, note that
\begin{equation}
    \begin{aligned}\label{eq:rtmlmc_grad_bias}
    \EE_{t-1}[\hat{G}_t] & =  \EE_{t-1}\left[\frac{1}{q_t(\lambda_t)} \left(\hat{g}_t^{\lambda_t}-\frac{1}{2}\hat{g}_t^{\lambda_t, {\rm e}} -\frac{1}{2}\hat{g}_t^{\lambda_t, {\rm o}} \right)\right] = \sum_{\ell=0}^{M_t} \EE_{t-1}\left[\hat{g}_t^{\ell}-\frac{1}{2}\hat{g}_t^{\ell, {\rm e}} -\frac{1}{2}\hat{g}_t^{\ell, {\rm o}} \right] \\
    &  = \EE_{t-1} [\hat{g}_t^{M_t}] 
    = \EE_{t-1}\Big[ \hat\EE_t^{M_t}[\hat{G}_{t+1}] \nabla_{x_t} f_t \big(\xi_t, \hat\EE_t^{M_t}[\hat{H}_{t+1}] \big)\Big],
\end{aligned}
\end{equation}
where the first equality follows from the definition of~$\hat{G}_t$ in~\eqref{eq:G_hat}, and the second uses the law of total expectation along with the independence of~$\lambda_t$ from all other random objects. The third equality holds because~$\hat{g}_t^{\ell,{\rm e}}$ and~$\hat{g}_t^{\ell,{\rm o}}$ share the distribution of~$\hat{g}_t^{\ell-1}$ conditional on $\mathcal F_t$ for all~$\ell \in [M_t]$ and because~$\hat{g}_t^{0,{\rm e}} = \hat{g}_t^{0,{\rm o}} = 0$. Finally, the fourth equality follows from Definition~\ref{def:rumlmc-grad}. Next, we introduce the shorthands
\begin{align*}
    A_1 & \coloneqq \Big\|\EE_{t-1}\Big[\hat\EE_t^{M_t}[\hat{G}_{t+1}] \left(\nabla_{x_t} f_t \big(\xi_t,\hat\EE_t^{M_t}[\hat{H}_{t+1}] \big) -\nabla_{x_t} f_t \left(\xi_t,F_{t+1}^T\right)\right)\Big] \Big\|_2\\
    A_2 & \coloneqq \Big\|\EE_{t-1}\Big[\Big(\EE_{t}[\hat{G}_{t+1}]-G_{t+1}\Big) \nabla_{x_t} f_t \left(\xi_t,F_{t+1}^T\right) \Big] \Big\|_2
\end{align*}
to simplify notation. Then, we have 
\begin{equation}\label{eq:bias_t_th_step}
\begin{aligned}
        \big\| \EE_{t-1}[\hat{G}_t] - G_t\big\|_2 & = \Big\|\EE_{t-1}\Big[\hat\EE_t^{M_t}[\hat{G}_{t+1}] \nabla_{x_t} f_t \big(\xi_t,\hat\EE_t^{M_t}[\hat{H}_{t+1}] \big) \Big] - \EE_{t-1}\Big[G_{t+1}\nabla_{x_t} f_t \big(\xi_t,F_{t+1}^T\big)\Big] \Big\|_2 \\
        &\leq \Big\|\EE_{t-1}\Big[\hat\EE_t^{M_t}[\hat{G}_{t+1}]\nabla_{x_t} f_t \big(\xi_t,\hat\EE_t^{M_t}[\hat{H}_{t+1}] \big) - \hat\EE_t^{M_t}[\hat{G}_{t+1}]\nabla_{x_t} f_t \big(\xi_t,F_{t+1}^T\big)\Big] \Big\|_2\\
        &\quad + \Big\|\EE_{t-1}\Big[\hat\EE_t^{M_t}[\hat{G}_{t+1}]\nabla_{x_t} f_t \big(\xi_t,F_{t+1}^T\big) - G_{t+1}  \nabla_{x_t} f_t \left(\xi_t,F_{t+1}^T\right) \Big]\Big\|_2 \! = A_1 \! + A_2,
\end{aligned}
\end{equation}
where the first equality exploits~\eqref{eq:rtmlmc_grad_bias} and the definition of~$G_t$ in~\eqref{eq:G_t}. The second equality follows from the law of total expectation and the elementary identity $\EE_{t}[\hat\EE_t^{M_t}[\cdot]]=\EE_t[\cdot]$. Next, we separately bound the terms~$A_1$ and~$A_2$. As for~$A_1$, we use the Cauchy-Schwarz and Jensen inequalities to obtain
\begin{align*}
        A_1 & \leq \EE_{t-1}\Big[\hat\EE_t^{M_t}\Big[\big\|\hat{G}_{t+1}\big\|_2\Big] \big\|\nabla_{x_t} f_t \big(\xi_t,\hat\EE_t^{M_t}[\hat{H}_{t+1}] \big) -\nabla_{x_t} f_t \left(\xi_t,F_{t+1}^T\right)\big\|_2 \Big] \\
        & \leq S_t \, \EE_{t-1}\Big[\big\|\hat{G}_{t+1}\big\|_2 \big\| \, \hat\EE_t^{M_t}[\hat{H}_{t+1}]  - F_{t+1}^T \big\|_2 \Big] \\
        & \leq S_t \, \EE_{t-1}\Big[\big\|\hat{G}_{t+1}\big\|_2^2\Big]^\frac{1}{2} \EE_{t-1}\Big[\big\|\hat\EE_t^{M_t}[\hat{H}_{t+1}]  - F_{t+1}^T \big\|_2^2 \Big]^\frac{1}{2} \\
        & = S_t \, \EE_{t-1}\Big[\big\|\hat{G}_{t+1}\big\|_2^2\Big]^\frac{1}{2}\Big( \EE_{t-1}\Big[\big\|\hat\EE_t^{M_t}[\hat{H}_{t+1}]  - \EE_t[\hat{H}_{t+1}] \big\|_2^2 \Big] 
        +\EE_{t-1}\Big[\big\|\EE_t[\hat{H}_{t+1}] - F_{t+1}^T\big\|_2^2 \Big]\Big)^\frac{1}{2}.
\end{align*}
Here, the second inequality follows from the smoothness of the integrand~$f_t$, as stipulated in Assumption~\ref{assump:smooth}. Note that we have also removed the first sample average $\hat\EE_t^{M_t}[\cdot]$. This is permissible because the second norm term does not depend on~$\xi_{t+1}$ and because the underlying samples are independent and identically distributed conditional on the information available at time~$t-1$. The third inequality and the equality follow from the Cauchy-Schwarz inequality and from a standard bias-variance decomposition of the mean squared error. 
By Lemma~\ref{lemma:bias_bound_rtmlmc_H_t}, the bias term admits the following upper bound.
\begin{align*}
    &\EE_{t-1}\Big[\big\|\EE_t[\hat{H}_{t+1}] - F_{t+1}^T\big\|_2^2 \Big] \leq \EE_{t-1}\bigg[\bigg(\sum_{s=t+1}^{T-1} \frac{L_{[t+1:s-1]} S_s \EE_{t}[\mu_{s+1}^2]}{2^{M_s + 1}} \bigg)^2\bigg]
\end{align*}
In addition, the variance term satisfies
\begin{align*}
    \EE_{t-1}\Big[\big\|\hat\EE_t^{M_t}[\hat{H}_{t+1}]  - \EE_t[\hat{H}_{t+1}] \big\|_2^2 \Big] = 2^{-M_t} \EE_{t-1}\Big[\big\|\hat{H}_{t+1}  - \EE_t[\hat{H}_{t+1}] \big\|_2^2 \Big] \leq 2^{-M_t} \EE_{t-1}[\mu_{t+1}^{2}],
\end{align*}
where the equality holds because the variance of the sample average $\hat\EE_t^{M_t}[\hat{H}_{t+1}]$ equals the variance of~$\hat{H}_{t+1}$ divided by the sample size~$2^{M_t}$. The inequality follows from the observation that the variance of~$\hat{H}_{t+1}$ is bounded below by its second moment and from the definition of~$\mu_{t+1}^{2}$ in~\eqref{eq:mu}. Substituting the bounds on these mean and the variance terms into the above bound on~$A_1$ then yields
\begin{align*}
    A_1 &\leq S_t \, \EE_{t-1}\Big[\big\|\hat{G}_{t+1}\big\|_2^2\Big]^\frac{1}{2} \Bigg( \EE_{t-1}\bigg[\bigg(\sum_{s=t+1}^{T-1} \frac{L_{[t+1:s-1]} S_s \EE_t[\mu_{s+1}^2]}{2^{M_s + 1}}\bigg)^2\bigg] + \frac{\EE_{t-1}[\mu_{t+1}^2]}{2^{M_t}} \Bigg)^\frac{1}{2}\\
    & \leq S_t \, \EE_{t-1}[\nu_{t+1}^{2}]^\frac{1}{2} \Bigg( \EE_{t-1}\bigg[\bigg(\sum_{s=t+1}^{T-1} \frac{L_{[t+1:s-1]} S_s \EE_t[\mu_{s+1}^2]}{2^{M_s + 1}}\bigg)^2\bigg]^\frac{1}{2} + \frac{\EE_{t-1}[\mu_{t+1}^{2}]^\frac{1}{2}}{2^{M_t/2}} \Bigg),
\end{align*}
where the second inequality holds because of the definition of~$\nu_{t+1}^2$ in~\eqref{eq:nu} and because $(a^2+b^2)^\frac{1}{2} \leq |a| + |b|$ for all~$a,b\in\mathbb R$. The Minkowski and Jensen inequalities further imply that
\begin{align*}
    \EE_{t-1}\bigg[\bigg(\sum_{s=t+1}^{T-1} \frac{L_{[t+1:s-1]} S_s \EE_t[\mu_{s+1}^{2}]}{2^{M_s + 1}}\bigg)^2\bigg]^\frac{1}{2} 
    &\leq \sum_{s=t+1}^{T-1}\EE_{t-1}\bigg[ \frac{L_{[t+1:s-1]}^2 S_s^2 \EE_t[\mu_{s+1}^{2}]^2 }{2^{2M_s + 2}}\bigg]^\frac{1}{2} \\
    &\leq \sum_{s=t+1}^{T-1}\frac{L_{[t+1:s-1]} S_s \EE_{t-1}[(\mu_{s+1}^{2})^2]^\frac{1}{2}}{2^{M_s + 1}}.
\end{align*}
Using this inequality to simplify that above bound on~$A_1$ yields
\begin{align*}
    A_1 \leq S_t \, \EE_{t-1}[\nu_{t+1}^{2}]^\frac{1}{2} \bigg( \sum_{s=t+1}^{T-1} \frac{L_{[t+1:s-1]} S_s \EE_{t-1}[(\mu_{s+1}^{2})^2]^\frac{1}{2}}{2^{M_s + 1}} + \frac{\EE_{t-1}[\mu_{t+1}^{2}]^\frac{1}{2}}{2^{M_t/2}} \bigg).
\end{align*}
As for the term~$A_2$, we use the Cauchy-Schwarz and Jensen inequalities to obtain
\begin{align*}
    &A_2 \leq  \EE_{t-1}\Big[\big\|\nabla_{x_t} f_t \left(\xi_t,F_{t+1}^T\right)\big\|_2 \, \big\|\EE_{t}[\hat{G}_{t+1}]-G_{t+1}  \big\|_2 \Big]\\
    &\quad \leq  L_t \; \EE_{t-1} \Big[\big\|\EE_{t}[\hat{G}_{t+1}]-G_{t+1} \big\|_2\Big]\\
    &\quad \leq L_t \;\EE_{t-1}\bigg[  \sum_{s=t+1}^{T-1}L_{[t+1:s-1]}S_s \EE_t[\nu_{s+1}^{2}]^\frac{1}{2} \bigg(\frac{\EE_{t}[\mu_{s+1}^{2}]^\frac{1}{2} }{2^{M_s/2}} + \bigg(\sum_{k=s+1}^{T-1} \frac{L_{[s+1:k-1]}S_k\EE_{t}[(\mu_{k+1}^{2})^2]^\frac{1}{2}}{2^{M_k + 1}}\bigg) \bigg) \bigg],
\end{align*}
where the second inequality follows from Assumption~\ref{assump:lipschitz}, and the third inequality exploits the induction hypothesis. Jensen’s inequality together with the law of total expectations further yields
\begin{align*}
&A_2 \leq  L_t \sum_{s=t+1}^{T-1}L_{[t+1:s-1]}S_s \EE_{t-1}[\nu_{s+1}^{2}]^\frac{1}{2} \bigg(\frac{\EE_{t-1}[\mu_{s+1}^{2}]^\frac{1}{2} }{2^{M_s/2}} + \bigg(\sum_{k=s+1}^{T-1} \frac{L_{[s+1:k-1]}S_k\EE_{t-1}[(\mu_{k+1}^{2})^2]^\frac{1}{2}}{2^{M_k + 1}}\bigg) \bigg) .
\end{align*}
Combining the bounds on~$A_1$ and~$A_2$ with~\eqref{eq:bias_t_th_step} and noting that $L_{[t:t-1]}=1$, we finally obtain
\begin{align*}
    &\big\| \EE_{t-1}[\hat{G}_t] - G_t\big\|_2\\
    &\quad \leq S_t \, \EE_{t-1}[\nu_{t+1}^{2}]^\frac{1}{2} \bigg(\frac{\EE_{t-1}[\mu_{t+1}^2]^\frac{1}{2}}{2^{M_t/2}} +\bigg(\sum_{s=t+1}^{T-1} \frac{L_{[t+1:s-1]} S_s \EE_{t-1}[(\mu_{s+1}^{2})^2]^\frac{1}{2}}{2^{M_s + 1}}\bigg) \bigg)   \\
    &\qquad\;\quad + L_t \sum_{s=t+1}^{T-1}L_{[t+1:s-1]}S_s \EE_{t-1}[\nu_{s+1}^{2}]^\frac{1}{2}\bigg(\frac{ \EE_{t-1}[\mu_{s+1}^{2}]^\frac{1}{2} }{2^{M_s/2}} + \bigg(\sum_{k=s+1}^{T-1} \frac{L_{[s+1:k-1]}S_k\EE_{t-1}[(\mu_{k+1}^{2})^2]^\frac{1}{2}}{2^{M_k + 1}}\bigg) \bigg)\\
    &\quad = \sum_{s=t}^{T-1} L_{[t:s-1]}S_s \EE_{t-1}[\nu_{s+1}^{2}]^\frac{1}{2} \bigg(\frac{ \EE_{t-1}[\mu_{s+1}^{2}]^\frac{1}{2}}{2^{M_s/2}} 
    + \sum_{k=s+1}^{T-1} \frac{L_{[s+1:k-1]}S_k\EE_{t-1}[(\mu_{k+1}^{2})^2]^\frac{1}{2}}{2^{M_k+1}}\bigg).
\end{align*}
This completes the induction step, and thus the claim follows.
\end{proof}

\begin{proof}[Proof of Theorem~\ref{thm:rtmlmc_grad_sample_complexity_mse}]
    To bound the root mean squared error of~$\hat{G}(x)$ by~$\epsilon$, it suffices to bound both the bias and the standard deviation of~$\hat{G}(x)$ by~$\epsilon/\sqrt{2}$ for any $x\in\cX$. As for the bias,  Lemma~\ref{lemma:bias_bound_rtmlmc_grad} implies that
    \begin{align*}
      & \big\| \EE[\hat{G}(x)]-G_1(x) \big\|_2 \leq \sum_{t=1}^{T-1} L_{[t-1]}S_t \EE[\nu_{t+1}^{2}(x)]^\frac{1}{2}\bigg( \frac{ \EE[\mu_{t+1}^{2}(x)]^\frac{1}{2}}{2^{M_t/2}} + \sum_{s=t+1}^{T-1} \frac{L_{[t+1:s-1]}S_s\EE[(\mu_{s+1}^{2}(x))^2]^\frac{1}{2}}{2^{M_s+1}} \bigg) \\
      & \qquad \leq \sum_{t=1}^{T-1} L_{[t-1]}S_t \EE \big[\nu_{t+1}^{2^{t+1}}(x) \big]^\frac{1}{2^{t+1}}  \bigg(\frac{\EE \big[\mu_{t+1}^{2^{t+1}}(x) \big]^\frac{1}{2^{t+1}}}{2^{M_t/2}} + \sum_{s=t+1}^{T-1} \frac{L_{[t+1:s-1]}S_s\EE \big[(\mu_{s+1}^{2^{s+1}}(x))^2 \big]^\frac{1}{2^{s+1}}}{2^{M_s+1}} \bigg),
    \end{align*}
    where the last expression follows from repeated application of Jensen's inequality and from the definitions of the conditional moments~$\mu_t^p(x)$ and~$\nu_t^p(x)$ in~\eqref{eq:mu} and~\eqref{eq:nu}, respectively. Lemma~\ref{lemma:bound_on_variance_rumlmc_grad} further implies that
    \begin{align*}
       \EE \big[\nu_{t+1}^{2^{t+1}}(x) \big]^\frac{1}{2^{t+1}} & \leq E_{t+1}^\frac{1}{2^{t+1}} \prod_{s=t+1}^{T-1} \max\big\{\cS_s, \cS_s^\prime \big\}^\frac{1}{2^{t+1}},
    \end{align*}
    where the auxiliary variables~$\cS_s$ and~$\cS_s^\prime$ are defined as in~\eqref{eq:S-definition} in the proof of Lemma~\ref{lemma:bound_on_variance_rumlmc_grad}. Thus, we have
    \begin{align*}
        \cS_s & = \sum_{\ell=0}^{M_s}\frac{1}{2^{(2^{s-1})(\rho_s+1)\ell} q_s(\ell)^{2^{s}-1}} = \sum_{\ell=0}^{M_s}\frac{Z_s^{2^s - 1} }{ 2^{(2^{s-1})(\rho_s+1)\ell} (1-r_s)^{(2^s-1)\ell} }.
    \end{align*}
    Here, the second equality holds because $q_s(\ell) = (1-r_s)^\ell/Z_s$, and~$Z_s \coloneqq  (1-(1-r_s)^{M_s+1})/r_s$ is shorthand for the normalization constant of the truncated geometric distribution~$\mathrm{Geo}(r_s | M_s)$. Note that~$Z_s\leq 1/r_s$. The first upper bound on~$r_s$ in Assumption~\ref{assump:rate-parameters-rumlmc-grad} thus ensures that~$\cS_s$ remains bounded as~$M_s$ tends to infinity. Consequently, there exists a constant~$\bar\cS_s\geq \cS_s$ that is independent of~$M_s$. Similarly, one can show that there exists a constant~$\bar{\cS}_s^\prime\geq \cS_s^\prime$ that is independent of~$M_s$. In summary, we may then conclude that
    \begin{align*}
        \EE \big[\nu_{t+1}^{2^{t+1}}(x) \big]^\frac{1}{2^{t+1}}
       & \leq E_{t+1}^\frac{1}{2^{t+1}} \prod_{s=t+1}^{T-1} \max\big\{ \bar\cS_s, \bar\cS_s^\prime \big\}^\frac{1}{2^{t+1}}
    \end{align*}
    uniformly across all~$x\in\cX$ and~$M_s$, $s=t+1,\ldots, T-1$. As~$\rho_{s}\leq 1$, Lemma~\ref{lemma:bound_on_variance_rtmlmc} further implies that
    \begin{align*}
        \mu_{t+1}^{2^{t+1}}(x) \leq  D_{t+1} \prod_{s=t+1}^{T-1} \bar\cS_s
    \end{align*}
    uniformly across all~$x\in\cX$ and~$M_s$, $s=t+1,\ldots, T-1$. Combining the above estimates then yields
    \begin{align*}
        \|\EE[\hat{G}(x)]-G_1(x)\|_2 \leq & \sum_{t=1}^{T-1} L_{[t-1]}S_t  E_{t+1}^\frac{1}{2^{t+1}} \Big(\prod_{s=t+1}^{T-1}\textstyle\max\big\{\bar\cS_s, \bar\cS_s^\prime\big\}^\frac{1}{2^{t+1}}\Big)\\
        &\times\bigg( \frac{ D_{t+1}^{\frac{1}{2^{t+1}}} \prod_{s=t+1}^{T-1} \bar\cS_s^\frac{1}{2^{t+1}}}{2^{M_t/2}}  + \sum_{s=t+1}^{T-1} \frac{L_{[t+1:s-1]}S_s D_{s+1}^{\frac{1}{2^{s}}} \prod_{s'=s+1}^{T-1} \bar\cS_{s'}^\frac{1}{2^{s}}}{2^{M_s+1}} \bigg).
    \end{align*}
    By construction, all constants in the resulting bound are independent of the truncation points. Since~$M_s \geq 0$, we can lower bound the denominator~$2^{M_s+1}$ by~$2^{M_s/2}$ and introduce auxiliary constants~$W_t' \geq 0$ that are independent of the truncation points to obtain a simpler but relaxed bias bound of the form
    \begin{align*}
        \|\EE[\hat{G}(x)] - G_1(x)\|_2 \leq \sum_{t=1}^{T-1} \frac{W_t'}{2^{M_t/2}}.
    \end{align*}
    Note that~$W_t'$ absorbs all dependencies of the bias on the Lipschitz, smoothness and Hölder constants, as well as on the dimensions, moment bounds and rate parameters. Setting $W_t=W^\prime_t \sqrt{2}(T-1)$, it is now evident that the bias drops below~$\epsilon / \sqrt{2}$ uniformly across all $x\in\cX$ if $M_{t}  =\lceil 2\log_2(W_t /\epsilon)\rceil$ for all $t\in[T-1]$. 

    Next, we show that the variance of the MLMC gradient estimator is bounded by $\epsilon^2/2$. From the first part of the proof we know that~$\nu_1^2(x)$ is bounded above by $E_1 \prod_{t=1}^{T-1}\max \{\bar\cS_t,\bar\cS_t^\prime \}$ uniformly across all~$x\in\cX$. Thus, both $\sup_{x\in\cX} \nu_1^2(x)$ and~$n_1$ are finite. Consequently, we may conclude that
    \[
        \VV(\hat{G}(x)) \leq \frac{\nu_1^2(x)}{n_1} \leq \frac{\epsilon^2}{2}
    \]
    uniformly across all $x\in\cX$, where the two inequalities follow from~\eqref{eq:variance-rumlmc-estimator} and the definition of~$n_1$.
    
    Finally, the expected sampling cost of the MLMC gradient estimator is given by
    \begin{align*}
        \EE[\CC(\hat{G}(x))]= n_1 \prod_{t=1}^{T-1} \sum_{\ell=0}^{M_t} q_t(\ell)2^{\ell} = n_1 \prod_{t=1}^{T-1} \sum_{\ell=0}^{M_t} \frac{2^\ell(1-r_t)^\ell}{Z_t} = n_1 \prod_{t=1}^{T-1} \frac{1-(2-2r_t)^{M_t+1}}{Z_t (2r_t - 1)},
    \end{align*}
    where the first equality can be proved as in Lemma~\ref{lemma:bound_on_cost_rtmlmc}. As~$2^{-1}< r_t <1$ by virtue of Assumption~\ref{assump:rate-parameters-rumlmc-grad}, the resulting expression remains bounded as~$M_t$ tends to infinity. 
    
    In summary, the above reasoning implies that the root mean squared error of the MLMC gradient estimator is bounded by~$\epsilon$ uniformly across all~$x\in\cX$ if $n_1=\cO(\epsilon^{-2})$ and $M_t= \cO(\log(\epsilon^{-1}))$ for all $t \in [T-1]$, in which case the expected sampling cost satisfies $\EE[\CC(\hat{G}(x))] \leq \cO(\epsilon^{-2})$. Hence, the claim follows.
\end{proof}

\begin{proof}[Proof of Theorem~\ref{theorem:sgd-num-steps_rt-mlmc}]
   Under the stated assumptions, Lemmas~\ref{lemma:bound_on_variance_rumlmc_grad}, \ref{lemma:bias_bound_rtmlmc_grad}, and~\ref{lemma:bound_on_cost_rumlmc_grad}, together with Theorem~\ref{thm:rtmlmc_grad_sample_complexity_mse}, are applicable. It is therefore easy to verify that the claim is a direct consequence of~\cite[Theorem~C.1]{hu2021bias}.
\end{proof}


\end{document}